\newcommand{\xmark}{\ding{55}}%
\newtheorem{definition}{Definition}[section]
\newtheorem{lemma}{Lemma}[section]
\newtheorem{proposition}{Proposition}[section]
\newtheorem{remark}{Remark}[section]
\newtheorem{corollary}{Corollary}[section]
\newtheorem{thom}{Theorem}[section]
\newcommand\keywords[1]{\textbf{Keywords}: #1}
\newcommand\msc[1]{\textbf{Mathematical Subject Classifications}: #1}
\newcommand{\Rmnum}[1]{\expandafter\@slowromancap\romannumeral #1@}
\newcommand{\etal}{et al. }
\begin{document}

\title{AdaBB: Adaptive Barzilai-Borwein Method \\ for Convex Optimization}

\author{Danqing Zhou\footnotemark[1]   \and Shiqian Ma\footnotemark[2]\and Junfeng Yang\footnotemark[1]}

\footnotetext[1]{Department of Mathematics, Nanjing University, \#22 Hankou Road, Nanjing, P. R. China.
Research supported by the National Natural Science Foundation of China (NSFC-12371301) and the Natural Science Foundation for Distinguished Young Scholars of Gansu Province (22JR5RA223). Email: zhoudanqing@smail.nju.edu.cn, jfyang@nju.edu.cn.}

\footnotetext[2]{Department of Computational Applied Mathematics and Operations Research, Rice University, Houston, USA. Research supported in part by NSF grants DMS-2243650, CCF-2308597, CCF-2311275 and ECCS-2326591, and a startup fund from Rice University. Email: sqma@rice.edu.}

\maketitle

\begin{abstract}
In this paper, we propose AdaBB, an adaptive gradient method based on the Barzilai-Borwein stepsize. The algorithm is line-search-free and parameter-free, and essentially provides a convergent variant of the Barzilai-Borwein method for general unconstrained convex optimization. We analyze the ergodic convergence of the objective function value and the convergence of the iterates for solving general unconstrained convex optimization. Compared with existing works along this line of research, our algorithm gives the best lower bounds on the stepsize and the average of the stepsizes.
Moreover, we present an extension of the proposed algorithm for solving composite optimization where the objective function is the summation of a smooth function and a nonsmooth function.
Our numerical results also demonstrate very promising potential of the proposed algorithms on some representative examples.
\end{abstract}

\noindent \keywords Adaptive Gradient Descent; Parameter-Free; Line-Search-Free; Automated Gradient Descent; Barzilai-Borwein Stepsize; Locally Lipschitz Gradient.

\noindent \msc 90C25

\section{Introduction} \label{intro}
In this paper, we propose an adaptive Barzilai-Borwein method (AdaBB) for solving unconstrained convex and smooth optimization problem
\begin{equation}
    \min_{x\in \mathbb{R}^n} f(x), \label{p}
\end{equation}
where $f:\mathbb{R}^{n}\rightarrow \mathbb{R}$ is convex with a locally Lipschitz gradient $\nabla f$. Remarkably, AdaBB is an automated gradient descent method -- it is line-search-free and parameter-free.

Our AdaBB method is closely related to the pioneering work by Malitsky and Mishchenko \cite{MM20} along the line of automated gradient descent method, as well as its follow-up works \cite{MM23,LTSP23,lTP23}. Recently, it draws great attention on how to design line-search-free and parameter-free gradient descent methods (GD) for solving \eqref{p}:
\begin{equation}\label{gd}
 x^{k+1} = x^{k} - \alpha_{k}\nabla f(x^{k}), \; k=0,1,2,\ldots,
\end{equation}
where $\{\alpha_k\}$ is a sequence of stepsizes and $x^0\in\mathbb{R}^n$ is a starting point. 
See \cite{GSW23,G23,LOZ23,AP23,AP232,LL23,DVR23} for a partial list of other works concerning this problem.
When $f$ is convex and has globally Lipschitz gradient with Lipschitz constant $L>0$, \eqref{gd} can take a fixed stepsize $\alpha_k=\alpha>0$. It is well known that setting $\alpha<2/L$ guarantees the convergence and the $O(1/K)$ convergence rate of GD \eqref{gd}. Drori and Teboulle \cite{DT14} first proposed the performance estimation problem (PEP) approach to study the complexity of GD when $\alpha\in(0,1/L]$. The resulting complexity bound improved the classical one by a factor of two and was shown to be tight. In a recent work, Teboulle and Vaisbourd \cite{TV23} provided an elementary proof to reach this tight bound for $\alpha \in (0,1/L]$ and designed a dynamic stepsize $\alpha_k\in [1/L,2/L)$ to improve the tight convergence bound by a constant that converges to two as $k\rightarrow\infty$.

However, these results are not parameter-free, because they require the Lipschitz constant $L$. Moreover, the Lipschitz constant $L$ can be much larger than the one given by the local curvature information, which will lead to conservative stepsizes and slow convergence in practice.

Since the emergence of applications from large-scale machine learning, it has been an active research area on how to adaptively choose $\alpha_k$ in gradient descent and stochastic gradient descent (SGD) methods. Moreover, the computation of $\alpha_k$ should only be conducted using existing gradient information and should not involve expensive computation. In this sense, GD/SGD with line search are not considered adaptive methods. As explained in a recent paper by Malitsky and Mishchenko \cite{MM23}, ``(a method is called adaptive), if it automatically adopts a stepsize to (its) local smoothness without additional expensive computation and the method does not deteriorate the rate of the original method in the worst case'', and it was pointed out in \cite{MM23} that AdaGrad \cite{DHS11, MS10} is not adaptive. 

In this paper, we focus on GD methods that adaptively compute the stepsizes $\alpha_k$ and are parameter-free. Therefore, this kind of methods are truly automated. The earliest work in this direction is the Barzilai-Borwein (BB) method \cite{BB88}, which adaptively computes stepsize using two consecutive gradients. There are two BB stepsizes in the literature:
\begin{align}
    \textrm{(Long BB)} \qquad & \beta_k = \frac{\|x^{k}-x^{k-1}\|^2}{\langle \nabla f(x^{k}) - \nabla f(x^{k-1}), x^{k}-x^{k-1} \rangle},  \label{betak} \\
    \textrm{(Short BB)} \qquad & \lambda_{k} = \frac{\langle \nabla f(x^{k}) - \nabla f(x^{k-1}), x^{k}-x^{k-1} \rangle}{\|\nabla f(x^{k})- \nabla f(x^{k-1})\|^2}. \label{lamk}
\end{align}
The BB method, albeit guaranteed to converge only for some special classes of problems, has been very influential in nonlinear optimization. The first time that BB was adopted to SGD was due to Tan \etal \cite{Tan-SVRGBB}, where the authors proposed SGD-BB and SVRG-BB and proved the convergence of the latter one under the assumption that the objective function is strongly convex. We will give a more detailed survey of the BB method in the next section. Recently, Malitsky and Mishchenko have made a significant breakthrough in parameter-free adaptive GD \cite{MM20, MM23}.
Specifically, starting with initial point $x^0\in\mathbb{R}^n$ and initial stepsize $\alpha_0>0$, the AdGD algorithm proposed in \cite{MM20, MM23} first computes $x^1 = x^0-\alpha_0\nabla f(x^0)$, and then updates the iterates in the $k$-th iteration as follows for $k=1,2,\ldots$,
\begin{align}\label{AdGD}
    \alpha_k & = \min\left\{\alpha_{k-1}\sqrt{1+\theta_{k-1}}, 1/(\sqrt{2} L_k) \right\}, \mbox{ where } L_k = \frac{\|\nabla f(x^k) - \nabla f(x^{k-1})\|}{\|x^k-x^{k-1}\|}, \nonumber\\
    x^{k+1} & = x^k - \alpha_k\nabla f(x^k), \text{~~and update $\theta_k$ via~~}     \theta_k   = \alpha_k/\alpha_{k-1}. 
\end{align}
We point out that \eqref{AdGD} is proposed in \cite{MM23}, and the algorithm proposed in \cite{MM20} replaces the factor $\sqrt{2}$ with $2$ in the updating formula of $\alpha_k$. Here we refer \eqref{AdGD} as AdGD because the second term in updating $\alpha_k$ allows larger stepsize.
This algorithm is guaranteed to converge for solving \eqref{p} for any $\alpha_0>0$ and $\theta_0=0$. Therefore, it is parameter-free. Note that $L_k$ estimates the local curvature information. Furthermore, in \cite{MM23} a variant of AdGD (we call it AdGD2) is proposed, which allows larger stepsizes.
Starting with initial point $x^0$ and initial stepsize $\alpha_0$, the AdGD2 algorithm first computes $x^1 = x^0-\alpha_0\nabla f(x^0)$, and then updates the iterates in the $k$-th iteration as follows for $k=1,2,\ldots$,
\begin{align}  \label{AdGD2}
    \alpha_k & = \min\left\{\sqrt{\tfrac{2}{3}+\theta_{k-1}}\alpha_{k-1},\frac{\alpha_{k-1}}{\scriptstyle{\sqrt{[2\alpha_{k-1}^2L_k^2-1]_{+}}}} \right\}, 
    \mbox{ where } L_k = \frac{\|\nabla f(x^k) - \nabla f(x^{k-1})\|}{\|x^k-x^{k-1}\|}, \nonumber \\
    x^{k+1} & = x^k - \alpha_k\nabla f(x^k), \text{~~and update $\theta_k$ via~~}     \theta_k   = \alpha_k/\alpha_{k-1}.  
\end{align}
where $[a]_+ = \max\{a,0\}$. The AdGDs \eqref{AdGD}-\eqref{AdGD2} are extended to adaptive proximal gradient methods by \cite{LTSP23, MM23,lTP23}. Moreover, Latafat \etal \cite{LTSP23,lTP23} used the BB stepsizes \eqref{betak} and \eqref{lamk} to estimate the local curvature information. Starting with initial point $x^0$, initial stepsize $\alpha_0$ and $\theta_0\geq 1$, the basic AdaPGM algorithm \cite[Alg 2.1]{LTSP23} first computes $x^1 = x^0 -\alpha_0\nabla f(x^0)$, and then updates the iterates as follows for $k=1,2,\ldots$,
\begin{align}\label{AdaPGM}
    \alpha_k & = \min\left\{\alpha_{k-1}\sqrt{1+\theta_{k-1}}, \; \frac{\alpha_{k-1}}{2\sqrt{[(\alpha_{k-1}/\beta_{k})(\alpha_{k-1}/\lambda_{k}-1)]_+}} \right\}, \nonumber\\
    x^{k+1} & = x^k - \alpha_k\nabla f(x^k),  \text{~~and update $\theta_k$ via~~}     \theta_k   = \alpha_k/\alpha_{k-1}.  
\end{align}
It is noted that both long BB \eqref{betak} and short BB \eqref{lamk} are used to estimate the local curvature information in \eqref{AdaPGM}. Notably, in a more recent work, Latafat \etal \cite{lTP23} introduced a unified framework, $\text{AdaPGM}^{\pi,r}$, that updates $\alpha_{k}$ through the formula
\[
\alpha_{k} = \min \left\{ \sqrt{\tfrac{1}{\pi}+\theta_{k-1}}, \; \sqrt{\tfrac{1-(r/\pi)}{[(1-2r)+\alpha_{k-1}^2L_k^2+2\alpha_{k-1}(r-1)/\beta_k]_{+}}} \right\} \alpha_{k-1}
\]
for any $\pi>r\geq \frac{1}{2}$. This modification gives more flexibility to balance the effects of two terms. Specifically, when opting for $r=\frac{1}{2}$ and $\pi=1$, it aligns with updates of AdaPGM \eqref{AdaPGM} but improves the second term by a factor of $\sqrt{2}$. This variant also guarantees a larger lower bound for the stepsize sequence, as we will discuss later.

{\bf Our contributions.} The main contributions of our paper lie in several folds. 
\begin{enumerate}[leftmargin=*]
    \item We propose a new adaptive algorithm, AdaBB, for solving \eqref{p}. There are two prominent features of our AdaBB algorithm: (a) We only use the short BB stepsize \eqref{lamk}, and we do not use the long BB stepsize \eqref{betak} and $L_k$ in \eqref{AdGD}. Note that the long BB stepsize $\beta_k$ can be removed from adaPGM \eqref{AdaPGM} but then $L_k$ needs to be used. (b) Our AdaBB has a very simple and intuitive connection with the BB method, and essentially provides a convergent variant of the BB method for general convex optimization.
    \item We answer an open question posed by Malitsky and Mishchenko in \cite{MM23} affirmatively. Specifically, Malitsky and Mishchenko posed an open question asking whether there exists an adaptive method in which the sum of the stepsizes $\sum_{i=1}^k\alpha_i$ is close to $k/L$ with readable proof, where $L$ denotes the local Lipschitz constant of $\nabla f$. Note that this indicates the average of the stepsizes is close to $1/L$. We prove that the sum of the stepsizes $\sum_{i=1}^k\alpha_i$ in our AdaBB algorithm is lower bounded by $(k-2+\sqrt{2})/L$, $\forall k\geq 1$, which can be further improved to $k/L$ with a suitably chosen initial stepsize.
    \item We prove that the stepsize $\alpha_k$ in our AdaBB is always lower bounded by $1/(\sqrt{2}L)$, i.e., $\alpha_k\geq 1/(\sqrt{2}L)$ for any $k\geq 1$. This also improves the existing results in \cite{MM20,MM23,LTSP23,lTP23}. See the detailed comparison in Table \ref{table:stepsize-comparison}. 
\end{enumerate}

{\bf Notation.} Throughout the paper, we assume the optimal solution set $\mathcal{X}^*$ of \eqref{p} is nonempty and denote its optimal value by $f_*$. We use $x^*$ to denote one element in $\mathcal{X}^*$. Let $\mathbb{R}_{+}$ be positive real line, and $|\mathcal{W}|$ be the cardinality of the set $\mathcal{W}$. We use the symbol $\times$ to represent the Cartesian product.
We can prove that the sequence $\{x^k\}$ generated by AdaBB is bounded and lies in a ball $B(x^*, R)$ whose center is $x^*$ and radius is $R$, which will be specified later. We assume that $f$ is $L$-locally smooth (or $\nabla f$ is $L$-locally Lipschitz) in $B(x^*,R)$, which is defined as
\begin{equation}\label{local-smooth-1}
    f(x)-f(y)-\langle \nabla f(y), x-y\rangle \geq \tfrac{1}{2L}\|\nabla f(x)-\nabla f(y)\|^2, \forall x,y \in B(x^*,R).
\end{equation}
According to \cite[Theorem 2.1.5]{N87}, this further implies $\|\nabla f(x)-\nabla f(y)\|\leq L\|x-y\|, \forall x,y \in B(x^*,R)$, and
\begin{equation}\label{local-smooth-2}
    \langle \nabla f(x)-\nabla f(y), x-y \rangle \geq {\tfrac{1}{L}}\|\nabla f(x)-\nabla f(y)\|^2, \forall x,y \in B(x^*,R).
\end{equation}
{We point out that these inequalities are not equivalent in when the points are restricted to a bounded convex set. A counterexample can be found in \cite{D18}. However, when $f$ is globally $L$-smooth in $\mathbb{R}^n$, they become equivalent (see \cite[Theorem 2.1.5]{N87}).}

{\bf Organization.} The rest of this paper is organized as follows.
In Section \ref{s2}, we present a simple version of our AdaBB algorithm and discuss its connection with the BB method. In Section \ref{main}, we present the full version of our AdaBB algorithm which allows more flexible choices of stepsizes. We also analyze the ergodic convergence of the function value error and the convergence of the iterates.
In Section \ref{explan}, we conduct an in-depth analysis of the stepsizes generated by our AdaBB algorithm. In particular, we provide lower bounds for both $\alpha_i$ and $\sum_{i=1}^k \alpha_i$.
In Section \ref{exten}, we extend our AdaBB algorithm to the case where the objective function is locally strongly convex, and to the composite case where the objective function is the summation of a smooth function and a nonsmooth function. Numerical experimental results are reported in Section \ref{ne} to illustrate the effectiveness of the proposed algorithms. Finally, we draw some concluding remarks in Section \ref{conclusions}.

\section{Our AdaBB Algorithm} \label{s2}

The main motivation of the BB method \cite{BB88} is to use a diagonal matrix ($\frac{1}{\eta_k} I$) to approximate the quasi-Newton matrix in the $k$-th iteration of the quasi-Newton method, where scalar $\eta_k >0$. In order to satisfy the secant equation, essentially we require $\frac{1}{\eta_k} s^k$ = $y^k$, where $s^k = x^k - x^{k-1}$ and $y^k = \nabla f(x^k) - \nabla f(x^{k-1})$. Since $s^k$ and $y^k$ are both $n$-dimensional vectors, it is impossible to find a scalar $\eta_k$ such that this linear equation holds. Therefore, one has to find $\eta_k$ that minimizes the residual, i.e.,
\[
\min_{\eta_k} \|s^k /\eta_k - y^k\|, \mbox{ or } \min_{\eta_k} \|s^k - \eta_k y^k \|,
\]
which leads to the two formulas given in \eqref{betak} and \eqref{lamk} (with $\eta_k$ replaced by $\beta_k$ and $\lambda_k$, respectively).

However, the naive BB method
\begin{equation}\label{BB-method}
x^{k+1} = x^k - \beta_k \nabla f(x^k), \quad \mbox{ or } \quad x^{k+1} = x^k - \lambda_k \nabla f(x^k)
\end{equation}
does not always converge. In fact, existing convergence results of \eqref{BB-method} have been mainly restricted to the special case where $f$ is a strongly convex quadratic function. In the original paper by Barzilai and Borwein \cite{BB88}, it is proved that the BB method \eqref{BB-method} converges $R$-superlinearly, if $f$ is strongly convex quadratic and $n=2$. When $f$ is strongly convex quadratic with a general dimensionality $n$, the BB method \eqref{BB-method} is proved to converge globally \cite{R93} and at an $R$-linear rate \cite{DL02}. However, when $f$ is not strongly convex quadratic function, i.e., when it is a general convex function, there exist counterexamples showing that the BB method \eqref{BB-method} can diverge \cite{BDH19}. To address this limitation, Rayden \cite{R97} incorporated the non-monotone line search technique from \cite{GLL86} to the BB method \eqref{BB-method} and proved its global convergence when $f$ is a general convex function.

Hence, it has been an open question whether there exists a simple variant of the BB method without line search that globally converges for general convex function. We answer this question affirmatively by proposing our AdaBB algorithm, which is described in Algorithm \ref{alg:AdaBB}.

\begin{algorithm}[!htb]
\caption{\textbf{Ada}ptive  \textbf{B}arzilai-\textbf{B}orwein Algorithm (AdaBB)} \label{alg:AdaBB}
\textbf{Input:} $x^0\in\mathbb{R}^n$, $\alpha_0 > 0$, $\theta_0 \geq 0 $
\begin{algorithmic}[1]
\State $x^1 = x^{0} - \alpha_0 \nabla f(x^0)$
\For{$k=1,2,\ldots,$}
\State $\lambda_{k} = \frac{\langle \nabla f(x^{k}) - \nabla f(x^{k-1}), x^{k}-x^{k-1} \rangle}{\|\nabla f(x^{k})- \nabla f(x^{k-1})\|^2}$
\If{$\lambda_k \geq \alpha_{k-1}$}\dotfill (Case i)
\State $\alpha_k = \sqrt{1+\theta_{k-1}}\alpha_{k-1}$, and $\theta_{k} = \frac{\alpha_{k}}{\alpha_{k-1}}$
\ElsIf{$\alpha_{k-1}/2 <\lambda_k < \alpha_{k-1}$} \dotfill (Case ii)
\State $\alpha_k = \lambda_{k}$, and
$\theta_{k} = \frac{2\alpha_{k}}{\alpha_{k-1}}-\frac{\alpha_{k}}{\lambda_{k}}$
\Else\dotfill (Case iii)
\State $\alpha_k = \frac{\lambda_{k}}{\sqrt{2}}$, and $\theta_{k} = \frac{\alpha_{k}}{\alpha_{k-1}}$
\EndIf
\State $x^{k+1} = x^{k} - \alpha_{k} \nabla f(x^{k})$
\EndFor
\end{algorithmic}
\end{algorithm}

This is our basic AdaBB algorithm. We will prove its convergence in the next section when we discuss a more general version of AdaBB. We also note that the per-iteration computational cost is almost the same as the BB method \eqref{BB-method}. We now give some intuitive explanation why AdaBB (Algorithm \ref{alg:AdaBB}) can overcome the drawbacks of the BB method. Note that BB method may diverge for general convex function because the stepsize is sometimes too aggressive. In AdaBB, we carefully design the stepsize so that if we find that the stepsize is too large in some iteration, then we use a smaller stepsize in the next iteration. This ensures that the risk of taking very large stepsizes is hedged so that the algorithm will not diverge. At the same time, when the stepsize is too small in some iteration, then we use a larger stepsize in the next iteration. This ensures that the stepsize is not always small to ensure a fast convergence. More specifically, the three ``if-else'' conditions in Algorithm \ref{alg:AdaBB} can be interpreted as follows.
\begin{itemize}
    \item (Case i), when $\lambda_k\geq\alpha_{k-1}$, it means that the stepsize $\alpha_{k-1}$ in the previous iteration is too small. So we set $\alpha_k$ to be larger than $\alpha_{k-1}$. This ensures that the stepsize is not always too small. The choice of $\theta_k$ will be clear from the convergence proof.
    \item (Case ii), when $\alpha_{k-1}/2<\lambda_k<\alpha_{k-1}$, it means that the BB stepsize is not too large and not too small. So we just use the BB stepsize $\lambda_k$ as the stepsize.
    \item (Case iii), when $\lambda_k\leq \alpha_{k-1}/2$, it means that the stepsize $\alpha_{k-1}$ in the previous iteration is too large. So we shrink the BB stepsize by $\sqrt{2}$ and use it as the stepsize for the current iteration. This ensures that the stepsize in the current iteration is not too large when the previous stepsize is large, to hedge the risk of divergence.
\end{itemize}

\section{The General Version of AdaBB and Convergence Analysis} \label{main}

In this section, we first present the general version of AdaBB in Algorithm \ref{agbb}, which offers more flexibility when choosing stepsize $\alpha_k$. Our basic AdaBB (Algorithm \ref{alg:AdaBB}) is a special case of the general version of AdaBB. We then analyze the convergence properties of this algorithm for solving \eqref{p}. 
\begin{algorithm}[!htb]
\caption{The General Version of AdaBB} \label{agbb}
\textbf{Input:} $x^0\in\mathbb{R}^n$, $\alpha_0 > 0$, $\theta_0 \geq 0 $
\begin{algorithmic}[1]
\State $x^1 = x^{0} - \alpha_0 \nabla f(x^0)$
\For{$k=1,2,\ldots,$}
\State $\lambda_{k} = \frac{\langle \nabla f(x^{k}) - \nabla f(x^{k-1}), x^{k}-x^{k-1} \rangle}{\|\nabla f(x^{k})- \nabla f(x^{k-1})\|^2}$
\If{$\lambda_k \geq \alpha_{k-1}$}\dotfill (Case i)
\State $\alpha_k = \sqrt{1+\theta_{k-1}}\alpha_{k-1}$, and $\theta_{k} = \frac{\alpha_{k}}{\alpha_{k-1}}$
\ElsIf{$\alpha_{k-1}/2 <\lambda_k < \alpha_{k-1}$}\dotfill (Case ii)
\State
\begin{eqnarray}\label{gen-AdaBB-alpha-1}
\alpha_k = \left\{\begin{array}{ll}
    \min \left\{ \sqrt{\frac{\lambda_{k}}{2(\alpha_{k-1}-\lambda_{k})}}, \, \sqrt{\frac{(1+\theta_{k-1})\lambda_{k}}{2\lambda_{k}-\alpha_{k-1}}}\right\}\alpha_{k-1}, & \textrm{(Option I),} \\
    \lambda_{k}, & \textrm{(Option II),}
\end{array}\right. \mbox{ and } \theta_{k} = \frac{2\alpha_{k}}{\alpha_{k-1}}-\frac{\alpha_{k}}{\lambda_{k}}
\end{eqnarray}
\Else\dotfill (Case iii)
\State
\begin{eqnarray}\label{gen-AdaBB-alpha-2}
\alpha_k = \left\{\begin{array}{ll}
    \sqrt{\frac{\alpha_{k-1}}{2(\alpha_{k-1}-\lambda_{k})}}\lambda_{k}, & \textrm{(Option I),} \\
    \frac{\lambda_{k}}{\sqrt{2}}, & \textrm{(Option II),}
\end{array}\right. \mbox{ and } \theta_{k} = \frac{\alpha_{k}}{\alpha_{k-1}}
\end{eqnarray}
\EndIf
\State $x^{k+1} = x^{k} - \alpha_{k} \nabla f(x^{k})$
\EndFor
\end{algorithmic}
\end{algorithm}
We note that Algorithm \ref{agbb} offers more choices for $\alpha_k$ in \eqref{gen-AdaBB-alpha-1} and \eqref{gen-AdaBB-alpha-2}. Choosing (Option II) in both \eqref{gen-AdaBB-alpha-1} and \eqref{gen-AdaBB-alpha-2} recovers the basic AdaBB (Algorithm \ref{alg:AdaBB}). Therefore, we only need to analyze the convergence of Algorithm \ref{agbb} and we will devote the rest of this section to it. The following lemma gives some immediate property of $\alpha_k$.

\begin{lemma}\label{alpha-property}
In both \eqref{gen-AdaBB-alpha-1} and \eqref{gen-AdaBB-alpha-2}, the stepsize $\alpha_k$ provided by (Option II) is less than or equal to that provided by (Option I).
\end{lemma}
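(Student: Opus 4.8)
The plan is to verify the two asserted inequalities by direct computation, treating the Case~ii formula \eqref{gen-AdaBB-alpha-1} and the Case~iii formula \eqref{gen-AdaBB-alpha-2} separately. In each branch (Option~II) is a single explicit function of $\lambda_k$, whereas (Option~I) is a minimum of two expressions in Case~ii and a single expression in Case~iii; so it suffices to show that the (Option~II) value is no larger than each candidate appearing in (Option~I). Before computing anything I would record the signs forced by the branch conditions: in Case~ii, $\alpha_{k-1}/2<\lambda_k<\alpha_{k-1}$ gives $\alpha_{k-1}-\lambda_k>0$ and $2\lambda_k-\alpha_{k-1}>0$; in Case~iii, $0<\lambda_k\le\alpha_{k-1}/2$ gives $\alpha_{k-1}-\lambda_k>0$. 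Combined with $\theta_{k-1}\ge0$, an invariant of Algorithm~\ref{agbb} that is immediate from the input assumption $\theta_0\ge0$ and the update rules, every radicand in \eqref{gen-AdaBB-alpha-1}--\eqref{gen-AdaBB-alpha-2} is positive, so all the inequalities below may be squared without reversing.

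For Case~iii the claim is $\lambda_k/\sqrt{2}\le\sqrt{\alpha_{k-1}/(2(\alpha_{k-1}-\lambda_k))}\,\lambda_k$; dividing by $\lambda_k>0$ and squaring turns this into $\tfrac{1}{2}\le\alpha_{k-1}/(2(\alpha_{k-1}-\lambda_k))$, i.e.\ $\alpha_{k-1}-\lambda_k\le\alpha_{k-1}$, which holds since $\lambda_k\ge0$. For Case~ii I would bound $\lambda_k$ above by each of $\sqrt{\lambda_k/(2(\alpha_{k-1}-\lambda_k))}\,\alpha_{k-1}$ and $\sqrt{(1+\theta_{k-1})\lambda_k/(2\lambda_k-\alpha_{k-1})}\,\alpha_{k-1}$. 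Squaring the first and cancelling one factor of $\lambda_k$ reduces it to $2\lambda_k(\alpha_{k-1}-\lambda_k)\le\alpha_{k-1}^2$, equivalently $0\le(\alpha_{k-1}-\lambda_k)^2+\lambda_k^2$, which is obvious. Squaring the second and cancelling $\lambda_k$ reduces it to $\lambda_k(2\lambda_k-\alpha_{k-1})\le(1+\theta_{k-1})\alpha_{k-1}^2$; since $\theta_{k-1}\ge0$ it is enough to show $\lambda_k(2\lambda_k-\alpha_{k-1})\le\alpha_{k-1}^2$, and this follows from the factorization $\alpha_{k-1}^2-\lambda_k(2\lambda_k-\alpha_{k-1})=(\alpha_{k-1}-\lambda_k)(\alpha_{k-1}+2\lambda_k)\ge0$, valid because $0<\lambda_k<\alpha_{k-1}$.

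This settles both displays in \eqref{gen-AdaBB-alpha-1}--\eqref{gen-AdaBB-alpha-2} and hence the lemma, which in particular confirms that selecting (Option~II) everywhere recovers the basic AdaBB (Algorithm~\ref{alg:AdaBB}) as the most conservative instance of Algorithm~\ref{agbb}. I do not anticipate any genuine obstacle: the statement is a short exercise once the branch inequalities $\alpha_{k-1}/2<\lambda_k<\alpha_{k-1}$ and $\lambda_k\le\alpha_{k-1}/2$ are used. The only points needing a little attention are the sign bookkeeping that legitimizes squaring each inequality and the elementary observation that $\theta_{k-1}\ge0$ holds throughout the run of the algorithm.
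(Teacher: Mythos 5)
Your proof is correct and follows essentially the same route as the paper: a direct elementary comparison of (Option II) against each candidate expression in (Option I), using only the branch inequalities $\alpha_{k-1}/2<\lambda_k<\alpha_{k-1}$ (resp.\ $0<\lambda_k\le\alpha_{k-1}/2$) and $\theta_{k-1}\ge 0$. The only cosmetic difference is that you square and complete the square (or factor) where the paper invokes monotonicity of the auxiliary function $v(x)=\sqrt{\lambda_k x^2/(2(x-\lambda_k))}$ and the bound $\sqrt{(1+\theta_{k-1})\lambda_k/(2\lambda_k-\alpha_{k-1})}\ge 1$; both arguments are equally elementary.
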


\begin{proof}
    When $\alpha_{k}/2 < \lambda_{k+1} < \alpha_{k}$, we immediately have
    \[\sqrt{\frac{(1+\theta_{k})\lambda_{k+1}}{2\lambda_{k+1}-\alpha_{k}}} \alpha_{k} \geq \alpha_{k} \geq \lambda_{k+1}.
    \]
    Moreover, since $v(x):=\sqrt{\frac{\lambda_{k+1}x^2}{2(x-\lambda_{k+1})}}$ is decreasing for $x \in (\lambda_{k+1},2\lambda_{k+1}]$, we have
\[
v(\alpha_k)=\sqrt{\frac{\lambda_{k+1}}{2(\alpha_{k}-\lambda_{k+1})}}\alpha_{k}  \geq v(2\lambda_{k+1}) > \lambda_{k+1}.
\]
This proves that (Option II) is not larger than (Option I) in \eqref{gen-AdaBB-alpha-1}. The proof when $ \lambda_{k+1} \leq \alpha_{k}/2$ is trivial and thus omitted.
\end{proof}

We now define some important notation. In particular, we define
$M_{k}$ and $P_{k}$ for $k\geq 1$ as follows
\begin{equation}\label{def-Mk-Pk}
\left\{
\begin{array}{lll}
M_{k}=0 ,    &P_{k}= \frac{\alpha_{k}^{2}}{\alpha_{k-1}},     & {\textrm{if }\lambda_{k}\geq \alpha_{k-1}}, \\
M_{k}=\frac{\alpha_{k}^{2}}{\lambda_{k}\alpha_{k-1}}-\frac{\alpha_{k}^{2}}{\alpha_{k-1}^2},   &
P_{k}= \frac{2\alpha_{k}^{2}}{\alpha_{k-1}}-\frac{\alpha_{k}^{2}}{\lambda_{k}},   &
{\textrm{if }\frac{\alpha_{k-1}}{2}<\lambda_{k} <  \alpha_{k-1}},\\
M_{k}=\frac{\alpha_{k}^{2}}{\lambda_{k}^2}-\frac{\alpha_{k}^{2}}{\alpha_{k-1}\lambda_{k}},   &
P_{k}= \frac{\alpha_{k}^{2}}{\alpha_{k-1}},
& {\textrm{if } 0 <\lambda_{k}\leq \frac{\alpha_{k-1}}{2}}.
\end{array} \right. 
\end{equation}
For convenience, we also define
\begin{equation}\label{def-P0}
    P_0 = P_1-\alpha_0.
\end{equation}

The following lemma provides some useful inequalities for $M_{k}$ and $P_{k}$ defined in \eqref{def-Mk-Pk} and \eqref{def-P0}.

\begin{lemma} \label{mpi}
For $\alpha_k$ generated by Algorithm \ref{agbb}, it holds that $P_{k} = \alpha_{k}\theta_{k}$ for all $k \geq 1$, and 
$2M_{k+1}\leq 1$ and $P_{k+1}\leq P_{k}+\alpha_k$ for all $k \geq 0$.
\end{lemma}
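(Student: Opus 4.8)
The plan is to check all three assertions by a direct branch-by-branch analysis of Algorithm~\ref{agbb}, using the fact that the stepsize formulas \eqref{gen-AdaBB-alpha-1}--\eqref{gen-AdaBB-alpha-2} were engineered precisely so that the two inequalities $2M_{k+1}\le 1$ and $P_{k+1}\le P_k+\alpha_k$ hold with as large a stepsize as possible. First I would dispose of the identity $P_k=\alpha_k\theta_k$: comparing \eqref{def-Mk-Pk} with the update rules for $\theta_k$, in (Case i) and (Case iii) both sides equal $\alpha_k^2/\alpha_{k-1}$, and in (Case ii) both equal $2\alpha_k^2/\alpha_{k-1}-\alpha_k^2/\lambda_k$. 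En route I would also record, by an immediate induction, that $\alpha_k>0$ and $\theta_k\ge 0$ for every $k$ — in (Case ii) this uses the branch condition $2\lambda_k>\alpha_{k-1}$, which makes $\theta_k=\alpha_k(2\lambda_k-\alpha_{k-1})/(\alpha_{k-1}\lambda_k)\ge 0$ — since nonnegativity of $\theta_k$ is needed in the two remaining estimates.

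For $2M_{k+1}\le 1$ I would split on the branch active at iteration $k+1$. In (Case i), $M_{k+1}=0$ and there is nothing to prove. In (Case ii), rewriting $M_{k+1}=\alpha_{k+1}^2(\alpha_k-\lambda_{k+1})/(\alpha_k^2\lambda_{k+1})$ shows $2M_{k+1}\le 1$ is equivalent to $\alpha_{k+1}\le\sqrt{\lambda_{k+1}/(2(\alpha_k-\lambda_{k+1}))}\,\alpha_k$; this holds for (Option~I) because that expression is the first argument of the minimum in \eqref{gen-AdaBB-alpha-1}, and for (Option~II), where $\alpha_{k+1}=\lambda_{k+1}$, either by Lemma~\ref{alpha-property} or directly since the required $2\lambda_{k+1}(\alpha_k-\lambda_{k+1})\le\alpha_k^2$ rearranges to $(\alpha_k-\lambda_{k+1})^2+\lambda_{k+1}^2\ge0$. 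In (Case iii), $M_{k+1}=\alpha_{k+1}^2(\alpha_k-\lambda_{k+1})/(\lambda_{k+1}^2\alpha_k)$, so $2M_{k+1}\le1$ amounts to $\alpha_{k+1}\le\sqrt{\alpha_k/(2(\alpha_k-\lambda_{k+1}))}\,\lambda_{k+1}$, which is an equality for (Option~I) and clear for (Option~II) because $\alpha_k/(\alpha_k-\lambda_{k+1})\ge1$.

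For $P_{k+1}\le P_k+\alpha_k$, the case $k=0$ is exactly the definition \eqref{def-P0} (with equality). For $k\ge1$, substituting $P_k=\alpha_k\theta_k$ reduces the claim to $P_{k+1}\le\alpha_k(1+\theta_k)$, and I would again split on the branch at iteration $k+1$. In (Case i), $P_{k+1}=\alpha_{k+1}^2/\alpha_k=(1+\theta_k)\alpha_k$, with equality. In (Case ii), $P_{k+1}=\alpha_{k+1}^2(2\lambda_{k+1}-\alpha_k)/(\alpha_k\lambda_{k+1})$ and the bound is equivalent to $\alpha_{k+1}\le\sqrt{(1+\theta_k)\lambda_{k+1}/(2\lambda_{k+1}-\alpha_k)}\,\alpha_k$, which is the second argument of the minimum in \eqref{gen-AdaBB-alpha-1} for (Option~I), and for (Option~II) follows from Lemma~\ref{alpha-property} (or directly from $2(\lambda_{k+1}-\alpha_k)(\lambda_{k+1}+\alpha_k/2)\le0$ together with $\theta_k\ge0$). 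In (Case iii), $P_{k+1}=\alpha_{k+1}^2/\alpha_k$, so it suffices that $\alpha_{k+1}\le\sqrt{1+\theta_k}\,\alpha_k$; here the branch condition $\lambda_{k+1}\le\alpha_k/2$ forces $\alpha_{k+1}\le\lambda_{k+1}\le\alpha_k/2$ under (Option~I) and $\alpha_{k+1}=\lambda_{k+1}/\sqrt2\le\alpha_k/(2\sqrt2)$ under (Option~II), and $\theta_k\ge0$ finishes it.

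The whole argument is elementary algebra; the one place with genuine content — and the step I would flag as the crux — is the second half of (Case ii), where the minimum of the two square-root terms in \eqref{gen-AdaBB-alpha-1} is doing real work: the first term is exactly what forces $2M_{k+1}\le1$ and the second term is exactly what forces $P_{k+1}\le\alpha_k(1+\theta_k)$, so (Option~I) is the largest admissible stepsize and (Option~II)$=\lambda_{k+1}$ is admissible by Lemma~\ref{alpha-property}. The main risk is bookkeeping: keeping straight which of the three branches is active at iteration $k+1$, and which of the two target inequalities each argument of each minimum is responsible for.
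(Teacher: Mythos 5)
Your proposal is correct and follows essentially the same route as the paper: the identity $P_k=\alpha_k\theta_k$ from the definitions, then a branch-by-branch check in which the two arguments of the minimum in \eqref{gen-AdaBB-alpha-1} are exactly responsible for $2M_{k+1}\leq 1$ and $P_{k+1}\leq P_k+\alpha_k$, respectively. The only (harmless) differences are that you verify (Option II) directly in some branches instead of always reducing to (Option I) via Lemma \ref{alpha-property}, and you make explicit the $\theta_k\geq 0$ induction and the $k=0$ case coming from the definition \eqref{def-P0}, which the paper treats implicitly.
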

\begin{proof}
First, for any $k \geq 1$, we have $P_{k} = \alpha_{k}\theta_{k}$ by the definitions of $\theta_k$ in Algorithm \ref{agbb} and $P_k$ in \eqref{def-Mk-Pk}. 
To show the remaining results, we consider three cases.
\begin{itemize}[leftmargin=*]
    \item Case (i): when $\lambda_{k+1}\geq \alpha_{k}$, by recalling $M_{k+1}=0$, $P_{k+1}=\alpha_{k+1}^2/\alpha_{k}$, and $\alpha_{k+1} = \sqrt{1+\theta_{k}}\alpha_{k}$, the desired results follow immediately.
    \item Case (ii): when $\alpha_{k}/2 < \lambda_{k+1} < \alpha_{k}$, from Lemma \ref{alpha-property}, we only need to prove the desired results for (Option I) of $\alpha_{k+1}$ in \eqref{gen-AdaBB-alpha-1}. In this case, we have
    \begin{align*}
    M_{k+1} & = \frac{\alpha_{k+1}^{2}}{\lambda_{k+1}\alpha_{k}}-\frac{\alpha_{k+1}^{2}}{\alpha_{k}^2} \leq \frac{\lambda_{k+1}\alpha_{k}^2}{2(\alpha_{k}-\lambda_{k+1})}\Big(\frac{1}{\lambda_{k+1}\alpha_{k}}-\frac{1}{\alpha_{k}^2}\Big)=\frac{1}{2},\\
    P_{k+1} & =  \frac{2\alpha_{k+1}^{2}}{\alpha_{k}}-\frac{\alpha_{k+1}^{2}}{\lambda_{k+1}} \leq  \frac{(1+\theta_{k})\lambda_{k+1}\alpha_{k}^2}{2\lambda_{k+1}-\alpha_{k}}\Big(\frac{2}{\alpha_{k}}-\frac{1}{\lambda_{k+1}}\Big)=\alpha_{k}+P_{k}.
    \end{align*}
    \item Case (iii): when $\lambda_{k+1}\leq \alpha_{k}/2$, again from Lemma \ref{alpha-property}, we only need to prove the desired results for (Option I) of $\alpha_{k+1}$ in \eqref{gen-AdaBB-alpha-2}. In this case, we have
    \begin{align*}
        M_{k+1} & =\frac{\alpha_{k+1}^{2}}{\lambda_{k+1}^2}-\frac{\alpha_{k+1}^{2}}{\alpha_{k}\lambda_{k+1}} \leq \frac{\lambda_{k+1}^2\alpha_{k}}{2(\alpha_{k}-\lambda_{k+1})}\Big(\frac{1}{\lambda_{k+1}^2}-\frac{1}{\alpha_{k}\lambda_{k+1}}\Big)=\frac{1}{2}, \\
        P_{k+1} & =\alpha_{k+1}^2/\alpha_{k} = \frac{\lambda_{k+1}^2}{2(\alpha_k-\lambda_{k+1})}< \alpha_{k} < P_{k}+\alpha_{k}.
    \end{align*}
\end{itemize}
Combining these three cases completes the proof.
\end{proof}

\begin{remark}\label{remark-M1-P1}
Note that $M_1$ and $P_1$ defined in \eqref{def-Mk-Pk} are determined only by $\lambda_1$, $\alpha_1$ and $\alpha_0$. Also, $\alpha_1$ is determined only by $\lambda_1$, $\alpha_0$ and $\theta_0$. Moreover, $\lambda_1$ is determined only by $x^0$ and $x^1$, while $x^1$ is determined only by $x^0$ and $\alpha_0$. Therefore, $M_1$ and $P_1$ are both absolute constants determined only by $x^0$, $\theta_0$ and $\alpha_0$. That is, by slightly abusing the notation, we can denote $M_1 = M_1(x^0,\theta_0,\alpha_0)$ and $P_1 = P_1(x^0,\theta_0,\alpha_0)$.
\end{remark}

The following lemma provides some useful properties about $\|x^{k+1}-x^{k}\|^2$.

\begin{lemma} \label{eq}
Let  $\{x^{k}\}$ be the  sequence generated by Algorithm \ref{agbb}, and $M_k$ and $P_k$ be defined in \eqref{def-Mk-Pk}.
Then, for any $k\geq 1$, we have
    \begin{align}
        \|x^{k+1}-x^{k}\|^2=\mathbf{I}_1 & := \Big(\frac{\alpha_{k}^{2}}{\lambda_{k}}-\frac{\alpha_{k}^{2}}{\alpha_{k-1}}\Big)\langle \nabla f(x^{k}) \!-\!\nabla f(x^{k-1}), x^{k}-x^{k-1} \rangle + \frac{\alpha_{k}^{2}}{\alpha_{k-1}}\langle\nabla f(x^{k}),x^{k-1}\!-\!x^{k}\rangle, \label{i1}\\
        \|x^{k+1}-x^{k}\|^2=\mathbf{I}_2 & :=  \alpha_k  \langle \nabla f(x^{k+1}) -\nabla f(x^{k}),x^{k+1}-x^{k}\rangle
+ \alpha_{k} \langle \nabla f(x^{k+1}),x^{k}-x^{k+1}\rangle, \label{i2} \\
        \|x^{k+1}-x^{k}\|^2\leq \mathbf{E} & := M_{k}\|x^{k}-x^{k-1}\|^2+P_{k}\big(f(x^{k-1})-f(x^{k})\big). \label{e1}
    \end{align}
\end{lemma}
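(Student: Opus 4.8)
The plan is to prove the two identities \eqref{i1}--\eqref{i2} by direct algebra from the update rule and the definition of $\lambda_k$, and then to deduce the estimate \eqref{e1} from \eqref{i1} by invoking convexity, splitting into exactly the three cases that define Algorithm~\ref{agbb}. Throughout write $s^k:=x^k-x^{k-1}$ and $y^k:=\nabla f(x^k)-\nabla f(x^{k-1})$. The update rule gives $s^k=-\alpha_{k-1}\nabla f(x^{k-1})$ and $x^{k+1}-x^k=-\alpha_k\nabla f(x^k)$, hence $\|x^{k+1}-x^k\|^2=\alpha_k^2\|\nabla f(x^k)\|^2$; the definition of $\lambda_k$ gives $\langle y^k,s^k\rangle=\lambda_k\|y^k\|^2$; and substituting $\nabla f(x^{k-1})=\nabla f(x^k)-y^k$ into $\tfrac{1}{\alpha_{k-1}}\|s^k\|^2=-\langle\nabla f(x^{k-1}),s^k\rangle$ yields the elementary identity $\tfrac{1}{\alpha_{k-1}}\|s^k\|^2=\langle y^k,s^k\rangle+\langle\nabla f(x^k),x^{k-1}-x^k\rangle$, which I will call $(\dagger)$ and which is the workhorse for both \eqref{i1} and \eqref{e1}.

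Identity \eqref{i2} is immediate: combining its two inner products gives $\mathbf{I}_2=\alpha_k\langle\nabla f(x^{k+1})-\nabla f(x^k)-\nabla f(x^{k+1}),\,x^{k+1}-x^k\rangle=-\alpha_k\langle\nabla f(x^k),x^{k+1}-x^k\rangle$, and $\nabla f(x^k)=-(x^{k+1}-x^k)/\alpha_k$ turns this into $\|x^{k+1}-x^k\|^2$. For \eqref{i1}, expand $\|\nabla f(x^k)\|^2=\|y^k-s^k/\alpha_{k-1}\|^2=\|y^k\|^2-\tfrac{2}{\alpha_{k-1}}\langle y^k,s^k\rangle+\tfrac{1}{\alpha_{k-1}^2}\|s^k\|^2$, replace $\|y^k\|^2$ by $\langle y^k,s^k\rangle/\lambda_k$, multiply through by $\alpha_k^2$, and use $(\dagger)$ to rewrite one factor $\tfrac{1}{\alpha_{k-1}}\|s^k\|^2$; the $\langle y^k,s^k\rangle$-terms collapse to coefficient $\alpha_k^2/\lambda_k-\alpha_k^2/\alpha_{k-1}$ and what remains is exactly $\mathbf{I}_1$.

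For \eqref{e1}, start from \eqref{i1}, set $a_k:=\alpha_k^2/\lambda_k-\alpha_k^2/\alpha_{k-1}$, and observe that in each of the three cases $P_k\ge 0$ (checked directly from \eqref{def-Mk-Pk}), so the subgradient inequality $\langle\nabla f(x^k),x^{k-1}-x^k\rangle\le f(x^{k-1})-f(x^k)$ may be applied to any term with a nonnegative prefactor. Case (i), $\lambda_k\ge\alpha_{k-1}$: then $a_k\le 0$ and $\langle y^k,s^k\rangle\ge 0$ by monotonicity of $\nabla f$, so discarding $a_k\langle y^k,s^k\rangle$ in \eqref{i1} leaves $\|x^{k+1}-x^k\|^2\le(\alpha_k^2/\alpha_{k-1})(f(x^{k-1})-f(x^k))$, which is \eqref{e1} since $M_k=0$, $P_k=\alpha_k^2/\alpha_{k-1}$ here. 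Case (iii), $\lambda_k\le\alpha_{k-1}/2$: then $a_k>0$; Cauchy--Schwarz gives $\lambda_k\|y^k\|=\langle y^k,s^k\rangle/\|y^k\|\le\|s^k\|$, hence $\lambda_k\langle y^k,s^k\rangle=\lambda_k^2\|y^k\|^2\le\|s^k\|^2$, so $a_k\langle y^k,s^k\rangle\le(a_k/\lambda_k)\|s^k\|^2=M_k\|s^k\|^2$ and \eqref{e1} follows with $P_k=\alpha_k^2/\alpha_{k-1}$. Case (ii), $\alpha_{k-1}/2<\lambda_k<\alpha_{k-1}$: here the split $\alpha_k^2/\alpha_{k-1}=P_k+a_k$ holds — this is precisely the definition $P_k=2\alpha_k^2/\alpha_{k-1}-\alpha_k^2/\lambda_k$ — so \eqref{i1} regroups as $a_k\big(\langle y^k,s^k\rangle+\langle\nabla f(x^k),x^{k-1}-x^k\rangle\big)+P_k\langle\nabla f(x^k),x^{k-1}-x^k\rangle$; feeding the bracket through $(\dagger)$ yields the exact identity $\|x^{k+1}-x^k\|^2=(a_k/\alpha_{k-1})\|s^k\|^2+P_k\langle\nabla f(x^k),x^{k-1}-x^k\rangle$ with $a_k/\alpha_{k-1}=M_k$, and convexity (with $P_k\ge 0$) finishes it.

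The step I expect to be the main obstacle is Case (ii): one has to find the precise split of the coefficient $\alpha_k^2/\alpha_{k-1}$ in \eqref{i1} that, after the substitution $(\dagger)$, simultaneously cancels the $\langle y^k,s^k\rangle$-term and reproduces verbatim the constants $M_k$ and $P_k$ of \eqref{def-Mk-Pk}; the identity $\alpha_k^2/\alpha_{k-1}=P_k+a_k$ is the crux, and it is exactly what forces the particular formula for $\theta_k$ (equivalently $P_k$) used in Case (ii) of the algorithm. It is worth noting that \eqref{e1} needs only convexity of $f$ (through gradient monotonicity and the subgradient inequality) together with Cauchy--Schwarz; the local smoothness \eqref{local-smooth-1}--\eqref{local-smooth-2} plays no role here and enters only in the subsequent convergence and stepsize analysis.
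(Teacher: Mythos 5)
Your proposal is correct and follows essentially the same route as the paper's proof: \eqref{i1} and \eqref{i2} by direct algebra from the update rule and the definition of $\lambda_k$, and \eqref{e1} by the same three-case split, using monotonicity plus convexity in Case (i), Cauchy--Schwarz (the paper labels it Young's inequality) in Case (iii), and in Case (ii) exactly the paper's regrouping, since your identity $(\dagger)$ is nothing but \eqref{i2} with $k$ replaced by $k-1$.
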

\begin{proof}
Let $k\geq 1$ be arbitrarily fixed. 
Equation \eqref{i1} can be proved as follows by using the update $x^{k+1}=x^k -\alpha_k\nabla f(x^k)$ and the identity $\|a\|^2=\|a-b\|^2-\|b\|^2+2\langle a,b\rangle$:
\begin{align*}
    & \|x^{k+1}-x^{k}\|^2 \\
    = & \alpha_k^2 \|\nabla f(x^{k})\|^2
    =\alpha_{k}^{2}\left\|\nabla f(x^{k})-\nabla f(x^{k-1})\right\|^{2}-\alpha_{k}^{2}\left\|\nabla f(x^{k-1})\right\|^{2}+2 \alpha_{k}^{2}\langle\nabla f(x^{k}), \nabla f(x^{k-1})\rangle \nonumber\\
    = & \frac{\alpha_{k}^{2}}{\lambda_{k}} \langle \nabla f(x^{k}) - \nabla f(x^{k\!-\!1}), x^{k}-x^{k\!-\!1} \rangle - \alpha_{k}^{2} \langle \nabla f(x^{k\!-\!1}) - \nabla f(x^{k}), \nabla f(x^{k\!-\!1})\rangle + \alpha_{k}^{2}\langle\nabla f(x^{k}), \nabla f(x^{k\!-\!1})\rangle \nonumber \\
    = & \frac{\alpha_{k}^{2}}{\lambda_{k}} \langle \nabla f(x^{k}) \!-\!\nabla f(x^{k\!-\!1}), x^{k}-x^{k\!-\!1} \rangle - \frac{\alpha_{k}^{2}}{\alpha_{k-1}} \langle \nabla f(x^{k\!-\!1})\!-\! \nabla f(x^{k}), x^{k\!-\!1}-x^{k} \rangle +  \frac{\alpha_{k}^{2}}{\alpha_{k-1}}\langle\nabla f(x^{k}),x^{k\!-\!1}\!-\!x^{k}\rangle, \nonumber
\end{align*}
which proves \eqref{i1} by noting the definition of $\lambda_k$ in \eqref{lamk}.
Equation \eqref{i2} simply follows from
\[
\langle \nabla f(x^{k+1}) -\nabla f(x^{k}),x^{k+1}-x^{k}\rangle = \langle \nabla f(x^{k+1}),x^{k+1}-x^{k}\rangle + \frac{1}{\alpha_k}\|x^{k+1}-x^{k}\|^2.
\]
We now prove \eqref{e1} by analyzing three cases.
\begin{itemize}[leftmargin=*]
    \item Case (i): when $\lambda_{k}\geq \alpha_{k-1}$, we know $1/\lambda_k\leq 1/\alpha_{k-1}$. By the convexity of $f$ and  monotonicity of $\nabla f$, we have
\begin{equation}
\|x^{k+1}-x^{k}\|^2 = \mathbf{I}_1 \leq  \frac{\alpha_{k}^{2}}{\alpha_{k-1}}\langle\nabla f(x^{k}),x^{k-1}\!-\!x^{k}\rangle \leq \frac{\alpha_{k}^{2}}{\alpha_{k-1}}\big(f(x^{k-1})-f(x^{k})\big).   \label{c1}
\end{equation}
    \item Case (ii): when $\alpha_{k-1}/2 < \lambda_{k} < \alpha_{k-1}$, we have ${2}/{\alpha_{k-1}}-{1}/{\lambda_{k}}>0$, and therefore,
\begin{align}
\|x^{k+1}-x^{k}\|^2 =\; & \mathbf{I}_1 = \Big(\frac{2\alpha_{k}^{2}}{\alpha_{k-1}}-\frac{\alpha_{k}^{2}}{\lambda_{k}}\Big)\langle\nabla f(x^{k}),x^{k-1}\!-\!x^{k}\rangle \nonumber \\
& +
 \Big(\frac{\alpha_{k}^{2}}{\lambda_{k}}-\frac{\alpha_{k}^{2}}{\alpha_{k-1}}\Big)\Big(\langle \nabla f(x^{k}) \!-\!\nabla f(x^{k-1}), x^{k}-x^{k-1} \rangle + \langle\nabla f(x^{k}),x^{k-1}\!-\!x^{k}\rangle \Big) \nonumber \\
\stackrel{\eqref{i2}} \leq & \Big(\frac{\alpha_{k}^{2}}{\lambda_{k}\alpha_{k-1}}-\frac{\alpha_{k}^{2}}{\alpha_{k-1}^2}\Big)\|x^{k}-x^{k-1}\|^2+\Big(\frac{2\alpha_{k}^{2}}{\alpha_{k-1}}-\frac{\alpha_{k}^{2}}{\lambda_{k}}\Big)\big(f(x^{k-1})-f(x^{k})\big), \label{c2}
\end{align}
where the inequality follows from the convexity of $f$ and replacing $k$ by $k-1$ in \eqref{i2}.
    \item Case (iii): when $\lambda_{k}\leq \alpha_{k-1}/2$, by using the Young's inequality, we have
\[
\begin{aligned}
\Big(\frac{\alpha_{k}^{2}}{\lambda_{k}}-\frac{\alpha_{k}^{2}}{\alpha_{k-1}}\Big)\langle \nabla f(x^{k}) \!-\!\nabla f(x^{k-1}), x^{k}-x^{k-1} \rangle & = \Big(\frac{\alpha_{k}^{2}}{\lambda_{k}^2}-\frac{\alpha_{k}^{2}}{\alpha_{k-1}\lambda_{k}}\Big)\frac{\langle \nabla f(x^{k}) \!-\!\nabla f(x^{k-1}), x^{k}-x^{k-1} \rangle^2}{\|\nabla f(x^{k})- \nabla f(x^{k-1})\|^2} \\
& \leq \Big(\frac{\alpha_{k}^{2}}{\lambda_{k}^2}-\frac{\alpha_{k}^{2}}{\alpha_{k-1}\lambda_{k}}\Big)\|x^{k}-x^{k-1}\|^2,
\end{aligned}
\]
which directly leads to
\begin{align}
\|x^{k+1}-x^{k}\|^2 = \mathbf{I}_1 \leq   \Big(\frac{\alpha_{k}^{2}}{\lambda_{k}^2}-\frac{\alpha_{k}^{2}}{\alpha_{k-1}\lambda_{k}}\Big)\|x^{k}-x^{k-1}\|^2 + \frac{\alpha_{k}^{2}}{\alpha_{k-1}}\big(f(x^{k-1})-f(x^{k})\big).\label{c3}
\end{align}
\end{itemize}
Indeed, the inequality \eqref{c3} holds for all $\lambda_{k}< \alpha_{k-1}$. Combining these three cases proves \eqref{e1}.
\end{proof}

Now, we are ready to derive a non-increasing Lyapunov energy. For this purpose, we define
\[
w_k := \alpha_k+P_k-P_{k+1}, \quad \forall\, k\geq 0.
\]

\begin{lemma} \label{lemma-ener}
Define Lyapunov function
\begin{equation}\label{def-varUpsilon}
\varUpsilon_{k} := \|x^{k}-x^*\|^2  +  2M_{k}\|x^{k}-x^{k-1}\|^2 + (2\alpha_{k-1}+2P_{k-1}) \big(f(x^{k-1})-f_*\big).
\end{equation}
    Then for $\{x^{k}\}$ generated by Algorithm \ref{agbb}, we have
    \begin{align}
        \varUpsilon_{k+1}\leq \varPhi_k:= \varUpsilon_{k}-2 w_{k-1}\big(f(x^{k-1})-f_*\big)\leq \varUpsilon_{k}, \quad \forall\, k\geq 1. \label{energy}
    \end{align}
\end{lemma}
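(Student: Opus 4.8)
The plan is to convert the one-step square-distance estimate \eqref{e1} into a monotone Lyapunov recursion by the standard ``expand $\|x^{k+1}-x^*\|^2$ and recombine'' device, the only nontrivial inputs being Lemma \ref{mpi} (the bounds $2M_{k+1}\le 1$ and $P_{k+1}\le P_k+\alpha_k$) and the convexity of $f$; fix $k\ge 1$ throughout. \textbf{Step 1.} Using $x^{k+1}=x^k-\alpha_k\nabla f(x^k)$, the convexity bound $\langle\nabla f(x^k),x^k-x^*\rangle\ge f(x^k)-f_*$, and $\alpha_k^2\|\nabla f(x^k)\|^2=\|x^{k+1}-x^k\|^2$, I would first obtain
\[
\|x^{k+1}-x^*\|^2\le \|x^k-x^*\|^2-2\alpha_k\big(f(x^k)-f_*\big)+\|x^{k+1}-x^k\|^2 .
\]

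\textbf{Step 2.} Add $2M_{k+1}\|x^{k+1}-x^k\|^2+(2\alpha_k+2P_k)\big(f(x^k)-f_*\big)$ to both sides; the $\mp 2\alpha_k(f(x^k)-f_*)$ cancel and the left-hand side becomes $\varUpsilon_{k+1}$:
\[
\varUpsilon_{k+1}\le \|x^k-x^*\|^2+(1+2M_{k+1})\|x^{k+1}-x^k\|^2+2P_k\big(f(x^k)-f_*\big).
\]
Now invoke \eqref{e1}: its right-hand side $M_k\|x^k-x^{k-1}\|^2+P_k(f(x^{k-1})-f(x^k))$ is nonnegative (it dominates a square), and $1\le 1+2M_{k+1}\le 2$ by Lemma \ref{mpi}, so
\[
(1+2M_{k+1})\|x^{k+1}-x^k\|^2\le 2M_k\|x^k-x^{k-1}\|^2+2P_k\big(f(x^{k-1})-f(x^k)\big).
\]
Substituting and combining $2P_k(f(x^{k-1})-f(x^k))+2P_k(f(x^k)-f_*)=2P_k(f(x^{k-1})-f_*)$ yields
\[
\varUpsilon_{k+1}\le \|x^k-x^*\|^2+2M_k\|x^k-x^{k-1}\|^2+2P_k\big(f(x^{k-1})-f_*\big).
\]

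\textbf{Step 3.} The right-hand side differs from $\varUpsilon_k$ in \eqref{def-varUpsilon} only in the coefficient of $f(x^{k-1})-f_*$, hence it equals $\varUpsilon_k-2(\alpha_{k-1}+P_{k-1}-P_k)\big(f(x^{k-1})-f_*\big)=\varPhi_k$. Finally, $w_{k-1}=\alpha_{k-1}+P_{k-1}-P_k\ge 0$ by $P_k\le P_{k-1}+\alpha_{k-1}$ (Lemma \ref{mpi}, which at $k=1$ holds with equality by \eqref{def-P0}), and $f(x^{k-1})-f_*\ge 0$ since $f_*$ is the optimal value; therefore $\varPhi_k\le\varUpsilon_k$, completing the chain \eqref{energy}.

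I do not expect a genuine obstacle: once Lemmas \ref{mpi} and \ref{eq} are in hand, this is essentially bookkeeping. The one point requiring care is the ordering in Step 2 — one must use that the bound in \eqref{e1} is nonnegative (rather than that $f(x^{k-1})-f(x^k)$ itself has a sign) before scaling by the factor $1+2M_{k+1}$, and only then apply $2M_{k+1}\le 1$; reversing this would leave an uncontrolled $\|x^k-x^{k-1}\|^2$ term. A minor secondary check is that every use of Lemma \ref{mpi} is legitimate at $k=1$, which is exactly why $P_0$ was introduced in \eqref{def-P0}.
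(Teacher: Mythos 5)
Your proof is correct and follows essentially the same route as the paper: both rest on the convexity expansion of $\|x^{k+1}-x^*\|^2$, the bound \eqref{e1}, and the facts $2M_{k+1}\le 1$ and $w_{k-1}\ge 0$ (with $w_0=0$ from \eqref{def-P0}) supplied by Lemma \ref{mpi}; you merely add the Lyapunov-completing terms before invoking \eqref{e1}, whereas the paper sums the two inequalities first, which is only a difference in bookkeeping.
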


\begin{proof}
First, from the convexity of $f$, we have
\begin{equation}
\begin{aligned} \label{gds}
\|x^{k+1}-x^{*}\|^{2} & =\left\|x^{k}-\alpha_{k} \nabla f(x^{k})-x^{*}\right\|^{2} \\
& =\|x^{k}-x^{*}\|^{2}-2 \alpha_{k}\langle \nabla f(x^{k}), x^{k}-x^{*}\rangle+\alpha_{k}^{2}\|\nabla f(x^{k})\|^{2} \\
& \leq \|x^{k}-x^{*}\|^{2}-2 \alpha_{k}\left(f(x^{k})-f_*\right)+\alpha_{k}^{2}\left\|\nabla f(x^{k})\right\|^{2}.\end{aligned}
\end{equation}
From \eqref{e1}, we have
    \begin{equation}\label{gds-follow-1}
        \|x^{k+1}-x^{k}\|^2 = \alpha_k^2 \|\nabla f(x^{k})\|^2 \leq 2 \mathbf{E}-\|x^{k+1}-x^{k}\|^2 = 2 \mathbf{E}-\alpha_{k}^{2}\left\|\nabla f(x^{k})\right\|^{2}.
    \end{equation}
Summing \eqref{gds} and \eqref{gds-follow-1} yields
\begin{equation}\label{gds-follow-2}
    \begin{aligned}
    & \|x^{k+1}-x^{*}\|^{2}+\|x^{k+1}-x^{k}\|^2+2 \alpha_{k}\left(f(x^{k})-f_*\right) \\ 
    \leq \, & \|x^{k}-x^{*}\|^{2}+2 \mathbf{E} \\ 
    \leq \, & \varUpsilon_k-(2\alpha_{k-1}+2P_{k-1})(f(x^{k-1})-f_*)+2P_k(f(x^{k-1})-f(x^k)),
    \end{aligned}
\end{equation}
which further implies
    \begin{equation}
     \varUpsilon_{k+1}\leq \|x^{k+1}-x^*\|^2 + \|x^{k+1}-x^{k}\|^2 + (2\alpha_{k}+2P_{k}) \big(f(x^{k})-f_*\big) \leq \varPhi_k,    \label{gdmp}
    \end{equation}
    where the first inequality is due to $2M_{k+1}\leq 1$ from Lemma \ref{mpi}.
    This proves the first inequality in \eqref{energy}. The second inequality in \eqref{energy} is trivial because $w_k\geq 0$ from Lemma \ref{mpi}.
\end{proof}

Lemma \ref{lemma-ener} immediately leads to the boundedness of $\{x^k\}$.

\begin{corollary} \label{co:bound}
The sequence $\{x^{k}\}$ generated by Algorithm \ref{agbb} is bounded. In particular, for all $k\geq 0$, we have $x^k \in B(x^*,R)$, where $R$ is defined as:
\begin{equation}\label{def-R}
R^2 := \|x^0-x^*\|^2+ \alpha_0^2(1+2M_1)\|\nabla f(x^0)\|^2 + \max\{2P_1-2\alpha_0,0\} \big(f(x^0)-f^*\big).
\end{equation}
Note that from Remark \ref{remark-M1-P1}, $M_1 = M_1(x^0,\theta_0,\alpha_0)$ and $P_1 = P_1(x^0,\theta_0,\alpha_0)$ are both absolute constants.
\end{corollary}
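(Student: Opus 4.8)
The plan is to combine the monotonicity of the Lyapunov function from Lemma~\ref{lemma-ener} with a direct evaluation of $\varUpsilon_1$. First I would observe that all three terms defining $\varUpsilon_k$ in \eqref{def-varUpsilon} are nonnegative: $f(x^{k-1})-f_*\ge 0$; the coefficient $M_k$ is $0$ in Case~(i) and equals $\tfrac{\alpha_k^2}{\alpha_{k-1}}\big(\tfrac{1}{\lambda_k}-\tfrac{1}{\alpha_{k-1}}\big)>0$ in Case~(ii) or $\tfrac{\alpha_k^2}{\lambda_k}\big(\tfrac{1}{\lambda_k}-\tfrac{1}{\alpha_{k-1}}\big)>0$ in Case~(iii), since $\lambda_k<\alpha_{k-1}$ in those cases; and $2\alpha_{k-1}+2P_{k-1}\ge 0$ because $P_j=\alpha_j\theta_j\ge 0$ for $j\ge 1$ (one checks $\theta_j\ge 0$ case by case, the only nonobvious one being Case~(ii), where $\theta_j=\alpha_j\big(\tfrac{2}{\alpha_{j-1}}-\tfrac{1}{\lambda_j}\big)>0$ as $\lambda_j>\alpha_{j-1}/2$), while for $k=1$ one has $2\alpha_0+2P_0=2P_1\ge 0$ by \eqref{def-P0}. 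Hence $\|x^k-x^*\|^2\le\varUpsilon_k$ for all $k\ge 1$, and Lemma~\ref{lemma-ener} gives $\varUpsilon_k\le\varUpsilon_{k-1}\le\cdots\le\varUpsilon_1$.

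It then remains to show $\varUpsilon_1\le R^2$. Here I would use $x^1=x^0-\alpha_0\nabla f(x^0)$, so that $\|x^1-x^0\|^2=\alpha_0^2\|\nabla f(x^0)\|^2$ and, by convexity, $\|x^1-x^*\|^2=\|x^0-x^*\|^2-2\alpha_0\langle\nabla f(x^0),x^0-x^*\rangle+\alpha_0^2\|\nabla f(x^0)\|^2\le\|x^0-x^*\|^2-2\alpha_0(f(x^0)-f_*)+\alpha_0^2\|\nabla f(x^0)\|^2$. Substituting into \eqref{def-varUpsilon} with $k=1$, the term $-2\alpha_0(f(x^0)-f_*)$ cancels the $2\alpha_0$ inside the coefficient $2\alpha_0+2P_0$, leaving $\varUpsilon_1\le\|x^0-x^*\|^2+\alpha_0^2(1+2M_1)\|\nabla f(x^0)\|^2+2P_0(f(x^0)-f_*)$; since $f(x^0)-f_*\ge 0$, the last term is bounded by $\max\{2P_0,0\}(f(x^0)-f_*)=\max\{2P_1-2\alpha_0,0\}(f(x^0)-f_*)$, which makes the right-hand side exactly $R^2$ as in \eqref{def-R}. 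Combined with the trivial bound $\|x^0-x^*\|^2\le R^2$, this gives $x^k\in B(x^*,R)$ for every $k\ge 0$, and Remark~\ref{remark-M1-P1} guarantees that $R$ depends only on $x^0$, $\theta_0$, $\alpha_0$.

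As for difficulties, there is no deep obstacle once Lemma~\ref{lemma-ener} is available, and it is worth noting that the Lyapunov estimate there is derived using only convexity, monotonicity of $\nabla f$, and Cauchy--Schwarz, never the local smoothness hypothesis \eqref{local-smooth-1} on $B(x^*,R)$; so there is no circularity in first establishing boundedness and only afterwards invoking the local-smoothness consequences. The only points requiring care are the bookkeeping that verifies every coefficient in $\varUpsilon_k$ is nonnegative (so that discarding the $M_k$ and function-value terms is legitimate) and matching the expanded bound on $\varUpsilon_1$ to the precise closed form of $R^2$ — in particular the $\max\{\,\cdot\,,0\}$ that is needed because $P_0$ may be negative.
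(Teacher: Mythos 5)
Your proposal is correct and follows essentially the same route as the paper: monotonicity of the Lyapunov function from Lemma~\ref{lemma-ener}, the expansion \eqref{gds} at $k=0$ together with $\|x^1-x^0\|^2=\alpha_0^2\|\nabla f(x^0)\|^2$ to bound $\varUpsilon_1$, and the cancellation of $-2\alpha_0(f(x^0)-f_*)$ against the coefficient $2\alpha_0+2P_0=2P_1$, yielding exactly $R^2$ via the $\max\{\cdot,0\}$ term. The only difference is presentational: you make explicit the nonnegativity of $M_k$ and of the function-value coefficients (which the paper uses implicitly in writing $\|x^k-x^*\|^2\leq\varUpsilon_k$) and you note the absence of circularity with the local-smoothness assumption, both of which are accurate but not new ideas.
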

\begin{proof}
    From \eqref{energy}, we can obtain
    $\|x^{k}-x^*\|^2 \leq \varUpsilon_{k} \leq \varPhi_{k-1} \leq \varPhi_1$ for all $k \geq 1$.
    From \eqref{def-P0}, we have $w_0=0$. Therefore,
    \begin{align}
    \varPhi_1=\varUpsilon_1 & = \|x^1-x^*\|^2  +  2 M_{1}\|x^{1}-x^{0}\|^2 + (2\alpha_{0}+2P_{0}) \big(f(x^{0})-f_*\big)\nonumber \\ & \stackrel{\eqref{gds}} \leq \|x^{0}-x^*\|^2  +  \alpha_0^2(1+2M_{1})\|\nabla f(x^{0})\|^2 + 2(P_{1}-\alpha_0) \big(f(x^{0})-f_*\big) \leq R^2.\label{phi1smallerR2}
    \end{align}
    Consequently, we have $x^k\in B(x^*,R)$ for all $k\geq 1$. It is trivial to see $x^0\in B(x^*,R)$. This completes the proof.
\end{proof}

\begin{remark}
    From now on, we assume that both \eqref{local-smooth-1} and \eqref{local-smooth-2} hold with $R$ defined in \eqref{def-R}. From \eqref{local-smooth-2}, we immediately have the following useful result:
    \begin{equation}\label{lambdak-lower-bound}
        \lambda_k \geq 1/L, \quad \forall k\geq 1.
    \end{equation}
\end{remark}

The following proposition gives a lower bound on $\alpha_k$ and estimates the order of $\sum_{i=1}^k \alpha_i$.
\begin{proposition}\label{lemma:sum-alpha}
For $\{\alpha_k\}$ generated by Algorithm \ref{agbb}, we have
\begin{itemize}
    \item[(i)] If $\alpha_j\geq\frac{1}{\sqrt{2}L}$ for some $j$, then $\alpha_k\geq\frac{1}{\sqrt{2}L}$ for any $k\geq j$;
    \item[(ii)] $\alpha_k\geq c:= \min\{\alpha_0,\frac{1}{\sqrt{2}L}\} > 0$ for all $k\geq 0$;
    \item[(iii)] $\sum_{i=1}^k \alpha_i = O(k)$.
\end{itemize}
\end{proposition}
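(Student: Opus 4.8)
The three parts all follow from a branch‑by‑branch inspection of Algorithm~\ref{agbb}, using two facts already available: the curvature lower bound $\lambda_k\ge 1/L$ from \eqref{lambdak-lower-bound} (legitimate because $\{x^k\}\subset B(x^*,R)$ by Corollary~\ref{co:bound}), and Lemma~\ref{alpha-property}, which lets me work with (Option~II) whenever I want a \emph{lower} bound on $\alpha_k$ and with (Option~I) whenever I want an \emph{upper} bound. The plan is to isolate a clean one‑step inequality and then bootstrap it.

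\emph{Parts (i) and (ii).} The core estimate I would prove is
\[
\alpha_k\ \ge\ \min\{\alpha_{k-1},\ \tfrac{1}{\sqrt 2\,L}\},\qquad k\ge 1 .
\]
First note $\theta_k\ge 0$ for all $k$: this holds for $k=0$ by hypothesis, and in each branch the update for $\theta_k$ is manifestly nonnegative precisely because of the branch condition (e.g.\ in Case (ii), $\theta_k=\alpha_k(2/\alpha_{k-1}-1/\lambda_k)\ge 0$ since $\lambda_k>\alpha_{k-1}/2$). The estimate is then checked branchwise: Case (i) gives $\alpha_k=\sqrt{1+\theta_{k-1}}\,\alpha_{k-1}\ge\alpha_{k-1}$; Case (ii) gives, via Lemma~\ref{alpha-property}, $\alpha_k\ge\lambda_k\ge 1/L>\tfrac{1}{\sqrt 2 L}$; Case (iii) gives, again via Lemma~\ref{alpha-property}, $\alpha_k\ge\lambda_k/\sqrt 2\ge\tfrac{1}{\sqrt 2 L}$. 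Part (ii) is then induction from $\alpha_0\ge c$: if $\alpha_{k-1}\ge c$ then $\alpha_k\ge\min\{\alpha_{k-1},\tfrac{1}{\sqrt 2 L}\}\ge\min\{c,\tfrac{1}{\sqrt 2 L}\}=c$ since $c\le\tfrac{1}{\sqrt 2 L}$. Part (i) is the same induction seeded at index $j$: once $\alpha_j\ge\tfrac{1}{\sqrt 2 L}$ the minimum equals $\tfrac{1}{\sqrt 2 L}$, and this level is preserved forever.

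\emph{Part (iii).} Here I would aim for a uniform bound $\alpha_k\le C$, which gives $\sum_{i=1}^k\alpha_i\le Ck$ at once. \emph{Step 1:} control $\theta_k$. A branchwise check shows $\theta_k\le\sqrt{1+\theta_{k-1}}$ always (equality in Case (i); in Cases (ii)--(iii), Lemma~\ref{alpha-property} together with the branch conditions force a strictly smaller value). Since $t\mapsto\sqrt{1+t}$ maps $[0,\Theta]$ into itself whenever $\Theta\ge\varphi:=\tfrac{1+\sqrt 5}{2}$, induction gives $0\le\theta_k\le\Theta:=\max\{\theta_0,\varphi\}$, hence $P_k=\alpha_k\theta_k\le\Theta\alpha_k$ by Lemma~\ref{mpi}. \emph{Step 2:} one‑step growth bounds. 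With $\theta_k$ bounded, Case (i) gives $\alpha_k\le\sqrt{1+\Theta}\,\alpha_{k-1}$; in Case (ii), viewing the minimum in (Option~I) as a function of $\lambda_k\in(\alpha_{k-1}/2,\alpha_{k-1})$, it is maximized where its two arguments coincide, which yields $\alpha_k\le\sqrt{(3+2\Theta)/2}\,\alpha_{k-1}$ (and (Option~II) gives the smaller value $\alpha_k=\lambda_k<\alpha_{k-1}$); in Case (iii), $\alpha_k\le\lambda_k\le\alpha_{k-1}/2$. \emph{Step 3:} promote these to a uniform bound on $\{\alpha_k\}$. A ``growth'' step ($k$ in Case (i) or (ii)) requires $\alpha_{k-1}<2\lambda_k$, while $\lambda_k\le\|x^k-x^{k-1}\|/\|\nabla f(x^k)-\nabla f(x^{k-1})\|$, so as soon as $\alpha_{k-1}$ climbs past a level fixed by $L$ and the (bounded) trajectory, Case (iii) must fire and halve the stepsize; combining the bounded per‑step multiplicative increase along growth runs with these forced halvings keeps $\{\alpha_k\}$ bounded. (An alternative is to write $\alpha_i=(P_{i+1}-P_i)+w_i$ with $w_i=\alpha_i+P_i-P_{i+1}\ge 0$ by Lemma~\ref{mpi}, sum to $\sum_{i=1}^k\alpha_i=P_{k+1}-P_1+\sum_{i=1}^k w_i\le\Theta\alpha_{k+1}+\sum_{i=1}^k w_i$, and control $\alpha_{k+1}$ and $\sum_i w_i$ using the monotone decrease of $\varUpsilon_k$ from Lemma~\ref{lemma-ener} together with the branch estimates.)

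The step I expect to be the real obstacle is Step~3 of part (iii): the per‑branch estimates by themselves only give $\alpha_k\le C_0^{\,k}\alpha_0$ with $C_0>1$, i.e.\ merely geometric growth, which is far weaker than $O(k)$. To close the gap one must show that the shrinking Case‑(iii) steps occur frequently enough to cancel the growth, i.e.\ that $\{\alpha_k\}$ is in fact bounded; and for this the boundedness of $\{x^k\}$ in $B(x^*,R)$ and the curvature bound $\lambda_k\ge 1/L$ have to be exploited jointly, since the local data alone do not obviously prevent $\lambda_k$ from being large over a stretch and thereby keeping the iteration in the growth branches.
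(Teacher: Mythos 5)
Your treatment of parts (i) and (ii) is correct and is essentially the paper's own argument: the one-step bound $\alpha_k\ge\min\{\alpha_{k-1},\tfrac{1}{\sqrt2 L}\}$, obtained branchwise from $\theta_{k-1}\ge0$ in Case (i) and from Lemma \ref{alpha-property} together with $\lambda_k\ge 1/L$ in Cases (ii)--(iii), plus induction, is exactly what the paper does (it phrases the induction via the first index $r$ with $\lambda_r<\alpha_{r-1}$, but the content is the same).

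For part (iii) you have misread what is being claimed, and the reading you chose leads you into an argument that cannot be closed. Despite the loose ``$O(k)$'' notation, the statement is used in Theorem \ref{thom1} (and again in Theorem \ref{thomc}) to conclude $\varPhi_1/(2S_k)=O(1/k)$ with $S_k=P_1+\sum_{i=1}^k\alpha_i$; for that one needs $S_k$ to grow \emph{at least} linearly in $k$, i.e.\ a lower bound $\sum_{i=1}^k\alpha_i\ge ck$, and this is why the paper says part (iii) ``follows from part (ii) immediately'': $\alpha_i\ge c>0$ for all $i$ gives $\sum_{i=1}^k\alpha_i\ge ck$ in one line. The uniform upper bound $\alpha_k\le C$ that your Step 3 tries to establish is not needed anywhere, and it is in fact false in the generality of the proposition: $f$ is only assumed convex with locally Lipschitz gradient, so the local curvature can degenerate (e.g.\ a quartic-type flat minimizer), in which case $\lambda_k\to\infty$ along the trajectory, Case (i) can fire over arbitrarily long stretches, and $\{\alpha_k\}$ need not stay bounded --- your own closing remark correctly senses this obstruction, but it is an obstruction to a claim the paper never makes. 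Replace your part (iii) by the one-line deduction from part (ii) and the proof is complete.
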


\begin{proof}
We first prove part (i) by considering the three cases in the ($j+1$)-th iteration of Algorithm \ref{agbb}. If (Case i) happens, then we have $\alpha_{j+1}\geq \alpha_j\geq \frac{1}{\sqrt{2}L}$. If (Case ii) or (Case iii) happens, then we have $\alpha_{j+1}\geq \frac{\lambda_{j+1}}{\sqrt{2}}\geq \frac{1}{\sqrt{2}L}$, where we used \eqref{lambdak-lower-bound} for the second inequality. By induction, this completes the proof of part (i).

We now prove part (ii). Let $r\geq 1$ be the smallest integer that satisfies $\lambda_r < \alpha_{r-1}$.  
When $r=1$, we obtain $\alpha_1\geq \frac{\lambda_1}{\sqrt{2}}\geq \frac{1}{\sqrt{2}L}$. Consequently, applying the result from part (i) yields $\alpha_k\geq \frac{1}{\sqrt{2}L}$ for all $k\geq 1$.
When $r>1$,  this implies
$\lambda_k\geq \alpha_{k-1}$ for $k = 1,2,\ldots,r-1$, i.e., (Case i) in Algorithm \ref{agbb} happens for the first $r-1$ iterations.  This leads to $\alpha_1\geq\alpha_0$, and $\alpha_k\geq \sqrt{2}\alpha_{k-1}$ for $k=2,\ldots,r-1$. Therefore, we have $\alpha_k \geq \sqrt{2}^{k-1}\alpha_0\geq\alpha_0$ for $k=1,2,\ldots,r-1$.
Moreover, $\lambda_r < \alpha_{r-1}$ also implies that either (Case ii) or (Case iii) in Algorithm \ref{agbb} happens for the $r$-th iteration. In both cases, we have
$\alpha_r\geq\frac{\lambda_r}{\sqrt{2}}\geq \frac{1}{\sqrt{2}L}$, where we used \eqref{lambdak-lower-bound} for the second inequality. Now from part (i), we know that $\alpha_k\geq\frac{1}{\sqrt{2}L}$ for any $k\geq r$. This completes the proof of part (ii).

Part (iii) follows from part (ii) immediately.
\end{proof}

Now we are ready to present the main convergence result of Algorithm \ref{agbb}.

\begin{thom}[Ergodic convergence] \label{thom1}
For sequence $\{x^{k}\}$ generated by Algorithm \ref{agbb}, define
\begin{align*}
    \Bar{x}^{k}  = \frac{(\alpha_k+P_k)x^{k}+\sum_{i=1}^{k-1}w_ix^{i}}{S_{k}},  \text{~~with~~} S_k = P_1+ \sum_{i=1}^k \alpha_i.
\end{align*}
Then we have
\begin{equation}\label{thom1-conclusion}
f(\Bar{x}^{k})-f_* \leq \frac{\varPhi_1}{2S_{k}}=O\left(\frac{1}{k}\right).
\end{equation}
\end{thom}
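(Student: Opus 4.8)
The plan is to telescope the one-step energy inequality from Lemma~\ref{lemma-ener}, being careful to retain the function-value term attached to the last iterate, and then to apply Jensen's inequality to a weighted average of $x^1,\dots,x^k$.

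First I would extract from the proof of Lemma~\ref{lemma-ener} a statement slightly sharper than \eqref{energy}: inequality \eqref{gdmp} shows that, for every $k\ge 1$,
\[
\|x^{k+1}-x^*\|^2+\|x^{k+1}-x^k\|^2+(2\alpha_k+2P_k)\big(f(x^k)-f_*\big)\le \varPhi_k=\varUpsilon_k-2w_{k-1}\big(f(x^{k-1})-f_*\big),
\]
and $\varUpsilon_{k+1}\le\varPhi_k$ is obtained from this only after lower-bounding the left-hand side via $2M_{k+1}\le 1$. So for a fixed horizon $K$ I would use $\varUpsilon_{k+1}\le\varUpsilon_k-2w_{k-1}\big(f(x^{k-1})-f_*\big)$ for $k=1,\dots,K-1$ and the displayed inequality at $k=K$, then add all of them. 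The $\varUpsilon$'s telescope, the squared terms $\|x^{K+1}-x^*\|^2$ and $\|x^{K+1}-x^K\|^2$ are discarded, and using $w_0=0$ (which holds because $P_0=P_1-\alpha_0$ by \eqref{def-P0}, whence also $\varPhi_1=\varUpsilon_1$) this should give
\[
(2\alpha_K+2P_K)\big(f(x^K)-f_*\big)+2\sum_{i=1}^{K-1}w_i\big(f(x^i)-f_*\big)\le \varPhi_1.
\]

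Next I would identify the normalization. Writing $S_K=(\alpha_K+P_K)+\sum_{i=1}^{K-1}w_i$ and substituting $w_i=\alpha_i+P_i-P_{i+1}$, the $P$-terms telescope to $P_1-P_K$, so $S_K=P_1+\sum_{i=1}^K\alpha_i$, matching the statement. Since $w_i\ge 0$ by Lemma~\ref{mpi} and $\alpha_K+P_K>0$, the point $\bar x^K$ is a genuine convex combination of $x^1,\dots,x^K$; applying Jensen's inequality to $f$ and then dividing the previous display by $2S_K$ yields
\[
f(\bar x^K)-f_*\le\frac{(\alpha_K+P_K)\big(f(x^K)-f_*\big)+\sum_{i=1}^{K-1}w_i\big(f(x^i)-f_*\big)}{S_K}\le\frac{\varPhi_1}{2S_K}.
\]
Finally, Proposition~\ref{lemma:sum-alpha}(ii) gives $\alpha_i\ge c:=\min\{\alpha_0,1/(\sqrt2 L)\}>0$, hence $S_K\ge P_1+cK$ and the bound is $O(1/K)$.

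The main obstacle will be the bookkeeping at the terminal index $k=K$: one must resist collapsing \eqref{gdmp} into $\varUpsilon_{K+1}\le\varPhi_K$ there, since both the weight $\alpha_K+P_K$ on $x^K$ in the definition of $\bar x^K$ and the clean telescoping identity $S_K=P_1+\sum_{i=1}^K\alpha_i$ rely on keeping the term $(2\alpha_K+2P_K)\big(f(x^K)-f_*\big)$ intact. The remainder is a routine telescoping sum plus convexity of $f$.
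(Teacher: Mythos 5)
Your proposal is correct and follows essentially the same route as the paper: telescope the energy inequality (using $w_0=0$), keep the terminal term $(2\alpha_k+2P_k)\bigl(f(x^k)-f_*\bigr)$, verify $(\alpha_k+P_k)+\sum_{i=1}^{k-1}w_i=S_k$, and conclude by Jensen and the lower bound on $\sum_i\alpha_i$. The ``obstacle'' you flag at the terminal index is in fact a non-issue: the paper simply telescopes all the way to $\varUpsilon_{k+1}$ and then reads the term $(2\alpha_k+2P_k)\bigl(f(x^k)-f_*\bigr)$ off the definition \eqref{def-varUpsilon} of $\varUpsilon_{k+1}$, which is equivalent to your retaining \eqref{gdmp} at the last step.
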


\begin{proof}
We only need to prove the inequality in \eqref{thom1-conclusion}, because $\frac{\varPhi_1}{2S_{k}}=O\left(\frac{1}{k}\right)$ follows directly from \eqref{phi1smallerR2}, Remark \ref{remark-M1-P1} and Proposition \ref{lemma:sum-alpha} part (iii).
    From \eqref{energy} we have $\varUpsilon_{i+1}\leq \varUpsilon_{i}-2 w_{i-1}\big(f(x^{i-1})-f_*\big)$. Summing this inequality over $i=1,\ldots,k$ yields (note $w_0=0$)
    \begin{align*}
        \varUpsilon_{k+1} + 2\sum_{i=1}^{k-1}w_i\big(f(x^i)-f_*\big) \leq \varUpsilon_{1} = \varPhi_1.
    \end{align*}
    Using \eqref{def-varUpsilon}, we know that
    \[
    \|x^{k+1}-x^*\|^2  +  2M_{k+1}\|x^{k+1}-x^{k}\|^2 + (2\alpha_{k}+2P_{k}) \big(f(x^{k})-f_*\big) + 2\sum_{i=1}^{k-1}w_i\big(f(x^i)-f_*\big) \leq \varPhi_1,
    \]
    which further leads to
    \begin{equation}\label{proof-thom1-eq1}
    (\alpha_{k}+P_{k}) \big(f(x^{k})-f_*\big) + \sum_{i=1}^{k-1}w_i\big(f(x^i)-f_*\big) \leq \frac{\varPhi_1}{2}.
    \end{equation}
    Since $w_i=\alpha_i+P_i-P_{i+1}$, we have $\sum_{i=1}^{k-1}w_i = \sum_{i=1}^{k-1}\alpha_i+P_1-P_k$. We thus have
    \[
        (\alpha_k + P_{k}) + \sum_{i=1}^{k-1}w_i = (\alpha_k + P_{k}) + \sum_{i=1}^{k-1}\alpha_i+P_1-P_k = P_1+\sum_{i=1}^k\alpha_i = S_k.
    \]
    Utilizing the convexity of $f$, we obtain
    \[
        f(\bar{x}^{k}) = f\left(\frac{(\alpha_k+P_k)x^{k}+\sum_{i=1}^{k-1}w_ix^{i}}{S_{k}}\right) \leq \frac{\alpha_k+P_k}{S_k} f(x^k) + \sum_{i=1}^{k-1}\frac{w_i}{S_k}f(x^i),
    \]
    which leads to
    \[
        f(\bar{x}^{k})-f_* \leq \frac{\alpha_k+P_k}{S_k} (f(x^k)-f_*) + \sum_{i=1}^{k-1}\frac{w_i}{S_k}(f(x^i)-f_*)\leq \frac{\varPhi_1}{2S_k},
    \]
    where the last inequality follows from \eqref{proof-thom1-eq1}.
\end{proof}

Next, we present a variant of the Opial lemma, which is useful in our convergence analysis.
\begin{lemma}[{\cite[Lemma 2]{MM20}}]\label{opial}
Let $\{x^{k}\}$ and $\{a_{k}\}$ be two sequences in $\mathbb{R}^{n}$ and $\mathbb{R}_{+}$, respectively. Suppose that $\{x^{k}\}$ is bounded, its cluster points belong to $\mathcal{X} \subset \mathbb{R}^{n} $ and it also holds that
\[
    \|x^{k+1}-x\|^2 + a_{k+1} \leq \|x^{k}-x\|^2 + a_k, \quad \forall x\in \mathcal{X},
\]
then $\{x^{k}\}$ converges to some element in $\mathcal{X}$.
\end{lemma}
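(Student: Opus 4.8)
The plan is to follow the classical Opial-type (quasi-Fej\'er monotonicity) argument. First I would fix an arbitrary $x\in\mathcal{X}$ and note that the assumed inequality says exactly that the nonnegative scalar sequence $b_k(x):=\|x^k-x\|^2+a_k$ is non-increasing; being bounded below by $0$, it converges. Hence $\lim_{k\to\infty} b_k(x)$ exists for every $x\in\mathcal{X}$. At this stage I deliberately make \emph{no} claim about the convergence of $\{a_k\}$ or of $\{\|x^k-x\|^2\}$ on their own, since neither need converge individually.

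Next, since $\{x^k\}$ is bounded it has at least one cluster point, and by hypothesis every cluster point lies in $\mathcal{X}$; the crux is to show the cluster point is unique. Let $\bar x,\bar x'$ be two cluster points of $\{x^k\}$, both in $\mathcal{X}$. The key observation is that in the difference
\[
b_k(\bar x)-b_k(\bar x')=\|x^k-\bar x\|^2-\|x^k-\bar x'\|^2=-2\langle x^k,\,\bar x-\bar x'\rangle+\|\bar x\|^2-\|\bar x'\|^2,
\]
the term $a_k$ cancels, so the left-hand side is the difference of two convergent sequences and therefore converges; consequently $\langle x^k,\bar x-\bar x'\rangle$ converges to some $\ell\in\mathbb{R}$. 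Passing to a subsequence of $\{x^k\}$ that converges to $\bar x$ gives $\ell=\langle \bar x,\bar x-\bar x'\rangle$, while passing to one converging to $\bar x'$ gives $\ell=\langle \bar x',\bar x-\bar x'\rangle$. Subtracting yields $\|\bar x-\bar x'\|^2=0$, i.e. $\bar x=\bar x'$.

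Finally, a bounded sequence in $\mathbb{R}^n$ with a unique cluster point converges to that cluster point, and by hypothesis the point belongs to $\mathcal{X}$; this completes the argument. The only genuinely subtle point — the main obstacle — is recognizing that one should not attempt to control $\{a_k\}$ directly (it may oscillate), but instead exploit the cancellation of $a_k$ in the difference $b_k(\bar x)-b_k(\bar x')$ to pin down the inner products $\langle x^k,\bar x-\bar x'\rangle$ and force the two cluster points to coincide; the remaining steps are routine.
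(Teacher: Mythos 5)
Your argument is correct: fixing $x\in\mathcal{X}$ gives convergence of $b_k(x)=\|x^k-x\|^2+a_k$, and the cancellation of $a_k$ in $b_k(\bar x)-b_k(\bar x')$ forces any two cluster points (which lie in $\mathcal{X}$ by hypothesis, and exist by boundedness) to coincide, so the bounded sequence converges to its unique cluster point in $\mathcal{X}$. The paper itself gives no proof, citing the lemma from \cite{MM20}, and your argument is precisely the standard Opial/quasi-Fej\'er proof used there, so nothing further is needed.
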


\begin{thom}[Pointwise convergence] \label{thom:ic}
The sequence $\{x^{k}\}$ generated by Algorithm \ref{agbb} globally converges to an optimal solution of \eqref{p}. Moreover, we have the following sublinear convergence rate of $\min_{1\leq i\leq k} \|\nabla f(x^i)\|^2$:
\begin{equation}\label{grad-min-rate}
\min_{1\leq i\leq k} \|\nabla f(x^i)\|^2 = O\left(\frac{1}{k}\right).
\end{equation}
\end{thom}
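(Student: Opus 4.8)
The plan is to prove the two assertions separately, establishing the $O(1/k)$ gradient estimate first and then feeding it into an Opial-type argument for pointwise convergence.

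For the gradient rate I would first sharpen the energy inequality behind Lemma~\ref{lemma-ener}. The step in \eqref{gdmp} that passes to $\varUpsilon_{k+1}$ via $2M_{k+1}\le 1$ discards the nonnegative quantity $(1-2M_{k+1})\|x^{k+1}-x^{k}\|^2$; keeping it together with $2w_{k-1}(f(x^{k-1})-f_*)$ gives
\[
\varUpsilon_{k+1} + (1-2M_{k+1})\|x^{k+1}-x^{k}\|^2 + 2w_{k-1}\big(f(x^{k-1})-f_*\big) \le \varUpsilon_{k}, \qquad \forall k\ge 1 .
\]
Telescoping from $k=1$, using $w_0=0$, $\varUpsilon_1=\varPhi_1\le R^2$ (see \eqref{phi1smallerR2}) and $\varUpsilon_k\ge 0$, yields $\sum_{k\ge1}\big[(1-2M_{k+1})\|x^{k+1}-x^{k}\|^2 + 2w_{k-1}(f(x^{k-1})-f_*)\big] \le \varPhi_1 < \infty$. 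I would then show, going through the three cases of \eqref{def-Mk-Pk}, that a fixed positive multiple of $\|\nabla f(x^{k})\|^2$ is dominated by the $k$-th summand, using $\|x^{k+1}-x^{k}\|^2=\alpha_k^2\|\nabla f(x^{k})\|^2$, the bounds $\alpha_k\ge c:=\min\{\alpha_0,1/(\sqrt2 L)\}$ (Proposition~\ref{lemma:sum-alpha}(ii)) and $\lambda_{k+1}\ge 1/L$ (from \eqref{lambdak-lower-bound}), and the local-smoothness consequence $\|\nabla f(x)\|^2\le 2L(f(x)-f_*)$ (from \eqref{local-smooth-1} with $\nabla f(x^*)=0$): in (Case~i), in (Case~ii with the second option active), and in (Case~iii under Option~II) one checks $(1-2M_{k+1})\alpha_k^2$ is bounded below (e.g.\ by $c^2/2$); in (Case~iii under Option~I) one has $M_{k+1}=\tfrac12$ but, since $\alpha_{k+1}^2/\alpha_k\le\alpha_k/4$, $w_k\ge\tfrac34\alpha_k\ge\tfrac34 c$, so the complementary term $w_k(f(x^{k})-f_*)\ge\tfrac{3c}{8L}\|\nabla f(x^{k})\|^2$ does the job. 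Summing gives $\sum_k\|\nabla f(x^{k})\|^2<\infty$, hence $\min_{1\le i\le k}\|\nabla f(x^i)\|^2 \le \frac1k\sum_{i=1}^k\|\nabla f(x^i)\|^2 = O(1/k)$, which is \eqref{grad-min-rate}.

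For pointwise convergence I would apply the Opial-type Lemma~\ref{opial} with $\mathcal X=\mathcal X^*$ and $a_k:=2M_k\|x^{k}-x^{k-1}\|^2+2(\alpha_{k-1}+P_{k-1})(f(x^{k-1})-f_*)\ge 0$. Its hypotheses hold: $\{x^k\}$ is bounded by Corollary~\ref{co:bound}; the inequality $\|x^{k+1}-x\|^2+a_{k+1}\le\|x^{k}-x\|^2+a_k$ for every $x\in\mathcal X^*$ is precisely $\varUpsilon_{k+1}\le\varUpsilon_k$ from Lemma~\ref{lemma-ener}, and $a_k$ does not depend on $x$; finally, since $\sum_k\|\nabla f(x^{k})\|^2<\infty$ we have $\|\nabla f(x^{k})\|\to 0$ along the whole sequence, so any cluster point $\hat x=\lim_j x^{k_j}$ satisfies $\nabla f(\hat x)=0$ by continuity of $\nabla f$, i.e.\ $\hat x\in\mathcal X^*$ by convexity. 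Lemma~\ref{opial} then gives $x^{k}\to$ some point of $\mathcal X^*$, an optimal solution of \eqref{p}.

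The main obstacle is the case analysis under Option~I in situations where $M_{k+1}=\tfrac12$, since there the surplus $(1-2M_{k+1})\|x^{k+1}-x^{k}\|^2$ disappears. Case~iii is handled via the $w_k$-surplus as above; the delicate remaining configuration is Option~I, Case~ii with the first option active, where one only has $w_k\ge 0$ (with $w_k$ possibly small near the coincidence $A=B$ of the two entries of the $\min$). I expect to dispatch this by iterating \eqref{e1} over a maximal run of such steps — there $\|x^{k+1}-x^{k}\|^2\le\tfrac12\|x^{k}-x^{k-1}\|^2+P_k(f(x^{k-1})-f(x^{k}))$ forces geometric decay of $\|x^{k+1}-x^{k}\|^2$ modulo a telescoping function-value remainder whose coefficients are controlled by $P_{k+1}\le P_k+\alpha_k$ (Lemma~\ref{mpi}) and by $\alpha_k(f(x^k)-f_*)\le\varPhi_1/2$ (from \eqref{proof-thom1-eq1}) — and then summing block by block. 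Should one prefer the proof of pointwise convergence to use only the stated $O(1/k)$ bound rather than the stronger summability $\sum_k\|\nabla f(x^{k})\|^2<\infty$, the same decay estimates are exactly what is needed to verify $a_{k_j}\to 0$ along the convergent subsequence, whence $\varUpsilon_{k_j}=\|x^{k_j}-\hat x\|^2+a_{k_j}\to 0$ and, by monotonicity of $\varUpsilon_k$, $x^k\to\hat x\in\mathcal X^*$.
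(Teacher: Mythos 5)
Your route to \eqref{grad-min-rate} --- telescoping the sharpened energy inequality and harvesting the slacks $(1-2M_{k+1})\|x^{k+1}-x^{k}\|^2$ and $w_k\big(f(x^k)-f_*\big)$ --- does work in (Case i), in (Case ii)/(Case iii) under Option II, and in (Case iii) under Option I (your bound $w_k\ge\tfrac34\alpha_k$ is correct there). But in (Case ii) under Option I the argument genuinely breaks. Whenever the second entry of the $\min$ in \eqref{gen-AdaBB-alpha-1} is active, a direct computation gives $P_{k+1}=(1+\theta_k)\alpha_k=\alpha_k+P_k$, so $w_k=0$ exactly, while $M_{k+1}$ can be arbitrarily close to $\tfrac12$; when the first entry is active, $M_{k+1}=\tfrac12$ exactly and $w_k$ can be arbitrarily small. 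Near the coincidence of the two entries both slacks vanish simultaneously, so no per-iteration quantity in your telescoped sum dominates a fixed multiple of $\|\nabla f(x^k)\|^2$. Your proposed repair --- iterating \eqref{e1} over maximal runs of such steps --- is only a sketch and does not obviously close: the increments $f(x^{k-1})-f(x^k)$ in \eqref{e1} are not sign-controlled (the method is nonmonotone), the coefficients $P_k$ are only known to grow like $\sum_i\alpha_i$ (Lemma \ref{mpi}), so the ``telescoping remainder'' weighted against geometric factors is not evidently summable, and even granting per-block control it is unclear how blocks of unbounded length aggregate into $\sum_k\|\nabla f(x^k)\|^2<\infty$ or even the min-rate. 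Since the theorem is stated for Algorithm \ref{agbb} with both options, this is a real gap, and the pointwise-convergence half inherits it: identifying all cluster points with $\mathcal{X}^*$ (or, in your alternative ending, showing $a_{k_j}\to 0$) uses precisely the summability you have not established.

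The paper avoids the case analysis altogether. It strengthens \eqref{gds} by invoking \eqref{local-smooth-1} at the pair $(x^*,x^k)$, i.e.\ $\langle\nabla f(x^k),x^k-x^*\rangle\ge f(x^k)-f_*+\tfrac1{2L}\|\nabla f(x^k)\|^2$ (this is \eqref{gds1}), so the Lyapunov decrease acquires an extra term $\tfrac{\alpha_k}{L}\|\nabla f(x^k)\|^2$ uniformly in $k$, independent of which case or option occurred. Telescoping gives $\sum_{i\le k}\alpha_i\|\nabla f(x^i)\|^2\le\varPhi_1 L$, and with $\alpha_i\ge c$ from Proposition \ref{lemma:sum-alpha}(ii) this yields \eqref{sum-grad-bounded}, hence both \eqref{grad-min-rate} and $\nabla f(x^k)\to0$; the Opial step is then exactly as in your second paragraph. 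You already use the companion inequality $f(x)-f_*\ge\tfrac1{2L}\|\nabla f(x)\|^2$; applying the same local-smoothness inequality inside \eqref{gds} instead would let you delete the problematic case analysis entirely and complete the proof.
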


\begin{proof}
    By using \eqref{local-smooth-1}, we have the following improved analysis for \eqref{gds}:
    \begin{equation}
\begin{aligned} \label{gds1}
\|x^{k+1}-x^{*}\|^{2} & =\|x^{k}-x^{*}\|^{2}-2 \alpha_{k}\langle \nabla f(x^{k}), x^{k}-x^{*}\rangle+\alpha_{k}^{2}\|\nabla f(x^{k})\|^{2} \\
& \leq \|x^{k}-x^{*}\|^{2}-2 \alpha_{k}\left(f(x^{k})-f_*+\frac{1}{2L}\|\nabla f(x^k)\|^2\right)+\alpha_{k}^{2}\left\|\nabla f(x^{k})\right\|^{2},\end{aligned}
\end{equation}
and then \eqref{gdmp} can be changed to:
    \[
        \varUpsilon_{k+1}+\frac{\alpha_k}{L}\|\nabla f(x^k)\|^2\leq\|x^{k+1}-x^*\|^2 + \|x^{k+1}-x^{k}\|^2 + (2\alpha_{k}+2P_{k}) \big(f(x^{k})-f_*\big) + \frac{\alpha_k}{L}\|\nabla f(x^k)\|^2\leq \varPhi_k\leq\varUpsilon_k,
    \]
    which yields to $\varPhi_{k+1}+\frac{\alpha_k}{L}\|\nabla f(x^k)\|^2\leq \varPhi_k$ for all $k\geq 1$.
    Change the index $k$ to $i$, and sum this inequality over
    $i=1,\ldots,k$, we obtain
    $\sum_{i=1}^k \alpha_k\|\nabla f(x^k)\|^2 \leq \varPhi_1 L$. Using Proposition \ref{lemma:sum-alpha} part (ii), we have
    \begin{equation}\label{sum-grad-bounded}
        \sum_{i=1}^k \|\nabla f(x^k)\|^2 \leq \frac{\varPhi_1 L}{c}.
    \end{equation}
    Therefore, $\nabla f(x^k)\xrightarrow{k \rightarrow \infty} 0$. This proves that all cluster points of $\{x^{k}\}$ belong to the solution sets of \eqref{p}. Now using $\varPhi_{k+1}\leq\varPhi_k$ and applying Lemma \ref{opial} by letting  $\mathcal{X}=\mathcal{X^*}$ and $a_{k} =2M_{k}\|x^{k}-x^{k-1}\|^2+2P_{k}\big(f(x^{k-1})-f_*\big)$, we obtain that $\{x^k\}$ converges to an optimal solution of \eqref{p}.

    Moreover, \eqref{sum-grad-bounded} immediately leads to the convergence rate \eqref{grad-min-rate}.
\end{proof}

\section{Improved Lower Bound for \texorpdfstring{$\sum_{i=1}^k\alpha_i$}{}}\label{explan}

Existing analysis of gradient method \eqref{gd} for convex minimization \eqref{p} requires stepsize $\alpha_k\leq 1/L$ to achieve the $O(1/k)$ convergence rate $f(x^k)-f_* = O(1/k)$. In this kind of analysis, choosing $\alpha_k\leq 1/L$ guarantees that the function value has a sufficient decrease in each iteration. In practice, $\alpha_k=1/L$ is usually chosen because it is the largest stepsize in this setting. In the case where adaptive stepsize is used, a natural question to ask is whether we can guarantee that on average the stepsize is approximately equal to $1/L$. This has been posed as an open question recently by Malitsky and Mishchenko \cite{MM23}. More precisely, the open question posed in \cite{MM23} asks whether one can design an adaptive algorithm such that the lower bound for $\sum_{i=1}^k\alpha_i$ is close to $k/L$. In this section, we answer this question affirmatively: the stepsizes generated by our AdaBB algorithm satisfy
\begin{equation}\label{open-question-answer}
    \sum_{i=i_0+1}^k\alpha_i\geq (k-i_0)/L, \mbox{ where } i_0 \in \{ 0,1,2\} \mbox{ depending on the choice of } \alpha_0.
\end{equation}
This further implies $\sum_{i=1}^k \alpha_i \geq \frac{k-2+\sqrt{2}}{L}$ for all $k\geq 1$. With a suitably chosen $\alpha_0$, this can be further improved to $\sum_{i=1}^k\alpha_i\geq k/L$.

Our result requires the following choice of $\theta_0$: \begin{equation}\label{int:theta}
    \theta_0 = \left\{\begin{array}{ll}
        {\lambda_1^2}/{(2\alpha_0^2)}-1, & \mbox{ if } \lambda_1 \geq \sqrt{2}\alpha_0, \\
        0, & \mbox{ otherwise.}
    \end{array}\right.
\end{equation}
Note that $\lambda_1$ is fully determined by $\alpha_0$ and $x^0$. So $\theta_0$ can be pre-given to the algorithm. In the rest of this paper, we assume that $\theta_0$ is chosen as in \eqref{int:theta}. This choice of $\theta_0$ immediately leads to the following lower bound for $\alpha_i$ and $\sum_{i=1}^k\alpha_i$.

\begin{lemma} \label{lower-bound-alphai}
For any given $x^0\in\mathbb{R}^n$ and $\alpha_0 > 0$, the stepsizes generated by our AdaBB (Algorithm \ref{agbb}) satisfy
$\alpha_i\geq \frac{1}{\sqrt{2}L}$ for all $i\geq 1$, and hence
$\sum_{i=1}^{k} \alpha_{i} \geq \frac{k}{\sqrt{2}L}$ for any $k\geq 1$.
\end{lemma}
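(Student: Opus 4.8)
The plan is to reduce everything to a single statement about the very first stepsize, namely $\alpha_1 \geq \tfrac{1}{\sqrt2 L}$, and then invoke Proposition \ref{lemma:sum-alpha}(i) to propagate this bound to all $i \geq 1$; summing over $i=1,\dots,k$ then gives $\sum_{i=1}^k \alpha_i \geq \tfrac{k}{\sqrt2 L}$. So the entire content is the base case $k=1$. Here I would use two facts already available: $\lambda_1 \geq 1/L$ from \eqref{lambdak-lower-bound} (which holds because $x^0, x^1 \in B(x^*,R)$ by Corollary \ref{co:bound} and \eqref{local-smooth-2}), and Lemma \ref{alpha-property}, which says the (Option II) values of $\alpha_1$ are the smallest, so it suffices to bound $\alpha_1$ from below under (Option II) in Cases (ii)--(iii).

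I would then split into the three cases of the first iteration. In \textbf{Case (iii)} ($\lambda_1 \leq \alpha_0/2$) the (Option II) formula gives $\alpha_1 = \lambda_1/\sqrt2 \geq \tfrac{1}{\sqrt2 L}$, done. In \textbf{Case (ii)} ($\alpha_0/2 < \lambda_1 < \alpha_0$) the (Option II) formula gives $\alpha_1 = \lambda_1 \geq 1/L \geq \tfrac{1}{\sqrt2 L}$, done. The only case that genuinely requires the prescribed $\theta_0$ from \eqref{int:theta} is \textbf{Case (i)} ($\lambda_1 \geq \alpha_0$), where $\alpha_1 = \sqrt{1+\theta_0}\,\alpha_0$. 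Here I would further distinguish the two branches of \eqref{int:theta}: if $\lambda_1 \geq \sqrt2\,\alpha_0$, then $1+\theta_0 = \lambda_1^2/(2\alpha_0^2)$, hence $\alpha_1 = \lambda_1/\sqrt2 \geq \tfrac{1}{\sqrt2 L}$; if instead $\alpha_0 \leq \lambda_1 < \sqrt2\,\alpha_0$, then $\theta_0 = 0$ so $\alpha_1 = \alpha_0 > \lambda_1/\sqrt2 \geq \tfrac{1}{\sqrt2 L}$, using $\lambda_1 < \sqrt2\,\alpha_0$ for the first inequality and $\lambda_1 \geq 1/L$ for the last. In all cases $\alpha_1 \geq \tfrac{1}{\sqrt2 L}$.

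Having established $\alpha_1 \geq \tfrac{1}{\sqrt2 L}$, I would apply Proposition \ref{lemma:sum-alpha}(i) with $j=1$ to conclude $\alpha_i \geq \tfrac{1}{\sqrt2 L}$ for every $i \geq 1$, and then $\sum_{i=1}^k \alpha_i \geq \tfrac{k}{\sqrt2 L}$ follows immediately.

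I do not expect any serious obstacle: the proof is essentially a finite case check plus one invocation of an earlier proposition. The one point demanding care is verifying that, in Case (i), the definition \eqref{int:theta} has been reverse-engineered precisely so that $\sqrt{1+\theta_0}\,\alpha_0 \geq \tfrac{1}{\sqrt2 L}$ regardless of how large $\lambda_1/\alpha_0$ is --- this is what upgrades the crude bound $\alpha_k \geq \min\{\alpha_0, \tfrac{1}{\sqrt2 L}\}$ of Proposition \ref{lemma:sum-alpha}(ii) to the clean $\tfrac{1}{\sqrt2 L}$ here --- together with the (trivial) observation from Lemma \ref{alpha-property} that it is enough to check the (Option II) values in Cases (ii) and (iii).
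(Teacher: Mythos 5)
Your proposal is correct and follows essentially the same route as the paper: both reduce the lemma to the base case $\alpha_1\geq \lambda_1/\sqrt{2}\geq \tfrac{1}{\sqrt{2}L}$ via the choice \eqref{int:theta} of $\theta_0$ (with the same three-way case analysis, using Lemma \ref{alpha-property} to reduce Cases (ii)--(iii) to (Option II) and \eqref{lambdak-lower-bound} for $\lambda_1\geq 1/L$), and then propagate the bound with Proposition \ref{lemma:sum-alpha}(i). No gaps.
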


\begin{proof}
We first prove $\alpha_1\geq\lambda_1/\sqrt{2}$. There are three cases to consider.
\begin{itemize}
    \item[(a).] If $\lambda_1\geq\sqrt{2}\alpha_0$, then (Case i) in Algorithm \ref{agbb} happens. From the definition of $\theta_0$ in \eqref{int:theta}, we have $\alpha_1=\lambda_1/\sqrt{2}$.
    \item[(b).] If $\alpha_0\leq\lambda_1 <\sqrt{2}\alpha_0$, then (Case i) happens. In this case, we have $\theta_0=0$, and therefore, $\alpha_1=\alpha_0 > \lambda_1/\sqrt{2}$.
    \item[(c).] If $\lambda_1<\alpha_0$, then (Case ii) or (Case iii) happens. In this case, recall Lemma \ref{alpha-property}, we have $\alpha_1\geq\lambda_1/\sqrt{2}$.
\end{itemize}
Combining these three cases proves $\alpha_1\geq\lambda_1/\sqrt{2}$, which further implies $\alpha_1\geq 1/(\sqrt{2}L)$ due to \eqref{lambdak-lower-bound}. It then follows from Proposition \ref{lemma:sum-alpha} (i) that $\alpha_i\geq \frac{1}{\sqrt{2}L}$ for all $i\geq 1$.
As a consequence, we obtain $\sum_{i=1}^{k} \alpha_{i} \geq \frac{k}{\sqrt{2}L}$ for any $k\geq 1$.
\end{proof}

For the ease of presentation, we partition the index set $\{1,2,3, \ldots\}$ into three categories which correspond to the three cases in our AdaBB (Algorithm \ref{agbb}):
\begin{equation}\label{def-category}
I_1:=\{k \geq 1 \mid \lambda_k\geq\alpha_{k-1}\}, \;
I_2:=\{k \geq 1 \mid \alpha_{k-1}/2<\lambda_{k}<\alpha_{k-1}\} \text{~and~}
I_3:=\{k \geq 1 \mid 0<\lambda_{k} \leq \alpha_{k-1}/2\}.
\end{equation}

We first establish a useful lemma.

\begin{lemma} \label{est}
For any given $x^0\in\mathbb{R}^n$ and $\alpha_0 > 0$, the stepsizes generated by our AdaBB (Algorithm \ref{agbb}) have the following properties.
\begin{enumerate}
   \item[(a)] If $i \in I_2$, then $\alpha_{i}\geq \frac{1}{L}$;
\item[(b)] If $(i+1) \in I_3$, then $\alpha_{i}\geq \frac{2}{L}$ and $\alpha_{i}+\alpha_{i+1}\geq\frac{4+\sqrt{2}}{2L}$;
\item[(c)]  If  $i \in I_1\cup I_2$ and $(i+1) \in I_1$, then $\alpha_{i+1}\geq \frac{1}{L}$;
\item[(d)] If $i \in I_3$, 
then $\alpha_{i-1}+\alpha_{i}+\alpha_{i+1}\geq \frac{2+\sqrt{2}}{L}>\frac{3}{L}$.
\end{enumerate}
\end{lemma}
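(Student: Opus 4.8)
The plan is to deduce all four statements from a small number of facts already established: the curvature bound $\lambda_k\geq 1/L$ from \eqref{lambdak-lower-bound}, the uniform stepsize bound $\alpha_k\geq \tfrac{1}{\sqrt{2}L}$ for $k\geq 1$ from Lemma~\ref{lower-bound-alphai}, and the per-case estimates $\alpha_k\geq\lambda_k$ when $k\in I_2$ and $\alpha_k\geq\lambda_k/\sqrt{2}$ when $k\in I_3$. The latter two hold for both (Option~I) and (Option~II): the (Option~II) values are exactly $\lambda_k$ and $\lambda_k/\sqrt{2}$, and by Lemma~\ref{alpha-property} these never exceed the actual $\alpha_k$. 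I would also record once and for all that $\theta_k\geq 0$ for every $k$: this is immediate from the input condition $\theta_0\geq 0$ (equivalently from \eqref{int:theta}) together with an inspection of the three updates, the only one needing a word being (Case~ii), where $\theta_k=\alpha_k\bigl(2/\alpha_{k-1}-1/\lambda_k\bigr)>0$ because $\lambda_k>\alpha_{k-1}/2$.

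Parts (a) and (b) are then essentially one line each. For (a): $i\in I_2$ gives $\alpha_i\geq\lambda_i\geq 1/L$. For (b): $(i+1)\in I_3$ means $\lambda_{i+1}\leq\alpha_i/2$, so $\alpha_i\geq 2\lambda_{i+1}\geq 2/L$; and since iteration $i+1$ falls in (Case~iii), $\alpha_{i+1}\geq\lambda_{i+1}/\sqrt{2}\geq\tfrac{1}{\sqrt{2}L}$, hence $\alpha_i+\alpha_{i+1}\geq \tfrac{2}{L}+\tfrac{1}{\sqrt{2}L}=\tfrac{4+\sqrt{2}}{2L}$. I would flag that the estimate $\alpha_i\geq 2/L$ in (b) is valid even for the boundary value $i=0$, since \eqref{lambdak-lower-bound} already applies at $k=1$; this is what makes part (d) go through for $i=1$.

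For (c) I would split on the case at iteration $i$. If $i\in I_2$, part (a) gives $\alpha_i\geq 1/L$, and since $(i+1)\in I_1$ we have $\alpha_{i+1}=\sqrt{1+\theta_i}\,\alpha_i\geq\alpha_i\geq 1/L$ using $\theta_i\geq 0$. If $i\in I_1$, then the (Case~i) update gives $\theta_i=\alpha_i/\alpha_{i-1}=\sqrt{1+\theta_{i-1}}\geq 1$, so $\alpha_{i+1}=\sqrt{1+\theta_i}\,\alpha_i\geq\sqrt{2}\,\alpha_i\geq\sqrt{2}\cdot\tfrac{1}{\sqrt{2}L}=\tfrac1L$, where the last step is the uniform bound of Lemma~\ref{lower-bound-alphai}. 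For (d), with $i\in I_3$ I would invoke part (b) at the shifted index $i-1$ (legitimate since $(i-1)+1=i\in I_3$ and $i-1\geq 0$) to get $\alpha_{i-1}+\alpha_i\geq\tfrac{4+\sqrt{2}}{2L}$, then add $\alpha_{i+1}\geq\tfrac{1}{\sqrt{2}L}$ from Lemma~\ref{lower-bound-alphai}; since $\tfrac{1}{\sqrt{2}L}=\tfrac{\sqrt{2}}{2L}$, the sum is at least $\tfrac{4+2\sqrt{2}}{2L}=\tfrac{2+\sqrt{2}}{L}$, which exceeds $\tfrac{3}{L}$ because $\sqrt{2}>1$.

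The computations here are all routine. The one place that needs a moment of care is the sub-case $i\in I_1$ of part (c): one has to unwind the (Case~i) update to see $\theta_i\geq 1$ and then feed in the uniform lower bound $\alpha_i\geq \tfrac{1}{\sqrt{2}L}$ to turn the \emph{relative} gain $\alpha_{i+1}\geq\sqrt{2}\,\alpha_i$ into the \emph{absolute} bound $\alpha_{i+1}\geq 1/L$. A secondary thing to keep straight is to route every ``$\alpha_k\geq\lambda_k$'' or ``$\alpha_k\geq\lambda_k/\sqrt2$'' inequality through Lemma~\ref{alpha-property} so that it covers both options, and to check that the boundary indices ($i=0$ in (b), $i=1$ in (d)) are handled because \eqref{lambdak-lower-bound} is already available at $k=1$.
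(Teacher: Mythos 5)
Your proof is correct and follows essentially the same route as the paper's: part (a) from $\alpha_i\geq\lambda_i\geq 1/L$ via Lemma \ref{alpha-property} and \eqref{lambdak-lower-bound}, part (b) from $\alpha_i\geq 2\lambda_{i+1}$ plus the uniform bound $\alpha_{i+1}\geq\tfrac{1}{\sqrt{2}L}$, part (c) by splitting on $i\in I_1$ versus $i\in I_2$ and unwinding the (Case i) update to get $\theta_i\geq 1$, and part (d) by shifting (b) and adding Lemma \ref{lower-bound-alphai}. Your explicit verification that $\theta_k\geq 0$ in all cases (including Case ii) and your attention to the boundary index are details the paper leaves implicit, but they do not change the argument.
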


\begin{proof}
(a)  Since $i\in I_2$, by Lemma \ref{alpha-property} and \eqref{lambdak-lower-bound} we have $\alpha_{i}\geq \lambda_{i}\geq \frac{1}{L}$.

(b)  By definition,  $(i+1) \in I_3$ implies that $\alpha_{i}\geq 2\lambda_{i+1}\geq \frac{2}{L}$. This combining with Lemma \ref{lower-bound-alphai} yields $\alpha_{i}+\alpha_{i+1}\geq \frac{1}{\sqrt{2}L}+\frac{2}{L} = \frac{4+\sqrt{2}}{2L}$.

(c) If $i \in I_1$, we have $\alpha_{i}=\sqrt{1+\theta_{i-1}}\alpha_{i-1}\geq \alpha_{i-1}$, which gives $\theta_{i}=\frac{\alpha_i}{\alpha_{i-1}}\geq 1$. Since $(i+1) \in I_1$, we obtain $\alpha_{i+1}=\sqrt{1+\theta_{i}}\alpha_{i}\geq \sqrt{2}\alpha_{i}\geq \frac{1}{L}$.
If $i \in I_2$, we have $\alpha_{i}\geq \lambda_i \geq \frac{1}{L}$. Then, $(i+1)\in I_1$ implies $\alpha_{i+1}=\sqrt{1+\theta_{i}}\alpha_{i}\geq \alpha_{i}\geq \frac{1}{L}$.

(d) Since $i\in I_3$, from part (b) we have $\alpha_{i-1}+\alpha_i \geq  \frac{4+\sqrt{2}}{2L}$. The result follows  by noting
$\alpha_{i+1}\geq \frac{1}{\sqrt{2}L}$.
\end{proof}

We now define some useful notation.
Let $1\leq p\leq q$ be integers. We define $(p,\ldots,q)$ as the ordered sequence of indices from $p$ to $q$, and $\{p,\ldots,q\}$ as the set of indices from $p$ to $q$ without regard to order. 

\begin{definition}
  [Break index]
   An index $i\geq 1$ is called a break index if $i\in I_1$ and $(i+1)\notin I_3$.
   For $j\geq 1$, we let $i_j$ be the $j$th smallest break index within $\{1,2,3,\ldots\}$.
\end{definition}

To carry out a more elaborate analysis, we define $i_0 \in \{0, 1, 2\}$ as follows:
\begin{equation}\label{def:i0}
i_0 =
\left\{
  \begin{array}{ll}
    0, & \hbox{if $1\in I_2$, or $(1,2)\in I_1\times I_3$, or $(1,2) \in I_3\times I_3$,} \\
    1, & \hbox{if $(1,2)\in I_1\times I_1$, or $(1,2)\in I_1\times I_2$, or $(1,2)\in I_3\times I_2$, or $(1,2,3)\in I_3\times I_1\times I_3$,} \\
    2, & \hbox{if $(1,2,3) \in I_3\times I_1 \times I_1$, or $(1,2,3)\in I_3\times I_1 \times I_2$.}
  \end{array}
\right.
\end{equation}
Note that every index belongs to one of the three categories \eqref{def-category}. Moreover, the nine conditions in \eqref{def:i0} cover all possibilities for the first three indices 1, 2, and 3. 
Our idea to prove the improved bound \eqref{open-question-answer} is to divide the ordered sequence of indices $(i_0+1,i_0+2,\ldots,k)$ into many shorter pieces, and for each piece, say, $(p,\ldots,q)$, we shall show that $\sum_{i=p}^q \alpha_i\geq (q-p+1)/L$.
In the rest of this section, we assume $k\geq 3$ is an arbitrarily fixed integer.
For fixed $k\geq 3$, we assume that there are $(m-1)$ break indices within $\{1,2,\ldots, k\}$, which satisfy
$1\leq i_1<i_2<\ldots<i_{m-1}\leq k$.
For convenience, we define
\begin{equation}\label{def:Tj}
T_j := ({i_{j-1}+1},{i_{j-1}+2},\ldots,{i_j}) \text{~~for~}j=1,2,\ldots,m-1, \text{~and~~} T_m := (i_{m-1}+1,i_{m-1}+2,\ldots, k).
\end{equation}
That is,
\[
\overbrace{i_0+1, \ldots, i_1}^{T_1}, \overbrace{i_1+1, \ldots, i_2}^{T_2},
 \ldots, \ldots,
 \overbrace{i_{m-2}+1, \ldots, i_{m-1}}^{T_{m-1}},
 \overbrace{i_{m-1}+1, \ldots, k}^{T_{m}}.
\]
Note that $(p,\ldots,q)$ is an empty set if $p>q$. Therefore, $T_1=\varnothing$ if $i_0\geq i_1$, and $T_m=\emptyset$ if $i_{m-1}=k$.
Due to the definition of break index, if there is an index $p\in I_1$ such that $i_{j-1}+1 \leq p \leq i_j-1$ for some $j$, then $(p+1)\in I_3$ must hold.
For $i\geq 1$ and $j\geq i+1$, we define the following sets, which contain ordered and continuous indices.
\begin{align}
    \mathcal{A} & :=
    \Big \{ (i,i+1,\ldots,j)   \mid \{i+1,\ldots,j\} \subseteq I_3\Big\}, \label{a}\\
    \mathcal{Q} & := \Big \{ (i,{i+1}, \ldots, j)  \mid  i \in I_2, \, \{i+1,\ldots,j\}\subseteq I_2 \cup I_3\Big\},\label{q}\\
    \mathcal{M} &:= \Big \{ (i,{i+1}, \ldots, j)  \mid  i \notin I_3, \, j \notin I_1, \text{ and, for any } i\leq \ell \leq j-1, \;
\ell \in I_1 \Rightarrow (\ell+1) \in I_3\Big\}.\label{m}
\end{align}
To establish our improved bound \eqref{open-question-answer}, it is sufficient to show that $\sum_{i \in T_j} \alpha_i\geq \frac{|\,T_j\,|}{L}$
for $j = 1,2,\ldots,m$. To show this, we first prove the following key lemma.
\begin{lemma}\label{lem:AQM}
Let $i\geq 1$ and $j\geq i+1$.
If $(i,i+1,\ldots,j) \in \mathcal{A} \cup \mathcal{Q} \cup  \mathcal{M}$,
where $\mathcal{A}$, $\mathcal{Q}$ and $\mathcal{M}$ are defined in \eqref{a}-\eqref{m}, then $\sum_{\ell=i}^j \alpha_\ell \geq ({j-i+1})/{L}$.
\end{lemma}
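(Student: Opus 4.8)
The plan is to handle each of the three families $\mathcal{A}$, $\mathcal{Q}$, $\mathcal{M}$ separately, in each case exploiting the structural constraints on which of $I_1, I_2, I_3$ the consecutive indices fall into, together with the already-established per-index and short-block lower bounds from Lemma~\ref{est} and Lemma~\ref{lower-bound-alphai}. The common goal is to show the average stepsize over the block is at least $1/L$, so it suffices to produce, for each block, a grouping of its indices into sub-blocks on each of which the sum of stepsizes meets the $1/L$-per-index target.

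First I would dispose of $\mathcal{A}$. Here $\{i+1,\dots,j\}\subseteq I_3$, so for every $\ell$ with $i+1\le \ell\le j$ we have $\ell\in I_3$, hence by Lemma~\ref{est}(b) (applied with the role of $i$ there played by $\ell-1$) we get $\alpha_{\ell-1}\ge 2/L$; in particular $\alpha_i\ge 2/L$ and $\alpha_\ell\ge 2/L$ for $i\le \ell\le j-1$, while $\alpha_j\ge \frac{1}{\sqrt2 L}$ by Lemma~\ref{lower-bound-alphai}. Summing gives $\sum_{\ell=i}^j\alpha_\ell\ge \frac{2(j-i)}{L}+\frac{1}{\sqrt2 L}\ge \frac{j-i+1}{L}$, since $j-i\ge 1$. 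For $\mathcal{Q}$, we have $i\in I_2$ and each of $i+1,\dots,j$ in $I_2\cup I_3$. The idea is: indices in $I_2$ contribute $\alpha_\ell\ge 1/L$ directly by Lemma~\ref{est}(a), and a maximal run of $I_3$-indices $\ell+1,\dots,\ell+t$ (preceded by an index $\ell$ which is in $I_2\cup I_3$, hence with $\alpha_\ell\ge 1/L$ or $\ge 2/L$) can be paired using Lemma~\ref{est}(b): $\ell+1\in I_3$ forces $\alpha_\ell\ge 2/L$, and then $\alpha_\ell+\alpha_{\ell+1}\ge \frac{4+\sqrt2}{2L}\ge \frac2L$, while each interior $I_3$-index $\ell+s$ with $s\le t-1$ again forces $\alpha_{\ell+s-1}\ge 2/L$; a careful bookkeeping of these pairings — making sure each $\alpha$ is used once — yields the per-index average $1/L$. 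The cleanest way is probably induction on $j-i$: strip off either a leading $I_2$-index (reducing to a shorter $\mathcal{Q}$-block) or, if $i+1\in I_3$, the pair $(i,i+1)$ which contributes $\ge 2/L$ and leaves either $\varnothing$ or a block still starting in $I_2\cup I_3$.

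The main work, and the main obstacle, is $\mathcal{M}$: here $i\notin I_3$, $j\notin I_1$, and the only structural rule is that any $I_1$-index $\ell<j$ in the block is immediately followed by an $I_3$-index. My plan is induction on the block length $j-i$, peeling off a short prefix. If $i\in I_1$ then $i+1\in I_3$; using Lemma~\ref{est}(c) (since $i\in I_1\cup I_2$ and... ) — actually $i\in I_1$ with the next index in $I_3$ gives $\alpha_i\ge \frac{\lambda_i\sqrt{1+\theta_{i-1}}}{\cdots}$; more usefully, $i+1\in I_3$ gives $\alpha_i\ge 2/L$ via Lemma~\ref{est}(b), and then either $i+1=j$ (but $j\notin I_1$ is fine, $j\in I_3$ contradicts nothing — wait, we still need $\alpha_i+\alpha_{i+1}\ge 2/L$, which holds) or we continue. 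If $i\in I_2$, then depending on whether subsequent indices stay in $I_2\cup I_3$ we may reduce to a $\mathcal{Q}$-block (already handled) or hit an $I_1$-index, which again triggers an $I_3$-successor. The delicate point is a run that looks like $I_2,\dots,I_2,I_1,I_3,\dots$: the $I_1$-index $\ell$ itself only satisfies $\alpha_\ell\ge 1/L$ via Lemma~\ref{est}(c) provided its predecessor is in $I_1\cup I_2$, which it is; then $\ell+1\in I_3$ pairs with $\ell$ (or with its own successor) as in the $\mathcal{A}$/$\mathcal{Q}$ analysis. I expect to need a finite case analysis on the first two or three indices of the block (mirroring the nine-case split in \eqref{def:i0}), and the crux is verifying that the "deficit" of the single $I_3$-index at position $j$ (only $\ge \frac{1}{\sqrt2 L}$) is always compensated by a neighboring index carrying a $2/L$ bound — which is exactly what Lemma~\ref{est}(b) guarantees because $j\in I_3$ (when it is) forces $\alpha_{j-1}\ge 2/L$, or, if $j\notin I_3$, then $j\in I_2$ gives $\alpha_j\ge 1/L$ outright. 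Assembling these, the induction closes and $\sum_{\ell=i}^j\alpha_\ell\ge (j-i+1)/L$ follows in all three families.
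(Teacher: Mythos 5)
Your plan is correct and follows essentially the same route as the paper's proof: dispose of $\mathcal{A}$ via Lemma \ref{est}(b) together with the $1/(\sqrt{2}L)$ bound of Lemma \ref{lower-bound-alphai}, and then decompose $\mathcal{Q}$- and $\mathcal{M}$-blocks into singleton $I_2$-indices (each with $\alpha_\ell\geq 1/L$) and pieces consisting of an index followed by a maximal $I_3$-run, which are exactly $\mathcal{A}$-type pieces. One caution: your alternative ``strip the pair $(i,i+1)$'' induction for $\mathcal{Q}$ does not close as stated, since crediting that pair only $2/L$ under-counts on a block like $(I_2,I_3,I_3)$ and the leftover may begin with an $I_3$-index, hence lies outside $\mathcal{A}\cup\mathcal{Q}\cup\mathcal{M}$; keep each maximal $I_3$-run attached to its predecessor, as in your primary bookkeeping description and in the paper.
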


\begin{proof}
There are three cases to consider.
\begin{itemize}
    \item[(a).] $(i,i+1,\ldots,j) \in \mathcal{A}$. In this case, we have $\{i+1,\ldots,j\} \subseteq I_3$. Hence, it follows from Lemma \ref{est} (b) that $\alpha_{\ell} \geq \frac{2}{L}$ for $i \leq \ell \leq j-1$. Thus, we have $\sum_{\ell=i}^j \alpha_\ell \geq \frac{2(j-i)}{L} +\frac{1}{\sqrt{2}L} \geq \frac{j-i+1}{L}$ as $j-i\geq 1$.
    \item[(b).] $(i,i+1,\ldots,j)\in \mathcal{Q}$. In this case, we can always divide it into shorter pieces as $(i,i+1,\ldots,j)=(p_1, \ldots, q_1 \mid p_2, \ldots, q_2 \mid \ldots\mid p_s,\ldots, q_s)$ for some $s\geq 1$, with $p_1=i$, $q_s=j$, $p_{t+1}=q_t +1$ for $t=1,2,\ldots,s-1$, such that for each $(p_t,\ldots, q_t)$, $t=1,2,\ldots, s$, we have either $p_t=q_t\in I_2$, or $q_t\geq p_t+1$, $p_t\in I_2$ and $\{p_t+1,\ldots, q_t\}\subseteq I_3$. In the former case, $p_t=q_t\in I_2$, the corresponding piece has length one, and by Lemma \ref{est} (a), we have $\alpha_{p_t} \geq 1/L$. In the latter case, we have  $(p_t, \ldots, q_t)$ belongs to $\mathcal{A}$, and therefore $\sum_{\ell=p_t}^{q_t}\alpha_{\ell} \geq (q_t-p_t+1)/L$.
    \item[(c).] $(i,i+1,\ldots,j)\in\mathcal{M}$. Similarly, in this case it can always be divided into shorter pieces as $(p_1,\ldots,q_1 \mid p_2,\ldots, q_2 \mid\ldots\mid p_{s'},\ldots, q_{s'})$ for some $s'\geq 1$, with $p_1=i$, $q_{s'}=j$, $p_{t+1}=q_t+1$ for $t=1,2,\ldots,s'-1$, such that for each $(p_t,\ldots, q_t)$, $t=1,2,\ldots, s'$, we have either $p_t=q_t\in I_2$, or $q_t\geq p_t+1$,
    $\{p_t+1,\ldots, q_t\}\subseteq I_3$.
    In the former case, $p_t=q_t\in I_2$, the corresponding piece has length one, and by Lemma \ref{est} (i), we have $\alpha_{p_t} \geq 1/L$.
    In the latter case, again we have $(p_t, \ldots, q_t)$ belongs to $\mathcal{A}$, and the result $\sum_{\ell=p_t}^{q_t}\alpha_{\ell} \geq (q_t-p_t+1)/L$ follows.
\end{itemize}
For Cases (b) and (c), the result $\sum_{\ell=i}^j \alpha_\ell \geq \frac{j-i+1}{L}$ follows immediately since each piece $(p_t, \ldots, q_t)$ satisfies $\sum_{\ell=p_t}^{q_t}\alpha_{\ell} \geq (q_t-p_t+1)/L$.
\end{proof}

Equipped with Lemma \ref{lem:AQM}, we next show that $\sum_{i \in T_j} \alpha_i\geq |\,T_j\,|/L$ for $1\leq j\leq m$.
We first prove the case $2\leq j\leq m-1$, and then prove the cases $j=m$ and $j=1$.

\begin{lemma} \label{lem:in}
For $2\leq j\leq m-1$, there holds $\sum_{i \in T_j} \alpha_i\geq |\,T_j\,|/L$.
\end{lemma}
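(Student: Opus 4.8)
The plan is to show that for any ``interior'' block $T_j = (i_{j-1}+1,\ldots,i_j)$ with $2\le j\le m-1$, the ordered index sequence $T_j$ belongs to $\mathcal{A}\cup\mathcal{Q}\cup\mathcal{M}$, so that Lemma~\ref{lem:AQM} applies directly and yields $\sum_{i\in T_j}\alpha_i\ge |T_j|/L$. The starting point is the structure forced by the definition of break index: both $i_{j-1}$ and $i_j$ are break indices, so $i_{j-1}\in I_1$, $i_j\in I_1$, and neither $i_{j-1}+1$ nor $i_j+1$ lies in $I_3$; moreover, by definition of $i_j$ being the \emph{next} break index after $i_{j-1}$, no index strictly between them is a break index. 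First I would translate ``no break index in $(i_{j-1}+1,\ldots,i_j-1)$'' into the implication: for every $\ell$ with $i_{j-1}+1\le \ell\le i_j-1$, if $\ell\in I_1$ then $(\ell+1)\in I_3$ (since $\ell\in I_1$ and $\ell$ not a break index force $(\ell+1)\in I_3$).

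Next I would set $i:=i_{j-1}+1$ and $j':=i_j$ (the endpoints of the block) and check the defining conditions of $\mathcal{M}$ in \eqref{m}. The condition ``$i\notin I_3$'' holds because $i = i_{j-1}+1$ and $i_{j-1}$ is a break index, so $(i_{j-1}+1)\notin I_3$. The condition ``$j'\notin I_1$'' is the one subtle point: $j'=i_j\in I_1$ by definition of break index, so $T_j$ as written does \emph{not} literally satisfy the $\mathcal{M}$ condition at its right endpoint. The natural fix is to peel off the last index: write $T_j = (i_{j-1}+1,\ldots,i_j-1)\,\cup\,(i_j)$. If $i_{j-1}+1 = i_j$ the block is a single index $i_j\in I_1$; but an isolated $I_1$ index contributes only the weak bound from Lemma~\ref{est}, so I would instead absorb $i_j$ into a shifted block --- more precisely, apply Lemma~\ref{lem:AQM} to $(i_{j-1}+1,\ldots,i_j)$ after verifying that \emph{appending} a final $I_1$ index to an $\mathcal{M}$-sequence still gives a sequence on which $\sum\alpha_\ell\ge(\text{length})/L$, using Lemma~\ref{est}(c): if $\ell\in I_1\cup I_2$ and $\ell+1\in I_1$ then $\alpha_{\ell+1}\ge 1/L$, so the new last term alone carries its own $1/L$. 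Thus the clean approach is: show $(i_{j-1}+1,\ldots,i_j-1)\in\mathcal{M}$ (handling the empty case trivially), invoke Lemma~\ref{lem:AQM} for that sub-block, and then add $\alpha_{i_j}\ge 1/L$ from Lemma~\ref{est}(c) applied with $\ell=i_j-1\in I_1\cup I_2$ (which holds since $i_j-1$ is not in $I_3$, because... here one needs that $i_j-1$, if in $I_3$, would force $i_j\notin I_1$ by definition of $I_3$'s successor behavior) and $\ell+1 = i_j\in I_1$.

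To verify $(i_{j-1}+1,\ldots,i_j-1)\in\mathcal{M}$ I would check the three clauses of \eqref{m}: (1) the left endpoint $i_{j-1}+1\notin I_3$ as noted; (2) the right endpoint $i_j-1\notin I_1$ --- if $i_j-1\in I_1$ then, not being a break index, it would force $i_j\in I_3$, contradicting $i_j\in I_1$; (3) for any $\ell$ in the interior, $\ell\in I_1\Rightarrow(\ell+1)\in I_3$, which is exactly the no-break-index-in-between property established in the first step. Once these are confirmed, Lemma~\ref{lem:AQM} gives the bound on the sub-block and Lemma~\ref{est}(c) closes the gap for the final index, yielding $\sum_{i\in T_j}\alpha_i\ge (|T_j|-1)/L + 1/L = |T_j|/L$ when $|T_j|\ge 2$, and the single-index case $|T_j|=1$ (meaning $i_j = i_{j-1}+1$) is handled directly by Lemma~\ref{est}(c) with $\ell=i_{j-1}\in I_1$, $\ell+1=i_j\in I_1$, giving $\alpha_{i_j}\ge 1/L$.

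The main obstacle I anticipate is the careful bookkeeping at the right endpoint: the block $T_j$ ends at a break index which lies in $I_1$, but $\mathcal{M}$ requires the terminal index to be \emph{outside} $I_1$, so the decomposition of $T_j$ into an $\mathcal{M}$-prefix plus one trailing $I_1$-index must be justified in every edge case (empty prefix, and the question of which category $i_j-1$ falls in so that Lemma~\ref{est}(c) is applicable). A secondary point to get right is confirming that in the decomposition inside the proof of Lemma~\ref{lem:AQM} the sequence $(i_{j-1}+1,\ldots,i_j-1)$ genuinely admits the claimed partition into length-one $I_2$-pieces and $\mathcal{A}$-pieces; this follows because any maximal run of $I_3$ indices must be immediately preceded by an index not in $I_3$ (either $I_1$ --- which then starts an $\mathcal{A}$-piece --- or $I_2$), but spelling out that the leftmost index $i_{j-1}+1\in I_1\cup I_2$ initiates the first piece correctly requires a short argument.
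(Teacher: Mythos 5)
Your strategy works only in the branch where $i_j-1\in I_2$, and it has a genuine gap in the branch $i_j-1\in I_3$. The pivot of your argument is the parenthetical claim that ``$i_j-1$, if in $I_3$, would force $i_j\notin I_1$''. There is no such successor rule for $I_3$: nothing in the definitions \eqref{def-category} prevents an $I_3$ index from being immediately followed by an $I_1$ index, and indeed the paper's own definition of $i_0$ in \eqref{def:i0} explicitly lists patterns such as $(1,2,3)\in I_3\times I_1\times I_1$. Concretely, if $i_j-1\in I_3$ then $\alpha_{i_j-1}=\lambda_{i_j-1}/\sqrt{2}$ may be as small as $1/(\sqrt{2}L)$ while $\theta_{i_j-1}=\alpha_{i_j-1}/\alpha_{i_j-2}\leq 1/(2\sqrt{2})$, so $\alpha_{i_j}=\sqrt{1+\theta_{i_j-1}}\,\alpha_{i_j-1}$ can be strictly below $1/L$; Lemma \ref{est}(c) is simply not applicable because its hypothesis $\ell\in I_1\cup I_2$ fails for $\ell=i_j-1$. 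Hence your decomposition ``$\mathcal{M}$-prefix $(i_{j-1}+1,\ldots,i_j-1)$ plus a trailing index worth $1/L$'' cannot close the count $|T_j|/L$ in this case, and this case cannot be argued away.

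The paper's proof is forced into exactly the extra case analysis you are missing: when $i_j-1\in I_3$ it peels off the last \emph{three} indices and uses Lemma \ref{est}(d), which gives $\alpha_{i_j-2}+\alpha_{i_j-1}+\alpha_{i_j}\geq 3/L$ because $i_j-1\in I_3$ forces $\alpha_{i_j-2}\geq 2\lambda_{i_j-1}\geq 2/L$, so the deficit of the two small stepsizes $\alpha_{i_j-1},\alpha_{i_j}$ is absorbed by $\alpha_{i_j-2}$. This in turn creates a boundary problem for the remaining prefix $(i_{j-1}+1,\ldots,i_j-3)$, whose right endpoint might lie in $I_1$ (then it cannot be in $\mathcal{M}$), which is why the paper further splits on whether $i_j-3\in I_1$ (its cases (b2) and (b3)): if $i_j-3\in I_1$, then $i_j-2\in I_3$ and $\alpha_{i_j-3}\geq 2/L$ by Lemma \ref{est}(b), and $i_j-3$ is peeled off as its own piece. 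Your treatment of the branch $i_j-1\in I_2$ (prefix in $\mathcal{Q}$ or $\mathcal{M}$, plus $\alpha_{i_j}\geq 1/L$ from Lemma \ref{est}(c)) and of the singleton block $T_j=\{i_j\}$ does match the paper's cases (a1), (a2), (b1); to repair the proposal you must add the $i_j-1\in I_3$ branch along the lines above (and, as a minor point, handle length-one prefixes separately, since $\mathcal{A},\mathcal{Q},\mathcal{M}$ and Lemma \ref{lem:AQM} are stated only for sequences of length at least two).
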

\begin{proof}
For $2\leq j\leq m-1$, we have $T_j=({i_{j-1}+1},\ldots, {i_{j}})$. By the definition of $i_j$, we have $i_{j-1}+1\in I_1\cup I_2$ and $i_j\in I_1$. This also implies $i_j-1\notin I_1$.  Moreover, if there exists $\ell \in I_1$ satisfying
${i_{j-1}+1} \leq \ell\leq i_j-2$, then there must hold $(\ell+1) \in I_3$.
We need to prove the result for two cases: (a) $|T_j \bigcap I_1|= 1$, and (b) $|T_j\bigcap I_1| \geq 2$.

\begin{itemize}
    \item[(a).] In this case, we have $|T_j\bigcap I_1|=1$. There are three cases to consider: (a1) ${i_{j-1}+1}\in I_1$; (a2) ${i_{j-1}+1}\in I_2$ and ${i_{j}-1} \in I_2$; (a3) ${i_{j-1}+1}\in I_2$ and ${i_{j}-1}\in I_3$.

    \begin{itemize}
        \item[(a1).] If ${i_{j-1}+1}\in I_1$, then ${i_{j-1}+1}={i_{j}}$ and $ T_j = \{i_j\}$.
        Since ${i_{j-1}}\in I_1$ and $i_j\in I_1$, from Lemma \ref{est} (c) we have ${\alpha_{i_{j}}} \geq \frac{1}{L}$.
        \item[(a2).] If ${i_{j-1}+1}\in I_2$ and ${i_{j}-1} \in I_2$, we partition $T_j$ into two parts: $T_j = ({i_{j-1}+1},\ldots, {i_{j}-1} \,\mid\, {i_{j}})$. Since $i_j\in I_1$, there is no index in $({i_{j-1}+1},\ldots, {i_{j}-1})$ belonging to $I_1$. Therefore, $({i_{j-1}+1},\ldots, {i_{j}-1}) \in \mathcal{Q}$. Moreover, $i_{j} \in I_1$ and ${i_{j}-1} \in I_2$ imply that  ${\alpha_{i_{j}}} \geq \frac{1}{L}$ (see Lemma \ref{est} (c)). Using Lemma \ref{lem:AQM}, we obtain $\sum_{i=i_{j-1}+1}^{i_j} \alpha_i \geq |\,T_j\,|/L$.
        \item[(a3).] If ${i_{j-1}+1}\in I_2$ and ${i_{j}-1}\in I_3$, we partition $T_j$ into two parts: $T_j = (i_{j-1}+1,\ldots, {i_{j}-3} \,\mid\, {i_{j}-2},{i_{j}-1},{i_{j}})$. Again, since $i_j\in I_1$, there is no index in $({i_{j-1}+1},\ldots, {i_{j}-3})$ belonging to $I_1$. Therefore, $({i_{j-1}+1},\ldots, {i_{j}-3})$, if nonempty, must belong to $\mathcal{Q}$. Moreover, it follows from Lemma \ref{est} (d) that $\sum_{\ell=i_{j}-2}^{i_j}\alpha_{\ell} \geq \frac{3}{L}$. Using Lemma \ref{lem:AQM}, we obtain $\sum_{i=i_{j-1}+1}^{i_j} \alpha_i \geq |\,T_j\,|/L$.
    \end{itemize}

    \item[(b).] In this case, we have $|T_j\bigcap I_1| \geq 2$. 
    There are again three cases to consider: (b1) ${i_{j}-1}\in I_2$; (b2) ${i_{j}-1}\in I_3$ and ${i_{j}-3}\in I_1$; (b3) ${i_{j}-1}\in I_3$ and ${i_{j}-3}\notin I_1$.
    \begin{itemize}
        \item[(b1).] If ${i_{j}-1}\in I_2$, then we can partition $T_j$ as $T_j = ({i_{j-1}+1},\ldots, {i_{j}-1}  \mid  {i_{j}})$. It is easy to see that $({i_{j-1}+1},\ldots, {i_{j}-1}) \in \mathcal{M}$. Moreover, according to Lemma \ref{est} (c), we have $\alpha_{i_{j}} \geq \frac{1}{L}$. Using Lemma \ref{lem:AQM}, we have $\sum_{i=i_{j-1}+1}^{i_j} \alpha_i \geq |\,T_j\,|/L$.
        \item[(b2).] If ${i_{j}-1} \in I_3$ and ${i_{j}-3}\in I_1$, then we must have ${i_{j}-2} \in I_3$, because otherwise $i_j-3$ is a break index. Moreover, $i_j-4\notin I_1$, because otherwise, $i_j-4$ is a break index. We then partition $T_j$ into $T_j = ({i_{j-1}+1},\ldots, {i_{j}-4} \mid {i_{j}-3} \mid {i_{j}-2},{i_{j}-1},{i_{j}})$. It is easy to see that $({i_{j-1}+1},\ldots,{i_{j}-4})$, if nonempty, must belong to $\mathcal{M}$. Moreover, since ${i_{j}-2} \in I_3$, we have by Lemma \ref{est} (b) that $\alpha_{i_{j}-3}\geq \frac{2}{L}$, and since ${i_{j}-1} \in I_3$,  we have $\alpha_{i_{j}-2}+\alpha_{i_{j}-1}+\alpha_{i_{j}} \geq \frac{3}{L}$ by Lemma \ref{est} (d). Using Lemma \ref{lem:AQM}, we have $\sum_{i=i_{j-1}+1}^{i_j} \alpha_i \geq |\,T_j\,|/L$.
        \item[(b3).] If ${i_{j}-1} \in I_3$ and ${i_{j}-3}\notin I_1$, then we partition $T_j$ into $T_j = ({i_{j-1}+1},\ldots, {i_{j}-3}  \mid {i_{j}-2},{i_{j}-1},{i_{j}})$. It is easy to see that $({i_{j-1}+1},\ldots, {i_{j}-3})$, if nonempty, must belong to $ \mathcal{M}$. Since ${i_{j}-1} \in I_3$, we have $\alpha_{i_{j}-2}+\alpha_{i_{j}-1}+\alpha_{i_{j}} \geq \frac{3}{L}$ by  Lemma \ref{est} (d). Using Lemma \ref{lem:AQM}, we have $\sum_{i=i_{j-1}+1}^{i_j} \alpha_i \geq |\,T_j\,|/L$.
    \end{itemize}
\end{itemize}
This completes the proof. 
\end{proof}

\begin{lemma} \label{lem:se}
For $j=m$ or $j=1$, there holds $\sum_{i \in T_j} \alpha_i\geq |\,T_j\,|/L$.
\end{lemma}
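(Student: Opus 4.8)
The plan is to handle the two boundary pieces $T_m$ and $T_1$ separately, reusing the combinatorial machinery (Lemmas~\ref{est}, \ref{lem:AQM}, and the case analysis of Lemma~\ref{lem:in}) with the key difference that $T_m$ has no ``closing'' break index $i_j\in I_1$ at its right end, and $T_1$ has no ``opening'' index at its left end but instead starts at $i_0+1$, where $i_0$ was defined in \eqref{def:i0} precisely to absorb the irregular first few indices.

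For $j=m$, recall $T_m=(i_{m-1}+1,\ldots,k)$ and that $i_{m-1}$ is the largest break index in $\{1,\ldots,k\}$, so for every $\ell$ with $i_{m-1}+1\le\ell\le k-1$ satisfying $\ell\in I_1$ we must have $(\ell+1)\in I_3$ (otherwise $\ell$ would be a break index exceeding $i_{m-1}$). I would first dispose of the trivial subcase $i_{m-1}=k$ (then $T_m=\varnothing$ and there is nothing to prove), and also the subcase where $T_m$ has length one. For the general case, the natural move is to peel off a short suffix of $T_m$ according to the position of the last few indices, exactly mirroring the sub-cases (a1)--(b3) of Lemma~\ref{lem:in}: if the last index $k$ lies in $I_2$, use Lemma~\ref{est}(a); if $k\in I_1$ the first index $i_{m-1}+1\in I_1\cup I_2$ together with $k\in I_1$ gives $\alpha_k\ge 1/L$ via Lemma~\ref{est}(c), and so on; if the tail ends in a run of $I_3$ indices, use the triple bound Lemma~\ref{est}(d). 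After the suffix is removed, the remaining prefix $(i_{m-1}+1,\ldots,q)$ satisfies $i_{m-1}+1\notin I_3$, $q\notin I_1$, and the implication ``$\ell\in I_1\Rightarrow(\ell+1)\in I_3$'' on its interior, hence lies in $\mathcal{M}$ (or $\mathcal{Q}$, when the starting index is in $I_2$); Lemma~\ref{lem:AQM} then finishes it. The only genuinely new wrinkle compared with Lemma~\ref{lem:in} is that $k$ is allowed to lie in $I_3$ (it need not be a break index), which is why one has to include the ``tail ends in $I_3$'' branch; but this is covered verbatim by Lemma~\ref{est}(d) and the prefix argument.

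For $j=1$, recall $T_1=(i_0+1,\ldots,i_1)$ where $i_1$ is the first break index (so $i_1\in I_1$ and $i_1+1\notin I_3$), and $T_1=\varnothing$ if $i_0\ge i_1$. The role of $i_0$, chosen in \eqref{def:i0}, is to guarantee that after discarding indices $1,\ldots,i_0$ the surviving block $(i_0+1,\ldots,i_1)$ is ``regular'' in the sense needed for $\mathcal{M}$/$\mathcal{Q}$: namely $i_0+1\notin I_3$ and no interior $I_1$ index without a trailing $I_3$. Concretely I would go through the nine cases of \eqref{def:i0}: in each case one checks directly that either $T_1$ is empty, or $i_0+1$ is the index at which regularity begins, and then the same prefix/suffix decomposition as in Lemma~\ref{lem:in} applies — strip the closing index $i_1\in I_1$ (using Lemma~\ref{est}(c) once we verify $i_1-1\notin I_1$, or using Lemma~\ref{est}(d) if a trailing $I_3$-run appears), leaving a prefix in $\mathcal{M}\cup\mathcal{Q}$ to which Lemma~\ref{lem:AQM} applies. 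One must also separately verify the very short configurations where $i_1\in\{1,2,3\}$ so that $T_1$ itself is one of the explicit length-$\le 3$ blocks; these are exactly the configurations listed in \eqref{def:i0}, and each is settled by a direct application of the relevant part of Lemma~\ref{est} (e.g.\ $(1,2,3)\in I_3\times I_1\times I_3$ with $i_0=1$ gives $T_1=(2,3)$ with $3\in I_3$, handled by Lemma~\ref{est}(d) after noting the preceding indices).

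The main obstacle I anticipate is bookkeeping rather than a conceptual difficulty: one has to be careful that in the boundary pieces the hypotheses of $\mathcal{A}$, $\mathcal{Q}$, $\mathcal{M}$ are genuinely met — in particular that the left endpoint is not in $I_3$ and that no interior break index has been missed — and that the edge cases where a piece is empty or of length $1,2,3$ are each accounted for. The definition \eqref{def:i0} of $i_0$ was engineered to make the $j=1$ piece work, so the real content is simply to check that each of its nine branches does what it promises; for $j=m$, the subtlety is that $k$ may fall in any of $I_1,I_2,I_3$, and the $I_3$ possibility is the one not present in the ``interior'' Lemma~\ref{lem:in}. Once the decomposition is set up correctly, every subcase reduces to a one-line citation of Lemma~\ref{est} and Lemma~\ref{lem:AQM}.
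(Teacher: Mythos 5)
Your overall strategy is the paper's: reuse Lemma \ref{est}, Lemma \ref{lem:AQM} and the decomposition of Lemma \ref{lem:in}, treating $T_m$ through the absence of a closing break index and $T_1$ through the nine branches of \eqref{def:i0}. For $j=m$ this essentially works, and is in fact simpler than your suffix-peeling: when $k\in I_1$ the argument of Lemma \ref{lem:in} applies verbatim, and when $k\notin I_1$ the \emph{whole} block $T_m$ already lies in $\mathcal{M}$, so Lemma \ref{lem:AQM} finishes it directly. One caution about your peeling description: Lemma \ref{est}(d) applied at an index $i\in I_3$ consumes $\alpha_{i+1}$, so if the trailing $I_3$-run ends exactly at $k$ you cannot invoke the triple bound at the last index without using $\alpha_{k+1}\notin T_m$; the correct accounting for such a tail is Lemma \ref{est}(b) for the indices preceding $I_3$-indices plus the $\tfrac{1}{\sqrt{2}L}$ floor for the last one, which is exactly what cases (a)/(c) of Lemma \ref{lem:AQM} do.

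The genuine gap is in the $j=1$ part. Your blanket claim that in every branch of \eqref{def:i0} ``either $T_1$ is empty or $i_0+1$ is the index at which regularity begins'' fails for the branch $(1,2)\in I_3\times I_3$, where $i_0=0$ and $T_1=(1,\ldots,i_1)$ starts \emph{inside} an $I_3$-run; the prefix you propose to leave behind then belongs to neither $\mathcal{M}$ nor $\mathcal{Q}$, since both require the left endpoint to lie outside $I_3$. What is actually needed there is to split off the initial maximal $I_3$-run $(1,\ldots,p)$ as an $\mathcal{A}$-block, and, in the sub-case $p+1=i_1$, a further split $T_1=(1,\ldots,i_1-3\mid i_1-2,i_1-1,i_1)$ in which the possible singleton $\{1\}$ is bounded by $\alpha_1\geq 2/L$ via Lemma \ref{est}(b) (using $2\in I_3$) and the final triple by Lemma \ref{est}(d); your plan never articulates this case. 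Relatedly, your illustrative example is incorrect: for $(1,2,3)\in I_3\times I_1\times I_3$ the index $2$ is \emph{not} a break index (because $3\in I_3$), so $i_1\geq 4$ and $T_1\neq(2,3)$; that branch is in fact one of the regular ones ($2\notin I_3$, $i_1\in I_1$) handled exactly as in Lemma \ref{lem:in}. These are precisely the bookkeeping points your plan defers, so as stated the proof of the $j=1$ case is not complete.
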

\begin{proof}
We first consider the case $j = m$ and in this case $T_m = ({i_{m-1}+1},\ldots,k)$. {If $i_{m-1}=k$, then $T_m$ equals to $\varnothing$. So we only need to consider the case when $i_{m-1}<k$.} If $k \in I_1$, then the proof is exactly the same as for the case $2\leq j\leq m-1$ in Lemma \ref{lem:in}. If $k\notin I_1$, then  $T_m\in\mathcal{M}$ as by the construction of $T_m$ we have ${i_{m-1}+1}\notin I_3$ and {if $\ell\in I_1$ within $\{{i_{m-1}+1}, \ldots, k-1\}$, we have $l+1\in I_3$.} 
In both cases, we have $\sum_{\ell=i_{m-1}+1}^{k} \alpha_{\ell} \geq |\,T_m\,|/L$.

We now consider the case $j=1$ and in this case $T_1 = ({i_{0}+1},\ldots,i_1)$. There are nine cases to consider according to the definition of $i_0$ in \eqref{def:i0}. Among them, in the following four cases, we have $T_1=\varnothing$ and no proof is needed:
   $(1,2,3) \in I_3\times I_1 \times I_1$ ($i_0=i_1=2$),
or $(1,2,3)\in I_3\times I_1 \times I_2$  ($i_0=i_1=2$),
or $(1,2)\in I_1\times I_1$ ($i_0=i_1=1$),
or $(1,2)\in I_1\times I_2$ ($i_0=i_1=1$).
It remains to consider the following five
scenarios: (a) $1\in I_2$; (b) $(1,2)\in I_1\times I_3$; (c) $(1,2)\in I_3\times I_2$; (d) $(1,2,3)\in I_3\times I_1\times I_3$; and (e) $(1,2) \in I_3\times I_3$.
\begin{itemize}
    \item[(a).] $1\in I_2$. In this case, $i_0=0$ and $T_1 = (i_0+1,\ldots,i_1)$ reduces to $(1,\ldots,i_1)$ with $i_1\geq 2$. This case is the same as the one in Lemma \ref{lem:in} because $1\notin I_3$ and $i_1\in I_1$.
    \item[(b).] $(1,2)\in I_1\times I_3$. In this case, $i_0=0$ and $T_1 = (i_0+1,\ldots,i_1)$ reduces to $(1,\ldots,i_1)$ with $i_1\geq 3$. This case is again the same as the one in Lemma \ref{lem:in} because $1\notin I_3$ and $i_1\in I_1$.
    \item[(c).] $(1,2)\in I_3\times I_2$. In this case, $i_0=1$ and $T_1 = (i_0+1,\ldots,i_1)$ reduces to $(2,\ldots,i_1)$ with $i_1\geq 3$. This case is again the same as the one in Lemma \ref{lem:in} because $2\notin I_3$ and $i_1\in I_1$.
    \item[(d).] $(1,2,3)\in I_3\times I_1\times I_3$. In this case, $i_0=1$ and $T_1 = (i_0+1,\ldots,i_1)$ reduces to $(2,\ldots,i_1)$ with $i_1\geq 4$. This case is again the same as the one in Lemma \ref{lem:in} because $2\notin I_3$ and $i_1\in I_1$.
    \item[(e).] $(1,2) \in I_3\times I_3$. In this case, $i_0=0$ and $T_1 = (i_0+1,\ldots,i_1)$ reduces to $(1,\ldots,i_1)$ with $i_1\geq 3$. In this case, we partition $T_1 = ({1},\ldots, {p}  \mid  {p+1},\ldots,{i_{1}})$, where $p\geq 2$, $\{1,\ldots,p\} \subseteq I_3$, and $(p+1) \notin I_3$. If $i_1>p+1$, then $(1,\ldots,p)$ belongs to $\mathcal{A}$ and $(p+1,\ldots,i_1)$ is the same as the one in Lemma \ref{lem:in}. If ${(p+1)}={i_1}\in I_1$, we then partition $T_1 = (1,\ldots, {i_1-3} \mid {i_1-2},{i_1-1},{i_1})$ and there are three cases to consider for the first part $(1,\ldots, {i_1-3})$:
    \begin{itemize}
        \item[(e1).] It is empty.
        \item[(e2).] It contains the index $1$ only, in which case we have $\alpha_1\geq\frac{2}{L}$  due to Lemma \ref{est} (b) and $2 \in I_3$.
        \item[(e3).] It belongs to $ \mathcal{A}$.
    \end{itemize}
Moreover, for the second part $({i_1-2},{i_1-1},{i_1})$, it follows from Lemma \ref{est} (d) that $\alpha_{i_1-2}+\alpha_{i_1-1}+\alpha_{i_1}\geq \frac{3}{L}$ because $i_1-1\in I_3$. Using Lemma \ref{lem:AQM}, we have shown $\sum_{\ell\in T_1} \alpha_{\ell} \geq |\,T_1\,|/L$ for all cases.
\end{itemize}
This completes the proof.
\end{proof}

Combining Lemmas \ref{lem:in} and \ref{lem:se}, we obtain the following theorem immediately.

\begin{thom} \label{1l}
For any given $x^0\in\mathbb{R}^n$ and $\alpha_0 > 0$, the stepsizes generated by our AdaBB (Algorithm \ref{agbb}) satisfy $\sum_{i=i_0+1}^{k} \alpha_i \geq \frac{k-i_0}{L}$ for all $k\geq 3$, where $i_0$ is defined in \eqref{def:i0}. This also implies $\sum_{i=1}^{k} \alpha_i \geq \frac{k-2+\sqrt{2}}{L}$ for all $k\geq 1$.
\end{thom}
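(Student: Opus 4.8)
The plan is to read Theorem~\ref{1l} as a telescoping bookkeeping consequence of the segment estimates already established. First I would fix $k\geq 3$, let $m-1$ be the number of break indices in $\{1,\dots,k\}$, say $1\leq i_1<\cdots<i_{m-1}\leq k$, and recall the blocks $T_1,\dots,T_m$ from~\eqref{def:Tj}. The key consistency fact to check up front is that $i_0\leq i_1$ for all nine configurations in~\eqref{def:i0} (with $i_0=i_1$ exactly in the four patterns for which $1$ or $2$ is itself a break index), so that $(i_0+1,i_0+2,\dots,k)$ is the disjoint union of $T_1,\dots,T_m$, with $T_1=\varnothing$ precisely when $i_0=i_1$ and $T_m=\varnothing$ precisely when $i_{m-1}=k$. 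Then
\[
\sum_{i=i_0+1}^{k}\alpha_i=\sum_{j=1}^{m}\sum_{i\in T_j}\alpha_i .
\]
For the interior blocks $2\leq j\leq m-1$ I invoke Lemma~\ref{lem:in}, and for $j\in\{1,m\}$ (including the degenerate case $m=1$ where these coincide) I invoke Lemma~\ref{lem:se}; both already absorb the empty-block situations. This gives $\sum_{i\in T_j}\alpha_i\geq|T_j|/L$ for every $j$, and summing with the telescoping identity $\sum_{j=1}^{m}|T_j|=k-i_0$ yields $\sum_{i=i_0+1}^{k}\alpha_i\geq(k-i_0)/L$, which is exactly~\eqref{open-question-answer} and the first assertion.

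The second assertion $\sum_{i=1}^{k}\alpha_i\geq(k-2+\sqrt2)/L$ then follows by adding back the first few stepsizes using Lemma~\ref{lower-bound-alphai} ($\alpha_i\geq 1/(\sqrt2 L)$ for all $i\geq1$). For $k\geq3$: if $i_0=0$ the bound $\sum_{i=1}^k\alpha_i\geq k/L$ already dominates $(k-2+\sqrt2)/L$ since $2>\sqrt2$; if $i_0=1$ then $\sum_{i=1}^k\alpha_i\geq\alpha_1+(k-1)/L\geq\big(k-1+\tfrac1{\sqrt2}\big)/L\geq(k-2+\sqrt2)/L$, the last step being the numerical inequality $1+\tfrac1{\sqrt2}\geq\sqrt2$; if $i_0=2$ then $\alpha_1+\alpha_2\geq 2/(\sqrt2 L)=\sqrt2/L$, so $\sum_{i=1}^k\alpha_i\geq\sqrt2/L+(k-2)/L=(k-2+\sqrt2)/L$. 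Finally the two small cases outside the $k\geq3$ regime are closed directly by Lemma~\ref{lower-bound-alphai}: $\sum_{i=1}^1\alpha_i\geq 1/(\sqrt2 L)\geq(\sqrt2-1)/L$ and $\sum_{i=1}^2\alpha_i\geq\sqrt2/L$, both matching the claimed bound at $k=1,2$.

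I do not expect a genuine obstacle here: the substantive work is entirely in Lemmas~\ref{lem:AQM}, \ref{lem:in} and~\ref{lem:se}, and Theorem~\ref{1l} is essentially their assembly. The only points that require care are (i) verifying $i_0\leq i_1$ so the partition $T_1,\dots,T_m$ is well defined and exactly exhausts $(i_0+1,\dots,k)$ — this is precisely what dictates the particular grouping of first-three-index patterns in~\eqref{def:i0}, and the four ``empty $T_1$'' sub-cases inside the proof of Lemma~\ref{lem:se} are exactly the patterns with $i_0=i_1$; (ii) the degenerate case $m=1$ (no break index in $\{1,\dots,k\}$), where $T_1=T_m=(i_0+1,\dots,k)$ is covered by Lemma~\ref{lem:se}; and (iii) remembering that the first assertion is only available for $k\geq3$, so the second assertion for $k\in\{1,2\}$ must be handled separately. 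All three are routine once flagged.
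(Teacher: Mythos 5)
Your proposal is correct and follows essentially the same route as the paper: the first bound is assembled from Lemmas~\ref{lem:in} and \ref{lem:se} via the block decomposition $T_1,\dots,T_m$ of $(i_0+1,\dots,k)$, and the implication is verified by the same three-case analysis on $i_0\in\{0,1,2\}$ using $\alpha_i\geq \frac{1}{\sqrt{2}L}$ from Lemma~\ref{lower-bound-alphai}. Your explicit checks that $i_0\leq i_1$ (so the blocks exactly exhaust the range) and that the $k=1,2$ cases of the second assertion follow directly from Lemma~\ref{lower-bound-alphai} are points the paper leaves implicit, and both are verified correctly.
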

\begin{proof}
The implication $\sum_{i=1}^{k} \alpha_i \geq \frac{k-2+\sqrt{2}}{L}$ can be verified as follows:
\begin{itemize}
    \item[(a).] If $i_0=0$, we have $\sum_{i=1}^{k} \alpha_i \geq \frac{k}{L} > \frac{k-2+\sqrt{2}}{L}$;
    \item[(b).] If $i_0=1$, we have $\sum_{i=1}^{k} \alpha_i = \alpha_1 + \sum_{i=2}^{k} \alpha_i \geq \frac{1}{\sqrt{2}L} + \frac{k-1}{L} >  \frac{k-2+\sqrt{2}}{L}$;
    \item[(c).] If $i_0=2$, we have $\sum_{i=1}^{k} \alpha_i = \alpha_1 + \alpha_2 +  \sum_{i=3}^{k} \alpha_i \geq \frac{\sqrt{2}}{L} + \frac{k-2}{L} = \frac{k-2+\sqrt{2}}{L}$,
\end{itemize}
 which completes the proof.
\end{proof}

\begin{remark}
Similar to \cite[Algorithm 2]{MM23}, if one can ensure $\alpha_0\in (\lambda_1, 2\lambda_1)$ through a line search strategy, then it is guaranteed that $1\in I_2$ and $i_0=0$. In this case, our improved bound \eqref{open-question-answer} becomes $\sum_{i=1}^k \alpha_i \geq \frac{k}{L}$ for any $k\geq 3$. For $k=1$, we have $\sum_{i=1}^k \alpha_i = \alpha_1 \geq \frac{1}{L}$ since $1\in I_2$. For $k=2$, there are three cases to consider.
\begin{itemize}
    \item[(a).] If $2\in I_1$, we have $\alpha_2\geq \frac{1}{L}$ follows from Lemma \ref{est} (c), hence we get $\alpha_1+\alpha_2\geq \frac{2}{L}$;
    \item[(b).] If $2 \in I_2$, we have $\alpha_2\geq \frac{1}{L}$ follows from Lemma \ref{est} (a), hence we get $\alpha_1+\alpha_2\geq \frac{2}{L}$;
    \item[(c).] If $2 \in I_3$, we have  $\alpha_1+\alpha_2\geq \frac{4+\sqrt{2}}{2L}\geq \frac{2}{L}$ follows from Lemma \ref{est} (b).
\end{itemize}
Hence, we prove that $\sum_{i=1}^k \alpha_i \geq \frac{k}{L}$ holds for all $k\geq 1$.
\end{remark}

We now give a detailed comparison of our results on the lower bounds of $\alpha_k$ and $\sum_{i=1}^k\alpha_i$ with the existing results in the literature. The results are summarized in Table \ref{table:stepsize-comparison}. From Table \ref{table:stepsize-comparison}, we first note that these results are all free with $\theta_0$ which is always pre-defined. The results for AdGD2 require a specially chosen $\alpha_0$, but other algorithms do not have restrictions on $\alpha_0$. Our AdaBB achieves the best lower bound for $\alpha_k$, i.e., $\alpha_k \geq \frac{1}{\sqrt{2}L}$, $\forall k\geq 1$. While AdGD \cite{MM23} and $\text{AdaPGM}^{\pi,r}$ also achieve the same lower bound, their results only hold for $k\geq r_1, r_3$, respectively. Lastly, our AdaBB clearly achieves the best lower bound for $\sum_{i=1}^k\alpha_i$. Overall, we believe that it is fair to claim that our AdaBB achieves the best results for the lower bounds of $\alpha_k$ and $\sum_{i=1}^k\alpha_i$.

\begin{table}[!htbp]
\begin{center}
\begin{tabular}{l|c|c|c|c|c|c}
\hline
& \multicolumn{2}{c|}{AdGD}      & AdGD2      & AdaPGM   & $\text{AdaPGM}^{\pi,r}$    & AdaBB \\ \cline{1-7}
$\theta_0$ & $+\infty$  & $0$   & $\frac{1}{3}$  & $\geq 1$          & $1$      & \eqref{int:theta} \\ \hline
$\alpha_0$ free?  & $\checkmark$       & $\checkmark$     & \xmark  & $\checkmark$  & $\checkmark$   & $\checkmark$  \\ \hline
$\alpha_k\geq $      & $\frac{1}{2L}$  & $\frac{1}{\sqrt{2}L}$ ($k\geq r_1$) & $\frac{1}{\sqrt{3}L}$                   & $\frac{1}{2L}$ ($k\geq r_2$) & $\frac{1}{\sqrt{2}L}$ ($k\geq r_3$) & $\frac{1}{\sqrt{2}L}$              \\ \hline
$\sum_{i=1}^k \alpha_i\geq$ & $\frac{k}{2L}$ & $\frac{k-r_1+1}{\sqrt{2}L}$                        & $\frac{k}{\sqrt{2}L}$            & $\frac{k-r_2+1}{2L}$                         & $\frac{k-r_3+1}{\sqrt{2}L}$                        & $\frac{k-2+\sqrt{2}}{L}$             \\ \hline
\end{tabular}
\end{center}
\caption{Comparision of the lower bounds of $\alpha_k$ and $\sum_{i=1}^k\alpha_i$ among AdGD \cite{MM20,MM23}, AdGD2 \cite{MM23}, AdaPGM \cite{LTSP23}, $\text{AdaPGM}^{\pi,r}$ \cite{lTP23}, and AdaBB. We listed two results for AdGD correponding to different values of $\theta_0$. The case $\theta_0=+\infty$ is analyzed in \cite{MM20} and the case $\theta_0=0$ is analyzed in \cite{MM23}. Here we note that the results of AdGD2 only hold when a special $\alpha_0$ satisfying $\alpha_0 L_1 \in [\frac{1}{\sqrt{2}},2]$ is chosen. Other algorithms do not have restrictions on $\alpha_0$. The lower bounds of $\alpha_k$ for AdGD \cite{MM23}, AdaPGM and $\text{AdaPGM}^{\pi,r}$ only hold for $k\geq r_1, r_2, r_3$, respectively. Here, $r_1$ is the smallest integer satisfying $\Pi_{i=1}^{r_1} \varpi_i\geq \frac{1}{\sqrt{2}L\alpha_0}$ with $\varpi_{n+1}=\sqrt{1+\varpi_{n}}$ and $\varpi_1=1$; $r_2=\lfloor 2\log_2{\frac{1}{\alpha_0 L}} \rfloor_{+}$; $r_3= 2 \lfloor \log_2{\frac{1}{\alpha_0 L}} \rfloor_{+}$.}\label{table:stepsize-comparison}
\end{table}

\section{Extensions}\label{exten}

In this section, we extend AdaBB (Algorithm \ref{alg:AdaBB}) to locally strongly convex problem and composite
convex optimization problems.

\subsection{When \texorpdfstring{$f$}{} is Locally Strongly Convex} \label{lconv}
In this subsection, we extend our analysis to the case where $f$ is locally strongly convex.
Specifically, in addition to the locally $L$-smoothness condition \eqref{local-smooth-1}, we also assume that $f$ is
locally $\mu$-strongly convex in $B(x^*, R)$, i.e.,
\begin{equation}\label{def:strong}
    f(x)-f(y)-\langle \nabla f(y), x-y\rangle \geq \tfrac{\mu}{2}\|x-y\|^2, \forall x,y \in B(x^*,R),
\end{equation}
where $R$ is defined in \eqref{def-R}, and we can prove that the sequence $\{x^k\}$ generated by the following algorithm lies in $B(x^*, R)$.
Additionally, it is worth noting that the parameters $\mu, L$ and $R$ are used solely for the purpose of analysis and are not involved in the algorithm. 
According to \cite[Theorem 2.1.10]{N87}, \eqref{def:strong} implies
\begin{equation}\label{jy-01}
\langle \nabla f(x)-\nabla f(y),x-y\rangle \leq \|\nabla f(x)-\nabla f(y)\|^2/\mu, \forall x,y \in B(x^*,R).
\end{equation}
We will present an extension of Algorithm \ref{alg:AdaBB} to handle this case and establish a linear convergence result.

Recall that $\lambda_k$ denotes the Short BB stepsize and is given by \eqref{lamk}.
The new algorithm follows the same iteration scheme as \eqref{gd}, with the only variation from Algorithm  \ref{alg:AdaBB}  being the update rule for the stepsize $\alpha_k$.
Specifically, for $k\geq 1$ we update $\alpha_k$ as follows
\begin{equation} \label{bbsimsc}
\alpha_k = \left\{
\begin{array}{lll}
\min\{\sqrt{1+\eta\theta_{k-1}}\alpha_{k-1},\lambda_k\}, &\text{update } \theta_k = \frac{\alpha_k}{\alpha_{k-1}}, & \text{if } \lambda_k \geq \alpha_{k-1},\\
\lambda_{k}, &\text{update } \theta_{k} = \frac{2\alpha_{k}}{\alpha_{k-1}}-1, & \text{if }\frac{\delta \alpha_{k-1}}{2} < \lambda_k < \alpha_{k-1},\\
\frac{\lambda_{k}}{\sqrt{2}}, &\text{update } \theta_k = \frac{\alpha_k}{\alpha_{k-1}}, & \text{if } 0 <\lambda_{k}\leq \frac{\delta\alpha_{k-1}}{2},
\end{array}
\right.
\end{equation}
where $\eta\in [0,1)$ and $\delta\in (1,2)$ are parameters.
In \eqref{bbsimsc}, when $\lambda_k \geq \alpha_{k-1}$, a more cautious stepsize is used compared to Algorithm \ref{alg:AdaBB}. Additionally, the region
${\delta \alpha_{k-1} / 2} < \lambda_k < \alpha_{k-1}$ is narrower than (Case ii) in Algorithm \ref{alg:AdaBB}. As a result, the region
$0 <\lambda_{k}\leq  {\delta\alpha_{k-1}/2}$ in \eqref{bbsimsc} becomes broader than (Case iii) in Algorithm \ref{alg:AdaBB}.
Similarly  to \eqref{def-Mk-Pk}, for all $k\geq 1$ we define $M_{k}$ and $P_{k}$ as follows
\begin{equation}\label{def-Mk-Pk-1}
\left\{
\begin{array}{lll}
M_{k}=0 ,    &P_{k}= \frac{\alpha_{k}^{2}}{\alpha_{k-1}},     & {\textrm{if }\lambda_{k}\geq \alpha_{k-1}}, \\
M_{k}=\frac{\alpha_{k}}{\alpha_{k-1}}-\frac{\alpha_{k}^{2}}{\alpha_{k-1}^2},   &
P_{k}= \frac{2\alpha_{k}^{2}}{\alpha_{k-1}}-\alpha_k,   &
{\textrm{if }\frac{\delta\alpha_{k-1}}{2}<\lambda_{k} <  \alpha_{k-1}},\\
M_{k}=\frac{1}{2}-\frac{\lambda_{k}}{2\alpha_{k-1}},   &
P_{k}= \frac{\alpha_{k}^{2}}{\alpha_{k-1}},
& {\textrm{if } 0 <\lambda_{k}\leq \frac{\delta\alpha_{k-1}}{2}}.
\end{array} \right. 
\end{equation}
For convenience, we define $P_0$ as $(P_1-\alpha_0)/\eta$ for $\eta \in (0,1)$. Otherwise, $P_0 = 0$.
We have the following lemma.

\begin{lemma} \label{mpis}
Let $\{\alpha_k\}$ and $\{\theta_k\}$ be generated by \eqref{bbsimsc} and  $M_{k}$ and $P_{k}$ be defined in \eqref{def-Mk-Pk-1}.
Then, we have $M_k\geq 0$, $P_k\geq 0$ and $P_{k} = \alpha_{k}\theta_{k}$ for all $k \geq 1$, and
$2M_{k+1}\leq 1$ and $P_{k+1}\leq \alpha_k +\eta P_{k}$ for $k \geq 0$.
\end{lemma}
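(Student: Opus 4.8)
The plan is to mirror the structure of the proof of Lemma \ref{mpi}, handling the three cases of the update rule \eqref{bbsimsc} separately, but now keeping careful track of the parameters $\eta\in[0,1)$ and $\delta\in(1,2)$. First I would dispense with the identity $P_k=\alpha_k\theta_k$: in each of the three branches of \eqref{bbsimsc} the update of $\theta_k$ is defined precisely so that $\alpha_k\theta_k$ matches the corresponding formula for $P_k$ in \eqref{def-Mk-Pk-1} — e.g.\ in the middle branch $\alpha_k\theta_k = \alpha_k(2\alpha_k/\alpha_{k-1}-1) = 2\alpha_k^2/\alpha_{k-1}-\alpha_k = P_k$, and similarly for the other two. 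The nonnegativity $M_k\ge 0$, $P_k\ge 0$ is immediate in Cases (i) and (iii); in Case (ii) one uses $\lambda_k<\alpha_{k-1}$, which gives $\alpha_k=\lambda_k<\alpha_{k-1}$, hence $M_k=\alpha_k/\alpha_{k-1}-\alpha_k^2/\alpha_{k-1}^2 = (\alpha_k/\alpha_{k-1})(1-\alpha_k/\alpha_{k-1})>0$ and $P_k = \alpha_k(2\alpha_k/\alpha_{k-1}-1)>0$ since $2\alpha_k/\alpha_{k-1}>2\cdot(\delta/2)=\delta>1$.

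The main work is the pair of inequalities $2M_{k+1}\le 1$ and $P_{k+1}\le \alpha_k+\eta P_k$, which I would prove by examining the three cases for the $(k+1)$-th iteration.
\begin{itemize}
\item Case (i), $\lambda_{k+1}\ge\alpha_k$: here $M_{k+1}=0$ so $2M_{k+1}\le 1$ is trivial, and $P_{k+1}=\alpha_{k+1}^2/\alpha_k$ with $\alpha_{k+1}\le\sqrt{1+\eta\theta_k}\,\alpha_k$, so $P_{k+1}\le (1+\eta\theta_k)\alpha_k = \alpha_k+\eta\alpha_k\theta_k = \alpha_k+\eta P_k$.
\item Case (ii), $\delta\alpha_k/2<\lambda_{k+1}<\alpha_k$: here $\alpha_{k+1}=\lambda_{k+1}$, so $2M_{k+1}=2(\lambda_{k+1}/\alpha_k)(1-\lambda_{k+1}/\alpha_k)\le 1/2 < 1$ by AM–GM (the product $t(1-t)\le 1/4$), and for $P_{k+1}$ one needs $2\lambda_{k+1}^2/\alpha_k-\lambda_{k+1}\le\alpha_k+\eta P_k$; since $\lambda_{k+1}<\alpha_k$ we get $2\lambda_{k+1}^2/\alpha_k-\lambda_{k+1}<2\lambda_{k+1}-\lambda_{k+1}=\lambda_{k+1}<\alpha_k\le\alpha_k+\eta P_k$.
\item Case (iii), $0<\lambda_{k+1}\le\delta\alpha_k/2$: here $M_{k+1}=\tfrac12-\tfrac{\lambda_{k+1}}{2\alpha_k}\le\tfrac12$, so $2M_{k+1}\le 1$, and $\alpha_{k+1}=\lambda_{k+1}/\sqrt2$ gives $P_{k+1}=\alpha_{k+1}^2/\alpha_k=\lambda_{k+1}^2/(2\alpha_k)\le (\delta\alpha_k/2)\lambda_{k+1}/(2\alpha_k) = (\delta/4)\lambda_{k+1}$, which since $\delta<2$ and $\lambda_{k+1}<\alpha_k$ (as $\delta/2<1$) is strictly less than $\alpha_k\le\alpha_k+\eta P_k$.
\end{itemize}
Finally, for the case $k=0$ one checks the base case using the definition of $P_0$: when $\eta\in(0,1)$ we set $P_0=(P_1-\alpha_0)/\eta$, so $\alpha_0+\eta P_0 = \alpha_0 + (P_1-\alpha_0) = P_1$, giving $P_1\le\alpha_0+\eta P_0$ with equality; when $\eta=0$ we have $P_0=0$ and must verify $P_1\le\alpha_0$ directly — in Case (i) at $k=1$, $P_1=\alpha_1^2/\alpha_0$ with $\alpha_1\le\sqrt{1+0\cdot\theta_0}\,\alpha_0=\alpha_0$, in Cases (ii)/(iii) the bounds above give $P_1<\alpha_0$ as well.

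I expect the main obstacle to be Case (ii), specifically verifying that the narrower middle region ${\delta\alpha_{k-1}/2}<\lambda_k<\alpha_{k-1}$ together with the plain choice $\alpha_k=\lambda_k$ still yields $2M_{k+1}\le 1$ and the correct $P$-recursion; the role of $\delta$ is to keep $P_k$ positive (needing $2\alpha_k/\alpha_{k-1}>\delta>1$), and one must be sure the estimates do not secretly require $\eta$-dependence in a way that breaks down at $\eta=0$. A secondary subtlety is bookkeeping the two definitions of $P_0$ so that the single inequality $P_{k+1}\le\alpha_k+\eta P_k$ holds uniformly for all $k\ge 0$ including $k=0$; this is why $P_0$ is scaled by $1/\eta$ when $\eta>0$.
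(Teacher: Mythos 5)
Your proposal is correct and follows essentially the same case-by-case argument as the paper's proof (same three branches, same bounds $M_{k+1}\le 1/4$ or $1/2$ and $P_{k+1}\le\alpha_k$, and the identity $P_k=\alpha_k\theta_k$ read off from the $\theta_k$-updates). The only difference is that you spell out the $k=0$ base case and the nonnegativity checks more explicitly than the paper does, which is harmless and, for the $\eta>0$ definition of $P_0$, arguably a bit more careful.
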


\begin{proof}
Recall that $\delta\in(1,2)$. First, the nonnegativity of $M_k$ and $P_k$ and $P_{k} = \alpha_{k}\theta_{k}$ for $k\geq 1$ can be verified straightforwardly by their definitions together with \eqref{bbsimsc}.
Let $k\geq 0$ be fixed.
We then show the remaining claims by considering the following three cases.
\begin{enumerate}
\item[(i)] $\lambda_{k+1}\geq \alpha_{k}$. In this case, we have $M_{k+1} = 0$ and $\alpha_{k+1} \leq \sqrt{1+\eta \theta_{k}}\alpha_{k}$. Hence,
it follows that $P_{k+1} = {\alpha_{k+1}^{2}/\alpha_{k}} \leq (1+\eta \theta_{k})\alpha_{k} = \alpha_k+\eta P_k$.
\item[(ii)]  $\delta\alpha_{k}/2 < \lambda_{k+1} < \alpha_{k}$. In this case, we have $\alpha_{k+1}=\lambda_{k+1}$, and it is easy to verify that
    \begin{align*}
    M_{k+1} 
    = \frac{\lambda_{k+1}}{\alpha_k}-\frac{\lambda_{k+1}^2}{\alpha_k^2} \leq \frac{1}{4}
    \text{~~and~~}
    P_{k+1}  
    =  \frac{2\lambda_{k+1}^{2}}{\alpha_{k}}-\lambda_{k+1}\leq \alpha_k \leq \alpha_k+\eta P_k.
    \end{align*}

\item[(iii)] $0<\lambda_{k+1}\leq \delta\alpha_{k}/2$. In this case, we have $\alpha_{k+1}=\lambda_{k+1}/{\sqrt{2}}$, and it is elementary to verify that
    \begin{align*}
        M_{k+1}   = \frac{1}{2}-\frac{\lambda_{k+1}}{2\alpha_{k}} \leq  \frac{1}{2} \text{~~and~~}
        P_{k+1}    = \frac{\lambda_{k+1}^2}{2\alpha_k} \le \frac{\delta^2 \alpha_k}{8} \le \alpha_{k} \le \alpha_{k}+ \eta P_{k}.
    \end{align*}
\end{enumerate}

In all three cases, we have shown that $2M_{k+1}\leq 1$ and $P_{k+1}\leq \alpha_k +\eta P_{k}$.
\end{proof}

Again, we emphasize that $\alpha_k$, $\theta_k$, $M_k$ and $P_k$ are defined in \eqref{bbsimsc}-\eqref{def-Mk-Pk-1}.
With these newly defined parameters, we still define $w_k = \alpha_k+P_k-P_{k+1}$ for $k\geq 0$ as in Lemma \ref{lemma-ener}, and
${\bf E}$,  $\varUpsilon_{k}$ and $\varPhi_k$ as in \eqref{e1}, \eqref{def-varUpsilon} and \eqref{energy}, respectively.
Next, we present the pointwise convergence of the gradient method \eqref{gd} with $\alpha_k$ given by \eqref{bbsimsc} and
establish bounds on $\alpha_k$ and $P_k/\alpha_k$. Note that since $f$ is locally strongly convex, it has a unique optimal solution.

\begin{thom}[Pointwise convergence] \label{thom:ic-strong}
For any $x^0\in\mathbb{R}^n$ and $\alpha_0>0$, let $\{x^{k}\}$ be the sequence generated by \eqref{gd} with $\alpha_k$ given by \eqref{bbsimsc}.
Then, $x^k \in B(x^*,R)$ for all $k\geq 0$, where $R$ is defined in \eqref{def-R}, and $\{x^k\}$ converges to the unique optimal solution $x^*$ of \eqref{p}.
\end{thom}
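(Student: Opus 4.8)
The plan is to mirror the structure of the proofs of Lemma \ref{lemma-ener}, Corollary \ref{co:bound}, and Theorem \ref{thom:ic}, but with the weaker recursion $P_{k+1}\leq \alpha_k+\eta P_k$ (with $\eta\in[0,1)$) replacing $P_{k+1}\leq P_k+\alpha_k$, and exploiting local strong convexity \eqref{def:strong} to turn the energy decay into a contraction. First I would verify that Lemma \ref{eq} carries over verbatim (the identities \eqref{i1}, \eqref{i2} do not depend on the stepsize rule, and the case analysis for \eqref{e1} goes through with the new $M_k,P_k$ from \eqref{def-Mk-Pk-1} since the only inputs are $\lambda_k<\alpha_{k-1}$ versus $\lambda_k\geq\alpha_{k-1}$ and convexity/monotonicity of $\nabla f$). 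Next, I would re-run the computation in Lemma \ref{lemma-ener}: combining \eqref{gds} (descent lemma), the bound \eqref{gds-follow-1} from \eqref{e1}, and $2M_{k+1}\leq 1$ from Lemma \ref{mpis}, one obtains
\[
\varUpsilon_{k+1}\leq \|x^k-x^*\|^2+2\mathbf{E}-(2\alpha_k+2P_k)\big(f(x^k)-f_*\big)
\]
which, after substituting $\mathbf{E}=M_k\|x^k-x^{k-1}\|^2+P_k(f(x^{k-1})-f(x^k))$ and using $P_{k+1}\leq \alpha_k+\eta P_k$, should give $\varUpsilon_{k+1}\leq \varPhi_k\leq\varUpsilon_k$ where now $\varPhi_k=\varUpsilon_k-2w_{k-1}(f(x^{k-1})-f_*)$ with $w_{k-1}=\alpha_{k-1}+P_{k-1}-P_k\geq \alpha_{k-1}-\eta P_{k-1}\cdot\frac{1}{\eta}\cdots$ — more carefully, $w_k=\alpha_k+P_k-P_{k+1}\geq \alpha_k+P_k-(\alpha_k+\eta P_k)=(1-\eta)P_k\geq 0$, so the energy is non-increasing. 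This already yields boundedness exactly as in Corollary \ref{co:bound}, giving $x^k\in B(x^*,R)$ with the same $R$ as in \eqref{def-R} (the bound on $\varPhi_1=\varUpsilon_1$ is unchanged since $w_0=0$ by the definition of $P_0$), which justifies invoking \eqref{local-smooth-1}, \eqref{local-smooth-2}, and \eqref{def:strong} on $B(x^*,R)$.

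For convergence to $x^*$, I would refine \eqref{gds} using \eqref{def:strong} to get an extra $-\alpha_k\mu\|x^k-x^*\|^2$ (or equivalently use \eqref{jy-01}): this produces a recursion of the form
\[
\varUpsilon_{k+1}\leq (1-c_1)\varUpsilon_k
\]
for some $c_1>0$ depending on $\mu$, $\eta$, and the uniform lower bound $\alpha_k\geq c=\min\{\alpha_0,1/(\sqrt{2}L)\}>0$. The lower bound on $\alpha_k$ is obtained just as in Proposition \ref{lemma:sum-alpha}: in (Case i) of \eqref{bbsimsc}, $\alpha_k\geq \min\{\alpha_{k-1},\lambda_k\}\geq\min\{\alpha_{k-1},1/L\}$, while in the other two cases $\alpha_k\geq\lambda_k/\sqrt2\geq 1/(\sqrt2 L)$, so once $\alpha_k$ reaches $1/(\sqrt2 L)$ it stays above it, and before that it is at least $\alpha_0$. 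To extract the strong-convexity gain I would need to control the cross terms: from the strongly convex descent inequality, $\|x^{k+1}-x^*\|^2\leq (1-\mu\alpha_k)\|x^k-x^*\|^2-2\alpha_k(f(x^k)-f_*)+\alpha_k^2\|\nabla f(x^k)\|^2$, and then absorb $\alpha_k^2\|\nabla f(x^k)\|^2=\|x^{k+1}-x^k\|^2$ into the $2M_{k+1}$ and $2P_{k+1}$ slack exactly as before; the residual negative terms $-2(1-\eta)P_k(f(x^k)-f_*)$ and the $-\mu\alpha_k\|x^k-x^*\|^2$ term can be combined with $(2\alpha_k+2P_k)(f(x^k)-f_*)\geq \mu(\alpha_k+P_k)\|x^k-x^*\|^2$ (using strong convexity again, since $f(x^k)-f_*\geq\frac\mu2\|x^k-x^*\|^2$) to dominate a constant fraction of $\varUpsilon_{k+1}$.

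The main obstacle I anticipate is the bookkeeping in the contraction step: $\varUpsilon_k$ contains three pieces — $\|x^k-x^*\|^2$, $2M_k\|x^k-x^{k-1}\|^2$, and $(2\alpha_{k-1}+2P_{k-1})(f(x^{k-1})-f_*)$ — and I must show each is contracted by a uniform factor $1-c_1$, which requires (i) relating $\|x^k-x^{k-1}\|^2$ back to a decrease in function value via \eqref{e1}, (ii) using the uniform upper bound on $P_k/\alpha_k=\theta_k$ (which the theorem statement also advertises, and which follows from $P_{k+1}\leq\alpha_k+\eta P_k$ by iterating: $\theta_{k+1}=P_{k+1}/\alpha_{k+1}\leq (\alpha_k+\eta P_k)/\alpha_{k+1}$, bounded since $\alpha_{k+1}\geq c$ and the $\alpha_k,P_k$ are comparable), and (iii) choosing the contraction factor as the minimum of the per-term rates. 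A clean way to finish is to apply Lemma \ref{opial}-style reasoning only for the boundedness/cluster-point part if the direct contraction proves delicate, but since $f$ is strongly convex the optimal solution is unique and I expect the direct geometric decay $\varUpsilon_{k+1}\leq(1-c_1)\varUpsilon_k$ to go through, immediately giving $\|x^k-x^*\|^2\leq\varUpsilon_k\to 0$.
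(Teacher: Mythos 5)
The first half of your proposal is correct and coincides with the paper's argument: you re-verify \eqref{e1} and \eqref{gds-follow-1} with the new $M_k,P_k$ from \eqref{def-Mk-Pk-1}, invoke Lemma \ref{mpis} ($2M_{k+1}\le 1$ and $P_{k+1}\le \alpha_k+\eta P_k\le \alpha_k+P_k$, hence $w_k\ge 0$) to recover the energy inequality \eqref{energy}, and then repeat Corollary \ref{co:bound} to get $x^k\in B(x^*,R)$. Where you diverge is the finish, and there the proposal has a real gap. The paper does not prove this pointwise statement by a contraction at all: it simply reruns the proof of Theorem \ref{thom:ic} — the refinement \eqref{gds1} via \eqref{local-smooth-1} gives $\sum_k \alpha_k\|\nabla f(x^k)\|^2\le \varPhi_1 L$, the lower bound $\alpha_k\ge c>0$ (same argument as Proposition \ref{lemma:sum-alpha}, cf.\ Proposition \ref{alpha-sc}(i)) gives $\nabla f(x^k)\to 0$, so cluster points are optimal, and Lemma \ref{opial} yields convergence of the whole sequence, the limit being the unique minimizer by strong convexity. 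You mention this route only as a fallback and omit its key step (the gradient-summability estimate), while your primary route, a direct geometric decay $\varUpsilon_{k+1}\le (1-c_1)\varUpsilon_k$, is really the subsequent linear-convergence theorem and, as sketched, does not go through.

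Concretely: to contract the component $(2\alpha_{k-1}+2P_{k-1})\big(f(x^{k-1})-f_*\big)$ of $\varUpsilon_k$, the only slack the energy inequality provides is $2w_{k-1}\big(f(x^{k-1})-f_*\big)$ with $w_{k-1}\ge (1-\eta)P_{k-1}$, so you need $P_{k-1}/\alpha_{k-1}$ bounded away from zero — a \emph{lower} bound on $P_k/\alpha_k=\theta_k$, not the upper bound you invoke (and your justification, that ``$\alpha_k,P_k$ are comparable,'' is circular). That lower bound is precisely Proposition \ref{alpha-sc}(ii), $P_k/\alpha_k\ge\min\{c\mu,\delta-1\}>0$, and it is exactly where the modifications in \eqref{bbsimsc} enter (the cap $\alpha_k\le\lambda_k\le 1/\mu$ and the parameter $\delta>1$ shrinking Case ii); without it your $c_1$ is not uniform. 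A second issue is that $\varUpsilon_k$ itself has no slack on the term $2M_k\|x^k-x^{k-1}\|^2$ (it is passed through with coefficient one in \eqref{energy}), which is why the paper's linear-convergence proof uses the modified Lyapunov function \eqref{def:Phik}, carrying the factor $1+\tfrac{\mu}{2L}$ and the ratio $\tfrac{2\alpha_k+2P_k}{\alpha_k+\eta P_k}$, obtained from the refined inequality \eqref{modf}. So either supply these two ingredients (which amounts to proving the linear-rate theorem), or — more economically, and as the paper does for this statement — finish via the Theorem \ref{thom:ic} argument, which needs strong convexity only for uniqueness of $x^*$.
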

\begin{proof}
First, by following the proof of Lemma \ref{eq}, it is elementary to verify that \eqref{c3}, and thus \eqref{e1}, holds as well for  $\lambda_{k}< \alpha_{k-1}$.
Then, by following the proof of Lemma \ref{lemma-ener}, it is also easy to observe that \eqref{gds-follow-1} holds as well, with $\bf E$ defined in  \eqref{e1}.
Combining Lemma \ref{mpis}, which confirms that $2M_{k+1}\leq 1$ and $P_{k+1}\leq \alpha_k + \eta P_{k} \leq \alpha_k + P_{k}$ for all $k \geq 0$,
with \eqref{gds-follow-1}, we obtain \eqref{energy}, with $\varUpsilon_{k}$ and $w_k$ defined in \eqref{def-varUpsilon} and Lemma \ref{lemma-ener}, respectively.
Consequently, by following the same lines of proof as in Corollary \ref{co:bound} and Theorem \ref{thom:ic}, we can show that $x^k \in B(x^*,R)$ for all $k\geq 0$, where $R$ is defined in \eqref{def-R}, and $\{x^k\}$ converges to the unique optimal solution of \eqref{p}. The details are omitted due to the high similarity.
\end{proof}

\begin{proposition}[Bounds on $\alpha_k$ and $P_k/\alpha_k$] \label{alpha-sc}
Let $\{\alpha_k\}$ be generated by \eqref{bbsimsc} with any $\alpha_0>0$. Then, for $k\geq 1$, we have
(i) $c\leq \alpha_k \leq 1/\mu$, where $c:= \min\{\alpha_0, 1/(\sqrt{2}L)\} > 0$, and (ii)
$P_k/\alpha_k \geq c_0 := \min \{{c \mu },\delta-1\} >0$.
\end{proposition}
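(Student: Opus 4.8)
The plan is to peel off the two inequalities separately, using the results already proved for Algorithm \ref{agbb} and the fact (Theorem \ref{thom:ic-strong}) that the whole sequence stays in $B(x^*,R)$. For the upper bound, that inclusion lets me apply \eqref{jy-01} with $x=x^k$, $y=x^{k-1}$, giving $\lambda_k\le 1/\mu$ for all $k\ge 1$; since every branch of \eqref{bbsimsc} sets $\alpha_k\le\lambda_k$ (branch~1 by the explicit minimum, branches~2--3 because $\alpha_k\in\{\lambda_k,\lambda_k/\sqrt2\}$), the bound $\alpha_k\le 1/\mu$ follows at once. For the lower bound I would essentially copy the proof of Proposition \ref{lemma:sum-alpha}(ii): first record that if $\alpha_j\ge 1/(\sqrt2 L)$ then $\alpha_k\ge 1/(\sqrt2 L)$ for all $k\ge j$ (branch~1 gives $\alpha_k\ge\alpha_{k-1}$ via $\sqrt{1+\eta\theta_{k-1}}\ge 1$ and $\lambda_k\ge\alpha_{k-1}$; branches~2--3 give $\alpha_k\ge\lambda_k/\sqrt2\ge 1/(\sqrt2 L)$ via \eqref{lambdak-lower-bound}), then let $r$ be the first index with $\lambda_r<\alpha_{r-1}$ and argue that $\alpha_k\ge\alpha_0$ for $k<r$, $\alpha_r\ge 1/(\sqrt2 L)$, and ``stays large'' for $k\ge r$; if no such $r$ exists, $\alpha_k\ge\alpha_0$ throughout. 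In every case $\alpha_k\ge\min\{\alpha_0,1/(\sqrt2 L)\}=c$.

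\textbf{Part (ii): the lower bound $P_k/\alpha_k\ge c_0$.} The key observation is that $P_k/\alpha_k=\theta_k$ by Lemma \ref{mpis}, so it suffices to bound $\theta_k$ below branch by branch. Branch~1: $\theta_k=\alpha_k/\alpha_{k-1}\ge 1>\delta-1$. Branch~2: $\theta_k=2\alpha_k/\alpha_{k-1}-1=2\lambda_k/\alpha_{k-1}-1>\delta-1$, directly from the defining inequality $\lambda_k>\delta\alpha_{k-1}/2$. Branch~3: $\theta_k=\alpha_k/\alpha_{k-1}=\lambda_k/(\sqrt2\,\alpha_{k-1})$; here I would combine $\lambda_k\ge 1/L$ from \eqref{lambdak-lower-bound} with $\alpha_{k-1}\le 1/\mu$ from part (i) to obtain $\theta_k\ge\mu/(\sqrt2 L)\ge c\mu$, the last step because $c\le 1/(\sqrt2 L)$. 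Taking the minimum over the three branches yields $\theta_k\ge\min\{\delta-1,c\mu\}=c_0$.

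\textbf{Main obstacle.} Everything above is routine once $k\ge 2$; the delicate point is branch~3 at the \emph{first} iteration, where $\theta_1=\lambda_1/(\sqrt2\,\alpha_0)$ while part (i) controls $\alpha_1,\alpha_2,\dots$ from above but not the input $\alpha_0$ itself. This forces a separate treatment of $k=1$, exploiting that branch~3 at $k=1$ requires $\lambda_1\le\delta\alpha_0/2$ together with the two-sided estimate $1/L\le\lambda_1\le 1/\mu$ coming from \eqref{local-smooth-2} and \eqref{jy-01} (and, as in the Remark following Theorem \ref{1l}, one may keep $\alpha_0$ moderate through a line-search-type safeguard so that this branch is not triggered with a wildly oversized stepsize). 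A secondary point to get right is that the parameter ranges $\eta\in[0,1)$ and $\delta\in(1,2)$ are used coherently: $\delta-1<1$, so branches~1 and~2 never pull the minimum below $c_0$, and $\eta\theta_{k-1}\ge 0$ keeps $\sqrt{1+\eta\theta_{k-1}}\ge 1$, which is exactly the inequality that drives the ``once large, stays large'' induction in part (i).
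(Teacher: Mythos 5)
Your proposal follows essentially the same route as the paper's proof. Part (i) is identical: $\alpha_k\le\lambda_k\le 1/\mu$ from \eqref{bbsimsc} and \eqref{jy-01}, and the lower bound by repeating the argument of Proposition \ref{lemma:sum-alpha}(ii). For part (ii) the paper also goes through $P_k/\alpha_k$ branchwise (via the formulas behind Lemma \ref{mpis}): it merges your branches 1 and 3 into a single case with $P_k/\alpha_k=\alpha_k/\alpha_{k-1}\ge c\mu$, using $\alpha_k\ge c$ and $\alpha_{k-1}\le 1/\mu$, and treats branch 2 exactly as you do ($P_k/\alpha_k=2\lambda_k/\alpha_{k-1}-1>\delta-1$). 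Your separate branch-1 bound $\theta_k\ge 1>c_0$ is a minor sharpening, since it does not need $\alpha_{k-1}\le 1/\mu$ in that branch.

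Two remarks on the ``main obstacle'' you single out. First, the repair you sketch for branch 3 at $k=1$ cannot work as stated: the defining inequality $\lambda_1\le\delta\alpha_0/2$ bounds $\alpha_0$ from \emph{below} by $2\lambda_1/\delta$, hence bounds $\theta_1=\lambda_1/(\sqrt{2}\,\alpha_0)$ from \emph{above} (by $\delta/(2\sqrt{2})$), not from below; combined with $1/L\le\lambda_1\le 1/\mu$ it still allows $\theta_1$ to be arbitrarily small when $\alpha_0\gg 1/\mu$, and in that regime $P_1/\alpha_1=\lambda_1/(\sqrt{2}\,\alpha_0)$ genuinely drops below $c_0$, so no argument closes this case under the literal hypothesis ``any $\alpha_0>0$''; one needs $k\ge 2$, or $\alpha_0\le 1/\mu$, or a safeguard on $\alpha_0$ (which alters the hypothesis). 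Second, the paper's own proof makes the same silent move: its case (a) estimate $\alpha_k/\alpha_{k-1}\ge c\mu$ invokes $\alpha_{k-1}\le 1/\mu$, which part (i) supplies only for $k-1\ge 1$. So for $k\ge 2$ your argument is complete and coincides with the paper's; at $k=1$ in branch 3 neither your sketch nor the paper's one-line justification covers an unrestricted $\alpha_0$, and you were right to flag it rather than claim it.
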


\begin{proof}
Let $k\geq 1$ be fixed.
(i) First, $\alpha_k\geq c =\min\{\alpha_0, 1/(\sqrt{2}L)\}$ follows from the same analysis as in Proposition \ref{lemma:sum-alpha} (ii).
Second, the definition of $\alpha_k$ in \eqref{bbsimsc} shows that $\alpha_k\leq \lambda_k$.
Further considering  \eqref{jy-01}, we obtain $\alpha_k \leq \lambda_k \leq 1/\mu$.
For part (ii), we split the analysis into two cases:
(a) $\lambda_k\geq \alpha_{k-1}$ or $\lambda_k\leq \delta\alpha_{k-1}/2$, and (b) $\delta\alpha_{k-1}/2 < \lambda_{k} < \alpha_{k-1}$.
For case (a), we have $ {P_k/\alpha_k} = {\alpha_k/\alpha_{k-1}}\geq c\mu$, where the inequality follows from $c\leq \alpha_k \leq 1/\mu$. For case (b), we have $P_{k}= \frac{2\alpha_{k}^{2}}{\alpha_{k-1}}-\alpha_k$, $\alpha_k = \lambda_k$, and hence ${P_{k}/\alpha_{k}} \geq \delta-1$. Combining these two cases completes the proof.
\end{proof}

Now, we are ready to establish the linear convergence result.
\begin{thom}[Linear convergence]
For any $x^0\in\mathbb{R}^n$ and $\alpha_0>0$,  the sequence $\{x^{k}\}$ generated by \eqref{gd} with $\alpha_k$ given by \eqref{bbsimsc}
converges linearly to the unique optimal solution $x^*$ of \eqref{p}.
\end{thom}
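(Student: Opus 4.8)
The plan is to turn the (non-strict) Lyapunov inequality \eqref{energy} into a genuine contraction $\varUpsilon_{k+1}\le(1-q)\varUpsilon_k$ for a fixed $q\in(0,1)$. As already observed in the proof of Theorem~\ref{thom:ic-strong}, the whole energy apparatus of Lemmas~\ref{eq} and~\ref{lemma-ener}---in particular \eqref{e1}, \eqref{gds-follow-1} and the estimate $2M_{k+1}\le1$ from Lemma~\ref{mpis}---transfers verbatim to the stepsize rule \eqref{bbsimsc} together with the quantities $M_k,P_k$ of \eqref{def-Mk-Pk-1}; the only change is in \eqref{gds}, where I would use local strong convexity \eqref{def:strong} instead of plain convexity. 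Applying \eqref{def:strong} at $x=x^*$, $y=x^k$ (both in $B(x^*,R)$ by Theorem~\ref{thom:ic-strong}) gives $\langle\nabla f(x^k),x^k-x^*\rangle\ge f(x^k)-f_*+\tfrac{\mu}{2}\|x^k-x^*\|^2$, and since $\alpha_k\le1/\mu$ by Proposition~\ref{alpha-sc}(i) we keep $1-\alpha_k\mu\ge0$; carrying this through the computation of Lemma~\ref{lemma-ener} word for word yields
\[
\varUpsilon_{k+1}\le\varUpsilon_k-\alpha_k\mu\,\|x^k-x^*\|^2-2w_{k-1}\big(f(x^{k-1})-f_*\big),\qquad k\ge1 .
\]

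Next I would fold the two auxiliary terms of $\varUpsilon_k$ into this decrement. From \eqref{local-smooth-1} at $y=x^*$ one has $\|x^k-x^{k-1}\|^2=\alpha_{k-1}^2\|\nabla f(x^{k-1})\|^2\le 2L\alpha_{k-1}^2\big(f(x^{k-1})-f_*\big)$, so using $2M_k\le1$ and $\alpha_{k-1}\le1/\mu$ gives $2M_k\|x^k-x^{k-1}\|^2\le\tfrac{L}{\mu}(2\alpha_{k-1}+2P_{k-1})\big(f(x^{k-1})-f_*\big)$, and hence
\[
\varUpsilon_k\le\|x^k-x^*\|^2+\Big(1+\tfrac{L}{\mu}\Big)(2\alpha_{k-1}+2P_{k-1})\big(f(x^{k-1})-f_*\big).
\]
On the decrement side, Lemma~\ref{mpis} gives $w_{k-1}=\alpha_{k-1}+P_{k-1}-P_k\ge(1-\eta)P_{k-1}$, and Proposition~\ref{alpha-sc}(ii) gives $P_{k-1}/\alpha_{k-1}\ge c_0>0$, whence $P_{k-1}\ge\tfrac{c_0}{1+c_0}(\alpha_{k-1}+P_{k-1})>0$ for $k\ge2$; together $2w_{k-1}\ge\tfrac{(1-\eta)c_0}{1+c_0}(2\alpha_{k-1}+2P_{k-1})$. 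Combining this with $\alpha_k\mu\ge c\mu$ (Proposition~\ref{alpha-sc}(i)) and setting
\[
q:=\min\Big\{c\mu,\ \tfrac{(1-\eta)c_0}{(1+c_0)(1+L/\mu)}\Big\}\in(0,1),
\]
the decrement $\alpha_k\mu\|x^k-x^*\|^2+2w_{k-1}(f(x^{k-1})-f_*)$ dominates $q\varUpsilon_k$ term by term (note $c\mu<1$ since $\mu\le L$), so $\varUpsilon_{k+1}\le(1-q)\varUpsilon_k$ for all $k\ge2$. Iterating and using $\|x^k-x^*\|^2\le\varUpsilon_k$ then yields $\|x^k-x^*\|^2\le(1-q)^{k-2}\varUpsilon_2$, which is the asserted linear convergence (local smoothness then also gives linear decay of $f(x^k)-f_*$).

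I expect the crux to be the cross term $2M_k\|x^k-x^{k-1}\|^2$ in $\varUpsilon_k$: it has no companion on the right-hand side of the one-step inequality, so it must be traded---via local smoothness---for a controlled multiple of the function-value term, after which one still has to verify that the $w_{k-1}$-decrement exceeds a fixed fraction of the now-inflated function-value coefficient. This is precisely where the uniform bounds $c\le\alpha_k\le1/\mu$ and $P_k/\alpha_k\ge c_0>0$ of Proposition~\ref{alpha-sc} become indispensable; they in turn hinge on the modifications built into \eqref{bbsimsc} (the more cautious Case~i update and the narrowed Case~ii region $\tfrac{\delta\alpha_{k-1}}{2}<\lambda_k<\alpha_{k-1}$ with $\delta>1$), without which the contraction factor would degenerate.
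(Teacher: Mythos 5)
Your argument is correct, and it reaches the contraction by a different decomposition than the paper. The paper does not work with $\varUpsilon_k$ itself: it introduces the modified Lyapunov function $\Phi_k$ in \eqref{def:Phik}, whose middle and last coefficients are inflated to $2(1+\tfrac{\mu}{2L})M_{k}$ and $\tfrac{2\alpha_{k-1}+2P_{k-1}}{\alpha_{k-1}+\eta P_{k-1}}P_{k}$, and it spends the descent inequality half on strong convexity and half on local smoothness (see \eqref{modf}), so that the one-step bound \eqref{e11} produces a shrink factor on each of the three terms separately; the contraction constant is then the per-term ratio $c_{1,k}=\max\{1-\tfrac{\alpha_k\mu}{2},\,1/(1+\tfrac{\mu}{2L}),\,\tfrac{\alpha_{k-1}+\eta P_{k-1}}{\alpha_{k-1}+P_{k-1}}\}$. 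You instead keep the original $\varUpsilon_k$ of \eqref{def-varUpsilon}, use strong convexity at full weight to create the decrement $\alpha_k\mu\|x^k-x^*\|^2+2w_{k-1}(f(x^{k-1})-f_*)$, and eliminate the cross term $2M_k\|x^k-x^{k-1}\|^2$ by converting it into function values via $\|\nabla f(x^{k-1})\|^2\le 2L(f(x^{k-1})-f_*)$, after which the decrement dominates a fixed fraction $q$ of $\varUpsilon_k$. Both routes rest on the same quantitative inputs (Lemma \ref{mpis}, i.e.\ $2M_{k+1}\le1$ and $P_{k+1}\le\alpha_k+\eta P_k$, and Proposition \ref{alpha-sc}, i.e.\ $c\le\alpha_k\le1/\mu$ and $P_k/\alpha_k\ge c_0$), and your steps check out: $w_{k-1}\ge(1-\eta)P_{k-1}\ge\tfrac{c_0}{1+c_0}(\alpha_{k-1}+P_{k-1})(1-\eta)$ for $k\ge2$, the restriction $\alpha_{k-1}\le1/\mu$ needed in your upper bound on $\varUpsilon_k$ indeed requires $k\ge2$, which you respect, and the validity of \eqref{e1} for the rule \eqref{bbsimsc} with the quantities \eqref{def-Mk-Pk-1} is exactly what the paper establishes en route to Theorem \ref{thom:ic-strong}, so citing it is legitimate. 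The trade-off is in the explicit rate: by trading the cross term for function values you pick up the factor $1+L/\mu$ in the second entry of $q$, so your guaranteed factor $1-q$ is more condition-number-dependent than the paper's $c_1$, whose corresponding entry $1-\tfrac{c_0(1-\eta)}{c_0+1}$ does not carry $L/\mu$; on the other hand your argument avoids introducing a second Lyapunov function and the half-and-half splitting, which makes it slightly more self-contained. Either way the theorem's statement (R-linear convergence of $\{x^k\}$ to $x^*$) follows.
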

\begin{proof}
For convenience, we define for $k\geq 0$ that
\begin{equation}\label{def:Phik}
       \Phi_{k+1}:= \|x^{k+1}-x^*\|^2  +  2(1+\tfrac{\mu}{2L})M_{k+1}\|x^{k+1}-x^{k}\|^2 + \tfrac{2\alpha_{k}+2P_k}{\alpha_{k}+\eta P_k}P_{k+1} \big(f(x^{k})-f_*\big).
\end{equation}
Recall that $f_* = f(x^*)$ and $\nabla f(x^*)=0$.
It follows from \eqref{def:strong}, \eqref{local-smooth-1} and \eqref{gd} that
    \begin{align*}
    \alpha_{k}\langle \nabla f(x^{k}), x^{*}-x^{k}\rangle & \stackrel{\eqref{def:strong}}\leq \alpha_k \big(f_*-f(x^k)\big)-\tfrac{\alpha_k\mu}{2}\|x^k-x^*\|^2,\\
    \alpha_{k}\langle \nabla f(x^{k}), x^{*}-x^{k}\rangle & \stackrel{\eqref{local-smooth-1}}\leq \alpha_k\big(f_*-f(x^k)\big)-\tfrac{\alpha_k}{2 L}\|\nabla f(x^k)-\nabla f(x^*)\|^2 \\
    & \stackrel{\eqref{gd}}= \alpha_k\big(f_*-f(x^k)\big)-\tfrac{1}{2\alpha_k L}\|x^{k+1}-x^{k}\|^2 \\
    & \leq \alpha_k\big(f_*-f(x^k)\big)-\tfrac{\mu}{2L}\|x^{k+1}-x^{k}\|^2,
    \end{align*}
    where the last ``$\leq$" is due to $\alpha_k \leq 1/\mu$.
Combining the above two inequalities to obtain
    \begin{equation}
    \alpha_{k}\langle \nabla f(x^{k}), x^{*}-x^{k}\rangle \leq \alpha_k\big(f_*-f(x^k)\big) - \tfrac{\alpha_k\mu}{4}\|x^k-x^*\|^2-\tfrac{\mu}{4L}\|x^{k+1}-x^k\|^2.   \label{modf}
    \end{equation}
Plugging \eqref{modf} into \eqref{gds}, we arrive at
    \begin{equation}
    \begin{aligned} \label{gdss}
    \|x^{k+1}& -x^{*}\|^{2} 
     \leq (1-\tfrac{\alpha_k\mu}{2}) \|x^{k}-x^{*}\|^{2}-2 \alpha_{k}\left(f(x^{k})-f_*\right)-\tfrac{\mu}{2L}\|x^{k+1}-x^k\|^2 + \alpha_{k}^{2}\left\|\nabla f(x^{k})\right\|^{2}.
     \end{aligned}
\end{equation}
By summing \eqref{gdss} and \eqref{gds-follow-1},  considering the definition of $\bf E$ in \eqref{e1}, and reorganizing terms, we can easily derive
 \begin{equation}
    \begin{aligned} \label{e11}
    (1-\tfrac{\alpha_k\mu}{2})\|x^{k}&-x^*\|^2  + 2M_{k}\|x^{k}-x^{k-1}\|^2+2P_{k}\big(f(x^{k-1})-f_*\big) \\
  \geq \, & \|x^{k+1}-x^*\|^2  + (1+\tfrac{\mu}{2L})\|x^{k+1} -x^{k}\|^2 + (2\alpha_{k}+2P_{k}) \big(f(x^{k})-f_*\big)
       \geq
    \Phi_{k+1},
   \end{aligned}
\end{equation}
where the second ``$\geq$" follows from  \eqref{def:Phik} and Lemma \ref{mpis}.
Define
\[
c_{1,k} := \max\big\{1-\tfrac{\alpha_k\mu}{2},1/(1+\tfrac{\mu}{2L}),\tfrac{\alpha_{k-1}+\eta P_{k-1}}{\alpha_{k-1}+P_{k-1}}\big\} > 0
\text{~~and~~}
c_1:= \max\big\{1-\tfrac{c\mu}{2},1/(1+\tfrac{\mu}{2L}),1-\tfrac{c_0 (1-\eta)}{c_0+1}\big\},
\]
where $c, c_0>0$ are defined in Proposition \ref{alpha-sc}.
From Proposition \ref{alpha-sc} (i), we have $\alpha_k\geq c$ and thus  $1-\tfrac{\alpha_k\mu}{2}\leq 1-\tfrac{c\mu}{2} < 1$. On the other hand,
from Proposition \ref{alpha-sc} (ii) we have $\alpha_{k-1}/P_{k-1} \leq 1/c_0$  for $k\geq 2$, and hence
     \[
\tfrac{\alpha_{k-1}+\eta P_{k-1}}{\alpha_{k-1}+P_{k-1}}=1-\tfrac{1-\eta }{(\alpha_{k-1}/P_{k-1})+1}
    \leq 1- \tfrac{c_0(1-\eta)}{c_0+1} < 1.
     \]
Therefore, we have shown that $c_{1,k} \leq c_1  < 1$ for all $k\geq 2$.
It then follows from \eqref{def:Phik}, \eqref{e11} and the definition of $c_{1,k}$ that $\Phi_{k+1} \leq c_{1,k} \, \Phi_{k} \leq c_1 \Phi_{k}$ for all $k\geq 2$.
Again, it follows from the definition of  \eqref{def:Phik} and $c_1  < 1$ that $\{x^k\}$ converges linearly to the unique optimal solution of \eqref{p}.
\end{proof}

\subsection{Composite Convex Optimization Problems}
Let $g: \mathbb{R}^n \rightarrow \mathbb{R}$ be an extended real-valued closed, proper and convex function, which may be non-smooth.
In this subsection, we extend AdaBB (Algorithm \ref{alg:AdaBB}) to solve the composite convex optimization problem
\begin{equation} \label{pcom}
    \min_{x\in\mathbb{R}^n} F(x):= f(x)+g(x),
\end{equation}
where
$f: \mathbb{R}^n \rightarrow \mathbb{R}$ is the same as in \eqref{p}. In particular, $f$ is a locally $L$-smooth function convex function satisfying \eqref{local-smooth-1} in which the radius $R := T$ and $T$ is defined in \eqref{def-T}.  We assume that the set of optimal solutions of \eqref{pcom}, also denoted by $\mathcal{X}^{*}$,  is non-empty and denote the optimal value of $F$ by $F_*$. In this section, we consider the proximal gradient method of the form
\begin{equation}\label{pg}
x^{k+1} = \text{prox}_{\alpha_k g}(x^{k} - \alpha_{k}\nabla f(x^k)), \quad k\geq 0,
\end{equation}
where $\alpha_k>0$ denotes the stepsize and will be chosen adaptively, and for given $\alpha>0$, $\text{prox}_{\alpha g}(\cdot)$ is defined by
\[
\text{prox}_{\alpha g}(x) = \arg\min\nolimits_{y\in\mathbb{R}^n} g(y) + \frac{1}{2\alpha}\|y-x\|^2, \quad x\in\mathbb{R}^n.
\]
An equivalent implicit form of \eqref{pg} is given by
\begin{equation} \label{pgd}
    x^{k+1} = x^{k} -\alpha_{k}\big(\nabla f(x^k)+\xi^{k+1}\big) \text{~~for some~~} \xi^{k+1} \in \partial g(x^{k+1}).
\end{equation}

Our adaptive proximal BB method (AdaPBB) for solving \eqref{pcom} is presented in Algorithm \ref{agbbc}.
\begin{algorithm}[!htb]
\caption{Adaptive Proximal BB Method (AdaPBB)} \label{agbbc}
\textbf{Input:} $x^0\in\mathbb{R}^n$, $\alpha_0 > 0$, $\theta_0 \geq 0$
\begin{algorithmic}[1]
\State  $x^{1} = \text{prox}_{\alpha_0 g}(x^{1} - \alpha_{0}\nabla f(x^0))$
\For{$k=1,2,\ldots,$}
\State $\lambda_{k} = \frac{\langle \nabla f(x^{k}) - \nabla f(x^{k-1}), \, x^{k}-x^{k-1} \rangle}{\|\nabla f(x^{k})- \nabla f(x^{k-1})\|^2}$
\If{$\lambda_k \geq \alpha_{k-1}$} \dotfill (Case i)
\State $\alpha_k = \sqrt{1+\theta_{k-1}}\alpha_{k-1}$, and $\theta_{k} = \frac{\alpha_{k}}{\alpha_{k-1}}$
\ElsIf{$\alpha_{k-1}/2 <\lambda_k < \alpha_{k-1}$} \dotfill (Case ii)
\State $\alpha_k = \frac{\alpha_{k-1}}{\sqrt{2}}$, and $\theta_{k} = 0 $
\Else \dotfill (Case iii)
\State $\alpha_k = \frac{\lambda_k}{\sqrt{2}}$, and $\theta_{k} = 0$
\EndIf
\State $x^{k+1} = \text{prox}_{\alpha_k g}(x^{k} - \alpha_{k}\nabla f(x^k))$
\EndFor
\end{algorithmic}
\end{algorithm}

\begin{remark} \label{remark-5.1}
It is worth noting that in Algorithm \ref{agbbc}, if $\lambda_{k} < \alpha_{k-1}$ (Cases ii and iii), then $\alpha_{k}\geq \lambda_{k}/{\sqrt{2}}\geq 1/{(\sqrt{2}L)}$.
Similar discussions in Proposition \ref{lemma:sum-alpha} will be formally presented in Proposition \ref{lemma:sum-alpha2} later.
\end{remark}

Before analyzing the convergence of Algorithm \ref{agbbc}, we define some useful notation and recall some important inequalities for the scheme \eqref{pgd}.
For $k\geq 1$, we  define $B_k$ and $E_k$ as follows
\begin{equation}
\left\{
\begin{array}{lll}
B_{k}:=0 ,  & E_{k}:= \frac{1}{\alpha_{k-1}},     & {\text{if }\lambda_{k}\geq \alpha_{k-1}}, \\
B_{k}:=1,   & E_{k}:=0,   & {\text{if }\frac{\alpha_{k-1}}{2}<\lambda_{k}\leq  \alpha_{k-1}}, \\
B_{k}:=\frac{(\alpha_{k-1}-\lambda_{k})^2}{\lambda_k^2},   & E_{k}:=0, & {\text{if $0 <\lambda_{k}\leq \frac{\alpha_{k-1}}{2}$}}.
\end{array} \right. \label{efc}
\end{equation}
For convenience, we also define $E_0:=(E_1\alpha_1^2-\alpha_0)/\alpha_0^2$.
The following lemma provides useful inequalities for $B_k$ and $E_k$ defined in \eqref{efc}.

\begin{lemma} \label{cbei}
Let $\{\alpha_k\}$ and $\{\theta_k\}$ be generated by Algorithm \ref{agbbc} and $B_k$ and $E_k$ be defined in \eqref{efc}. Then,
$\alpha_k E_{k} = \theta_{k}$ for $k\geq 1$, and  $2B_{k+1}\leq \alpha_{k}^2/\alpha_{k+1}^2$ and $E_{k+1}\alpha_{k+1}^2\leq E_{k}\alpha_k^2+\alpha_k$ for $k\geq 0$.
\end{lemma}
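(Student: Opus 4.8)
The plan is to prove the three assertions by a direct case analysis over the three branches of Algorithm \ref{agbbc}, exactly paralleling the proofs of Lemma \ref{mpi} and Lemma \ref{mpis}. First I would establish the identity $\alpha_k E_k = \theta_k$ for $k\geq 1$, which is immediate from the definitions in \eqref{efc} and the updates in Algorithm \ref{agbbc}: in (Case i) one has $E_k = 1/\alpha_{k-1}$ and $\theta_k = \alpha_k/\alpha_{k-1}$, so $\alpha_k E_k = \alpha_k/\alpha_{k-1} = \theta_k$; in (Case ii) and (Case iii) one has $E_k = 0 = \theta_k$.

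Next, for the inequality $E_{k+1}\alpha_{k+1}^2\leq E_k\alpha_k^2+\alpha_k$, I would first dispose of the base index $k=0$, where the definition $E_0 := (E_1\alpha_1^2-\alpha_0)/\alpha_0^2$ makes the inequality hold with equality. For $k\geq 1$, I would substitute the identity $\alpha_k E_k = \theta_k$ just proved, so that $E_k\alpha_k^2 = \alpha_k\theta_k$ and the claim reduces to $E_{k+1}\alpha_{k+1}^2\leq \alpha_k(1+\theta_k)$. Then I would case on the branch taken at iteration $k+1$: if $\lambda_{k+1}\geq\alpha_k$ (Case i), then $E_{k+1} = 1/\alpha_k$ and $\alpha_{k+1}^2 = (1+\theta_k)\alpha_k^2$, giving equality; if $\alpha_k/2<\lambda_{k+1}<\alpha_k$ or $0<\lambda_{k+1}\leq\alpha_k/2$ (Cases ii and iii), then $E_{k+1} = 0$ and the inequality is trivial because the right-hand side is nonnegative.

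Finally, for $2B_{k+1}\leq \alpha_k^2/\alpha_{k+1}^2$, I would again split according to iteration $k+1$. In (Case i), $B_{k+1} = 0$ and nothing is needed. In (Case ii), $B_{k+1} = 1$ and $\alpha_{k+1} = \alpha_k/\sqrt{2}$, so $\alpha_k^2/\alpha_{k+1}^2 = 2 = 2B_{k+1}$. In (Case iii), $B_{k+1} = (\alpha_k-\lambda_{k+1})^2/\lambda_{k+1}^2$ and $\alpha_{k+1} = \lambda_{k+1}/\sqrt{2}$, so after clearing the common denominator $\lambda_{k+1}^2/2$ the claim becomes $(\alpha_k-\lambda_{k+1})^2\leq\alpha_k^2$, i.e.\ $\lambda_{k+1}\leq 2\alpha_k$, which holds since the Case-iii condition gives $\lambda_{k+1}\leq\alpha_k/2$. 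Combining the three cases completes the proof.

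I do not anticipate a genuine obstacle here: the argument is purely a branch-by-branch verification. The only points that require a moment's care are the bookkeeping at the base index $k=0$ (handled cleanly by the definition of $E_0$) and recognizing that the Case-iii bound for $B_{k+1}$ collapses, after clearing denominators, to the elementary inequality $\lambda_{k+1}\leq 2\alpha_k$ furnished directly by the branch condition.
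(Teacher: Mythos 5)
Your proof is correct and follows essentially the same branch-by-branch verification as the paper's own proof, with only cosmetic differences (you handle $k=0$ explicitly via the definition of $E_0$ and reduce the Case-iii bound to $\lambda_{k+1}\leq 2\alpha_k$, whereas the paper bounds $(\alpha_k-\lambda_{k+1})^2\leq\alpha_k^2$ directly).
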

\begin{proof}
The fact that $\alpha_k E_{k} = \theta_{k}$ for $k \geq 1$ is obvious.
Let $k\geq 0$ be fixed. To establish the remaining results, we split the analysis into the following three cases.
Case (i):  $\lambda_{k+1}\geq \alpha_{k}$. In this case, we have $B_{k+1}=0$, $E_{k+1}=1/\alpha_{k}$, $\alpha_{k+1} = \sqrt{1+\theta_{k}}\alpha_{k}$,
and thus $E_{k+1}\alpha_{k+1}^2 = (1+\theta_k)\alpha_k = E_{k}\alpha_k^2+\alpha_k$.
Case (ii):  $\alpha_{k}/2 < \lambda_{k+1} < \alpha_{k}$.  In this case, we have $B_{k+1}=1$, $E_{k+1}=0$, $\alpha_{k+1}=\alpha_{k}/\sqrt{2}$, and thus
$2B_{k+1} = \alpha_{k}^2/\alpha_{k+1}^2$.
Case (iii):  $\lambda_{k+1}\leq \alpha_{k}/2$. In this case, we have  $B_{k+1}=(\alpha_{k}-\lambda_{k+1})^2/\lambda_{k+1}^2$, $E_{k+1}=0$, $\alpha_{k+1}=\lambda_{k+1}/\sqrt{2}$, and thus
$2B_{k+1} = {2(\alpha_{k}-\lambda_{k+1})^2 / \lambda_{k+1}^2} \leq  {2\alpha_{k}^2 / \lambda_{k+1}^2} = {\alpha_{k}^2/\alpha_{k+1}^2}$.
The proof is completed by combining the above three cases.
\end{proof}

The following results are taken from \cite{MM23}. Note that a refined inequality that improves upon \eqref{ei1} will be derived in the proof of Theorem \ref{iterative_convergence}.

\begin{lemma}[{\cite[Eq. (34) and Lemmas 11-12]{MM23}}]
Let $\{x^{k}\}$ be generated by \eqref{pgd} with arbitrarily positive stepsizes $\{\alpha_{k}\}$. Then, for $k\geq 0$, we have
    \begin{align}
    &\|x^{k+1}-x^{*}\|^2 + 2\alpha_{k}(F(x^{k})-F_*)  \leq \|x^{k}-x^*\|^2 + \alpha_{k}^2  \|\nabla f(x^k)+\xi^k\|^2, \label{ei1}\\
    &\|\nabla f(x^{k})+\xi^{k+1}\|^2 = \langle \nabla f(x^{k+1})+\xi^{k+1},\nabla f(x^{k})+\xi^{k+1}\rangle + \frac{1}{\alpha_{k}}\langle \nabla f(x^{k+1}) -\nabla f(x^{k}), x^{k+1}-x^{k} \rangle, \label{ei2}\\
    &\|\nabla f(x^{k})+\xi^{k+1}\| \leq \|\nabla f(x^k)+\xi^{k}\|.  \label{ei3}
    \end{align}
\end{lemma}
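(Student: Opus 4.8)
Since this lemma is quoted from \cite{MM23}, the plan is to re-derive its three relations directly from the implicit form \eqref{pgd} of the proximal step, $x^{k+1}=x^{k}-\alpha_{k}(\nabla f(x^{k})+\xi^{k+1})$ with $\xi^{k+1}\in\partial g(x^{k+1})$, and to verify that nothing more than convexity of $f$, the subgradient inequality for $g$, and monotonicity of $\partial g$ is needed. I would treat \eqref{ei2} and \eqref{ei3} first since they are essentially mechanical, and reserve the care for \eqref{ei1}.

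First I would dispatch \eqref{ei2}, which is a pure identity: substituting $x^{k+1}-x^{k}=-\alpha_{k}(\nabla f(x^{k})+\xi^{k+1})$ into its last term converts $\tfrac{1}{\alpha_{k}}\langle\nabla f(x^{k+1})-\nabla f(x^{k}),x^{k+1}-x^{k}\rangle$ into $-\langle\nabla f(x^{k+1})-\nabla f(x^{k}),\nabla f(x^{k})+\xi^{k+1}\rangle$, and adding this to the first term on the right-hand side cancels the $\nabla f(x^{k+1})$ contribution and leaves exactly $\|\nabla f(x^{k})+\xi^{k+1}\|^{2}$. Next, for \eqref{ei3}, I would invoke monotonicity of $\partial g$: since $\xi^{k}\in\partial g(x^{k})$ and $\xi^{k+1}\in\partial g(x^{k+1})$ we have $\langle\xi^{k+1}-\xi^{k},x^{k+1}-x^{k}\rangle\ge 0$, and substituting the update (and dividing by $-\alpha_{k}<0$) gives $\langle\xi^{k+1}-\xi^{k},\nabla f(x^{k})+\xi^{k+1}\rangle\le 0$, i.e.\ $\|\nabla f(x^{k})+\xi^{k+1}\|^{2}\le\langle\nabla f(x^{k})+\xi^{k},\,\nabla f(x^{k})+\xi^{k+1}\rangle$; Cauchy--Schwarz then yields \eqref{ei3}.

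The substantive step is \eqref{ei1}. I would start from the exact expansion $\|x^{k+1}-x^{*}\|^{2}=\|x^{k}-x^{*}\|^{2}-2\alpha_{k}\langle\nabla f(x^{k})+\xi^{k+1},x^{k}-x^{*}\rangle+\alpha_{k}^{2}\|\nabla f(x^{k})+\xi^{k+1}\|^{2}$. Convexity of $f$ gives $\langle\nabla f(x^{k}),x^{k}-x^{*}\rangle\ge f(x^{k})-f(x^{*})$, while the subgradient inequality for $g$ at $x^{k+1}$ gives $\langle\xi^{k+1},x^{k+1}-x^{*}\rangle\ge g(x^{k+1})-g(x^{*})$; splitting $x^{k}-x^{*}=(x^{k}-x^{k+1})+(x^{k+1}-x^{*})$ and substituting $x^{k}-x^{k+1}=\alpha_{k}(\nabla f(x^{k})+\xi^{k+1})$ turns the leftover cross term $\langle\xi^{k+1},x^{k}-x^{k+1}\rangle$ into $\alpha_{k}$-scaled quadratics in $\nabla f(x^{k})$ and $\xi^{k+1}$. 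Collecting terms and using $f(x^{*})+g(x^{*})=F_{*}$, the estimate becomes $\|x^{k+1}-x^{*}\|^{2}\le\|x^{k}-x^{*}\|^{2}-2\alpha_{k}\big(f(x^{k})+g(x^{k+1})-F_{*}\big)+\alpha_{k}^{2}\big(\|\nabla f(x^{k})\|^{2}-\|\xi^{k+1}\|^{2}\big)$. It then remains to trade the ``lagged'' value $g(x^{k+1})$ for $g(x^{k})$ and to turn $\|\nabla f(x^{k})\|^{2}-\|\xi^{k+1}\|^{2}$ into $\|\nabla f(x^{k})+\xi^{k}\|^{2}$. For this I would use the subgradient inequality for $g$ at $x^{k}$ with the earlier multiplier $\xi^{k}$, namely $g(x^{k})-g(x^{k+1})\le\langle\xi^{k},x^{k}-x^{k+1}\rangle=\alpha_{k}\langle\xi^{k},\nabla f(x^{k})+\xi^{k+1}\rangle$; substituting, the entire remainder collapses to the elementary inequality $2\langle\xi^{k},\xi^{k+1}\rangle\le\|\xi^{k}\|^{2}+\|\xi^{k+1}\|^{2}$, after which every term cancels and \eqref{ei1} follows.

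I expect the main obstacle to be precisely this last bookkeeping in \eqref{ei1}. The natural temptation is to bound the gradient-mapping norm by \eqref{ei3} at the very start, but that route fails, because $\langle\nabla f(x^{k})+\xi^{k+1},x^{k}-x^{*}\rangle\ge F(x^{k})-F_{*}$ is \emph{not} true (the subgradient $\xi^{k+1}$ lives at $x^{k+1}$, not at $x^{k}$). The correct argument must instead use the subgradient inequality for $g$ at \emph{both} points --- at $x^{k+1}$ with $\xi^{k+1}$ and at $x^{k}$ with the lagged multiplier $\xi^{k}$ --- and one has to notice that the seemingly wasteful step $2\langle\xi^{k},\xi^{k+1}\rangle\le\|\xi^{k}\|^{2}+\|\xi^{k+1}\|^{2}$ is in fact exactly tight for the remaining terms. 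Since these relations are already available in \cite{MM23}, an alternative is simply to cite them verbatim; the refinement of \eqref{ei1} promised for Theorem~\ref{iterative_convergence} is then where the genuinely new argument is needed.
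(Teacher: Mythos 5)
Your proposal is correct, but note that the paper does not actually prove this lemma at all: its ``proof'' consists of citing \cite{MM23} (Eq.~(34) for \eqref{ei1}, the proof of Lemma~11 for \eqref{ei2}, and Lemma~12 for \eqref{ei3}). What you have done is reconstruct the underlying derivations, and each step checks out. For \eqref{ei2} the substitution $x^{k+1}-x^{k}=-\alpha_k(\nabla f(x^k)+\xi^{k+1})$ does collapse the right-hand side to $\langle \nabla f(x^k)+\xi^{k+1},\nabla f(x^k)+\xi^{k+1}\rangle$. For \eqref{ei3}, monotonicity of $\partial g$ gives $\|\nabla f(x^k)+\xi^{k+1}\|^2\le\langle \nabla f(x^k)+\xi^{k},\nabla f(x^k)+\xi^{k+1}\rangle$ and Cauchy--Schwarz finishes it, exactly as in \cite{MM23}. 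For \eqref{ei1}, expanding $\|x^{k+1}-x^*\|^2$, using convexity of $f$ at $x^k$ and the subgradient inequality for $g$ at $x^{k+1}$ (with the split $x^k-x^*=(x^k-x^{k+1})+(x^{k+1}-x^*)$) yields
\[
\|x^{k+1}-x^*\|^2\le\|x^k-x^*\|^2-2\alpha_k\bigl(f(x^k)+g(x^{k+1})-F_*\bigr)+\alpha_k^2\bigl(\|\nabla f(x^k)\|^2-\|\xi^{k+1}\|^2\bigr),
\]
and your second use of the subgradient inequality, $g(x^k)-g(x^{k+1})\le\alpha_k\langle\xi^k,\nabla f(x^k)+\xi^{k+1}\rangle$, together with $2\langle\xi^k,\xi^{k+1}\rangle\le\|\xi^k\|^2+\|\xi^{k+1}\|^2$, indeed makes everything cancel to give the stated bound with the lagged multiplier $\xi^k$ on the right and $F(x^k)$ on the left. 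Your diagnosis of the pitfall is also apt: one cannot bound $\langle\nabla f(x^k)+\xi^{k+1},x^k-x^*\rangle$ below by $F(x^k)-F_*$ because $\xi^{k+1}$ is a subgradient at $x^{k+1}$, so the two-point use of the subgradient inequality is essential. The trade-off between the two routes is simply self-containedness versus brevity: the paper leans on \cite{MM23} and keeps the section short, while your derivation makes the lemma independent of the reference at the cost of a page of bookkeeping. One small point either way: for $k=0$ the statement (and your proof, through $\xi^0$) tacitly requires fixing some $\xi^0\in\partial g(x^0)$, an assumption the paper also makes implicitly when it defines $T$ in \eqref{def-T}; it is worth stating explicitly if you include your derivation.
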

\begin{proof}
The result \eqref{ei1} follows from \cite[Eq. (34)]{MM23},
\eqref{ei2} is taken from the proof of \cite[Lemma 11]{MM23},
and \eqref{ei3} is given as Lemma 12 in \cite{MM23}.
\end{proof}

\begin{lemma}[Analogous to \eqref{e1}]\label{ceq}
Let $\{x^{k}\}$ be generated by Algorithm \ref{agbbc} and  $B_{k}$ and $E_k$ are defined in \eqref{efc}. Then, for $k\geq 1$, we have
\begin{equation}\label{jy-02}
\|\nabla f(x^k)+\xi^k\|^2\leq B_{k}\|\nabla f(x^{k-1})+\xi^{k-1}\|^2 + E_k\big(F(x^{k-1})-F(x^{k})\big).
\end{equation}
\end{lemma}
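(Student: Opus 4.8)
The plan is to derive \eqref{jy-02} by analyzing the same three cases that define $B_k$ and $E_k$ in \eqref{efc}, paralleling exactly the structure of the proof of Lemma \ref{eq}. The starting point is the identity \eqref{ei2}, which plays the role that the identity leading to \eqref{i1} played in the smooth case: it expresses $\|\nabla f(x^{k-1})+\xi^{k}\|^2$ (after shifting the index $k\to k-1$) in terms of a "$y$-type" inner product $\langle \nabla f(x^{k}) -\nabla f(x^{k-1}), x^{k}-x^{k-1} \rangle$ and a term involving $\nabla f(x^{k})+\xi^{k}$. The key algebraic tool is that this $y$-type inner product equals $\|\nabla f(x^{k})-\nabla f(x^{k-1})\|^2/\lambda_k$ by the very definition of $\lambda_k$ in \eqref{lamk}, together with the monotonicity inequality $\langle \nabla f(x^{k}) -\nabla f(x^{k-1}), x^{k}-x^{k-1}\rangle \geq \lambda_k\|\nabla f(x^k)+\xi^k\|^2$-type bounds coming from local smoothness and the implicit update \eqref{pgd}.

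First I would rewrite \eqref{ei2} with $k$ replaced by $k-1$ to get
\[
\|\nabla f(x^{k-1})+\xi^{k}\|^2 = \langle \nabla f(x^{k})+\xi^{k},\nabla f(x^{k-1})+\xi^{k}\rangle + \tfrac{1}{\alpha_{k-1}}\langle \nabla f(x^{k}) -\nabla f(x^{k-1}), x^{k}-x^{k-1} \rangle,
\]
and simultaneously note from \eqref{pgd} that $x^k - x^{k-1} = -\alpha_{k-1}(\nabla f(x^{k-1})+\xi^k)$, so that $\|\nabla f(x^{k-1})+\xi^k\|^2 = \|x^k-x^{k-1}\|^2/\alpha_{k-1}^2$. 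Combining these two facts expresses $\|x^k-x^{k-1}\|^2$ purely in terms of the gradient-plus-subgradient quantities at steps $k-1$ and $k$ and the $\lambda_k$ quantity. Then, exactly as in Lemma \ref{eq}: in Case (i) ($\lambda_k\geq\alpha_{k-1}$) I would use $1/\lambda_k\leq 1/\alpha_{k-1}$ and convexity of $F$ to bound the cross term by $E_k(F(x^{k-1})-F(x^k))$ with $B_k=0$; in Case (ii) ($\alpha_{k-1}/2<\lambda_k\leq\alpha_{k-1}$) I would use $B_k=1$, $E_k=0$ and the descent-type inequality together with \eqref{ei3} to absorb everything into $\|\nabla f(x^{k-1})+\xi^{k-1}\|^2$; in Case (iii) ($\lambda_k\leq\alpha_{k-1}/2$) I would apply Young's inequality to the $\lambda_k$-term, using $\|\nabla f(x^k)-\nabla f(x^{k-1})\|^2\leq L\langle\nabla f(x^k)-\nabla f(x^{k-1}),x^k-x^{k-1}\rangle$ from \eqref{local-smooth-2}, and then control the resulting multiple of $\|x^k-x^{k-1}\|^2/\lambda_k^2 = \alpha_{k-1}^2\|\nabla f(x^{k-1})+\xi^k\|^2/\lambda_k^2$ via \eqref{ei3} to obtain the factor $(\alpha_{k-1}-\lambda_k)^2/\lambda_k^2 = B_k$.

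The main obstacle I anticipate is the nonsmooth bookkeeping in Case (iii): unlike the smooth case, the quantity $\|\nabla f(x^{k-1})+\xi^k\|^2$ is not directly $\|\nabla f(x^{k-1})+\xi^{k-1}\|^2$, so I must be careful to invoke \eqref{ei3} at the right moment (and with the right index shift) to convert $\xi^k$ into $\xi^{k-1}$, and to track exactly how the coefficient $\alpha_{k-1}^2/\lambda_k^2$ combines with the Young's-inequality constant to produce $(\alpha_{k-1}-\lambda_k)^2/\lambda_k^2$ rather than a larger constant. A secondary subtlety is ensuring the convexity argument for the $F$-decrease term is applied to $F=f+g$ and not just $f$, which requires using the subgradient inequality $g(x^{k-1})\geq g(x^k)+\langle\xi^k,x^{k-1}-x^k\rangle$ alongside the gradient inequality for $f$; these combine to give $\langle\nabla f(x^k)+\xi^k, x^{k-1}-x^k\rangle\leq F(x^{k-1})-F(x^k)$, which is what feeds the $E_k$ term.
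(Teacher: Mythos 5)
Your overall route is indeed the paper's: the same three-case split matching \eqref{efc}, the same ingredients \eqref{ei2}, \eqref{ei3}, the implicit update \eqref{pgd}, the definition \eqref{lamk} of $\lambda_k$, and convexity of $F$ through $\nabla f(x^k)+\xi^k\in\partial F(x^k)$, with the correct target coefficients in each case. However, two steps as you state them would not go through. First, your opening algebra is aimed at the wrong quantity: combining \eqref{ei2} (with $k\to k-1$) with $\|\nabla f(x^{k-1})+\xi^{k}\|=\|x^{k}-x^{k-1}\|/\alpha_{k-1}$ only re-expresses $\|x^{k}-x^{k-1}\|^2$, whereas \eqref{jy-02} bounds $\|\nabla f(x^{k})+\xi^{k}\|^2$, which is \emph{not} $\|x^{k+1}-x^{k}\|^2/\alpha_k^2$ (that would be $\|\nabla f(x^{k})+\xi^{k+1}\|^2$). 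The missing piece is the expansion of the target itself, $\|\nabla f(x^k)+\xi^k\|^2=\|\nabla f(x^k)-\nabla f(x^{k-1})\|^2-\|\nabla f(x^{k-1})+\xi^{k}\|^2+2\langle \nabla f(x^k)+\xi^{k},\nabla f(x^{k-1})+\xi^{k}\rangle$ (the paper's \eqref{eq1}); only after this do \eqref{ei2} and \eqref{ei3} convert the cross term and shift $\xi^{k}$ to $\xi^{k-1}$, yielding \eqref{case23}, on which the case analysis can act. Relatedly, the auxiliary inequality you invoke, $\langle \nabla f(x^k)-\nabla f(x^{k-1}),x^k-x^{k-1}\rangle\geq\lambda_k\|\nabla f(x^k)+\xi^k\|^2$, is false in general and is not needed: in Case (i) all that is used is that the nonpositive coefficient $\tfrac{1}{\lambda_k}-\tfrac{1}{\alpha_{k-1}}$ multiplies the nonnegative monotonicity term, together with the bound $\langle\nabla f(x^k)+\xi^k,x^{k-1}-x^k\rangle\leq F(x^{k-1})-F(x^k)$ that you correctly identified.

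Second, in Case (iii) the tool is not \eqref{local-smooth-2}: that inequality would drag $L$ into the estimate and cannot produce the $L$-free factor $B_k=(\alpha_{k-1}-\lambda_k)^2/\lambda_k^2$. What is needed is only Cauchy--Schwarz combined with \eqref{lamk}: $\langle \nabla f(x^k)-\nabla f(x^{k-1}),x^k-x^{k-1}\rangle=\tfrac{1}{\lambda_k}\tfrac{\langle \nabla f(x^k)-\nabla f(x^{k-1}),x^k-x^{k-1}\rangle^2}{\|\nabla f(x^k)-\nabla f(x^{k-1})\|^2}\leq\tfrac{1}{\lambda_k}\|x^k-x^{k-1}\|^2$, then $\|x^k-x^{k-1}\|^2=\alpha_{k-1}^2\|\nabla f(x^{k-1})+\xi^{k}\|^2\leq\alpha_{k-1}^2\|\nabla f(x^{k-1})+\xi^{k-1}\|^2$ by \eqref{ei3}, after which the coefficient $1+\tfrac{\alpha_{k-1}^2}{\lambda_k^2}-\tfrac{2\alpha_{k-1}}{\lambda_k}$ collapses to $B_k$. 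With these two repairs (and correcting the inverted reading of \eqref{lamk}, which gives $\langle \nabla f(x^k)-\nabla f(x^{k-1}),x^k-x^{k-1}\rangle=\lambda_k\|\nabla f(x^k)-\nabla f(x^{k-1})\|^2$, not the quotient by $\lambda_k$), your plan coincides with the paper's proof of Lemma \ref{ceq}.
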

\begin{proof}
From the equality $\|a\|^2=\|a-b\|^2-\|b\|^2+2\langle a,b\rangle$, we obtain
\begin{align}
    \|\nabla f(x^k)&+\xi^k\|^2  = \|\nabla f(x^k)-\nabla f(x^{k-1})\|^2 - \|\nabla f(x^{k-1})+\xi^{k}\|^2+2\langle \nabla f(x^k)+\xi^{k}, \nabla f(x^{k-1})+\xi^{k} \rangle \nonumber \\
    &   =  \big(\frac{1}{\lambda_{k}}-\frac{1}{\alpha_{k-1}}\big)\langle \nabla f(x^{k}) -\nabla f(x^{k-1}), x^{k}-x^{k-1} \rangle
    + \langle \nabla f(x^k)+\xi^{k}, \nabla f(x^{k-1})+\xi^{k} \rangle,
    \label{eq1}
\end{align}
where the second equality follows from \eqref{lamk} and \eqref{pgd}.
It follows from \eqref{ei2} and \eqref{ei3} that
\begin{align}
\langle \nabla f(x^k)+\xi^{k}, \nabla f(x^{k-1})+\xi^{k} \rangle \stackrel{\eqref{ei2}}  = & \|\nabla f(x^{k-1})+\xi^k\|^2 - \frac{1}{\alpha_{k-1}}\langle \nabla f(x^{k}) -\nabla f(x^{k-1}), x^{k}-x^{k-1} \rangle  \nonumber \\
\stackrel{\eqref{ei3}} \leq & \|\nabla f(x^{k-1})+\xi^{k-1}\|^2- \frac{1}{\alpha_{k-1}}\langle \nabla f(x^{k}) -\nabla f(x^{k-1}), x^{k}-x^{k-1} \rangle. \label{eq2}
\end{align}
Combining \eqref{eq1} and \eqref{eq2}, we obtain
\begin{equation} \label{case23}
\|\nabla f(x^k)+\xi^k\|^2 \leq \big(\frac{1}{\lambda_{k}}-\frac{2}{\alpha_{k-1}}\big)\langle \nabla f(x^{k}) -\nabla f(x^{k-1}), x^{k}-x^{k-1} \rangle + \|\nabla f(x^{k-1})+\xi^{k-1}\|^2.
\end{equation}
We then prove the desired result \eqref{jy-02} by analyzing the following three cases.
\begin{itemize}[leftmargin=*]
\item Case (i): $\lambda_k \geq \alpha_{k-1}$. In this case, we have $1/\lambda_k -1/ \alpha_{k-1} \leq 0$. Since
 $\nabla f(x^{k-1})+\xi^{k} = (x^{k-1}-x^{k})/\alpha_{k-1}$ and $\langle \nabla f(x^{k}) -\nabla f(x^{k-1}), x^{k}-x^{k-1} \rangle\geq 0$, we
obtain from \eqref{eq1} that
\begin{equation}
\|\nabla f(x^k)+\xi^k\|^2 \leq \tfrac{1}{\alpha_{k-1}} \langle \nabla f(x^k)+\xi^{k}, x^{k-1}-x^{k} \rangle \leq \tfrac{1}{\alpha_{k-1}}\big(F(x^{k-1})-F(x^{k})\big), \label{ic1}
\end{equation}
where the second ``$\leq$" is due to the convexity of $F$ and $\nabla f(x^k)+\xi^{k}\in\partial F(x^k)$.

\item Case (ii): ${\alpha_{k-1}/2} < \lambda_k < \alpha_{k-1}$. In this case, we have $1/\lambda_k - 2/\alpha_k\leq 0$ and
\eqref{case23} implies
\begin{equation}
\|\nabla f(x^k)+\xi^k\|^2 \leq \|\nabla f(x^{k-1})+\xi^{k-1}\|^2. \label{ic2}
\end{equation}
\item Case (iii): $0<\lambda_k \leq {\alpha_{k-1}/2}$. Then, \eqref{lamk} and \eqref{case23} imply
\begin{align}
\|\nabla f(x^k)+\xi^k\|^2  & \leq \frac{1}{\lambda_{k}}\big(\frac{1}{\lambda_{k}}-\frac{2}{\alpha_{k-1}}\big)\frac{\langle \nabla f(x^{k}) -\nabla f(x^{k-1}), x^{k}-x^{k-1} \rangle ^2}{\|\nabla f(x^{k}) -\nabla f(x^{k-1})\|^2} + \|\nabla f(x^{k-1})+\xi^{k-1}\|^2 \nonumber \\
& \leq \frac{1}{\lambda_{k}}\big(\frac{1}{\lambda_{k}}-\frac{2}{\alpha_{k-1}}\big)\|x^{k}-x^{k-1}\|^2 +     \|\nabla f(x^{k-1})+\xi^{k-1}\|^2 \nonumber \\
& \stackrel{\eqref{pgd}}= \frac{\alpha_{k-1}^2}{\lambda_{k}}\big(\frac{1}{\lambda_{k}}-\frac{2}{\alpha_{k-1}}\big)\|\nabla f(x^{k-1})+\xi^{k}\|^2 + \|\nabla f(x^{k-1})+\xi^{k-1}\|^2\nonumber \\
& \stackrel{\eqref{ei3}} \leq \frac{(\alpha_{k-1}-\lambda_{k})^2}{\lambda_k^2} \|\nabla f(x^{k-1})+\xi^{k-1}\|^2.\label{ic3}
\end{align}
\end{itemize}
The desired result \eqref{jy-02} follows immediately by combining \eqref{ic1}-\eqref{ic3} with \eqref{efc}.
\end{proof}
We are now ready to establish a result that is analogous to \eqref{energy} for problem \eqref{p}.
For this purpose, in the rest of this section, we let $x^*\in{\cal X^*}$ be an arbitrarily fixed solution of \eqref{pcom} and define for $k\geq 1$ that
\begin{equation}
\left\{
\begin{array}{l}
w_k := \alpha_k+E_k\alpha_k^2-E_{k+1}\alpha_{k+1}^2,\\
V_{k} := \|x^{k}-x^*\|^2  +  2B_{k}\alpha_{k}^2 \|\nabla f(x^{k-1})+g^{k-1}\|^2 + 2\alpha_{k-1}(1+E_{k-1}\alpha_{k-1}) \big(F(x^{k-1})-F_*\big),\\
U_k := V_{k}-2w_{k-1}\big(F(x^{k-1})-F_*\big).
\end{array} \right. \label{w-v-u}
\end{equation}
It is obvious from Lemma \ref{cbei} that $w_k\geq 0$ for $k\geq 0$.
Furthermore, direct calculations show that
\begin{equation}
  \label{jy-04}
  U_k  = \|x^{k}-x^*\|^2  +  2B_{k}\alpha_{k}^2 \|\nabla f(x^{k-1})+\xi^{k-1}\|^2 + 2E_k\alpha_{k}^2  \big(F(x^{k-1})-F_*\big).
\end{equation}

\begin{lemma}[Analogous to Lemma \ref{lemma-ener}]\label{enerc}
Let $\{x^{k}\}$ be generated by Algorithm \ref{agbbc} and  $U_{k}$ and $V_k$ are defined in \eqref{w-v-u}. Then, for $k\geq 1$, we have
$V_{k+1}\leq U_k  \leq V_{k}$.
\end{lemma}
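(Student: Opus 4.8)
The plan is to mirror the proof of Lemma~\ref{lemma-ener}, with inequality \eqref{ei1} and Lemma~\ref{ceq} playing the roles that \eqref{gds} and \eqref{e1} played there, and with Lemma~\ref{cbei} playing the role of Lemma~\ref{mpi}. (Here $g^{k-1}$ in \eqref{w-v-u} is read as $\xi^{k-1}$, consistently with \eqref{jy-04}.)

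The inequality $U_k\leq V_k$ is immediate from \eqref{w-v-u}: indeed $V_k-U_k = 2w_{k-1}\big(F(x^{k-1})-F_*\big)\geq 0$, since $w_{k-1}\geq 0$ by Lemma~\ref{cbei} and $F(x^{k-1})\geq F_*$.

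For the first inequality $V_{k+1}\leq U_k$, I would proceed as follows. First, multiplying \eqref{jy-02} by $\alpha_k^2$ and using the same ``doubling'' manoeuvre as in \eqref{gds-follow-1}, one gets
\[
\alpha_k^2\|\nabla f(x^k)+\xi^k\|^2 \leq 2B_k\alpha_k^2\|\nabla f(x^{k-1})+\xi^{k-1}\|^2 + 2E_k\alpha_k^2\big(F(x^{k-1})-F(x^k)\big) - \alpha_k^2\|\nabla f(x^k)+\xi^k\|^2 .
\]
Adding this to \eqref{ei1}, the term $\alpha_k^2\|\nabla f(x^k)+\xi^k\|^2$ cancels on the right and one obtains
\[
\|x^{k+1}-x^*\|^2 + \alpha_k^2\|\nabla f(x^k)+\xi^k\|^2 + 2\alpha_k\big(F(x^k)-F_*\big) \leq \|x^k-x^*\|^2 + 2B_k\alpha_k^2\|\nabla f(x^{k-1})+\xi^{k-1}\|^2 + 2E_k\alpha_k^2\big(F(x^{k-1})-F(x^k)\big).
\]
Next, writing $F(x^{k-1})-F(x^k) = \big(F(x^{k-1})-F_*\big) - \big(F(x^k)-F_*\big)$ and comparing with the explicit form \eqref{jy-04} of $U_k$, the right-hand side above equals $U_k - 2E_k\alpha_k^2\big(F(x^k)-F_*\big)$, so that, after rearranging,
\[
\|x^{k+1}-x^*\|^2 + \alpha_k^2\|\nabla f(x^k)+\xi^k\|^2 + \big(2\alpha_k+2E_k\alpha_k^2\big)\big(F(x^k)-F_*\big) \leq U_k .
\]
Finally, Lemma~\ref{cbei} gives $2B_{k+1}\alpha_{k+1}^2\leq\alpha_k^2$, so replacing $\alpha_k^2\|\nabla f(x^k)+\xi^k\|^2$ on the left by the no-larger quantity $2B_{k+1}\alpha_{k+1}^2\|\nabla f(x^k)+\xi^k\|^2$ and noting $2\alpha_k+2E_k\alpha_k^2 = 2\alpha_k(1+E_k\alpha_k)$ turns the left-hand side into exactly $V_{k+1}$ (equation \eqref{w-v-u} with $k$ replaced by $k+1$); hence $V_{k+1}\leq U_k$.

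I do not expect a genuine obstacle here; the argument is essentially a bookkeeping exercise parallel to Lemma~\ref{lemma-ener}. The points that need care are (i) carrying the weight $\alpha_k^2$ through \eqref{jy-02} so the resulting terms align with the $\alpha_{k+1}^2$-weighted terms of $V_{k+1}$; (ii) correctly telescoping $F(x^{k-1})-F(x^k)$ against the $\big(F(x^{k-1})-F_*\big)$ contribution hidden inside $U_k$ via \eqref{jy-04}; and (iii) checking that the boundary index $k=1$, where $\xi^0$, $B_1$, $E_1$ and the convention $E_0 = (E_1\alpha_1^2-\alpha_0)/\alpha_0^2$ enter, is consistent with the definitions in \eqref{efc}.
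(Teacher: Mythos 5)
Your proposal is correct and follows essentially the same route as the paper's proof: the same doubling of \eqref{jy-02} (the paper's \eqref{jy-03}), substitution into \eqref{ei1}, identification of the right-hand side with $U_k$ via \eqref{jy-04}, and the use of $2B_{k+1}\alpha_{k+1}^2\leq\alpha_k^2$ from Lemma \ref{cbei} to pass to $V_{k+1}$, with $U_k\leq V_k$ coming from $w_{k-1}\geq 0$. The only difference is that you write out explicitly the bookkeeping the paper compresses into ``rearrange the terms and perform elementary calculations.''
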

\begin{proof}
Let $k\geq 1$ be fixed.
First, \eqref{jy-02} is equivalent to
\begin{equation}\label{jy-03}
\|\nabla f(x^k)+\xi^k\|^2\leq 2B_{k}\|\nabla f(x^{k-1})+\xi^{k-1}\|^2 + 2E_k\big(F(x^{k-1})-F(x^{k})\big)-\|\nabla f(x^k)+\xi^k\|^2.
\end{equation}
By utilizing equation \eqref{jy-03} to expand the term $\|\nabla f(x^k) + \xi^k\|^2$ on the right-hand-side of equation \eqref{ei1}, and considering the definitions of $U_k$, $V_k$, and $w_k$ provided in equation \eqref{w-v-u}, we can rearrange the terms and perform elementary calculations to obtain the following inequality:
\begin{equation*}
U_k\geq    \|x^{k+1}-x^*\|^2 + \alpha_{k}^2\|\nabla f(x^{k})+\xi^{k}\|^2 + 2\alpha_{k}(1+E_{k}\alpha_{k}) \big(F(x^{k})-F_*\big) \geq V_{k+1},
\end{equation*}
where the second ``$\geq$" follows from Lemma \ref{cbei} and the definition of $V_k$ in \eqref{w-v-u}.
Finally, considering   $w_i\geq 0$ for all $i\geq 0$, we obtain $V_{k+1}\leq U_k = V_{k}-2w_{k-1}\big(F(x^{k-1})-F_*\big) \leq V_{k}$.
\end{proof}

\begin{corollary}[Analogous to Corollary \ref{co:bound}] \label{co:bound2}
Let $\{x^{k}\}$ be generated by Algorithm \ref{agbbc}. Then, $\{x^{k}\}$ is bounded. In particular, $x^k \in B(x^*,T)$ for all $k\geq 0$, where $T$ is defined as:
\begin{equation}\label{def-T}
T^2 := \|x^0-x^*\|^2+ 2\alpha_0^2\|\nabla f(x^0)+\xi^0\|^2 + \max\{2(E_1\alpha_1^2-\alpha_0),0\}  \big(F(x^0)-F^*\big).
\end{equation}
\end{corollary}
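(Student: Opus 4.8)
The plan is to transcribe, almost verbatim, the proof of Corollary~\ref{co:bound}, with the substitutions $(M_k,P_k,\varUpsilon_k,\varPhi_k)\leadsto(B_k\alpha_k^2,\,E_k\alpha_k^2,\,V_k,\,U_k)$ and with the smooth descent inequality \eqref{gds} replaced by its proximal counterpart \eqref{ei1}. First I would invoke Lemma~\ref{enerc}, which gives the monotone chain $V_{k+1}\leq U_k\leq V_k$ for all $k\geq 1$, hence $V_k\leq V_{k-1}\leq\cdots\leq V_1$. Since the last two terms in the definition \eqref{w-v-u} of $V_k$ are nonnegative --- the coefficient $2B_k\alpha_k^2$ is nonnegative by \eqref{efc}, and the function-value coefficient $2\alpha_{k-1}(1+E_{k-1}\alpha_{k-1})$ equals $2\alpha_{k-1}(1+\theta_{k-1})\geq 0$ for $k\geq 2$ and equals $2\alpha_0(1+E_0\alpha_0)=2E_1\alpha_1^2\geq 0$ for $k=1$ by the definition $E_0=(E_1\alpha_1^2-\alpha_0)/\alpha_0^2$ --- we get $\|x^k-x^*\|^2\leq V_k\leq V_1$ for every $k\geq 1$. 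It therefore suffices to show $V_1\leq T^2$.

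To bound $V_1$, observe that the same definition of $E_0$ forces $w_0=\alpha_0+E_0\alpha_0^2-E_1\alpha_1^2=0$, so that $U_1=V_1$. I would then combine the explicit form \eqref{jy-04} of $U_1$, namely
\[
U_1=\|x^1-x^*\|^2+2B_1\alpha_1^2\|\nabla f(x^0)+\xi^0\|^2+2E_1\alpha_1^2\big(F(x^0)-F_*\big),
\]
with the proximal descent estimate \eqref{ei1} at $k=0$, which yields $\|x^1-x^*\|^2\leq\|x^0-x^*\|^2+\alpha_0^2\|\nabla f(x^0)+\xi^0\|^2-2\alpha_0(F(x^0)-F_*)$. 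Substituting gives
\[
V_1\leq\|x^0-x^*\|^2+\big(\alpha_0^2+2B_1\alpha_1^2\big)\|\nabla f(x^0)+\xi^0\|^2+\big(2E_1\alpha_1^2-2\alpha_0\big)\big(F(x^0)-F_*\big).
\]
Now I would apply the bound $2B_1\alpha_1^2\leq\alpha_0^2$, which follows from $2B_1\leq\alpha_0^2/\alpha_1^2$ in Lemma~\ref{cbei} (the case $k=0$), to the middle term; and since $F(x^0)-F_*\geq 0$ by optimality of $F_*$, the factor $2E_1\alpha_1^2-2\alpha_0$ in the last term may be replaced by $\max\{2(E_1\alpha_1^2-\alpha_0),0\}$. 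This produces exactly $V_1\leq T^2$ with $T$ as defined in \eqref{def-T}. Together with the trivial membership $x^0\in B(x^*,T)$, this gives $x^k\in B(x^*,T)$ for all $k\geq 0$.

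The only point demanding care is the bookkeeping at the initial index $k=1$, where $E_0$ need not be nonnegative: one must verify that the initial function-value coefficient $2\alpha_0(1+E_0\alpha_0)$ collapses to the nonnegative quantity $2E_1\alpha_1^2$ (used both in showing $\|x^1-x^*\|^2\leq V_1$ and in identifying $U_1$), and that the corresponding term in the radius $T^2$ appears as a positive part rather than a signed quantity. Everything else is a routine mirror of the proof of Corollary~\ref{co:bound}, so no genuinely new difficulty is anticipated; I would state this and omit the fully expanded algebra.
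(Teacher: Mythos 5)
Your argument is correct and follows essentially the same route as the paper's proof: monotonicity of the energy from Lemma~\ref{enerc}, the bound on $U_1=V_1$ via \eqref{jy-04}, \eqref{ei1} at $k=0$, and $2B_1\alpha_1^2\leq\alpha_0^2$ from Lemma~\ref{cbei}, with the positive-part trick handling the sign of $E_1\alpha_1^2-\alpha_0$. The only cosmetic difference is that you descend along the chain $\|x^k-x^*\|^2\leq V_k\leq\cdots\leq V_1$ (checking nonnegativity of the coefficients in $V_k$ directly), whereas the paper uses $\|x^k-x^*\|^2\leq U_k\leq V_k\leq U_{k-1}\leq\cdots\leq U_1$, which is immediate from \eqref{jy-04}; both are sound.
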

\begin{proof}
It follows from \eqref{jy-04} and Lemma \ref{enerc} that
$ \|x^{k}-x^*\|^2 \leq U_k \leq V_{k} \leq U_{k-1} \leq \cdots \leq U_1$ for all $k \geq 1$.
Setting $k=1$ in \eqref{jy-04} and \eqref{ei1}, and using $2B_{1}\alpha_{1}^2\leq \alpha_{0}^2$ from Lemma \ref{cbei}, we obtain
    \begin{align*}
    U_1 & = \|x^{1}-x^*\|^2  +  2B_{1}\alpha_{1}^2 \|\nabla f(x^{0})+\xi^{0}\|^2 + 2 E_1 \alpha_1^2 \big(F(x^{0})-F_*\big) \\
    & \stackrel{\eqref{ei1}}\leq  \|x^{0}-x^*\|^2  +  2\alpha_0^2 \|\nabla f(x^{0})+\xi^{0}\|^2 + 2 (E_1 \alpha_1^2-\alpha_0) \big(F(x^{0})-F_*\big).
    \end{align*}
Moreover, analogous to Remark \ref{remark-M1-P1}, we can claim that $E_1\alpha_1^2$ is a constant entirely determined by $x^0$ and $\alpha_0$.
Furthermore, it is trivial to observe that $\|x^0-x^*\|\leq T$.
Combining the above arguments, we conclude that $\|x^k-x^*\|\leq T$ for all $k \geq 0$.
\end{proof}

\begin{proposition}[The same as Proposition \ref{lemma:sum-alpha}] \label{lemma:sum-alpha2}
For $\{\alpha_k\}$ generated by Algorithm \ref{agbbc}, we have
(i) if $\alpha_j\geq\frac{1}{\sqrt{2}L}$ for some $j$, then $\alpha_k\geq\frac{1}{\sqrt{2}L}$ for any $k\geq j$;
(ii)  $\alpha_k\geq c:= \min\{\alpha_0,\frac{1}{\sqrt{2}L}\}$ for all $k\geq 0$; and
(iii) $\sum_{i=1}^k \alpha_i = O(k)$.
\end{proposition}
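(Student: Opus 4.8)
The plan is to follow the proof of Proposition~\ref{lemma:sum-alpha} essentially line by line, since Algorithm~\ref{agbbc} has the same three-branch structure for updating $\alpha_k$; the only differences are that (Case ii) now sets $\alpha_k=\alpha_{k-1}/\sqrt{2}$ (rather than $\alpha_k=\lambda_k$ as in Algorithm~\ref{agbb}) and (Case iii) still sets $\alpha_k=\lambda_k/\sqrt{2}$. The one external fact I need is the analogue of \eqref{lambdak-lower-bound}, namely $\lambda_k\geq 1/L$ for all $k\geq 1$: this follows from Corollary~\ref{co:bound2}, which guarantees $x^k\in B(x^*,T)$ for all $k$, together with the local smoothness inequality \eqref{local-smooth-2} (valid with radius $R:=T$) applied to $x^k,x^{k-1}$ in the definition \eqref{lamk} of $\lambda_k$. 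I will also use the elementary observation recorded in Remark~\ref{remark-5.1}: whenever $\lambda_k<\alpha_{k-1}$ (Cases ii and iii) one has $\alpha_k\geq\lambda_k/\sqrt{2}$, because in (Case ii) $\alpha_k=\alpha_{k-1}/\sqrt{2}>\lambda_k/\sqrt{2}$ and in (Case iii) $\alpha_k=\lambda_k/\sqrt{2}$.

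For part (i) I would argue by induction on $k\geq j$. Suppose $\alpha_j\geq\frac{1}{\sqrt{2}L}$ and consider iteration $j+1$. If (Case i) occurs, then $\alpha_{j+1}=\sqrt{1+\theta_j}\,\alpha_j\geq\alpha_j\geq\frac{1}{\sqrt{2}L}$ since $\theta_j\geq 0$. If (Case ii) or (Case iii) occurs, then $\lambda_{j+1}<\alpha_j$, and the observation above together with $\lambda_{j+1}\geq 1/L$ gives $\alpha_{j+1}\geq\lambda_{j+1}/\sqrt{2}\geq\frac{1}{\sqrt{2}L}$. This closes the induction.

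For part (ii), I would let $r\geq 1$ be the smallest index with $\lambda_r<\alpha_{r-1}$; if no such $r$ exists, then (Case i) holds at every iteration, so $\alpha_k=\sqrt{1+\theta_{k-1}}\,\alpha_{k-1}\geq\alpha_{k-1}\geq\cdots\geq\alpha_0\geq c$ and we are done. If $r=1$, then (Case ii) or (Case iii) is triggered at the first iteration, so $\alpha_1\geq\lambda_1/\sqrt{2}\geq\frac{1}{\sqrt{2}L}$, and part (i) propagates $\alpha_k\geq\frac{1}{\sqrt{2}L}\geq c$ for all $k\geq 1$ (while $\alpha_0\geq c$ trivially). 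If $r>1$, then (Case i) holds for iterations $1,\dots,r-1$, whence $\alpha_1\geq\alpha_0$, so $\theta_1=\alpha_1/\alpha_0\geq 1$ and consequently $\alpha_k\geq\sqrt{2}\,\alpha_{k-1}$ for $2\leq k\leq r-1$; in particular $\alpha_k\geq\alpha_0\geq c$ for $1\leq k\leq r-1$. At iteration $r$ we have $\lambda_r<\alpha_{r-1}$, so $\alpha_r\geq\lambda_r/\sqrt{2}\geq\frac{1}{\sqrt{2}L}$, and part (i) gives $\alpha_k\geq\frac{1}{\sqrt{2}L}\geq c$ for all $k\geq r$. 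Combining the ranges $k\leq r-1$, $k=r$, $k\geq r$, and $k=0$ yields $\alpha_k\geq c$ for all $k\geq 0$. Part (iii) then follows immediately, since $\alpha_i\geq c>0$ gives $\sum_{i=1}^k\alpha_i\geq ck$.

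I expect no real obstacle here: this is a bookkeeping argument that transfers almost verbatim from Proposition~\ref{lemma:sum-alpha}. The only point needing a moment's care is checking that the modified (Case ii) rule $\alpha_k=\alpha_{k-1}/\sqrt{2}$ still satisfies $\alpha_k\geq\lambda_k/\sqrt{2}$, which holds because $\lambda_k<\alpha_{k-1}$ in that branch; and, more pedantically, making sure Corollary~\ref{co:bound2} (hence $\lambda_k\geq 1/L$) is established before it is invoked, which it is.
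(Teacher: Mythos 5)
Your proposal is correct and follows essentially the argument the paper intends: the paper omits this proof, citing its similarity to Proposition \ref{lemma:sum-alpha}, and your transfer of that proof (using $\lambda_k\geq 1/L$ from Corollary \ref{co:bound2} with radius $T$, the observation of Remark \ref{remark-5.1} that Cases ii and iii give $\alpha_k\geq\lambda_k/\sqrt{2}$, and the same case analysis on the first index $r$ with $\lambda_r<\alpha_{r-1}$) is exactly the intended adaptation. Your explicit check that the modified Case ii rule $\alpha_k=\alpha_{k-1}/\sqrt{2}$ still dominates $\lambda_k/\sqrt{2}$, and your handling of the case where no such $r$ exists, are the only points where care is needed, and you handled both correctly.
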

\begin{proof}
The proof is highly similar to that of Proposition \ref{lemma:sum-alpha} and is thus omitted.
\end{proof}

Now, we are ready to derive the ergodic sublinear convergence result of Algorithm \ref{agbbc}.

\begin{thom}[Analogous to Theorem \ref{thom1}] \label{thomc}
Let $\{x^{k}\}$ be generated by Algorithm \ref{agbbc}. Then, we have
\[
F(\Bar{x}^{k})-F_* \leq \frac{U_1}{2S_{k}} = O\left(\frac{1}{k}\right),
\]
where $\Bar{x}^{k} := \big(\alpha_k(1+E_k\alpha_k)x^{k}+\sum_{i=1}^{k-1}w_ix^{i}\big)/S_{k}$ with
$S_k :=  E_1\alpha_1^2+ \sum_{i=1}^k \alpha_i$.
\end{thom}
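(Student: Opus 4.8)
The plan is to follow the proof of Theorem~\ref{thom1} essentially verbatim, with the Lyapunov pair $(\varUpsilon_k,\varPhi_k)$ replaced by $(V_k,U_k)$ from \eqref{w-v-u} and the telescoping quantities $P_k$ replaced by $E_k\alpha_k^2$. First I would invoke Lemma~\ref{enerc}, which gives $V_{i+1}\le U_i = V_i-2w_{i-1}\big(F(x^{i-1})-F_*\big)$ for all $i\ge 1$. Noting that $w_0=\alpha_0+E_0\alpha_0^2-E_1\alpha_1^2=0$ by the definition of $E_0$, so that $U_1=V_1$, I would sum this telescoping inequality over $i=1,\dots,k$ to get
\[
V_{k+1}+2\sum_{i=1}^{k-1}w_i\big(F(x^i)-F_*\big)\le V_1=U_1.
\]
Expanding $V_{k+1}$ via its definition in \eqref{w-v-u}, discarding the nonnegative terms $\|x^{k+1}-x^*\|^2$ and $2B_{k+1}\alpha_{k+1}^2\|\nabla f(x^k)+\xi^k\|^2$, and dividing by $2$, this yields
\[
\alpha_k(1+E_k\alpha_k)\big(F(x^k)-F_*\big)+\sum_{i=1}^{k-1}w_i\big(F(x^i)-F_*\big)\le\tfrac{U_1}{2}.
\]

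Next I would do the bookkeeping on the weights. Since $w_i=\alpha_i+E_i\alpha_i^2-E_{i+1}\alpha_{i+1}^2$, the sum $\sum_{i=1}^{k-1}w_i$ telescopes to $\sum_{i=1}^{k-1}\alpha_i+E_1\alpha_1^2-E_k\alpha_k^2$, and therefore
\[
\alpha_k(1+E_k\alpha_k)+\sum_{i=1}^{k-1}w_i=\alpha_k+E_k\alpha_k^2+\sum_{i=1}^{k-1}\alpha_i+E_1\alpha_1^2-E_k\alpha_k^2=E_1\alpha_1^2+\sum_{i=1}^k\alpha_i=S_k.
\]
Because $E_k\alpha_k=\theta_k\ge 0$ and $w_i\ge 0$ (both from Lemma~\ref{cbei}), the weights $\alpha_k(1+E_k\alpha_k)/S_k$ and $w_i/S_k$ are nonnegative and sum to one, so $\bar x^k$ is a genuine convex combination of $x^k,x^{k-1},\dots,x^1$. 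Applying Jensen's inequality to the convex function $F$ and then the displayed estimate gives
\[
F(\bar x^k)-F_*\le\frac{\alpha_k(1+E_k\alpha_k)}{S_k}\big(F(x^k)-F_*\big)+\sum_{i=1}^{k-1}\frac{w_i}{S_k}\big(F(x^i)-F_*\big)\le\frac{U_1}{2S_k}.
\]

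For the rate I would observe that $U_1=V_1$ is a finite constant determined only by $x^0$ and $\alpha_0$ (cf. Corollary~\ref{co:bound2}), while Proposition~\ref{lemma:sum-alpha2}(ii) gives $\sum_{i=1}^k\alpha_i\ge ck$ with $c=\min\{\alpha_0,1/(\sqrt2 L)\}>0$, so $S_k\ge ck$ and hence $U_1/(2S_k)=O(1/k)$. I do not expect a genuine obstacle: the only points requiring care are that the telescoping of $\sum w_i$ is exact, that the weights defining $\bar x^k$ are nonnegative (which reduces precisely to the inequalities in Lemma~\ref{cbei}), and that Jensen's step is applied to $F$ rather than just $f$ — the nonsmooth part $g$ having already been absorbed into the $U_k,V_k$ bookkeeping through the subgradient identities \eqref{ei1}--\eqref{ei3} and Lemma~\ref{ceq}.
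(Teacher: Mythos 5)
Your proposal is correct and follows essentially the same route as the paper's proof: telescoping the Lyapunov inequality from Lemma~\ref{enerc}, using the exact telescoping of $\sum_i w_i$ to identify the total weight with $S_k$, and finishing with Jensen's inequality and the stepsize lower bound from Proposition~\ref{lemma:sum-alpha2}. The only cosmetic difference is that you start the telescoping at $i=1$ via $w_0=0$ (so $U_1=V_1$), whereas the paper starts at $i=2$ and uses $V_2\leq U_1$; the two are equivalent.
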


\begin{proof}
    The proof is similar to that of Theorem \ref{thom1}. By telescoping $V_{i+1}\leq V_{i}-2 w_{i-1}\big(F(x^{i-1})-F_*\big)$ given in Lemma \ref{enerc} for $i=2,\ldots,k$, we derive $V_{k+1} + 2\sum_{i=1}^{k-1}w_i\big(F(x^i)-F_*\big) \leq V_{2} \leq U_1$.
    Taking into account the definition of $V_k$ in \eqref{w-v-u}, we further derive
    \begin{equation}\label{proof-thomc1-eq1}
        \alpha_{k}(1+E_{k}\alpha_{k}) \big(F(x^{k})-F_*\big) +  \sum\nolimits_{i=1}^{k-1}w_i\big(F(x^i)-F_*\big)\leq  \tfrac{U_1}{2}.
    \end{equation}
Again, from \eqref{w-v-u} we have
$\alpha_k (1+ E_{k}\alpha_{k}) + \sum\nolimits_{i=1}^{k-1}w_i =  E_1\alpha_1^2+ \sum\nolimits_{i=1}^k \alpha_i =S_k \sim {\cal O}(k)$.
The desired result follows from \eqref{proof-thomc1-eq1}, the convexity of $F$, and Jensen's inequality.
\end{proof}

Before establishing the pointwise convergence of Algorithm \ref{agbbc}, we derive a useful inequality.
\begin{lemma}
Let $\{x^k\}$ be generated by Algorithm \ref{agbbc}. Then, for any $k\geq 1$ we have
\begin{equation}\label{icon2}
\frac{1}{\alpha_k}\|x^{k+1}-x^{k}\|^2 \leq 4\sqrt{2}\alpha_{k+1}\|\nabla f(x^{k+1})-\nabla f(x^{*})\|^2 + 4\alpha_{k}\|\nabla f(x^{k})-\nabla f(x^{*})\|^2 + 2(F(x^{k})-F(x^{k+1})).
\end{equation}
\end{lemma}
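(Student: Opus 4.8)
The plan is to first derive a ``fundamental inequality'' that bounds $\tfrac{1}{\alpha_k}\|x^{k+1}-x^k\|^2$ in terms of the function-value gap $F(x^k)-F(x^{k+1})$ and the Barzilai--Borwein inner product $\langle \nabla f(x^{k+1})-\nabla f(x^k),\,x^{k+1}-x^k\rangle$, and then to convert that inner product into the right-hand side of \eqref{icon2} by treating the three branches of Algorithm \ref{agbbc} separately. For the fundamental step, I would take identity \eqref{ei2}, multiply it by $\alpha_k$, and use \eqref{pgd} in the form $\nabla f(x^k)+\xi^{k+1}=\tfrac{1}{\alpha_k}(x^k-x^{k+1})$ together with the subgradient inequality $\langle \nabla f(x^{k+1})+\xi^{k+1},\,x^k-x^{k+1}\rangle\le F(x^k)-F(x^{k+1})$ (valid since $\nabla f(x^{k+1})+\xi^{k+1}\in\partial F(x^{k+1})$) to obtain
\[
\tfrac{1}{\alpha_k}\|x^{k+1}-x^k\|^2\;\le\;\big(F(x^k)-F(x^{k+1})\big)+\langle \nabla f(x^{k+1})-\nabla f(x^k),\,x^{k+1}-x^k\rangle.
\]
Since the left-hand side is nonnegative, the right-hand side is also nonnegative, so it does not decrease when doubled; hence
\[
\tfrac{1}{\alpha_k}\|x^{k+1}-x^k\|^2\;\le\;2\big(F(x^k)-F(x^{k+1})\big)+2\langle \nabla f(x^{k+1})-\nabla f(x^k),\,x^{k+1}-x^k\rangle.
\]

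In Cases (ii) and (iii) of Algorithm \ref{agbbc} I would finish from here using the definition \eqref{lamk} of $\lambda_{k+1}$: writing $\langle \nabla f(x^{k+1})-\nabla f(x^k),\,x^{k+1}-x^k\rangle=\lambda_{k+1}\|\nabla f(x^{k+1})-\nabla f(x^k)\|^2\le 2\lambda_{k+1}\big(\|\nabla f(x^{k+1})-\nabla f(x^*)\|^2+\|\nabla f(x^k)-\nabla f(x^*)\|^2\big)$ turns the displayed bound into
\[
\tfrac{1}{\alpha_k}\|x^{k+1}-x^k\|^2\le 2\big(F(x^k)-F(x^{k+1})\big)+4\lambda_{k+1}\|\nabla f(x^{k+1})-\nabla f(x^*)\|^2+4\lambda_{k+1}\|\nabla f(x^k)-\nabla f(x^*)\|^2.
\]
It then suffices to check $4\lambda_{k+1}\le 4\sqrt2\,\alpha_{k+1}$ and $4\lambda_{k+1}\le 4\alpha_k$: in Case (ii), $\lambda_{k+1}<\alpha_k$ and $\alpha_{k+1}=\alpha_k/\sqrt2$ give $\lambda_{k+1}<\alpha_k=\sqrt2\,\alpha_{k+1}$; in Case (iii), $\lambda_{k+1}\le\alpha_k/2$ and $\alpha_{k+1}=\lambda_{k+1}/\sqrt2$ give $\lambda_{k+1}<\alpha_k$ and $\lambda_{k+1}=\sqrt2\,\alpha_{k+1}$. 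In both cases \eqref{icon2} follows.

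Case (i) must be handled differently, since there $\lambda_{k+1}\ge\alpha_k$ can be arbitrarily large and the $\lambda_{k+1}$-bound is useless; this is the main obstacle. Here I would instead apply Young's inequality directly in the fundamental inequality, $\langle \nabla f(x^{k+1})-\nabla f(x^k),\,x^{k+1}-x^k\rangle\le\tfrac{\alpha_k}{2}\|\nabla f(x^{k+1})-\nabla f(x^k)\|^2+\tfrac{1}{2\alpha_k}\|x^{k+1}-x^k\|^2$, absorb the $\tfrac{1}{2\alpha_k}\|x^{k+1}-x^k\|^2$ term into the left-hand side, multiply by $2$, and then use $\|\nabla f(x^{k+1})-\nabla f(x^k)\|^2\le 2\|\nabla f(x^{k+1})-\nabla f(x^*)\|^2+2\|\nabla f(x^k)-\nabla f(x^*)\|^2$ to get $\tfrac{1}{\alpha_k}\|x^{k+1}-x^k\|^2\le 2\big(F(x^k)-F(x^{k+1})\big)+2\alpha_k\|\nabla f(x^{k+1})-\nabla f(x^*)\|^2+2\alpha_k\|\nabla f(x^k)-\nabla f(x^*)\|^2$. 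Since $\alpha_{k+1}=\sqrt{1+\theta_k}\,\alpha_k\ge\alpha_k$ in Case (i), we have $2\alpha_k\le 4\alpha_k$ and $2\alpha_k\le 2\alpha_{k+1}\le 4\sqrt2\,\alpha_{k+1}$, which again gives \eqref{icon2}. Throughout, the delicate bookkeeping is to keep the coefficient of $F(x^k)-F(x^{k+1})$ equal to exactly $2$ — which is what the sign-indefiniteness of that gap forces — while retaining enough slack in the quadratic terms to absorb the case-dependent comparisons between $\lambda_{k+1}$, $\alpha_k$, and $\alpha_{k+1}$.
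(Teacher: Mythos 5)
Your proposal is correct and follows essentially the same route as the paper: your ``fundamental inequality'' is exactly the paper's bound \eqref{ico21} (obtained there by setting $x=x^k$ in the prox inequality \eqref{icon}, equivalently via \eqref{ei2} and the subgradient inequality as you do), your Case (i) treatment via Young's inequality reproduces \eqref{ico22} with the $\alpha_{k+1}\ge\alpha_k$ comparison, and your Cases (ii)--(iii) coincide with the paper's $\lambda_{k+1}<\alpha_k$ case using $\alpha_{k+1}\ge\lambda_{k+1}/\sqrt{2}$, with the factor-of-two scaling justified by nonnegativity just as in the paper.
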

\begin{proof}
Let $k\geq 1$ and $x \in \mathbb{R}^n$ be arbitrarily fixed.
It follows from \eqref{pg} that
\begin{equation}\label{jy-08}
\alpha_k\big(g(x^{k+1})-g(x)\big)\leq \langle x^{k+1}-x^{k} + \alpha_k \nabla f(x^{k}),x-x^{k+1}\rangle.
\end{equation}
It follows from \eqref{jy-08}, the convexity of $f$  and $F = f + g$ that
\begin{equation} \label{icon}
F(x^{k+1})-F(x) \leq \frac{1}{\alpha_{k}}\langle x^{k+1}-x^{k},x-x^{k+1} \rangle +
\langle \nabla f(x^{k})-\nabla f(x^{k+1}),x-x^{k+1} \rangle.
\end{equation}
Setting $x=x^k$ in \eqref{icon} and using the definition of $\lambda_{k+1}$ in \eqref{lamk}, we derive
\begin{equation}
    \frac{1}{\alpha_k}\|x^{k+1}-x^{k}\|^2  \leq \lambda_{k+1}\|\nabla f(x^{k+1})-\nabla f(x^{k})\|^2 + F(x^{k})-F(x^{k+1}). \label{ico21}
\end{equation}
Furthermore, plugging $\langle \nabla f(x^{k})-\nabla f(x^{k+1}),x^k-x^{k+1} \rangle \leq \frac{\alpha_{k}}{2}\|\nabla f(x^{k})-\nabla f(x^{k+1})\|^2+\frac{1}{2\alpha_k}\|x^k-x^{k+1}\|^2$ into \eqref{icon} with $x=x^k$ to obtain
\begin{equation}
\frac{1}{\alpha_k}\|x^{k+1}-x^{k}\|^2  \leq
\alpha_k\|\nabla f(x^{k+1})-\nabla f(x^{k})\|^2
+ 2\big(F(x^{k})-F(x^{k+1})\big). \label{ico22}
\end{equation}
We split the discussion into two cases. (i) If $\lambda_{k+1}\geq \alpha_{k}$, then \eqref{ico22} implies
\begin{align}
\frac{1}{\alpha_k}& \|x^{k+1}-x^{k}\|^2 \leq 2\alpha_{k}\|\nabla f(x^{k+1})-\nabla f(x^{*})\|^2 + 2\alpha_k\|\nabla f(x^{k})-\nabla f(x^{*})\|^2 + 2\big(F(x^{k})-F(x^{k+1})\big)  \nonumber\\
& \leq 2\alpha_{k+1}\|\nabla f(x^{k+1})-\nabla f(x^{*})\|^2 + 2\alpha_k\|\nabla f(x^{k})-\nabla f(x^{*})\|^2 + 2\big(F(x^{k})-F(x^{k+1})\big), \label{x1}
\end{align}
where the first ``$\leq$"  uses $\|a-b\|^2\leq 2\|a-c\|^2 + 2\|b-c\|^2$, and the second  is due to  $\alpha_{k+1}=\sqrt{1+\theta_{k}}\alpha_{k}\geq \alpha_k$.
(ii) If $\lambda_{k+1}<\alpha_{k}$, then $\alpha_{k+1}\geq \frac{1}{\sqrt{2}}\lambda_{k+1}$ and \eqref{ico21} implies
\begin{align}
\frac{1}{\alpha_k}\|x^{k+1} & -x^{k}\|^2\leq 2\lambda_{k+1}\|\nabla f(x^{k+1})-\nabla f(x^{*})\|^2 + 2\lambda_{k+1}\|\nabla f(x^{k})-\nabla f(x^{*})\|^2 + F(x^{k})-F(x^{k+1}) \nonumber \\
& \leq 2\sqrt{2}\alpha_{k+1}\|\nabla f(x^{k+1})-\nabla f(x^{*})\|^2 + 2\alpha_k\|\nabla f(x^{k})-\nabla f(x^{*})\|^2 + F(x^{k})-F(x^{k+1}). \label{x2}
\end{align}
Apparently the expression on the right-hand side of \eqref{x2} is nonnegative, allowing us to expand it further by multiplying by a factor of $2$.
Moreover, considering \eqref{x1}, we can derive \eqref{icon2} in both scenarios.
\end{proof}

\begin{thom}[Analogous to Theorem \ref{thom:ic}]\label{iterative_convergence}
The sequence $\{x^{k}\}$ generated by Algorithm \ref{agbbc} converges to an optimal solution of \eqref{pcom}.
\end{thom}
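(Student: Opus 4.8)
The plan is to follow the template of Theorem~\ref{thom:ic}, with the gradient $\nabla f(x^k)$ there replaced throughout by the proximal‑gradient residual $\nabla f(x^k)+\xi^{k+1}=(x^k-x^{k+1})/\alpha_k$, and with one new ingredient: a sharpening of \eqref{ei1}. Inspecting the derivation of \eqref{ei1} (i.e.\ \cite[Eq.~(34)]{MM23}), the only step that uses merely the convexity of $f$ is the bound $\langle\nabla f(x^k),x^k-x^*\rangle\ge f(x^k)-f(x^*)$; since Corollary~\ref{co:bound2} places all iterates in $B(x^*,T)$, we may instead invoke \eqref{local-smooth-1} with $x=x^*$, $y=x^k$ to get $\langle\nabla f(x^k),x^k-x^*\rangle\ge f(x^k)-f(x^*)+\tfrac1{2L}\|\nabla f(x^k)-\nabla f(x^*)\|^2$ for every $x^*\in\mathcal X^*$. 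This upgrades \eqref{ei1} to the refined estimate
\[
\|x^{k+1}-x^*\|^2+2\alpha_k\big(F(x^k)-F_*\big)+\tfrac{\alpha_k}{L}\|\nabla f(x^k)-\nabla f(x^*)\|^2\le\|x^k-x^*\|^2+\alpha_k^2\|\nabla f(x^k)+\xi^k\|^2 .
\]

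Feeding this refined estimate into the computation of Lemma~\ref{enerc} (where \eqref{jy-02}--\eqref{jy-03} and Lemma~\ref{cbei} remain untouched), the extra nonnegative term simply rides along and yields $V_{k+1}+\tfrac{\alpha_k}{L}\|\nabla f(x^k)-\nabla f(x^*)\|^2\le U_k\le V_k$ for all $k\ge1$. Telescoping and using $V_{N+1}\ge0$ gives $\sum_{k\ge1}\alpha_k\|\nabla f(x^k)-\nabla f(x^*)\|^2\le LV_1<\infty$; combined with the uniform lower bound $\alpha_k\ge c>0$ from Proposition~\ref{lemma:sum-alpha2}(ii) this forces $\sum_{k\ge1}\|\nabla f(x^k)-\nabla f(x^*)\|^2<\infty$, hence $\nabla f(x^k)\to\nabla f(x^*)$. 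I would then sum \eqref{icon2} over $k=1,\dots,N$: its left‑hand sum is exactly $\sum_{k=1}^N\alpha_k\|\nabla f(x^k)+\xi^{k+1}\|^2$, the first two families of terms on the right are dominated by the finite quantity $\sum_k\alpha_k\|\nabla f(x^k)-\nabla f(x^*)\|^2$ just controlled, and the last group telescopes to $F(x^1)-F(x^{N+1})\le F(x^1)-F_*$. Since every summand on the left is nonnegative while the right‑hand side is bounded uniformly in $N$, we obtain $\sum_{k\ge1}\alpha_k\|\nabla f(x^k)+\xi^{k+1}\|^2<\infty$, and once more $\alpha_k\ge c$ yields $\nabla f(x^k)+\xi^{k+1}\to0$. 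Together with $\nabla f(x^{k+1})-\nabla f(x^k)\to0$ (both gradients converging to $\nabla f(x^*)$) this gives $\nabla f(x^{k+1})+\xi^{k+1}\to0$ with $\xi^{k+1}\in\partial g(x^{k+1})$, i.e.\ $\mathrm{dist}\!\big(0,\partial F(x^{k+1})\big)\to0$.

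The endgame mirrors Theorem~\ref{thom:ic}. The sequences $\{x^k\}$ and $\{x^{k+1}\}$ have the same cluster points; if $x^{m_j+1}\to\tilde x$ along a subsequence, then $\nabla f(x^{m_j+1})\to\nabla f(\tilde x)$ by continuity, so $\xi^{m_j+1}\to-\nabla f(\tilde x)$, and the closedness of the graph of $\partial g$ gives $-\nabla f(\tilde x)\in\partial g(\tilde x)$, i.e.\ $0\in\partial F(\tilde x)$ and $\tilde x\in\mathcal X^*$; thus every cluster point of $\{x^k\}$ is optimal. Finally, $\{x^k\}$ is bounded (Corollary~\ref{co:bound2}) and Lemma~\ref{enerc} gives $\|x^{k+1}-x^*\|^2+a_{k+1}\le\|x^k-x^*\|^2+a_k$ for every $x^*\in\mathcal X^*$, where $a_k:=2B_k\alpha_k^2\|\nabla f(x^{k-1})+\xi^{k-1}\|^2+2\alpha_{k-1}(1+E_{k-1}\alpha_{k-1})(F(x^{k-1})-F_*)\ge0$ does not depend on $x^*$; hence the Opial‑type Lemma~\ref{opial} with $\mathcal X=\mathcal X^*$ implies $\{x^k\}$ converges to some element of $\mathcal X^*$.

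I expect the main obstacle to be the middle step, namely proving that the residual $\nabla f(x^k)+\xi^{k+1}$ vanishes. In contrast to the smooth case, $\alpha_k$ carries no a priori upper bound, so $\|x^{k+1}-x^k\|\to0$ is not directly available and cannot be used to pass to the limit in the $\mathrm{prox}$ step; the argument must instead route through the refined energy inequality together with \eqref{icon2}, exploiting that all the relevant series have nonnegative terms while their majorants either telescope or are already known to be summable, and only then divide out the lower bound $\alpha_k\ge c$. Once the residual is shown to vanish, passing to cluster points via the shifted sequence $\{x^{k+1}\}$---which carries the subgradient $\xi^{k+1}\in\partial g(x^{k+1})$ evaluated at the correct iterate---avoids any further need to control the step sizes.
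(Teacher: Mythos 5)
Your proposal is correct, and its first half coincides with the paper's own argument: the refinement of \eqref{ei1} via \eqref{local-smooth-1} is exactly the paper's \eqref{jy-05}, the strengthened energy inequality is \eqref{jy-11}, and the weighted summability of $\|\nabla f(x^k)-\nabla f(x^*)\|^2$ is \eqref{a-f}, combined with $\alpha_k\geq c$ from Proposition \ref{lemma:sum-alpha2} just as in the paper. Where you genuinely diverge is the middle step. The paper splits the analysis into three cases according to $\liminf_k\alpha_k$ and $\limsup_k\alpha_k$ and, in the delicate mixed case, proves $F(x^k)\to F_*$ by combining \eqref{icon} (with $\alpha_k\to\infty$ along one subsequence), the bound \eqref{jy-09} obtained from summing \eqref{icon2} (with $\alpha_k\leq C$ along the complementary subsequence), and the Cauchy--Schwarz estimate \eqref{jy-10}; optimality of cluster points then follows from function-value convergence. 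You instead observe that, by \eqref{pgd}, the left side of \eqref{icon2} equals $\alpha_k\|\nabla f(x^k)+\xi^{k+1}\|^2$, so summing \eqref{icon2} and using \eqref{a-f} plus the telescoping of $F(x^k)-F(x^{k+1})$ gives $\sum_k\alpha_k\|\nabla f(x^k)+\xi^{k+1}\|^2<\infty$; dividing by the lower bound $\alpha_k\geq c$ (rather than needing any upper bound on $\alpha_k$) yields vanishing of the proximal-gradient residual, and cluster points are then identified as optimal through the graph-closedness of $\partial g$ rather than through $F(x^k)\to F_*$. Both routes finish identically with Lemma \ref{enerc} and the Opial-type Lemma \ref{opial}. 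Your variant buys a cleaner endgame --- it eliminates the trichotomy on $\alpha_k$ and the auxiliary estimate \eqref{jy-10} entirely, and it delivers the extra information $\operatorname{dist}(0,\partial F(x^{k+1}))\to 0$ (indeed square-summable residuals, mirroring \eqref{grad-min-rate} in Theorem \ref{thom:ic}) --- at the mild cost of invoking closedness of the subdifferential graph; the paper's route, by contrast, yields $F(x^k)\to F_*$ explicitly and stays entirely within the function-value machinery already assembled for the ergodic rate. One small caveat: the refined inequality should only be asserted for the fixed $x^*$ around which the local smoothness ball $B(x^*,T)$ in \eqref{def-T} is defined (not ``for every $x^*\in\mathcal X^*$''), but since you use it solely for that one point and apply the unrefined Lemma \ref{enerc} in the Opial step, this does not affect the argument.
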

\begin{proof}
First, we derive a refined inequality of \eqref{ei1}. Let $k\geq 1$ be arbitrarily fixed.
By setting $x=x^*$ in \eqref{jy-08}, we obtain
$\alpha_k(g(x^{k+1})-g(x^*)) \leq \langle x^{k+1}-x^k+\alpha_k\nabla f(x^k), x^*-x^{k+1}\rangle$,
which can be equivalently reformulated as
\begin{equation}\label{jy-07}
\|x^{k+1}-x^*\|^2 + 2 \alpha_k(g(x^{k+1})-g(x^*)) \leq \|x^{k}-x^*\|^2 + 2\alpha_k \langle \nabla f(x^k), x^*-x^{k+1}\rangle -
\|x^{k+1}-x^k\|^2.
\end{equation}
By using the inequality in \eqref{local-smooth-1} over $B(x^*,T)$ and the convexity of $g$, we obtain
\begin{equation}\label{jy-06}
\begin{aligned}
 \langle \nabla &f(x^k), x^*-x^{k+1}\rangle
= \langle \nabla f(x^k), x^*-x^k\rangle + \langle \nabla f(x^k) + \xi^k, x^k-x^{k+1}\rangle
+ \langle \xi^k, x^{k+1}-x^k\rangle \\
&\leq f(x^*) - f(x^k) - \frac{1}{2L}\|\nabla f(x^k)-\nabla f(x^*)\|^2 + \langle \nabla f(x^k) + \xi^k, x^k-x^{k+1}\rangle
+ g(x^{k+1})-g(x^k).
\end{aligned}
\end{equation}
Combining \eqref{jy-06} and \eqref{jy-07}, using $2\alpha_k\langle \nabla f(x^k) + \xi^k, x^k-x^{k+1}\rangle - \|x^{k+1}-x^k\|^2 \leq \alpha_k^2\|\nabla f(x^k) +\xi^k\|^2$, taking   into account $F = f + g$, and reorganizing terms, we obtain a refined inequality of
\eqref{ei1}:
\begin{equation}\label{jy-05}
\|x^{k+1}-x^{*}\|^2 + 2\alpha_{k}(F(x^{k})-F_*) + \frac{\alpha_k}{L}\|\nabla f(x^k)-\nabla f(x^{*})\|^2  \leq \|x^{k}-x^*\|^2 + \alpha_{k}^2  \|\nabla f(x^k)+\xi^k\|^2.
\end{equation}
Then, by using \eqref{jy-05} in place of \eqref{ei1} in
the proof of Lemma \ref{enerc}, we can derive
\begin{equation}\label{jy-11}
V_{k+1} + \frac{\alpha_k}{L}\|\nabla f(x^k)-\nabla f(x^{*})\|^2 \leq U_k\leq V_{k},
\end{equation}
which holds for all $k\geq 1$.
Telescoping this inequality leads to
\begin{equation} \label{a-f}
\sum_{k=1}^{\infty}\alpha_{k}\|\nabla f(x^{k})-\nabla f(x^{*})\|^2 \leq V_1 < +\infty.
\end{equation}
Since $\alpha_k\geq c > 0$ from Proposition \ref{lemma:sum-alpha2} (ii),    \eqref{a-f}  implies that $\lim\limits_{k\rightarrow \infty}\nabla f(x^{k}) = \nabla f(x^{*})$. We next split the proof into three cases, which cover all possibilities:
\begin{itemize}
    \item[(a).] $\liminf\nolimits_{k\rightarrow \infty} \alpha_k = \limsup\nolimits_{k\rightarrow \infty} \alpha_k = +\infty$, i.e.,  $\lim\nolimits_{k\rightarrow \infty} \alpha_k = +\infty$;
    \item[(b).] $\limsup\nolimits_{k\rightarrow \infty} \alpha_k<+\infty$, i.e., there exists a constant $C>0$, such that $\alpha_{k}\leq C$ for all $k$;
    \item[(c).] $\limsup\nolimits_{k\rightarrow \infty} \alpha_k= +\infty$, yet $\liminf\nolimits_{k\rightarrow \infty} \alpha_k < +\infty$, i.e., there exists an infinite set $K$ such that its complementary set $\overline{K} :=\{1,2,\ldots\}\backslash K$ is also infinite,
$\lim\nolimits_{K\ni k\rightarrow \infty} \alpha_k = +\infty$ and, for some $C>0$, $\alpha_{k}\leq C$ for all $k\in \overline{K}$.
\end{itemize}
First, we assume case (c) holds and show that $\lim_{K\ni k\rightarrow\infty} F(x^k) = \lim_{ \overline{K} \ni k\rightarrow\infty} F(x^k) = F_*$, and thus $\lim_{k\rightarrow\infty} F(x^k) = F_*$.
Recall that $\{x^{k}\}$ is bounded and $\|\nabla f(x^{k+1})-\nabla f(x^{k})\| \rightarrow 0$ as $k\rightarrow\infty$ since $\{\nabla f(x^k)\}$ converges.
By setting $x = x^*$ in \eqref{icon} and then taking the limit ``$K\ni k \rightarrow \infty$" on both sides of \eqref{icon}, we derive
$0\leq \lim_{K\ni k\rightarrow \infty} \big(F(x^{k+1})-F_*\big) \leq 0$ and thus $\lim_{K\ni k\rightarrow \infty} F(x^{k+1})=F_*$.
On the other hand,   by taking sum over $k=1,2,\ldots$ on both sides of \eqref{icon2}, using $\alpha_k\leq C$ for all $k\in \overline{K}$,
 $F(x^k)\geq F_*$ for all $k\geq 0$, and \eqref{a-f}, we obtain
\begin{equation}\label{jy-09}
    \frac{1}{C}\sum_{k\in\overline{K}} \|x^{k+1}-x^k\|^2 \leq \sum_{k=1}^{\infty }\frac{1}{\alpha_k}\|x^{k+1}   -x^{k}\|^2 \leq 4\sqrt{2}  V_1 + 4 V_1
     + 2\big(F(x^1)-F_*\big) < \infty.
\end{equation}
Note that $\overline{K}$ is infinite. Thus, \eqref{jy-09} implies that   $\lim\nolimits_{\overline{K}\ni k \rightarrow \infty}\|x^{k+1}-x^{k}\| = 0$.
Moreover, by applying Cauchy-Schwartz inequality to \eqref{icon} with $x=x^*$, we derive
\begin{equation}\label{jy-10}
0\leq F(x^{k+1})-F_* \leq \frac{1}{\alpha_{k}}\| x^{k+1}-x^{k}\|\|x^{*}-x^{k+1} \| +
\|\nabla f(x^{k})-\nabla f(x^{k+1})\|\|x^{*}-x^{k+1}\|.
\end{equation}
Again, $\{x^{k}\}$ is bounded and $\alpha_k\geq c >0$ from Proposition \ref{lemma:sum-alpha2} (ii).
Then, by taking the limit ``$\overline{K}\ni k \rightarrow \infty$" on both sides of \eqref{jy-10} and noting $\lim\nolimits_{\overline{K}\ni k \rightarrow \infty}\|x^{k+1}-x^{k}\| = 0$ and $\lim_{k\rightarrow\infty}\|\nabla f(x^{k+1})-\nabla f(x^{k})\|=0$, we obtain $\lim_{\overline{K}\ni k \rightarrow \infty}F(x^{k+1})=F(x^*)$. In summary, we have shown that $\lim_{k \rightarrow \infty}F(x^{k+1})=F(x^*)$ in case (c),
which confirms that all limit points of $\{x^k\}$ belong to $\mathcal{X}^*$.
It follows from \eqref{jy-11} that $U_{k+1} \leq U_k$, where $U_k$ is defined in \eqref{jy-04}.
Using Lemma \ref{opial} with $a_k:= 2B_{k}\alpha_{k}^2\|\nabla f(x^{k-1})+\xi^{k-1}\|^2+ 2E_{k}\alpha_{k}^2\big(F(x^{k-1})-F_*\big)$ and $\mathcal{X}=\mathcal{X}^*$, we derive the convergence of whole sequence $\{x^k\}$ to an element in ${\cal X}^*$. This completes the proof for case (c).
The proofs for cases (a) and (b) are much simpler and thus are omitted.
In summary, we have shown that the sequence $\{x^{k}\}$ generated by Algorithm \ref{agbbc} converges to an optimal solution of \eqref{pcom}.
\end{proof}

\section{Numerical Experiments} \label{ne}
In this section, we apply our AdaBB algorithms to two representative problems: logistic regression, where $f$ is convex and $L$-smooth, and cubic regularization, where $f$ is convex and locally smooth.
We will first compare the four algorithms implied by Algorithm \ref{agbb} by choosing different options in (Case ii) and (Case iii). These are given in Table \ref{tab:4AdaBB}. 
\begin{table}[!htbp]
\centering
\begin{tabular}{c|c|c|c|c|}
\cline{2-5}
                               & AdaBB     & AdaBB1   & AdaBB2    & AdaBB3    \\ \hline
\multicolumn{1}{|c|}{Case ii}  & Option II & Option I & Option I  & Option II \\ \hline
\multicolumn{1}{|c|}{Case iii} & Option II & Option I & Option II & Option I  \\ \hline
\end{tabular}\caption{Four AdaBB Variants.}\label{tab:4AdaBB}
\end{table}

Moreover, we will also compare the four algorithms in Table \ref{tab:4AdaBB} with the following algorithms: GD \eqref{gd} with $\alpha = 1/L$, AdGD \cite[Algorithm 1]{MM23}, and AdaPGM \cite{LTSP23}. This comparison will help demonstrate the efficiency of our proposed method. 
For initial points, we set $x^0=0$ for all situations. 
For adaptive methods: AdGD, AdaPGM and the four AdaBB variants in Table \ref{tab:4AdaBB}, we choose $\alpha_0=10^{-10}$ as recommended in \cite{MM20}. This ensures that $x^1$ will be close to $x^0$, and provides a reliable estimate of $\alpha_1$. Since $\lambda_1$ is very likely to be greater than $\alpha_0$, it is more likely that $1 \in I_1$. Hence, for the four AdaBB variants in Table \ref{tab:4AdaBB}, we set $\theta_0$ as defined in \eqref{int:theta}. In the numerical experiments, we also set $\theta_1=1\ll \frac{\alpha_1}{\alpha_0}$ to prevent excessive $\alpha_2$ values due to the small value of $\alpha_0=10^{-10}$.

Our codes were written in Python 3.11.0 and used the framework provided by Malitsky and Mishchenko \cite{MM20}. All numerical experiments were conducted on a personal computer with an AMD Ryzen 7 5800H processor, Radeon Graphics, and 16GB memory. Additionally, the experiments utilized the mushrooms, w8a, and covtype datasets from LIBSVM \cite{CL11}.

\subsection{Logistic Regression} \label{sec:lr}
In this subsection, we consider the logistic regression problem
\begin{equation}\label{log-regression}
\min_{x\in {\mathbb R}^{n}} f(x)=-\frac{1}{m}\sum_{i=1}^m \left(y_i \log (s(a_i^\top x)) + (1 - y_i) \log (1 - s(a_i^\top x))\right) + \frac{\gamma}{2}\|x\|^2,
\end{equation}
where $a_i\in {\mathbb R}^{n}$, $y_i\in \{0, 1\}$. Here,  $s(z)=1/({1+\exp(-z)})$ denotes the sigmoid function, $m$ represents the number of observations, and $\gamma$ serves as a regularization parameter. 
For this problem, the gradient of $f$ is given by $\nabla f(x) = \frac{1}{m}\sum_{i=1}^m a_i(s(a_i^\top x)-y_i) + \gamma x$. This means that $f$ is a $L$-smooth function with $L=\frac{1}{4}\lambda_{\max}(A^\top A) + \gamma$, where $A = (a_1^{\top},\ldots,a_m^{\top})^{\top}$ and $\lambda_{\max}(A^\top A)$ denotes the largest eigenvalue of matrix $A^\top A$ \cite{MM20}. 
In this experiment, we run all algorithms for a fixed number of iterations, denoted by MaxIter in Table \ref{tab:settings}. We use  $f_*$ to denote the lowest objective function value obtained among all tested algorithms.

Details of the data sets and the parameters are given in Table \ref{tab:settings}.

\begin{table}[!htbp]
\centering
\begin{tabular}{l|c|c|c|c|c|}
\cline{2-6}
                                        & $m$      & $n$   &   $L $ & $\gamma$    &                MaxIter  \\ \hline
\multicolumn{1}{|l|}{mushrooms} & $8124$   & $112$ & $2.59$ & $\frac{L}{m}=3.18\times 10^{-5}$ & $1000$      \\ \hline
\multicolumn{1}{|l|}{w8a}       & $49749$  & $300$ & $0.66$ & $\frac{L}{m}=1.32\times 10^{-6}$ & $3000$       \\ \hline
\multicolumn{1}{|l|}{covtype}   & $581012$ & $54$ & $5.04\times 10^6$ & $\frac{L}{10m}=8.68$         & $10000$        \\ \hline
\end{tabular}
\caption{Parameters settings for different datasets.}
\label{tab:settings}
\end{table}

\begin{figure}[!htbp]
\centering
\subfloat[mushrooms dataset, objective]{
\includegraphics[scale=0.25]{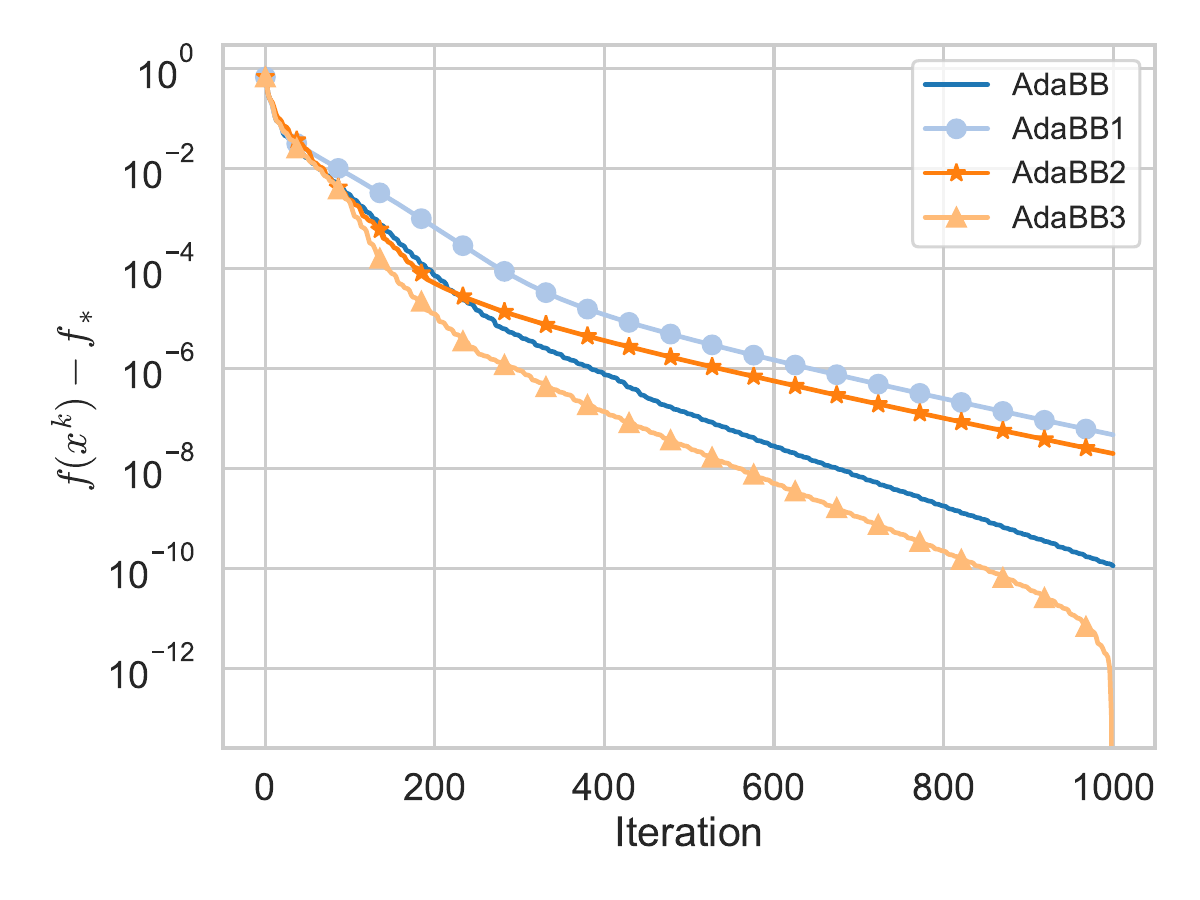}}
\subfloat[w8a dataset, objective]{
\includegraphics[scale = 0.25]{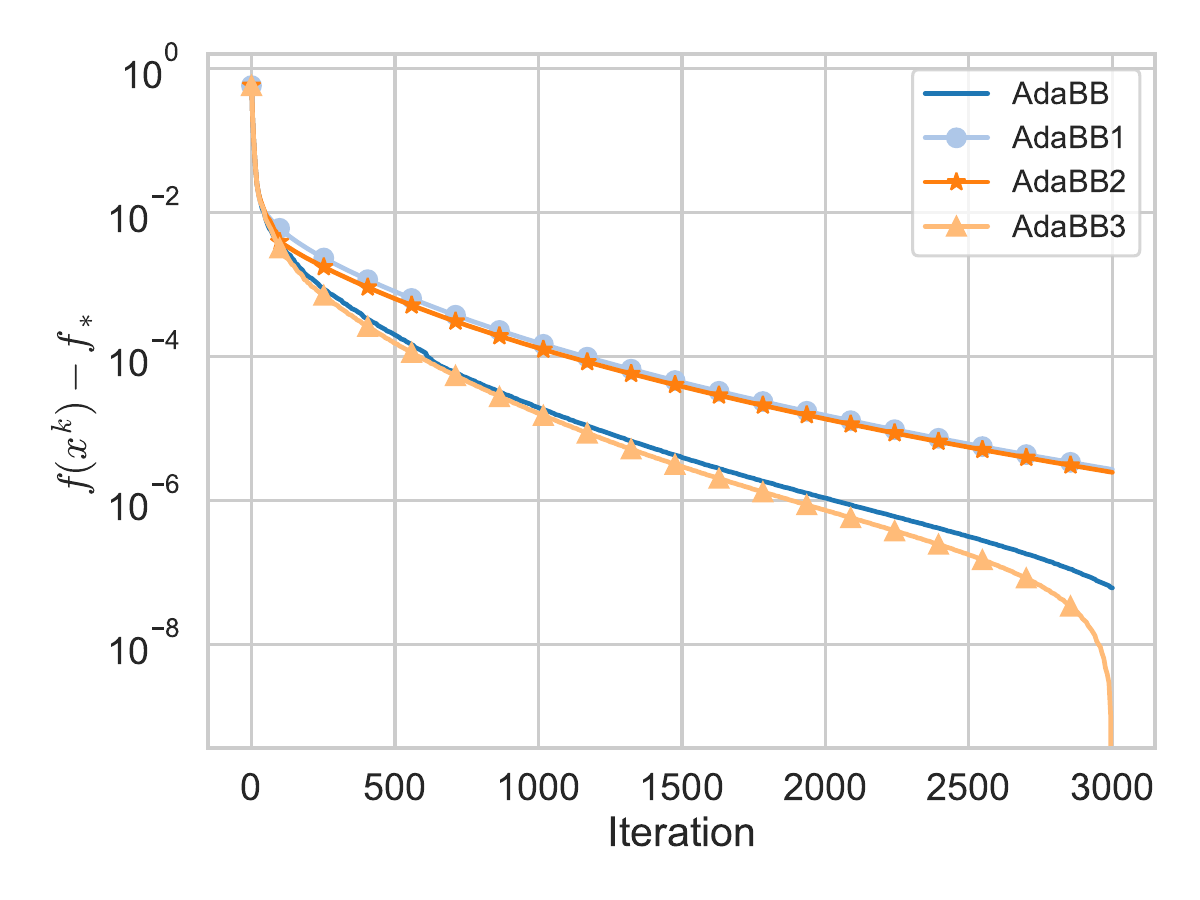}}
\subfloat[covtype dataset, objective]
{\includegraphics[scale = 0.25]{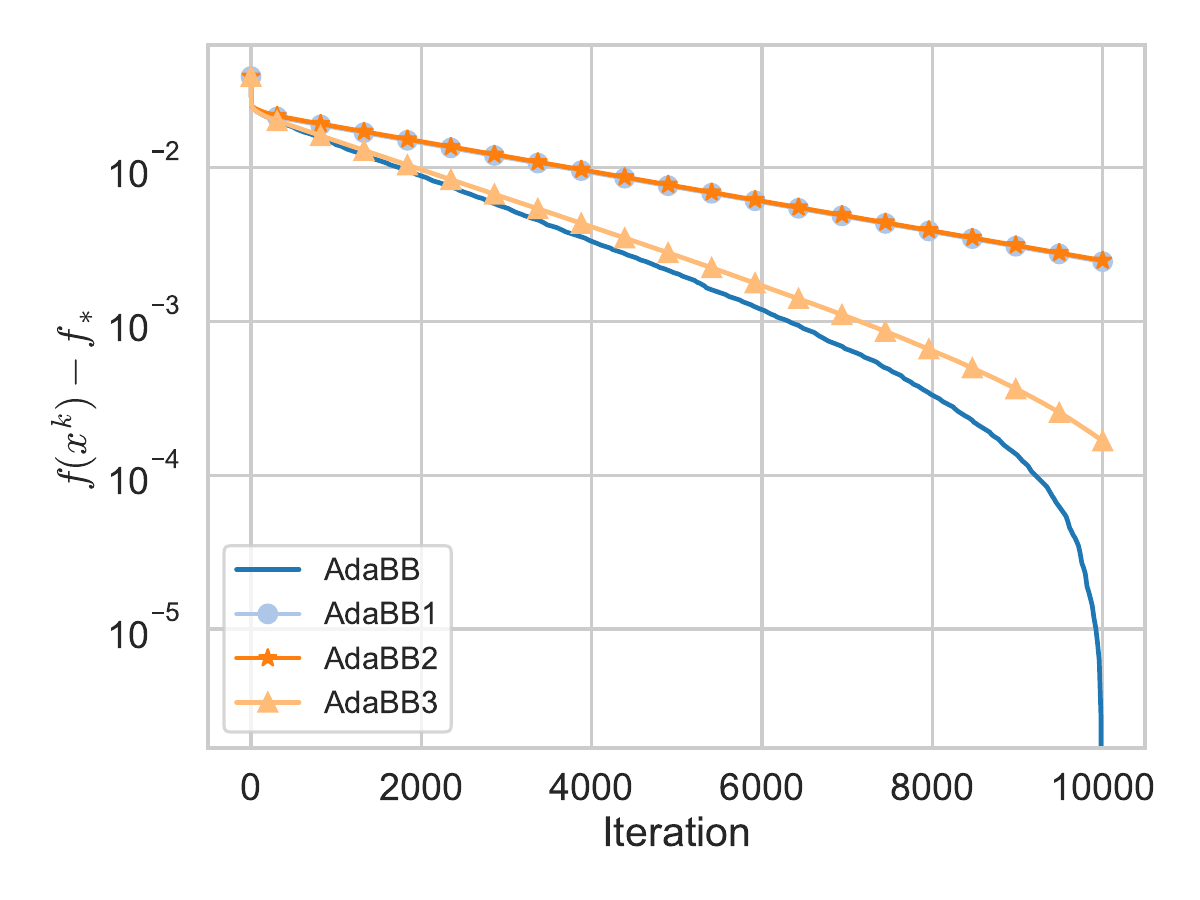}}
\caption{Results for the logistic regression problem via AdaBB, AdaBB1, AdaBB2, AdaBB3 concerning the function value residual.}
\label{fig:self-1}
\end{figure}

We first present the numerical performances for the four AdaBB variants given in Table \ref{tab:4AdaBB}.
Figure \ref{fig:self-1} shows that AdaBB and AdaBB3 are more efficient than AdaBB1 and AdaBB2. This indicates that choosing Option II in (Case ii) of Algorithm \ref{agbb} is more preferable. This further implies that when the BB stepsize $\lambda_k$ is not too large and not too small, then it gives superior performance by choosing $\lambda_k$ as the stepsize.
In the rest of this subsection, we only compare AdaBB and AdaBB3 with other popular optimization algorithms. 

In Figure \ref{fig:rg}, we compare AdaBB and AdaBB3 with GD, AdGD and AdaPGM. In subfigures (a), (b), and (c), we show the function value error, and in subfigures (d), (e) and (f), we show the norm of the gradient. From these figures we see that AdaBB and AdaBB3 both perform very well and are usually better than the other three algorithms -- AdGD is comparable sometimes. 

\begin{figure}[!htbp]
\centering
\subfloat[mushrooms, objective]
{\includegraphics[scale = 0.25]{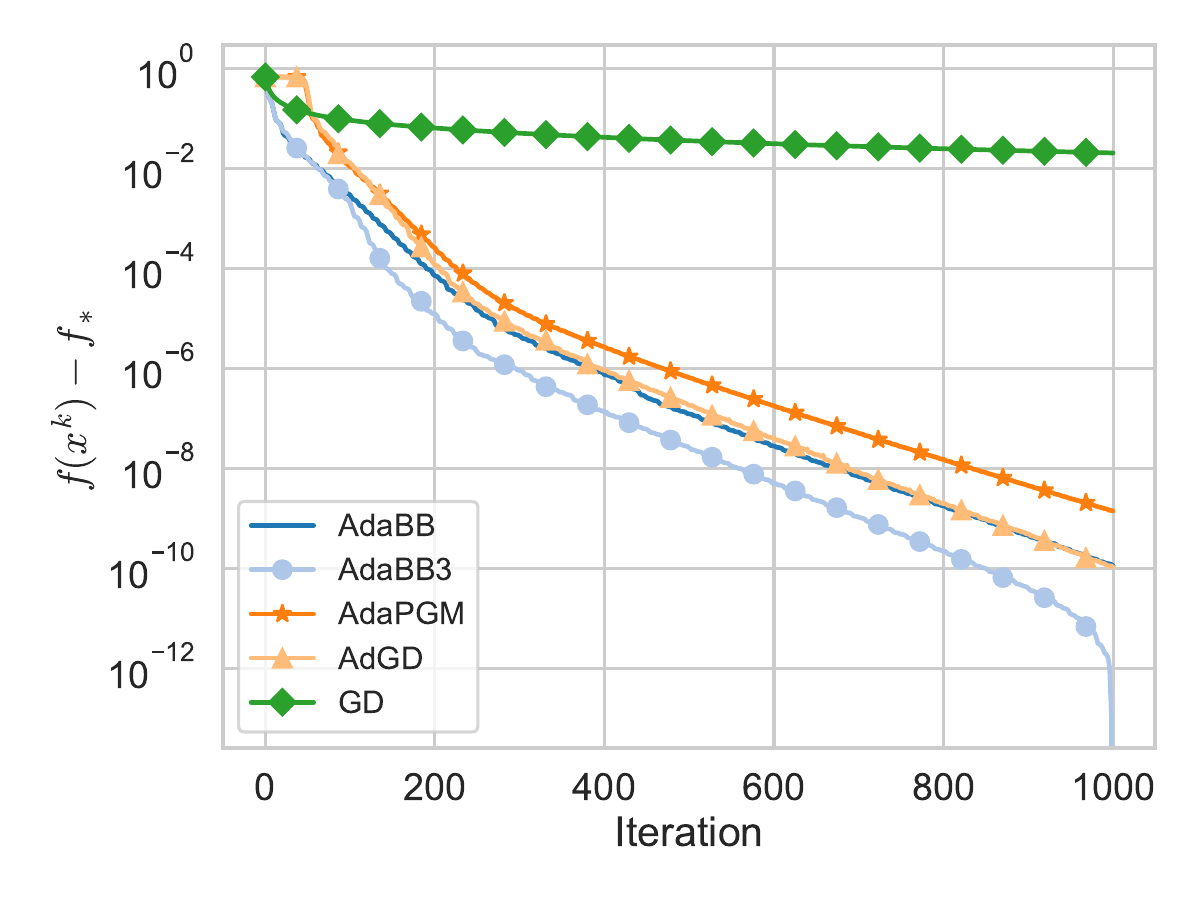}}
\subfloat[w8a, stepsize]{
\includegraphics[scale = 0.25]{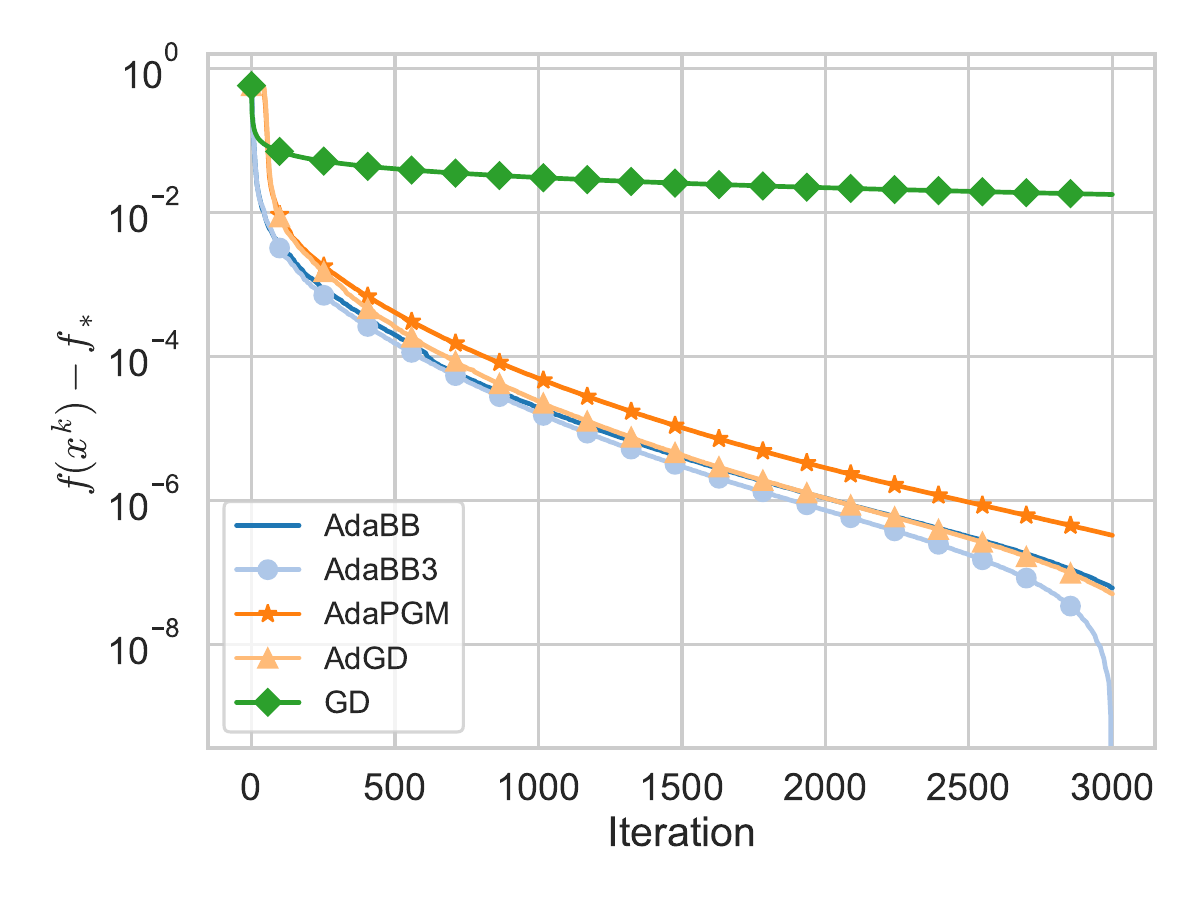}}
\subfloat[covtype, objective]{
\includegraphics[scale = 0.25]{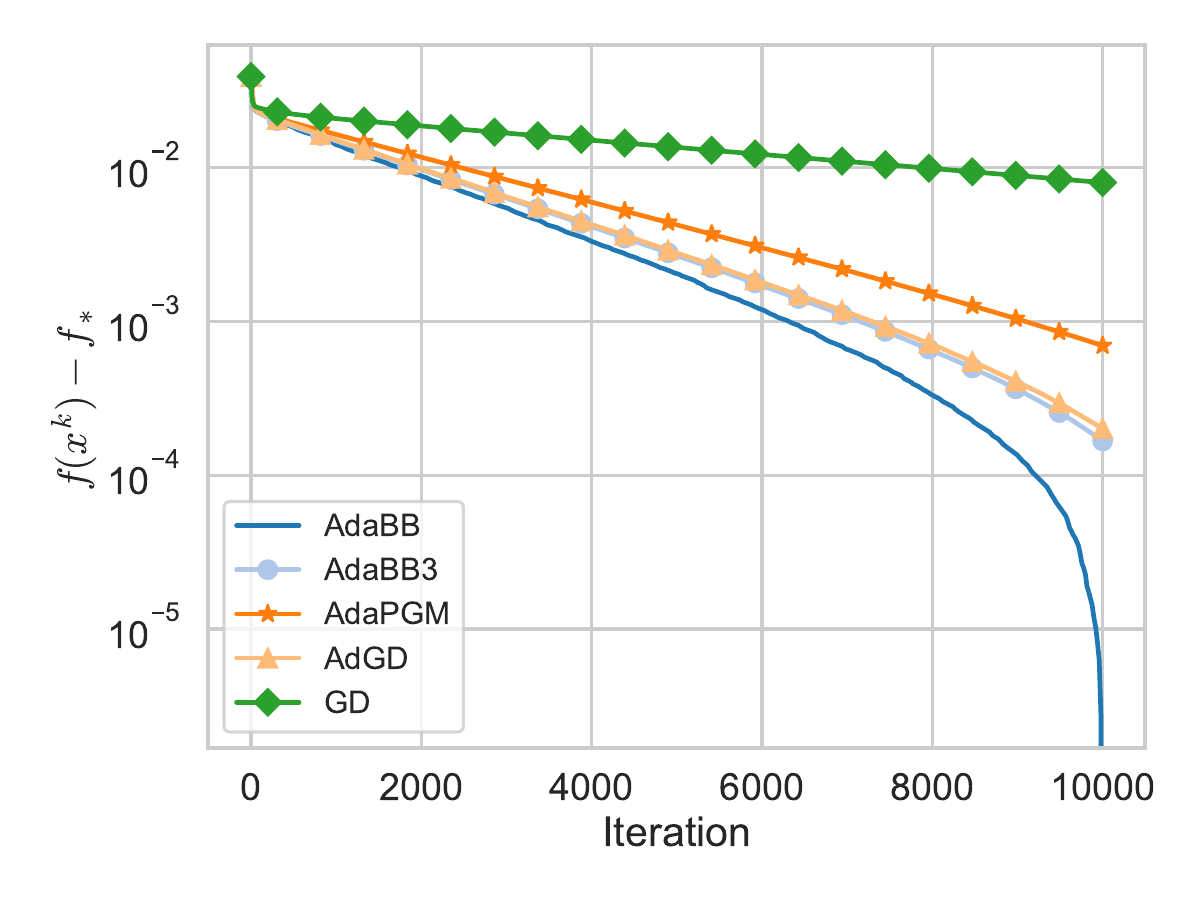}}
\\
\subfloat[mushrooms, gradient norm]{
\includegraphics[scale=0.25]{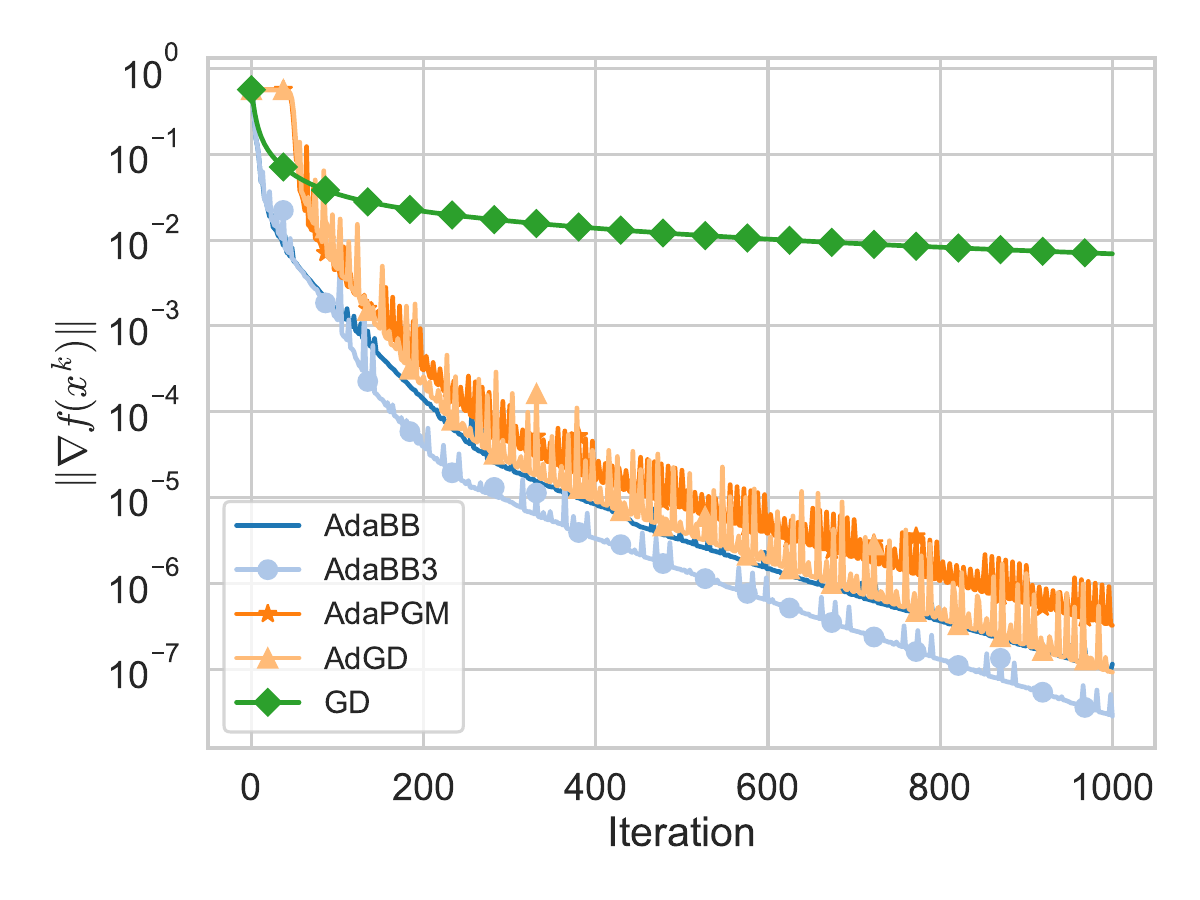}}
\subfloat[w8a, gradient norm]{
\includegraphics[scale=0.25]{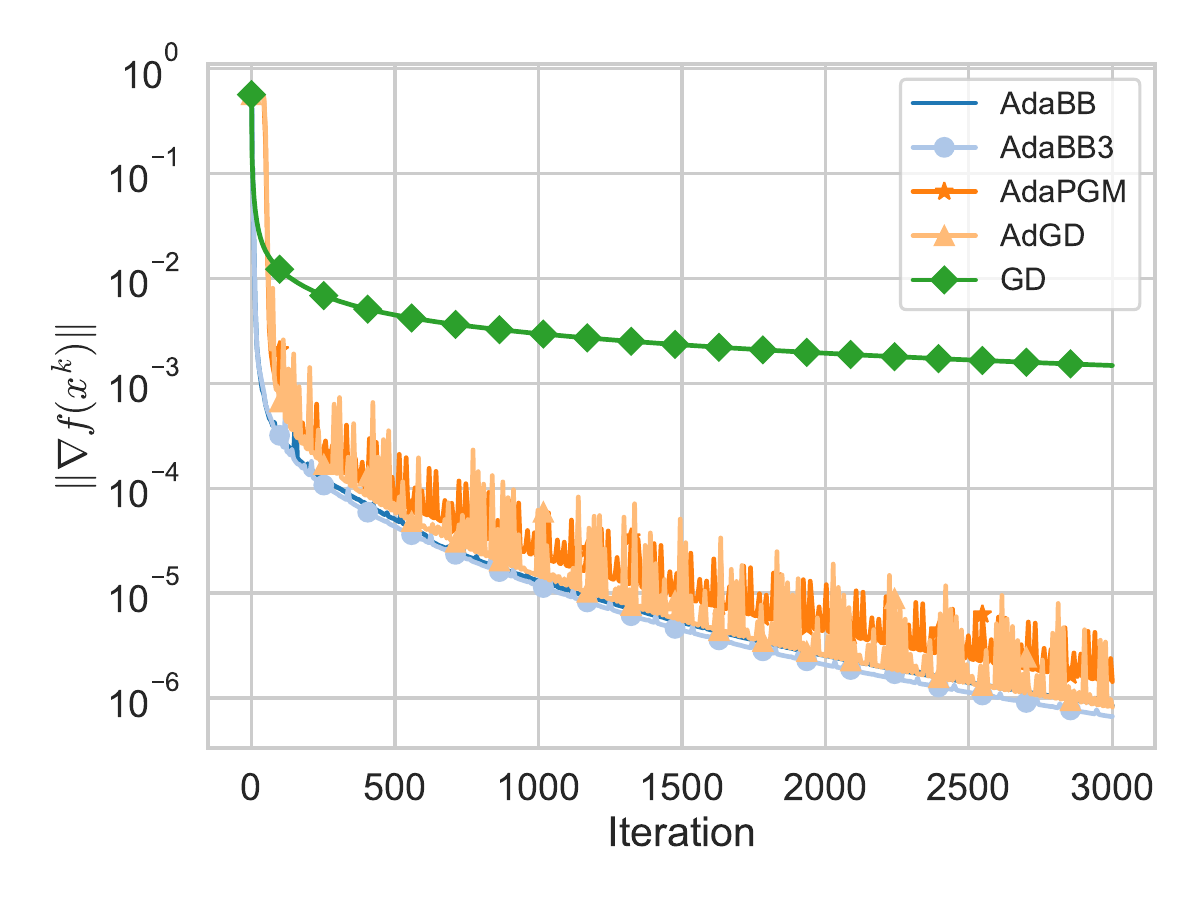}}
\subfloat[covtype, gradient norm]{
\includegraphics[scale=0.25]{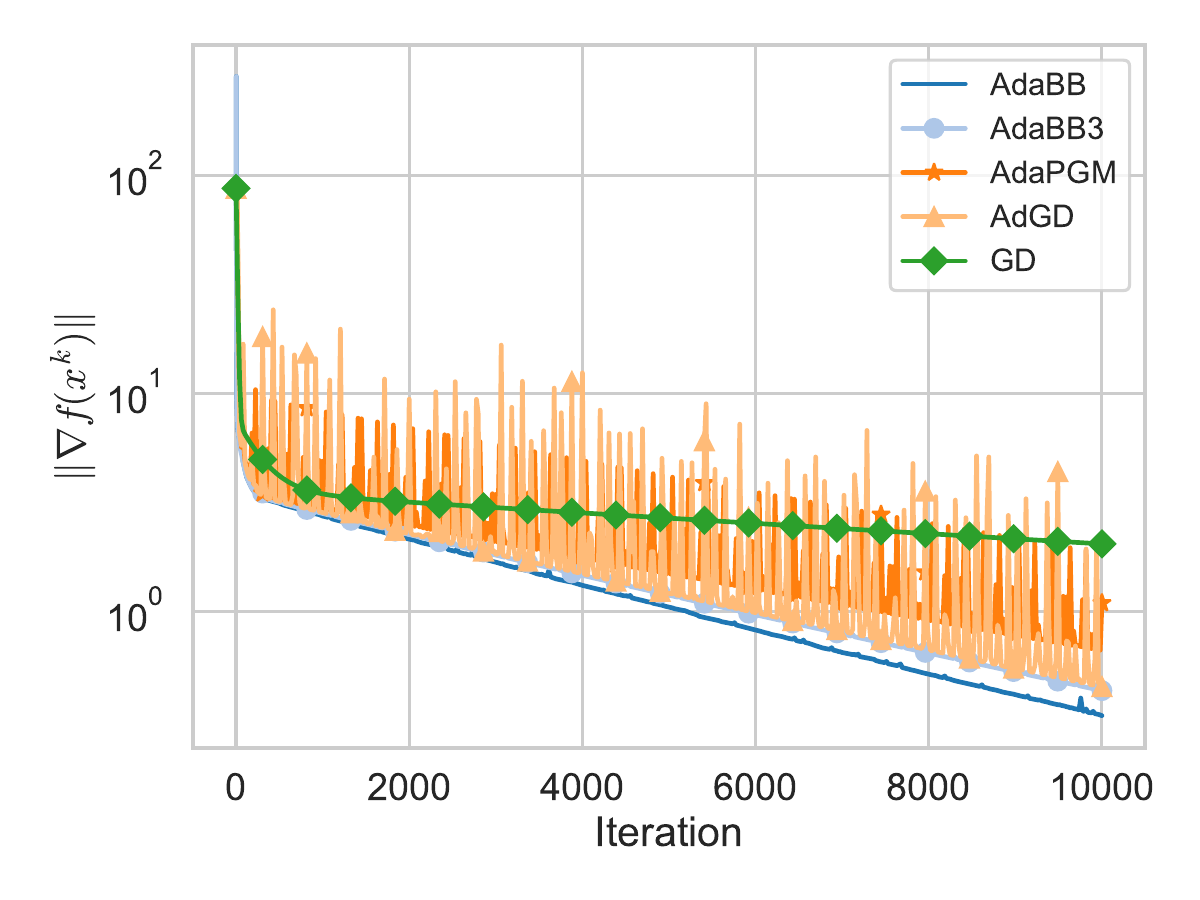}}
\caption{Results for the logistic regression problem via GD, AdGD, AdaPGM, and AdaBB concerning the function value residual and gradient norm.}
\label{fig:rg}
\end{figure}

We also compare AdaBB and AdaBB3 with line-search methods that do not require prior knowledge of $L$, including line search for GD (with Armijo) \cite{A66}, and BB stepsize with line search \cite{R97}. The results are shown in Figure \ref{fig:ls}. This time the $x$-axis denotes the number of matrix-vector multiplications.
The results indicate that both AdaBB and AdaBB3 usually perform better than the two line search methods. 

\begin{figure}[!htbp]
\centering
\subfloat[mushrooms, objective]{
\includegraphics[scale=0.25]{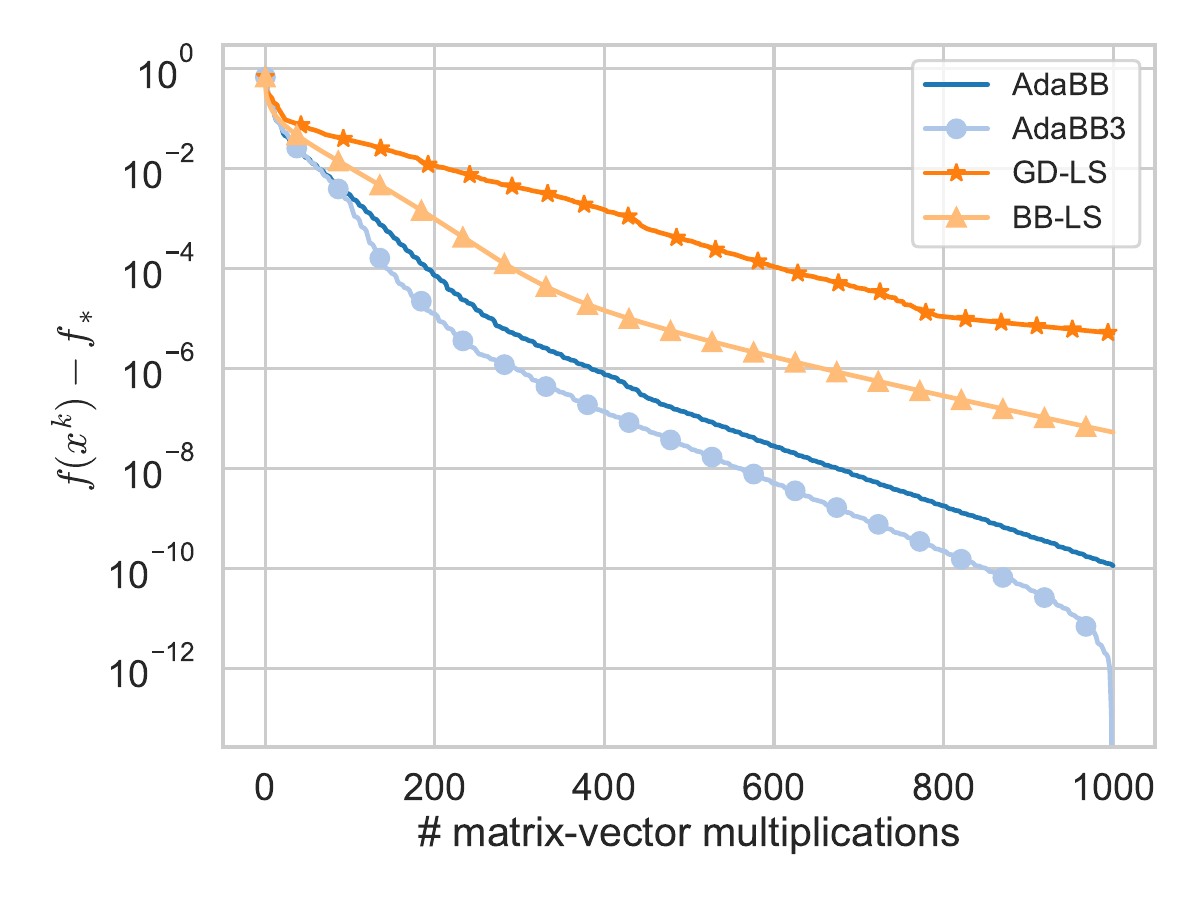}}
\subfloat[w8a, objective]{
\includegraphics[scale = 0.25]{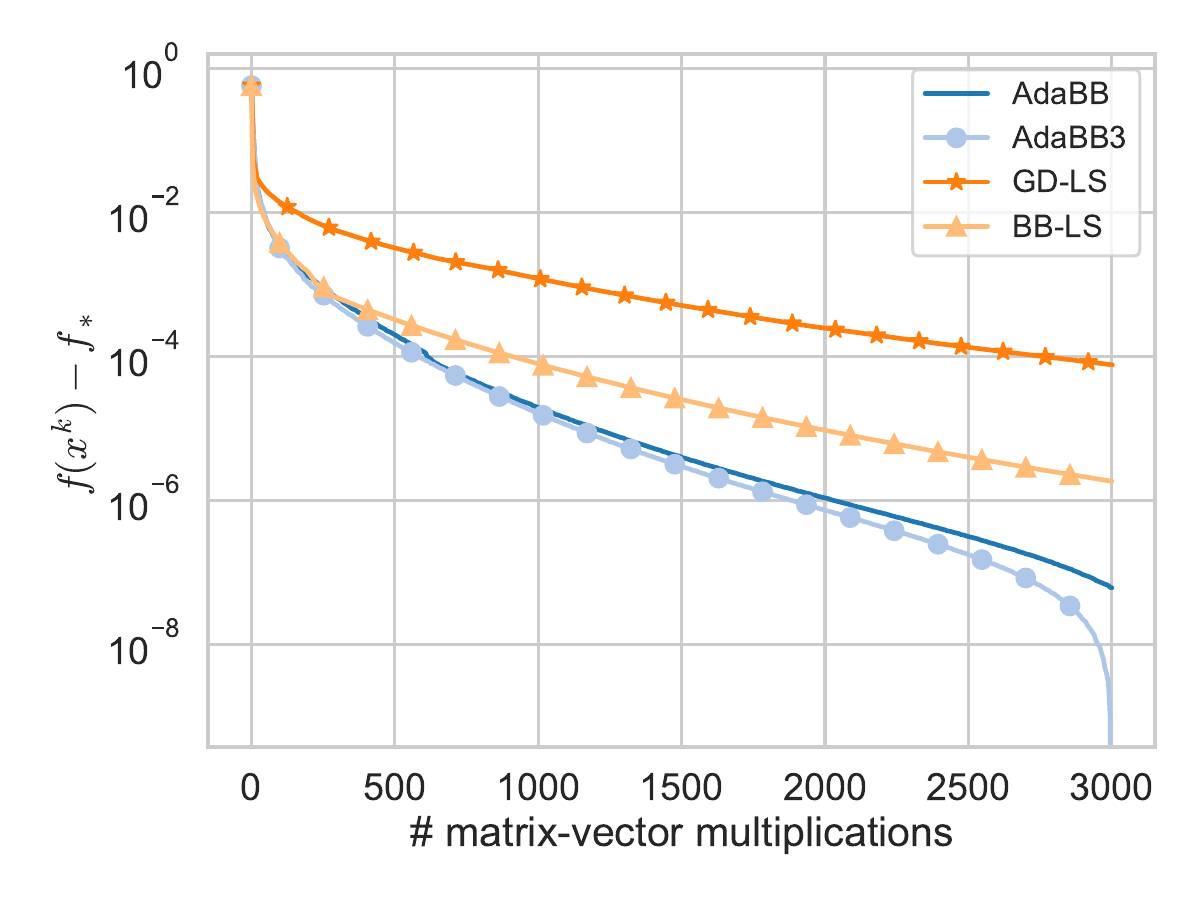}}
\subfloat[covtype, objective]
{\includegraphics[scale = 0.25]{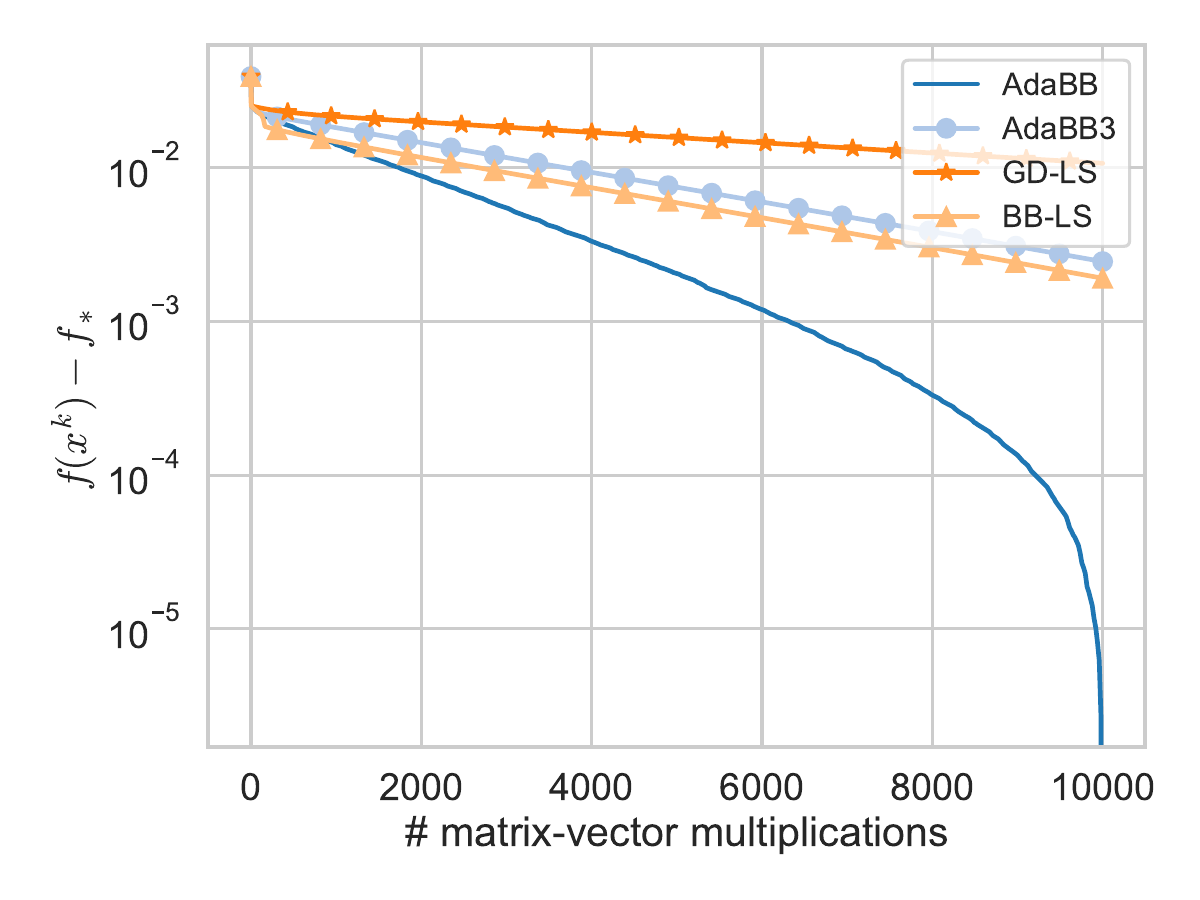}}
\caption{Results for the logistic regression problem via line search methods, and AdaBB concerning the function value residual.}
\label{fig:ls}
\end{figure}

At the end of this subsection, we show the stepsizes generated in the first $100$ iterations of AdGD and AdaBB. The results are shown in Figure \ref{fig:step}. From Figure \ref{fig:step} (a), (b) and (c) we see that the stepsizes produced by both AdGD and AdaBB have a fractal-like nature, and AdaBB usually produces larger stepsizes comparing with AdGD. Figure \ref{fig:step} (d), (e) and (f) illustrate the pattern of the stepsizes generated by AdaBB. We see with excessively large $\alpha_k$, it is more likely that the next stepsize will be very small, i.e., $(k+1) \in I_3$. Conversely, when $\alpha_k$ is too small, AdaBB automatically opts for $(k+1) \in I_1$ to enlarge the stepsize, and opting for the BB stepsize $\lambda_k$ is rational when the stepsize is moderate.

\begin{figure}[!htbp]
\centering
\subfloat[mushrooms, stepsize]{
\includegraphics[scale=0.25]{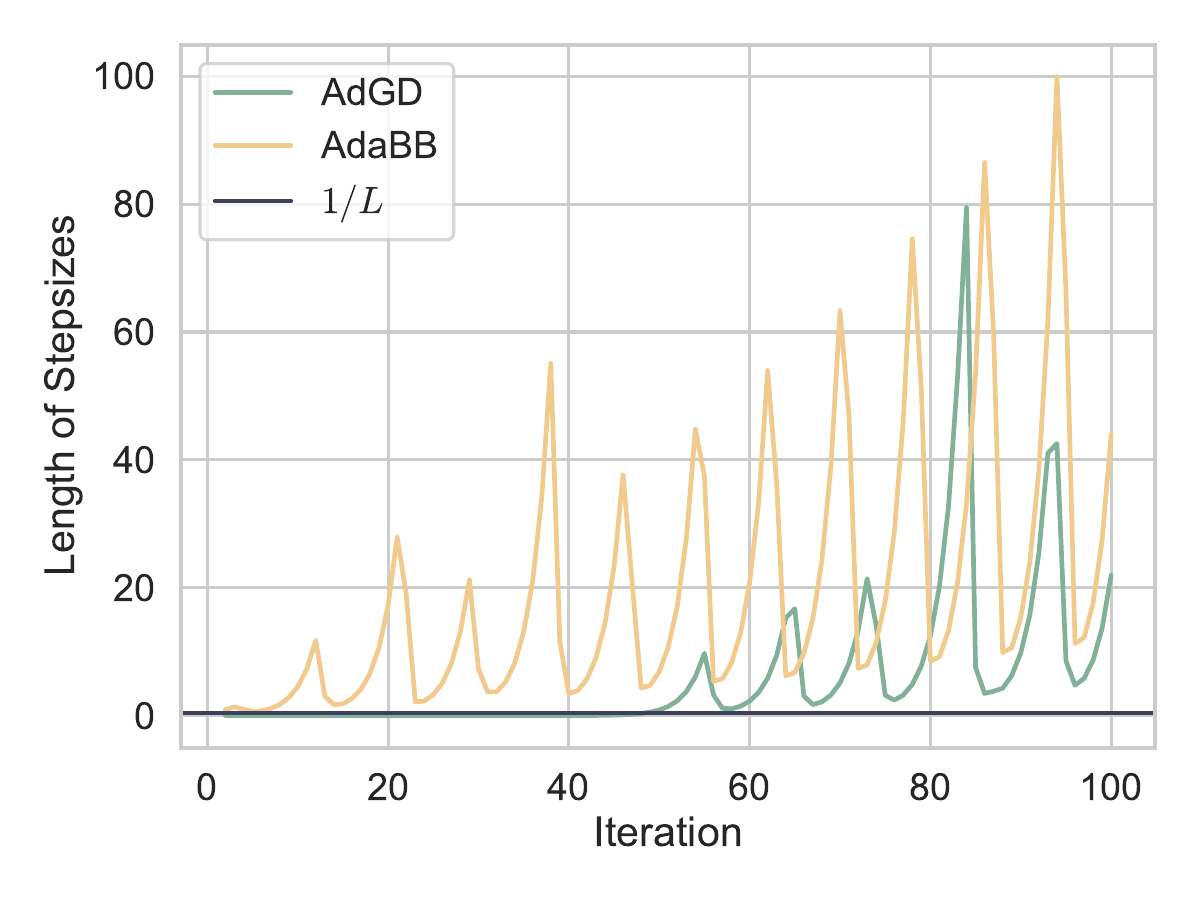}}
\subfloat[w8a, stepsize]{
\includegraphics[scale=0.25]{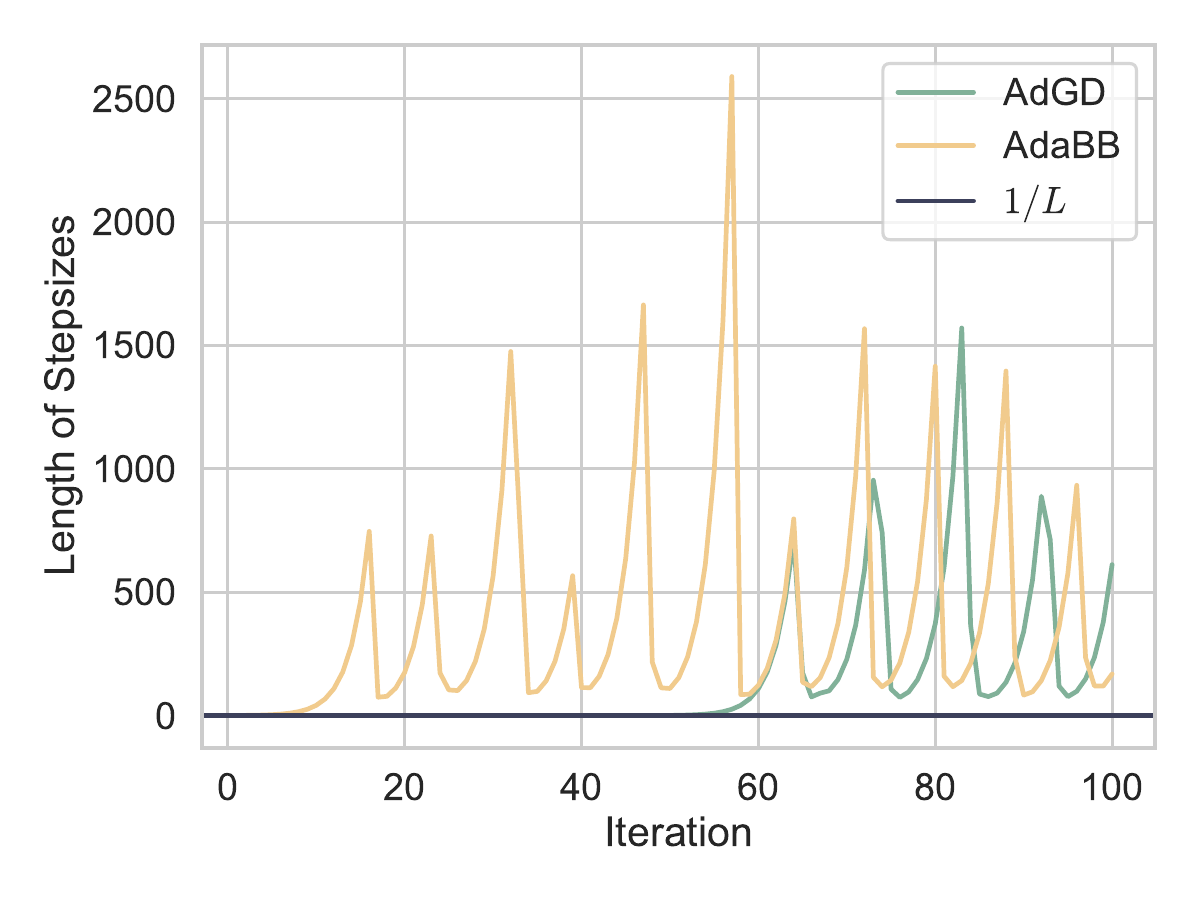}}
\subfloat[covtype, stepsize]{
\includegraphics[scale=0.25]{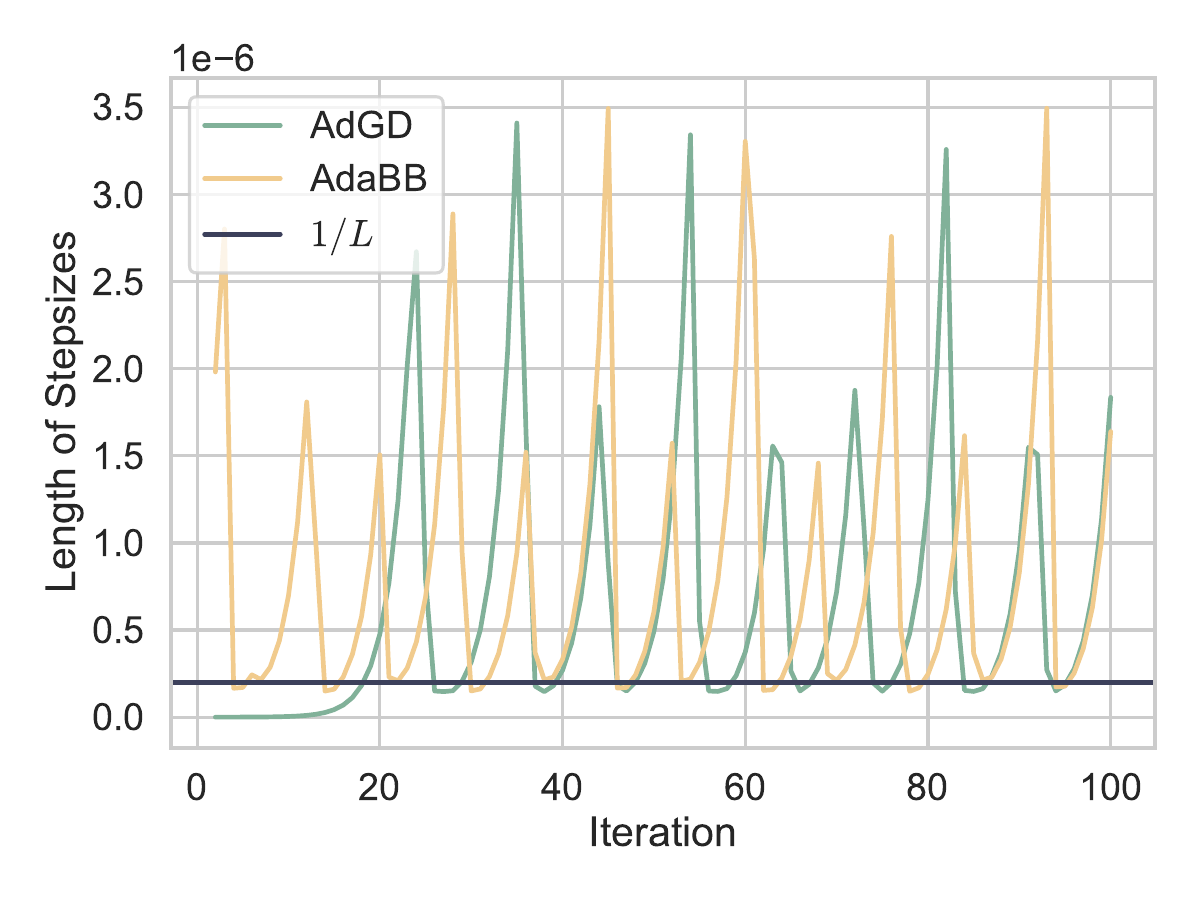}}\\
\subfloat[mushrooms, pattern]{
\includegraphics[scale=0.25]{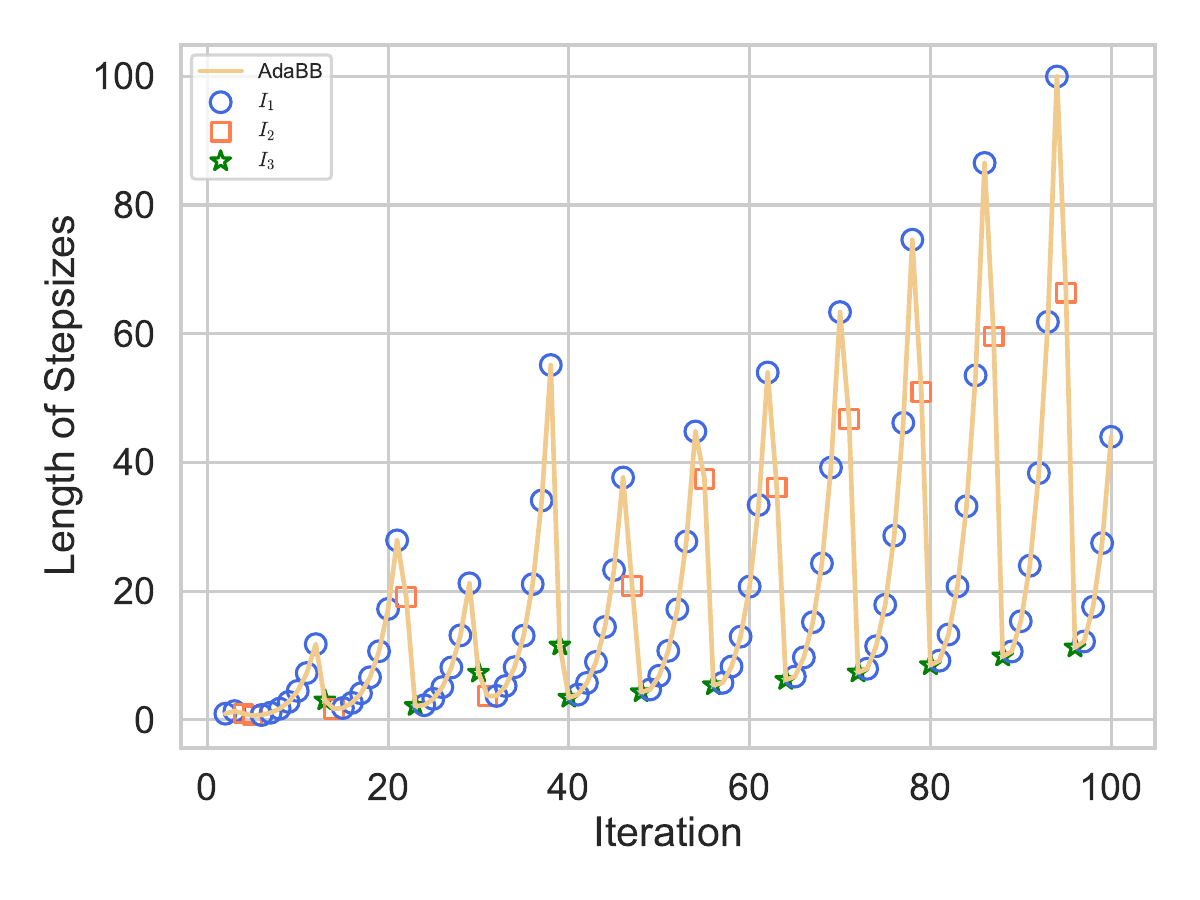}}
\subfloat[w8a, pattern]{
\includegraphics[scale=0.25]{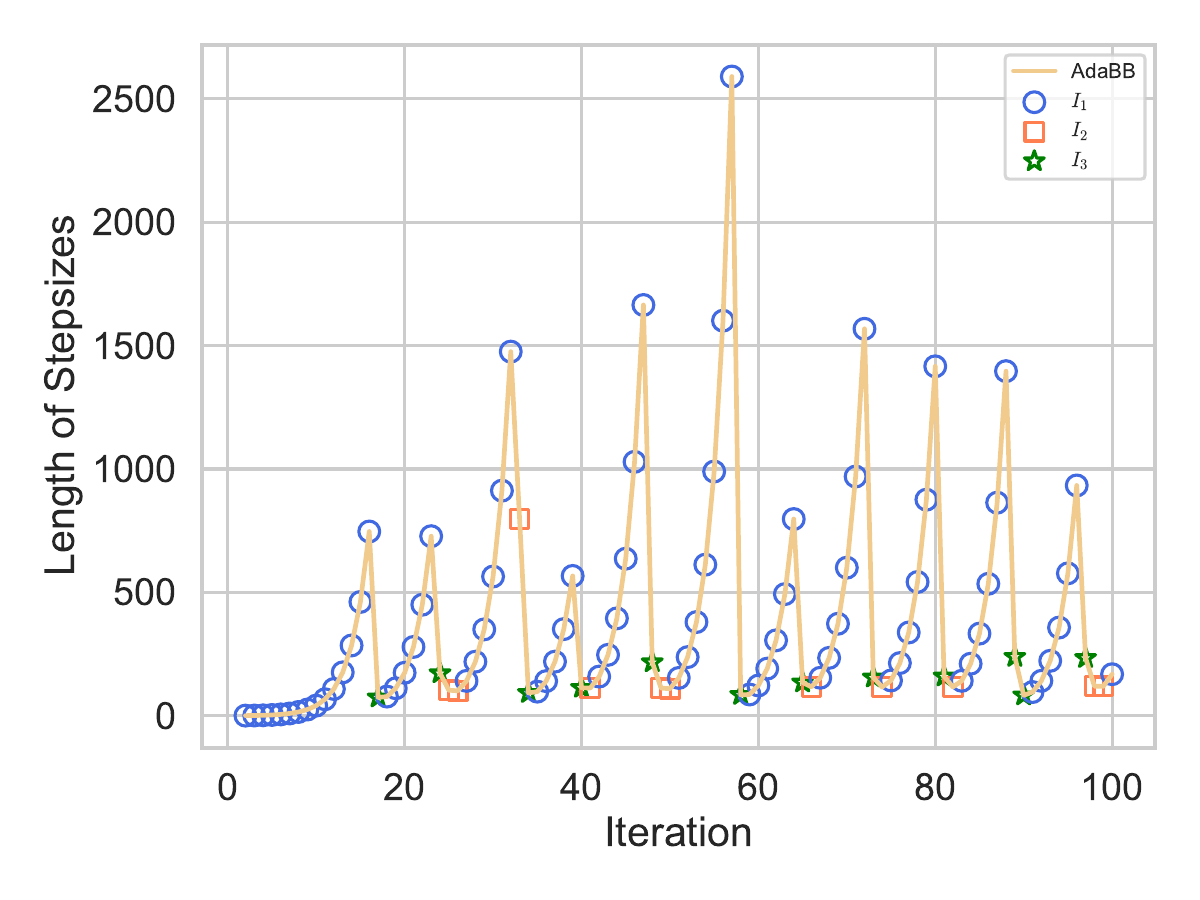}}
\subfloat[covtype, pattern]{
\includegraphics[scale=0.25]{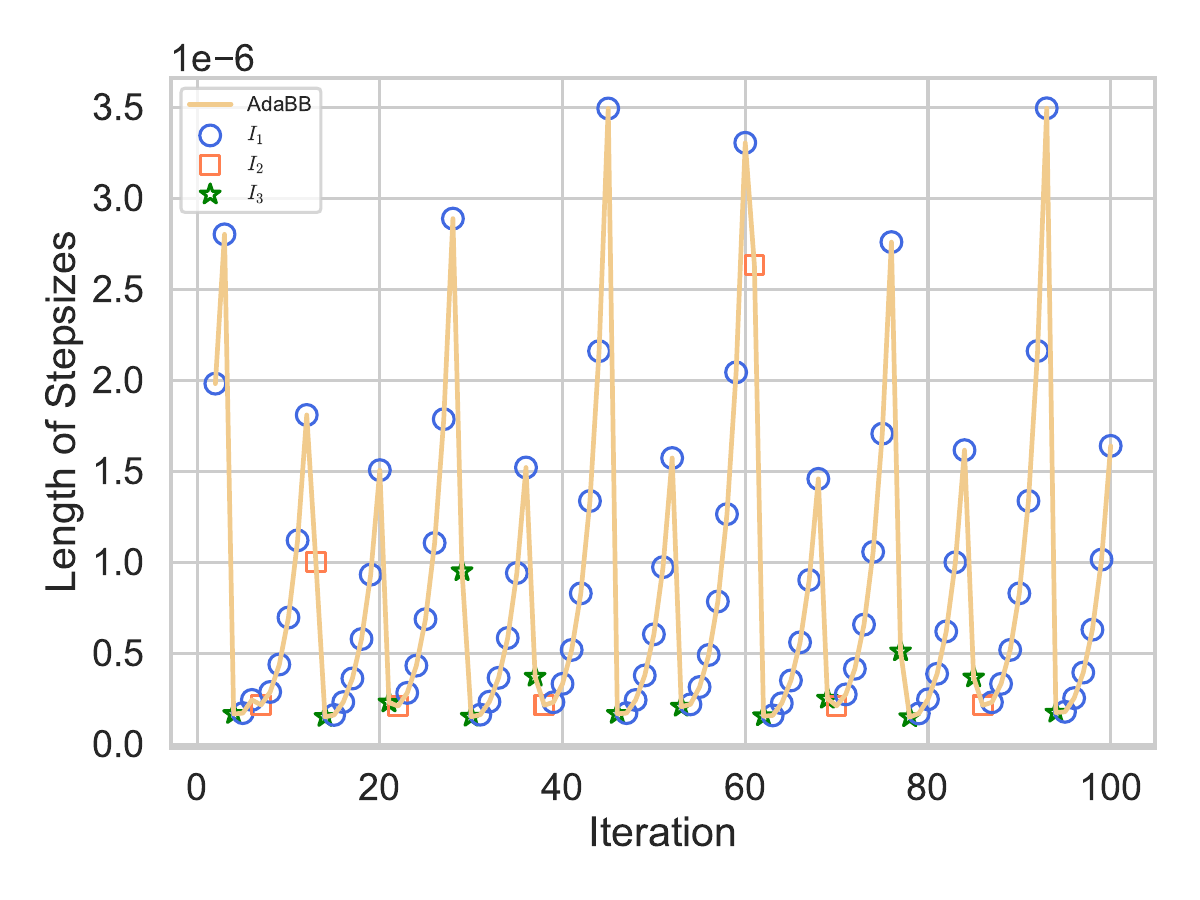}}
\caption{Stepsizes generated by AdGD and AdaBB. }
\label{fig:step}
\end{figure}

\subsection{Subproblem of cubic regularized Newton method} \label{sec:cr}
The cubic regularized Newton method \cite{NP06} requires solving the following subproblem: 
\begin{equation}\label{cubic-sub}
    \min_{x\in {\mathbb R}^{n}} f(x)=g^\top x + \frac{1}{2} x^\top H x + \frac{M}{6}\|x\|^3,
\end{equation}
in each iteration, where $g\in {\mathbb R}^{n}$, $H\in {\mathbb R}^{n\times n}$, and $M>0$ is a given regularization parameter. For this problem, the gradient of $f$ is given by $\nabla f(x) = g + H x + \frac{M \|x\|}{2}x$. Note that there is no value of $L$ that can guarantee $\|\nabla f(x)-\nabla f(y)\|\leq L\|x-y\|$ for all $x$ and $y$ in $\mathbb{R}^{n}$. This implies that $f$ is only smooth locally. Therefore, it becomes challenging to determine the stepsizes for GD. To solve this issue, we adopt a trial-and-error approach \cite{MM20} to fine-tune the stepsize for these two methods. Specifically, we designate the values from an array of $10$ numbers evenly spaced on a logarithmic scale between $10^{-1}$ and $10$, as potential stepsizes. By computing $f(x^k)$ with $k=\text{MaxIter}/2$ for each of these stepsizes, we select the largest number for which $f(x^k)$ is evaluated, i.e., not NaN, to be our fine-tuned stepsize. 

In this experiment, we assume that the problem \eqref{cubic-sub} is the subproblem of the cubic regularized Newton method for solving the logistic regression problem \eqref{log-regression}. We thus generate the gradient $g$ and the Hessian $H$ for logistic regression problem evaluated at $0$ for different values of $M$. Specifically, we consider $M=\{10,15\}$ for all three datasets. 
The dimension $n$ stays the same as in Table \ref{tab:settings}. Moreover, the stopping criterion is similar to the one outlined in Subsection \ref{sec:lr}. We run all algorithms for a given number of iterations. Specifically, MaxIter is set to $30$ for mushrooms and w8a datasets, and to $1000$ for the covtype dataset, because the covtype dataset is around 90 times larger than the mushrooms dataset and 30 times larger than the w8a dataset and it requires more iterations to solve. Note that we here set a limited number of iterations. To avoid AdGD and AdaPGM requiring several initial iterations to allow for the small $\alpha_0=10^{-10}$ to grow to an appropriate step, we first calculate $x^1$ and $L_1$ defined in \eqref{AdGD} for the given $\alpha_0$ and then reset $\alpha_0$ to $1/(\sqrt{2}L_1)$.

\begin{figure}[!htbp]
\centering
\subfloat[mushrooms, $M=10$, objective]{
\includegraphics[scale = 0.25]{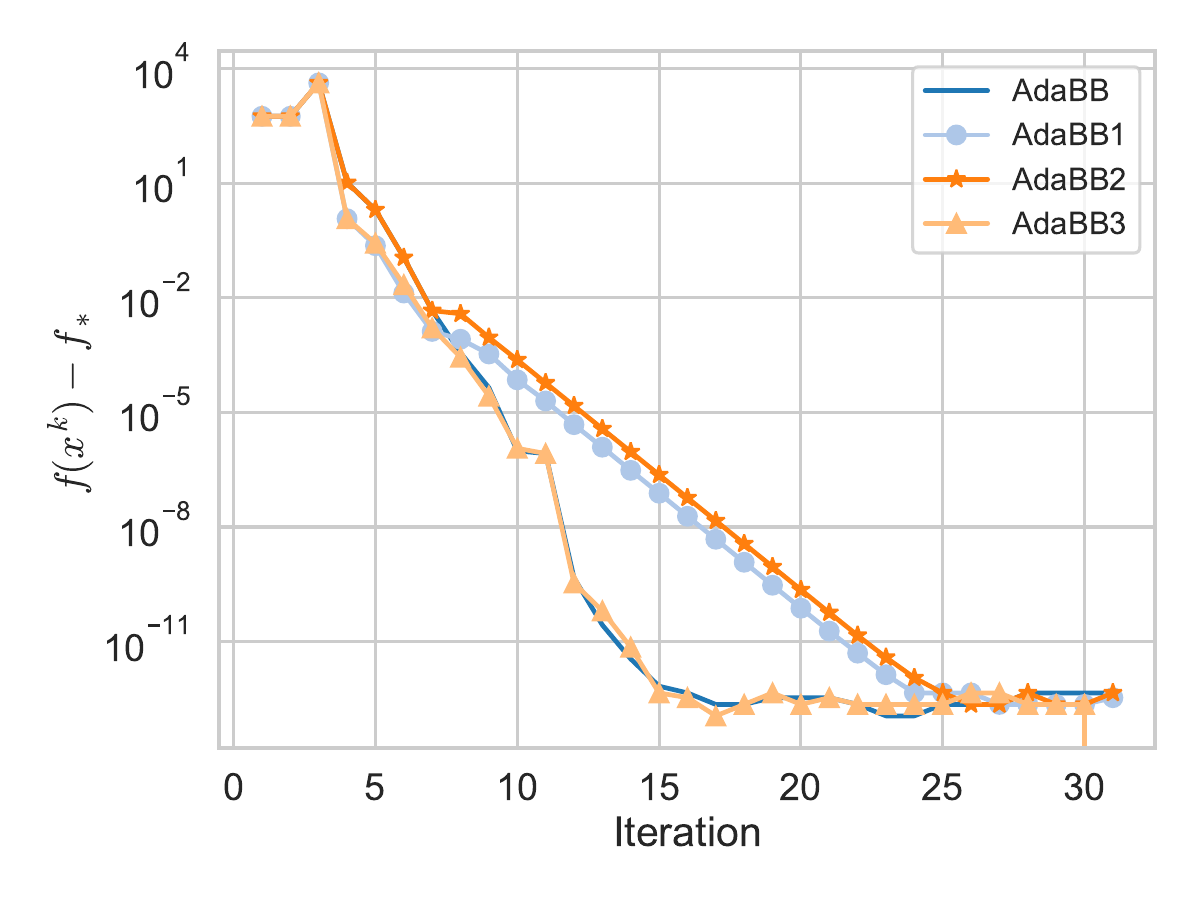}}
\subfloat[w8a, $M=10$, objective]{
\includegraphics[scale = 0.25]{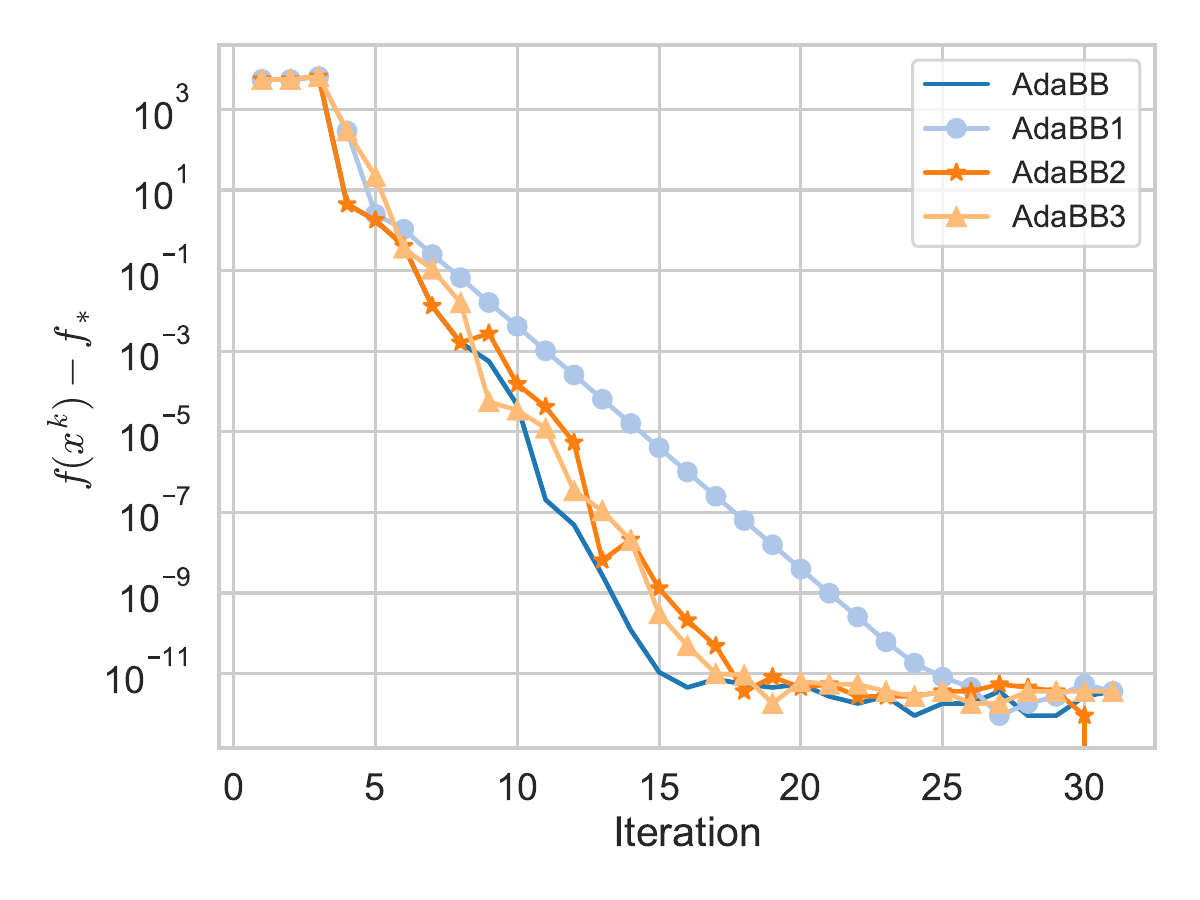}}
\subfloat[covtype, $M=10$, objective]{
\includegraphics[scale = 0.25]{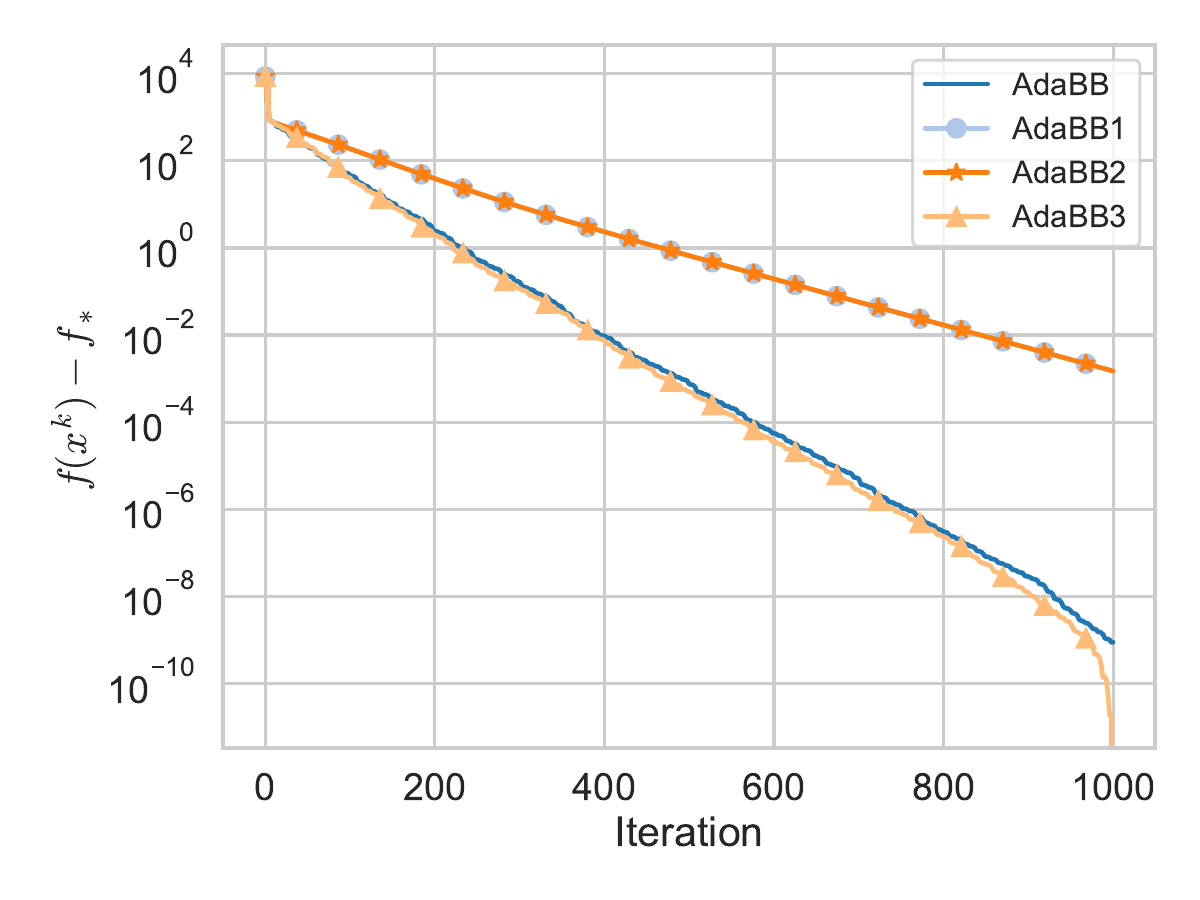}}\\
\subfloat[mushrooms, $M=15$, objective]
{\includegraphics[scale = 0.25]{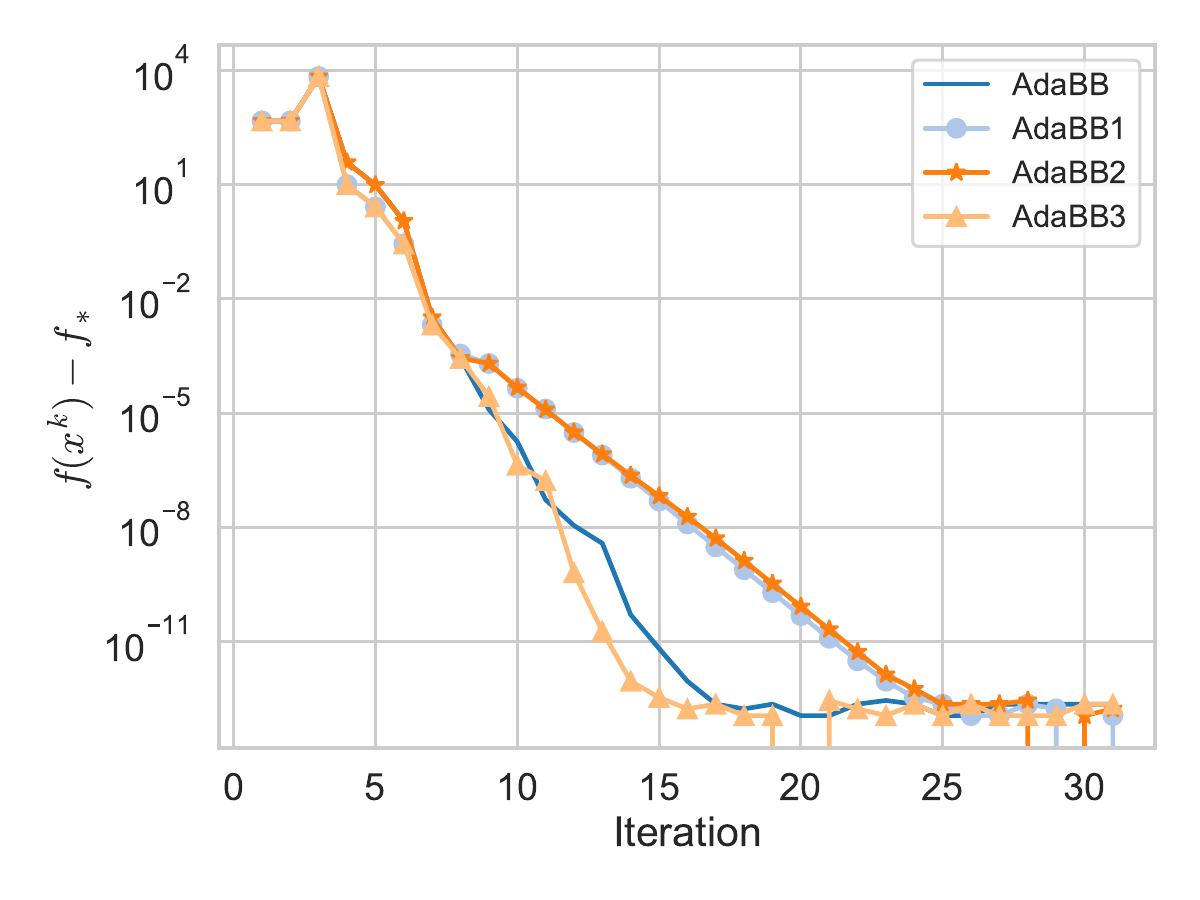}}
\subfloat[w8a, $M=15$, objective]
{\includegraphics[scale = 0.25]{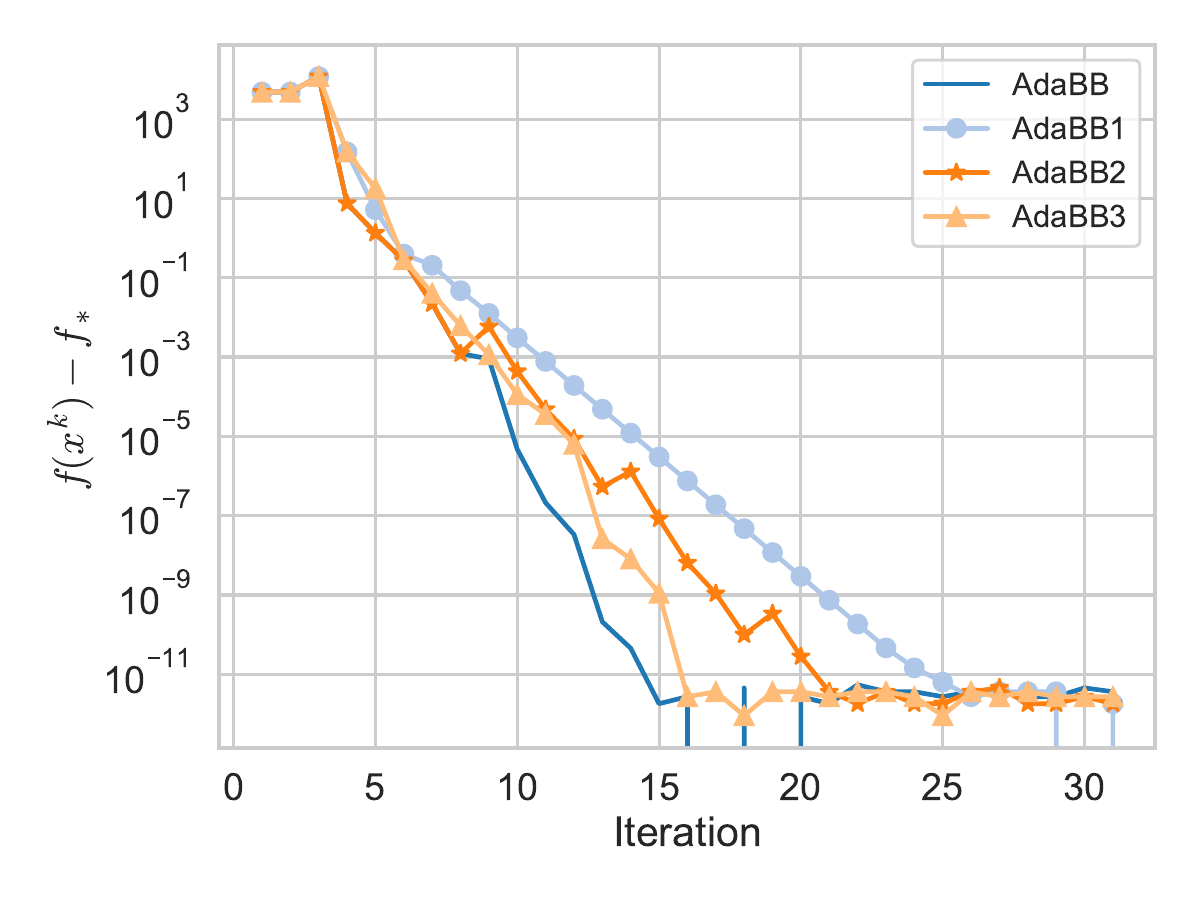}}
\subfloat[covtype, $M=15$, objective]
{\includegraphics[scale = 0.25]{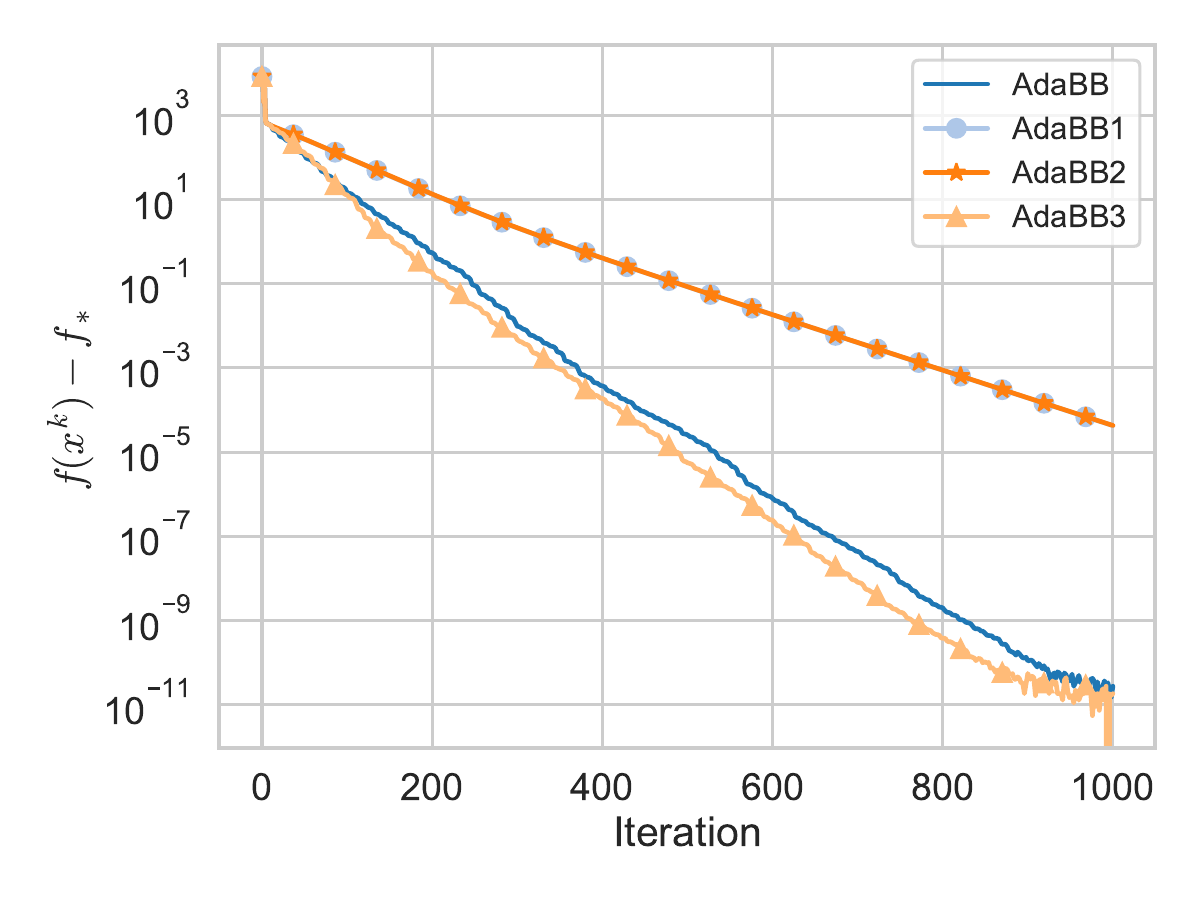}}
\caption{Results for the cubic regulation problem for AdaBB, AdaBB1, AdaBB2, AdaBB3 concerning the function value residual.}
\label{fig:self-2}
\end{figure}

We first compare the four AdaBB variants in Table \ref{tab:4AdaBB}. The results are shown in Figure \ref{fig:self-2}, which again confirms that AdaBB and AdaBB3 are usually better than the other two variants, and thus indicates the effectiveness of opting for Option II in (Case ii). 

\begin{figure}[!htbp]
\centering
\subfloat[mushrooms, $M=10$]{
\includegraphics[scale = 0.2]{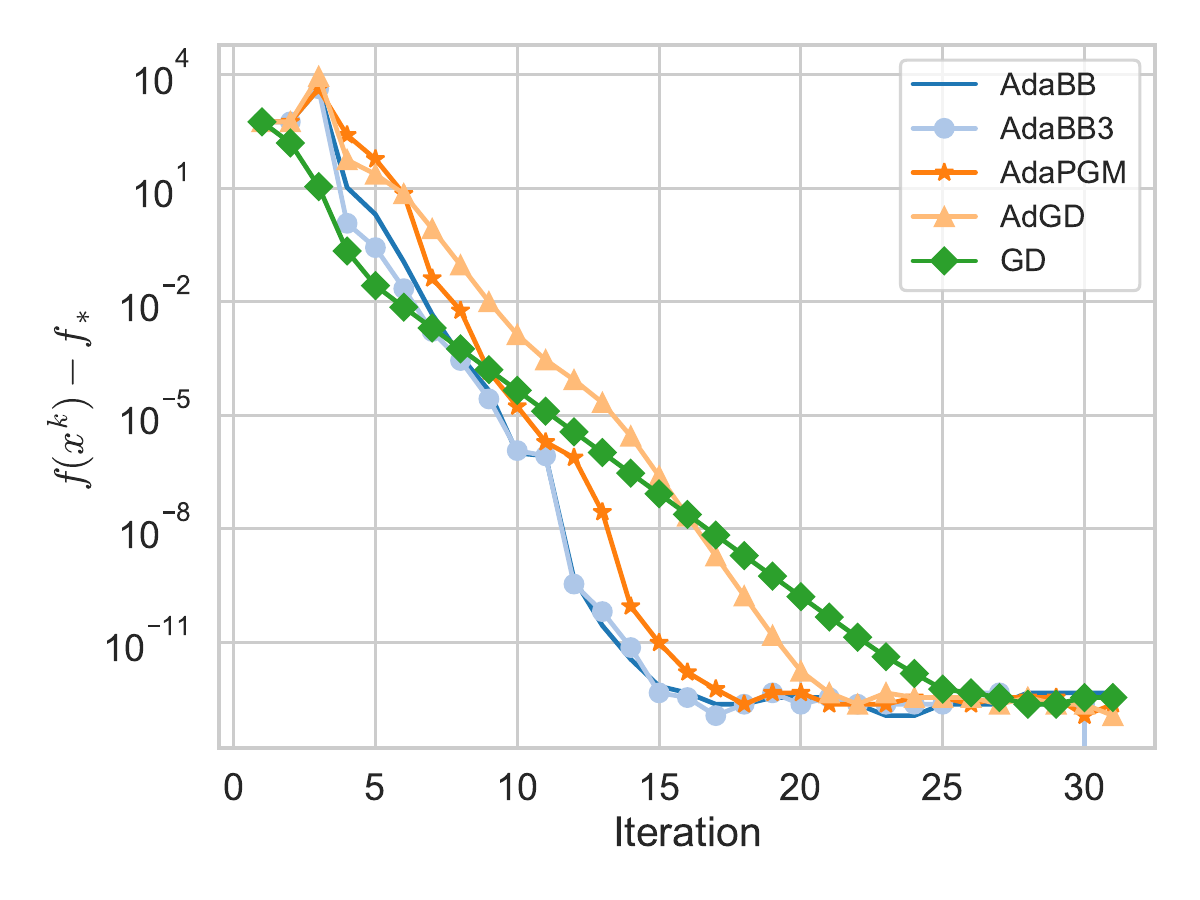}}
\subfloat[mushrooms, $M=15$]
{\includegraphics[scale = 0.2]{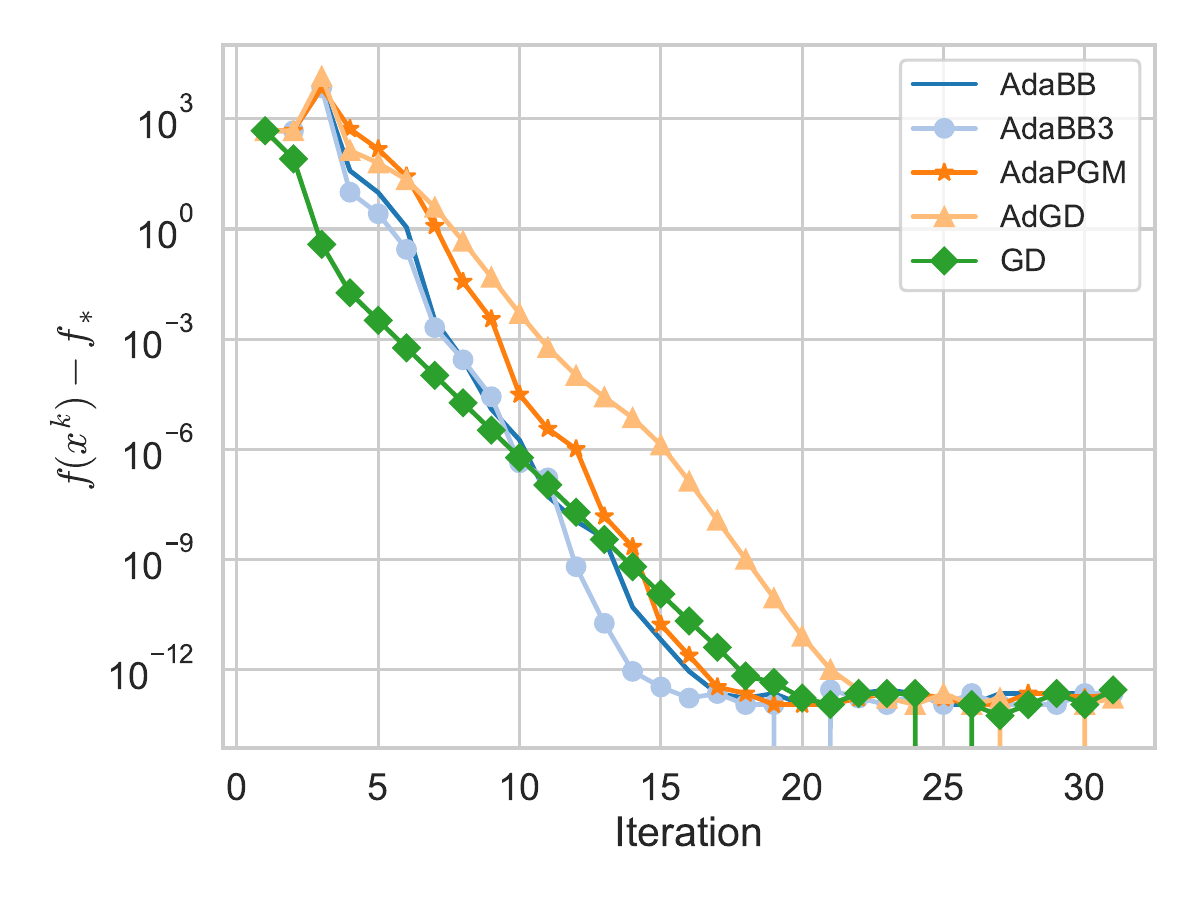}}
\subfloat[mushrooms, $M=10$]
{\includegraphics[scale = 0.2]{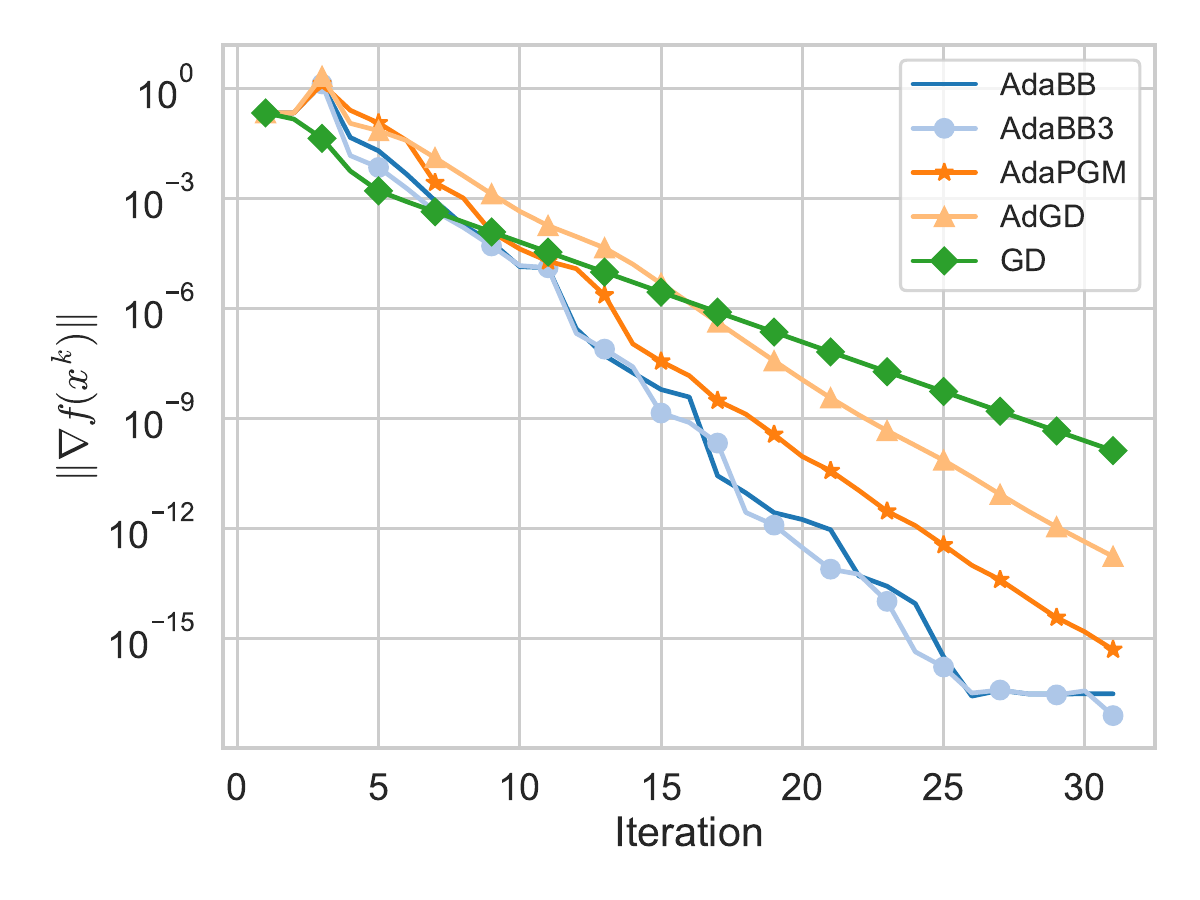}}
\subfloat[mushrooms, $M=15$]
{\includegraphics[scale = 0.2]{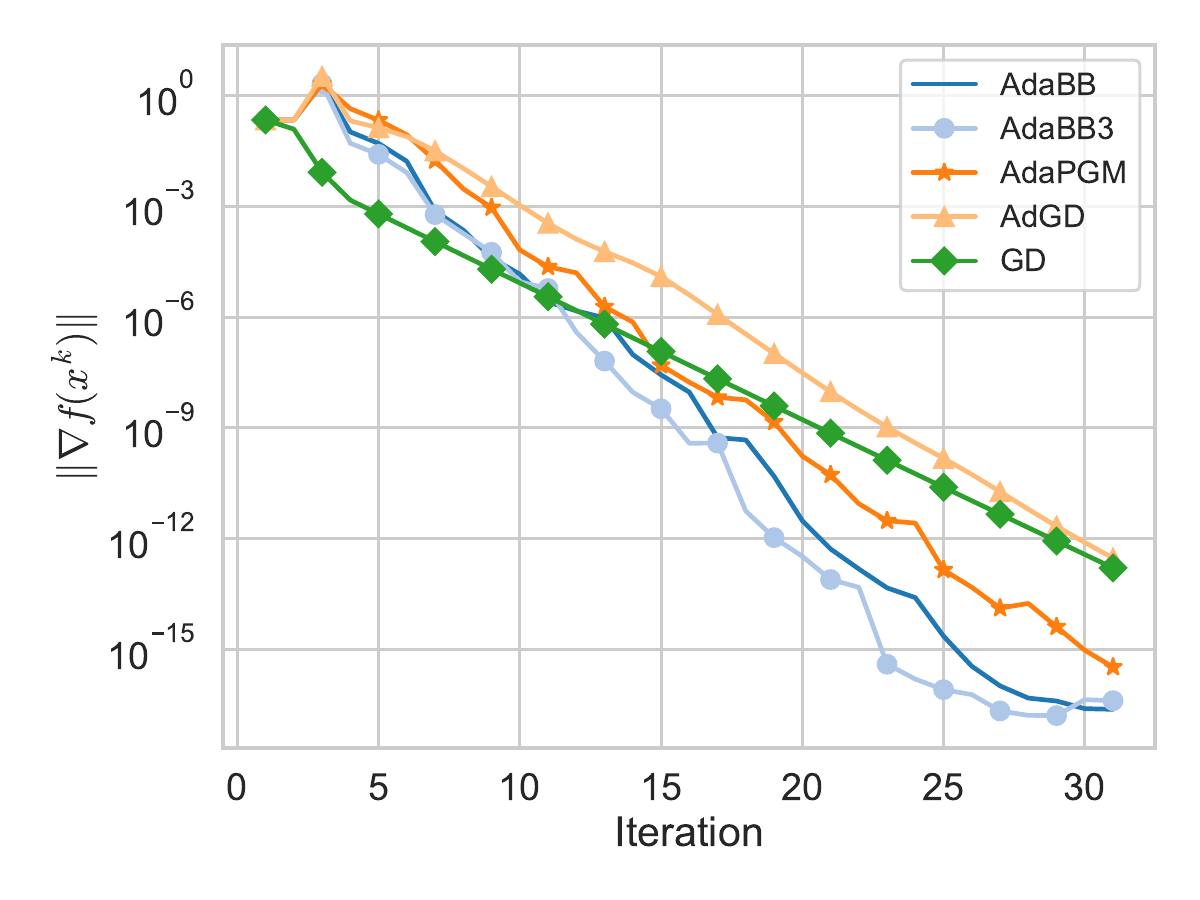}}
\\
\subfloat[w8a, $M=10$]{
\includegraphics[scale = 0.2]{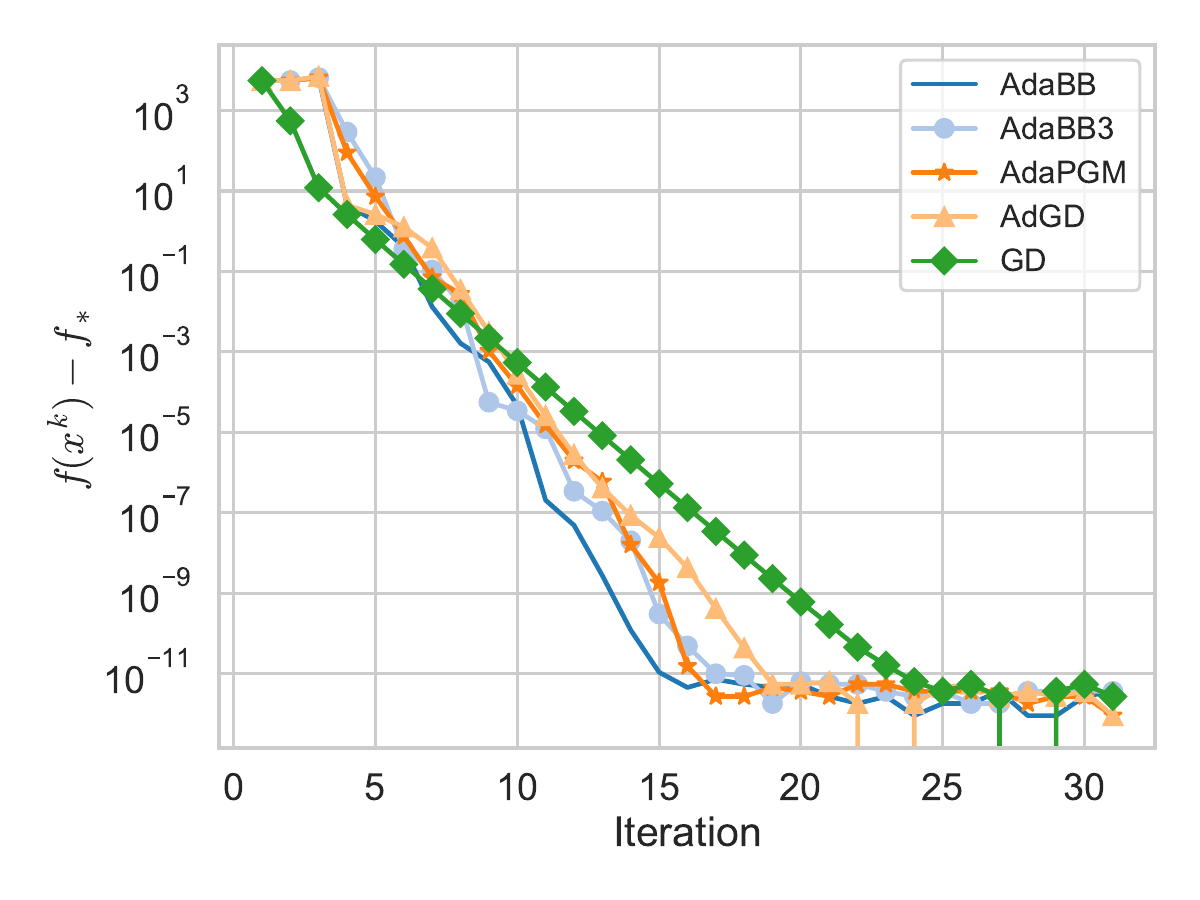}}
\subfloat[w8a, $M=15$]
{\includegraphics[scale = 0.2]{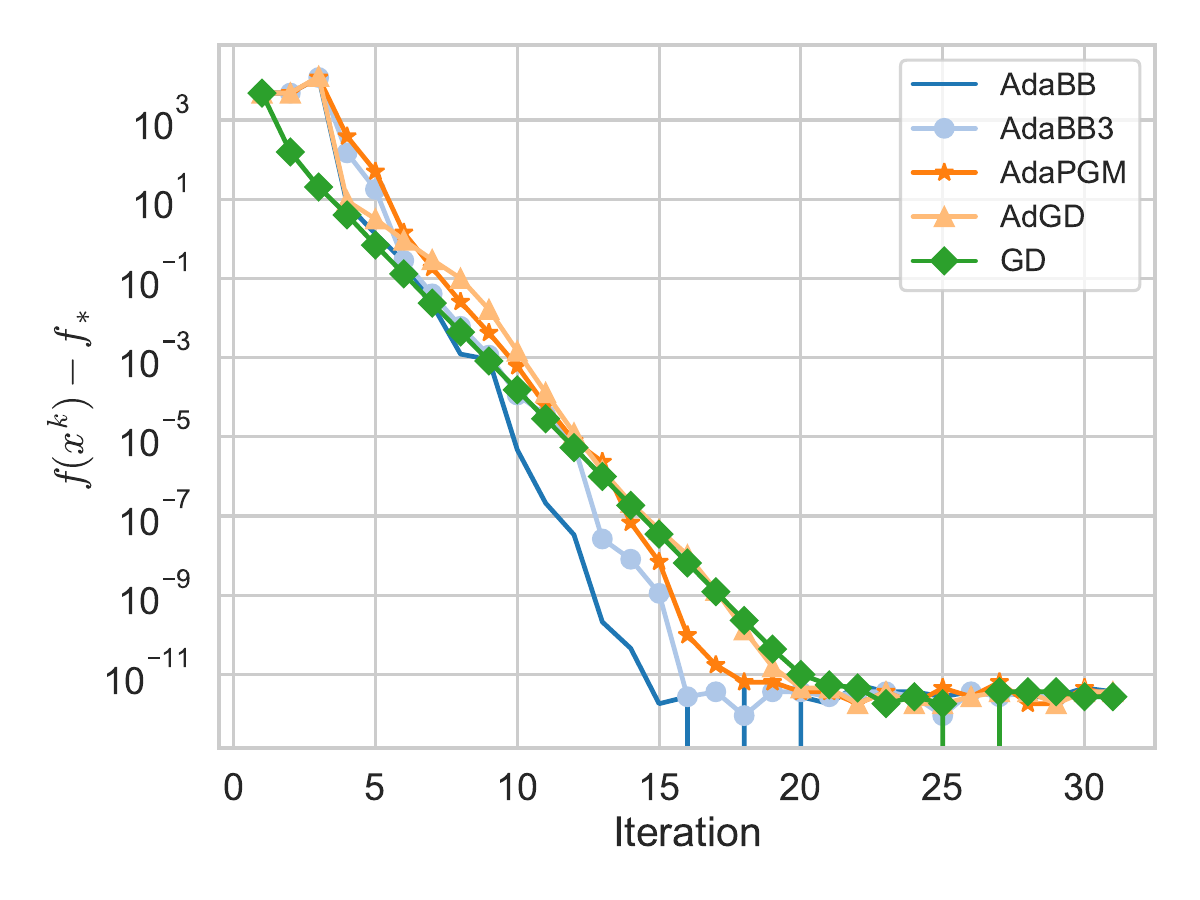}}
\subfloat[w8a, $M=10$]
{\includegraphics[scale = 0.2]{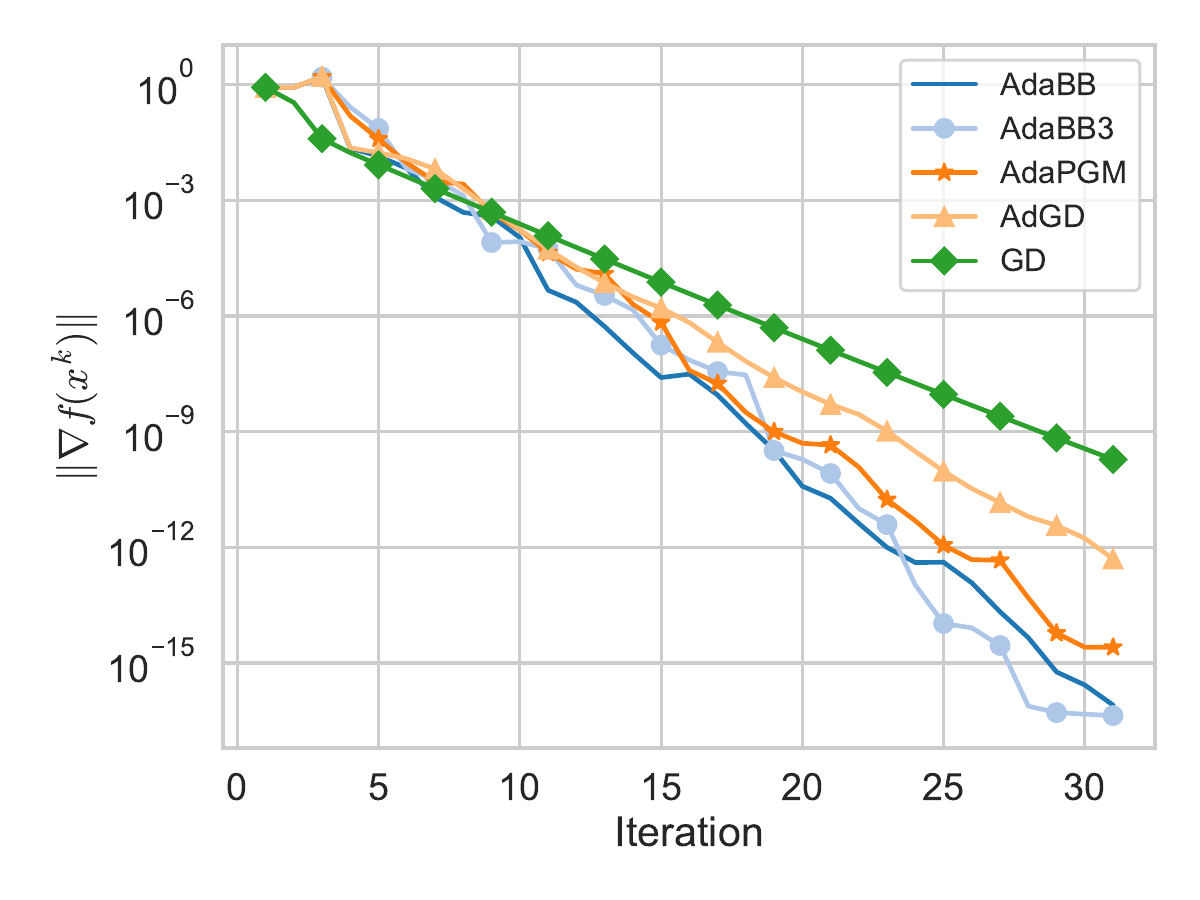}}
\subfloat[w8a, $M=15$]
{\includegraphics[scale = 0.2]{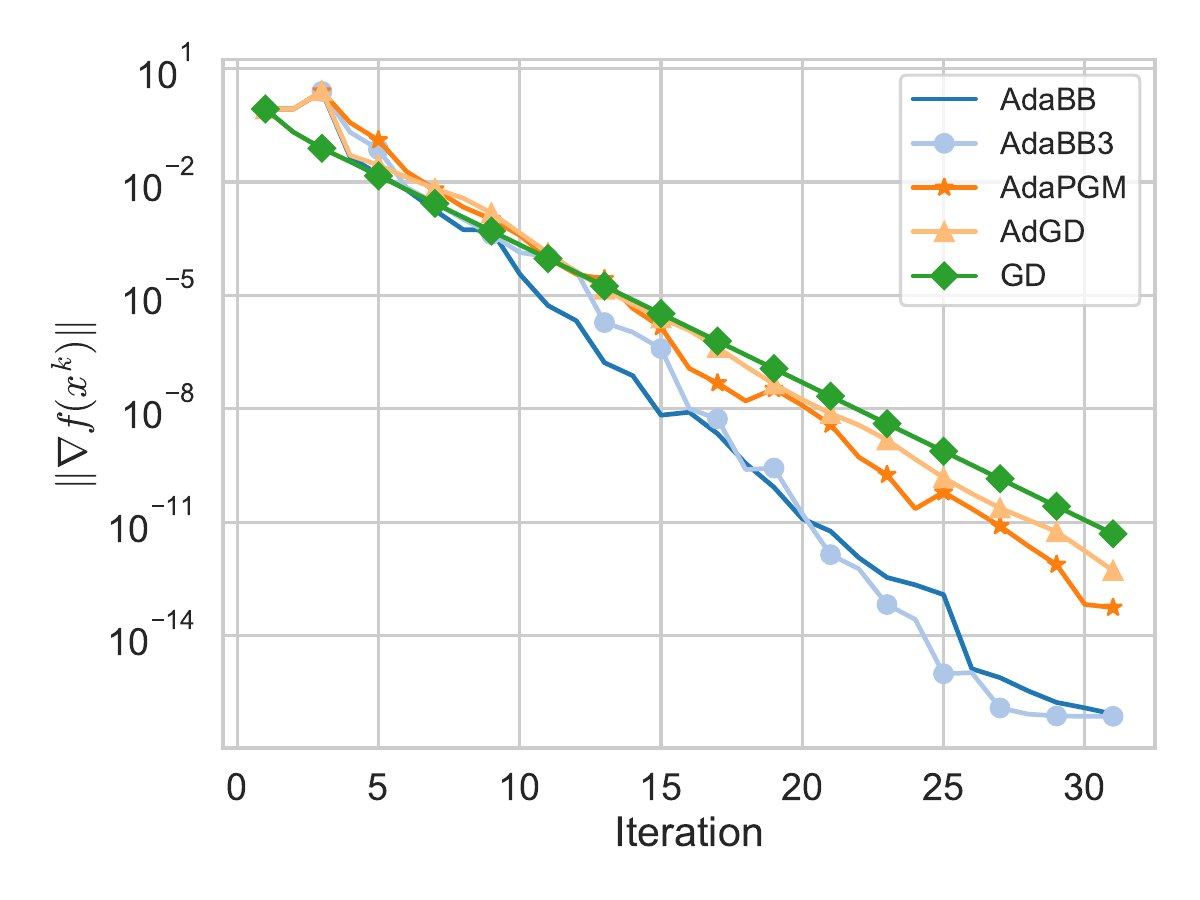}}\\
\subfloat[covtype, $M=10$]{
\includegraphics[scale = 0.2]{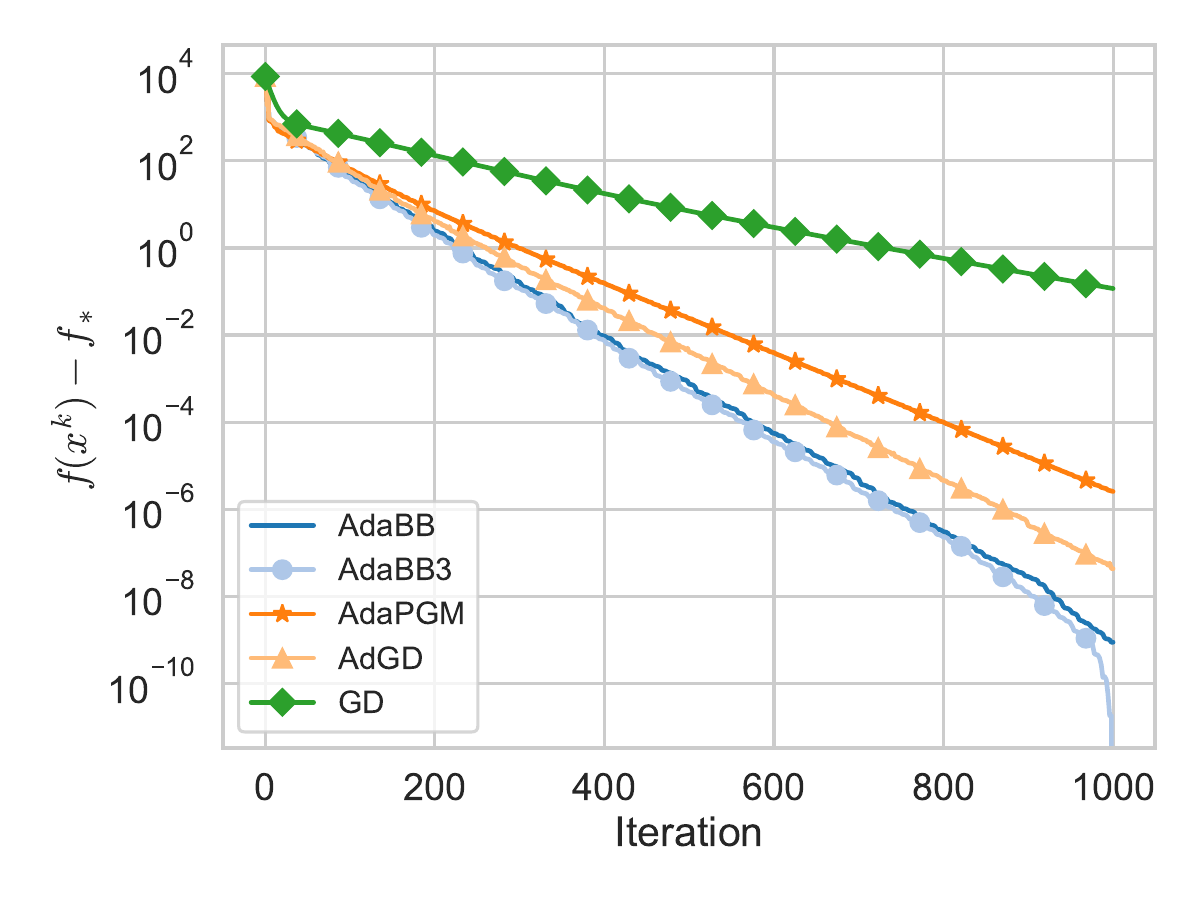}}
\subfloat[covtype, $M=15$]
{\includegraphics[scale = 0.2]{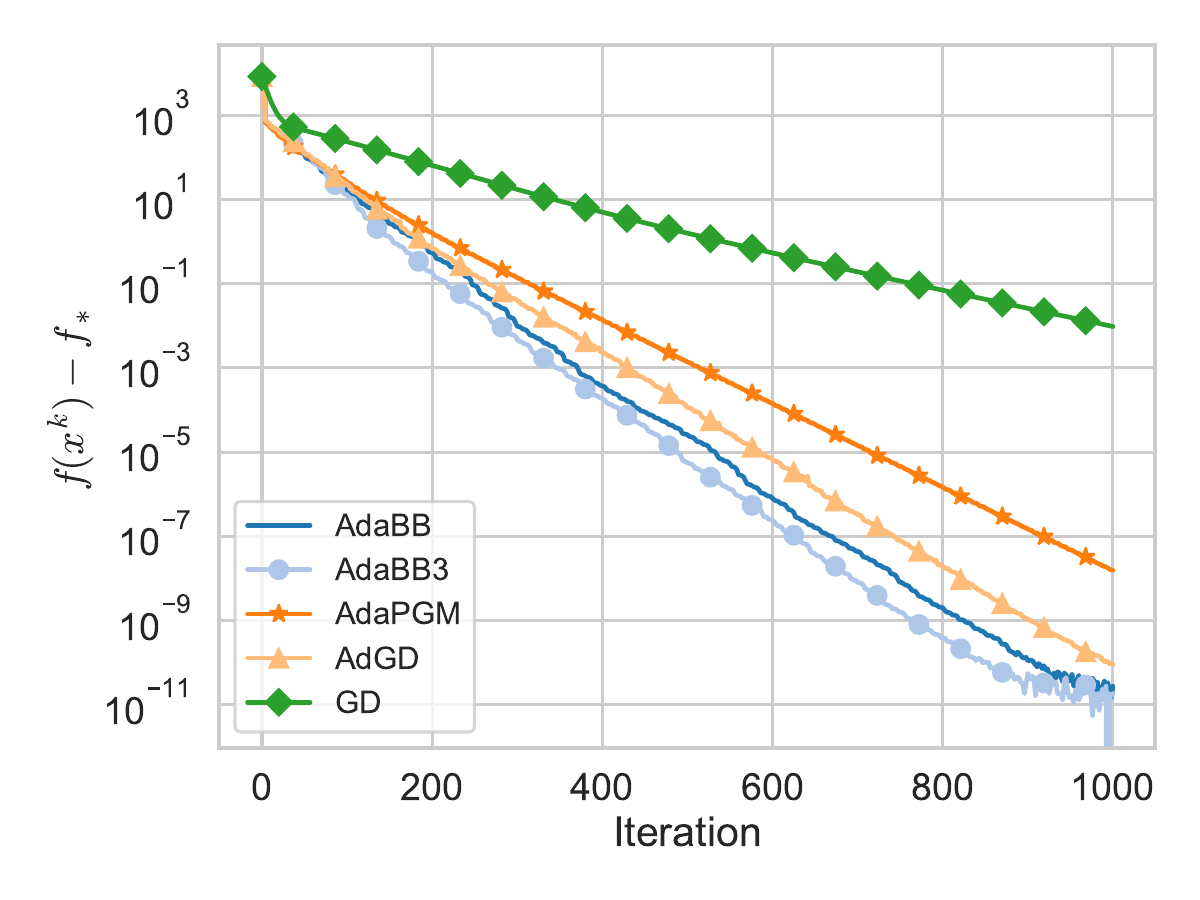}}
\subfloat[covtype, $M=10$]
{\includegraphics[scale = 0.2]{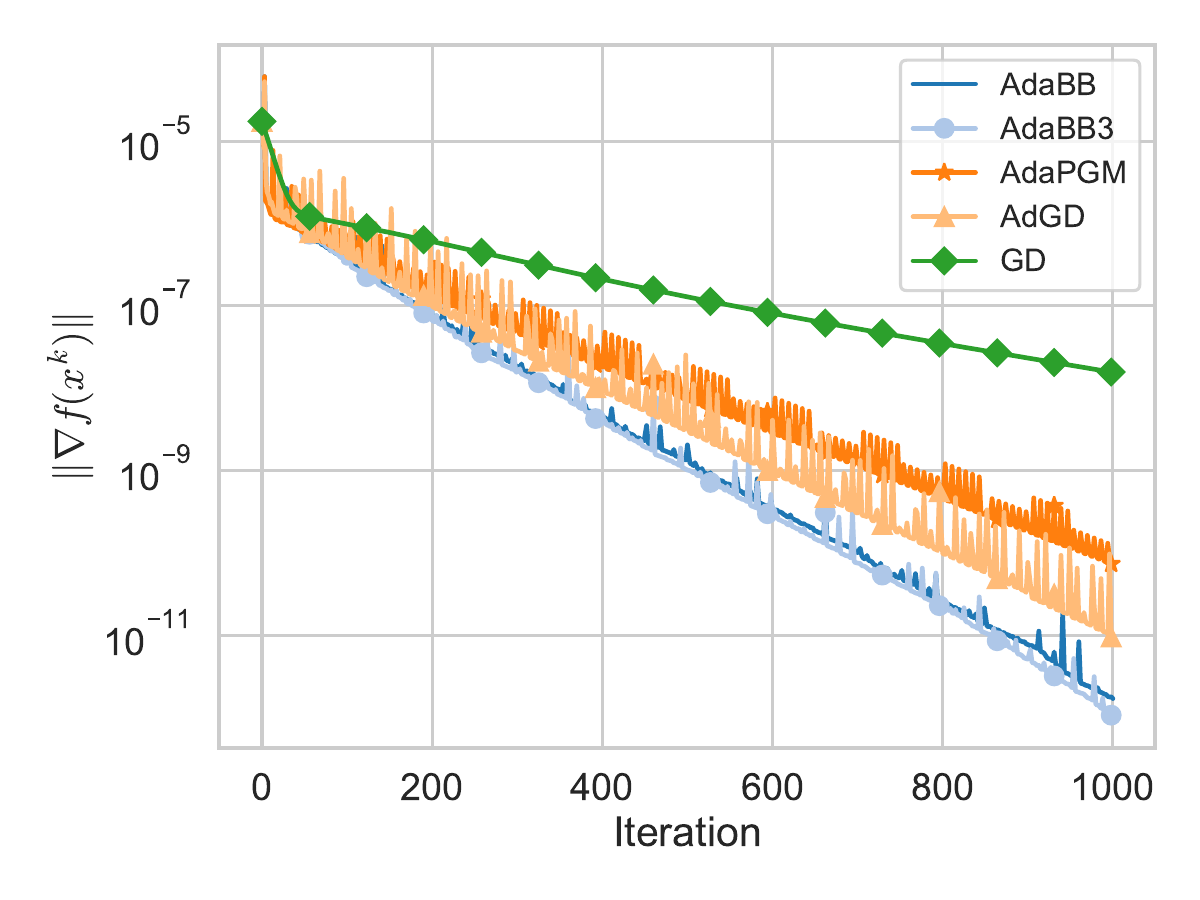}}
\subfloat[covtype, $M=15$]
{\includegraphics[scale = 0.2]{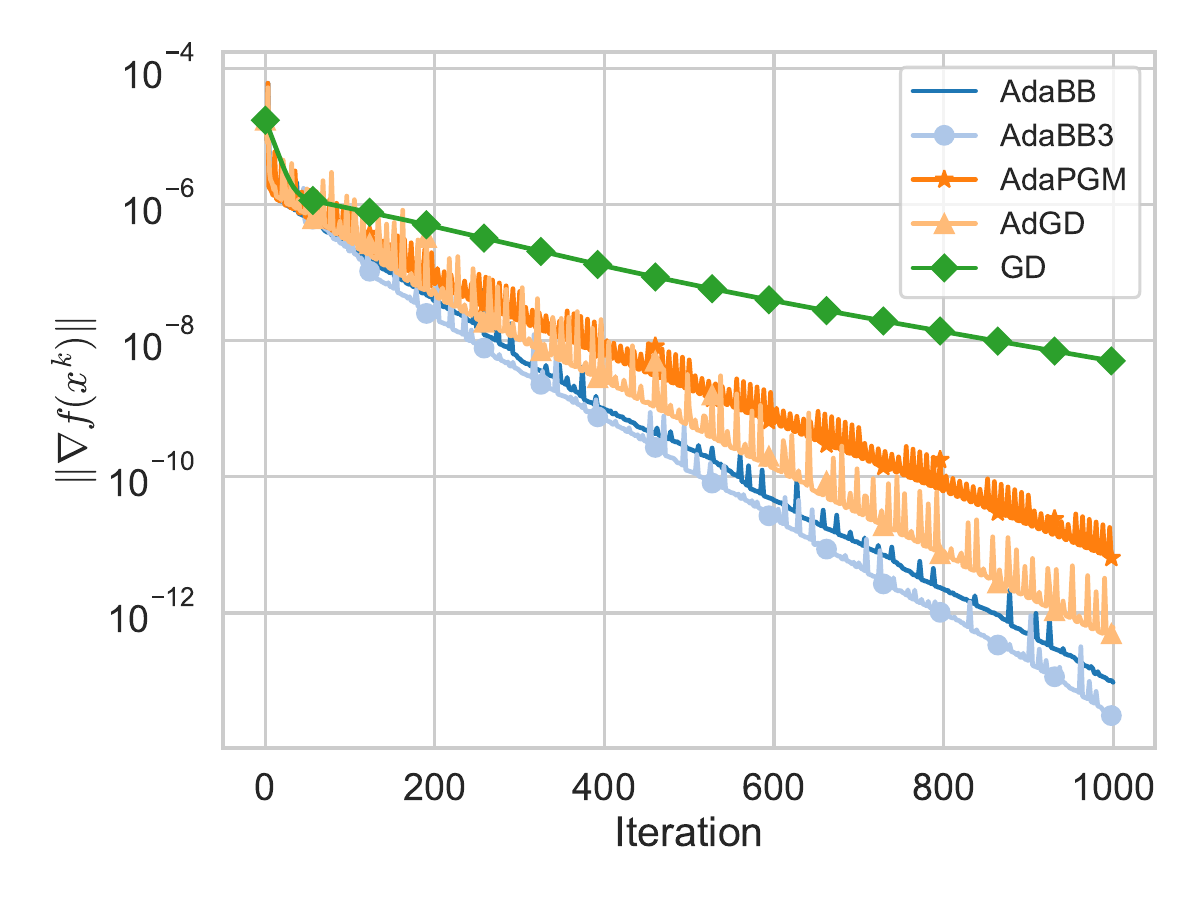}}
\caption{Results for the cubic regulation problem for GD, AdGD, AdaPGM, and AdaBB concerning the function value residual and gradient norm.}
\label{fig:cr}
\end{figure}

In Figure \ref{fig:cr}, we show the comparison of AdaBB and AdaBB3 with GD, AdGD and AdaPGM, from which we see again that AdaBB and AdaBB3 are usually better than the other three algorithms. 
Moreover, we also draw the stepsizes generated by AdGD and AdaBB in Figure \ref{fig:crst}. 

\begin{figure}[!htbp]
\centering
\subfloat[mushrooms, $M=10$, stepsize]{
\includegraphics[scale = 0.25]{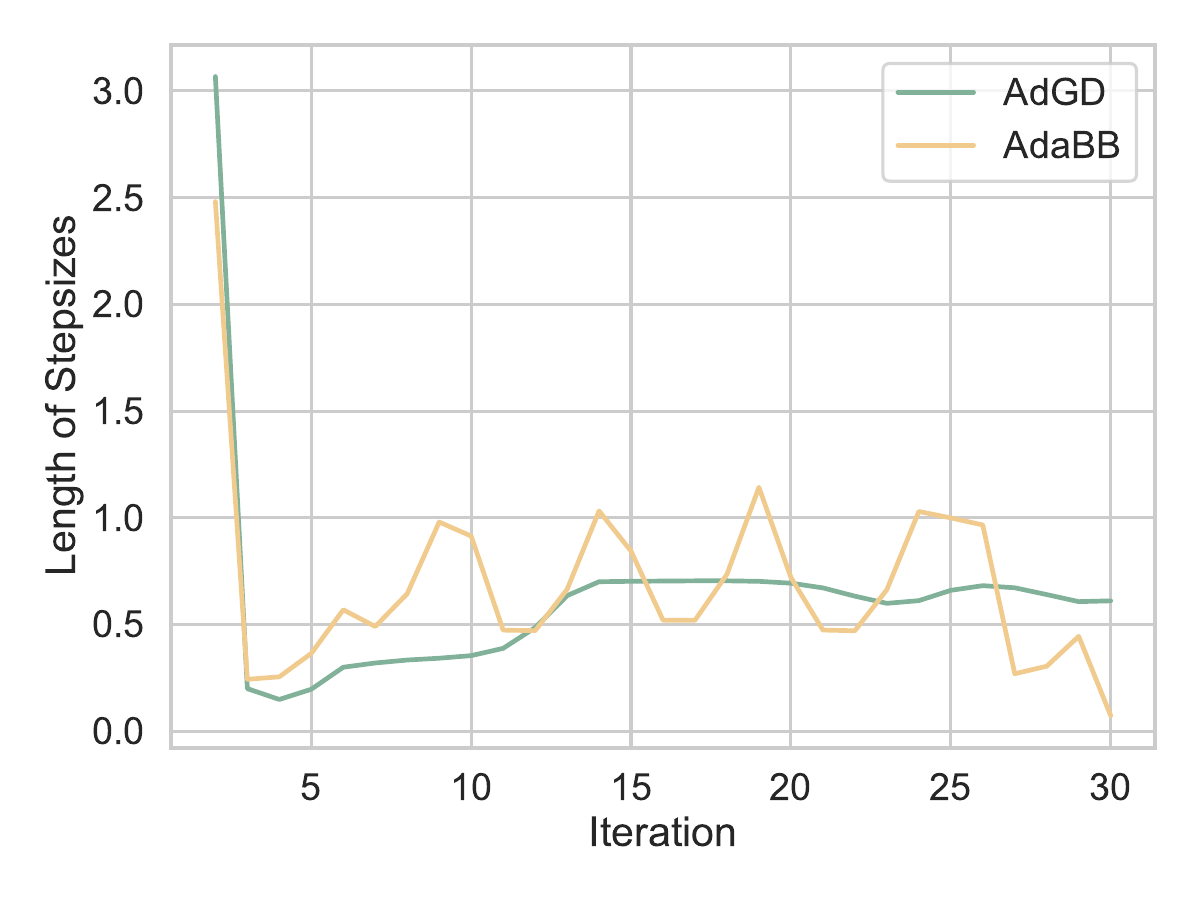}}
\subfloat[w8a, $M=10$, stepsize]{
\includegraphics[scale = 0.25]{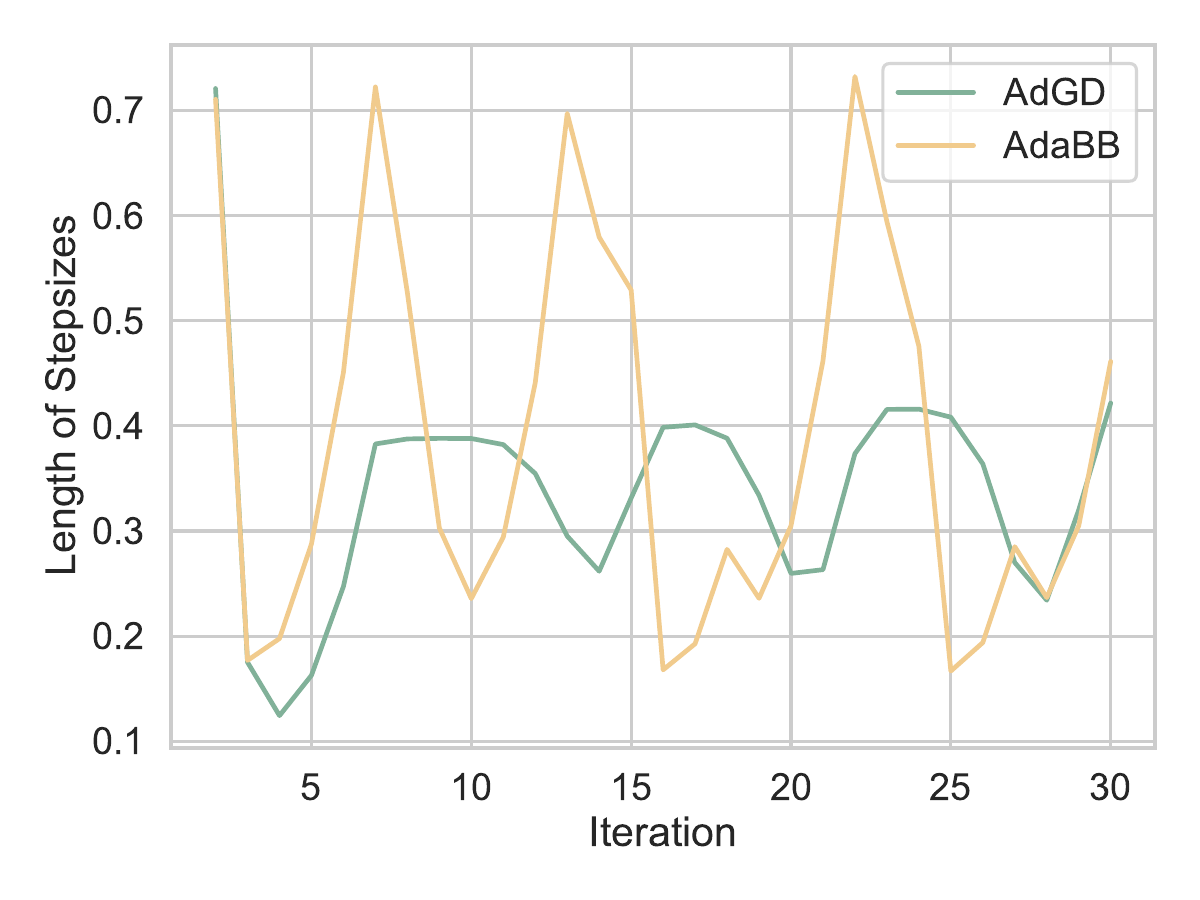}}
\subfloat[covtype, $M=10$, stepsize]
{\includegraphics[scale = 0.25]{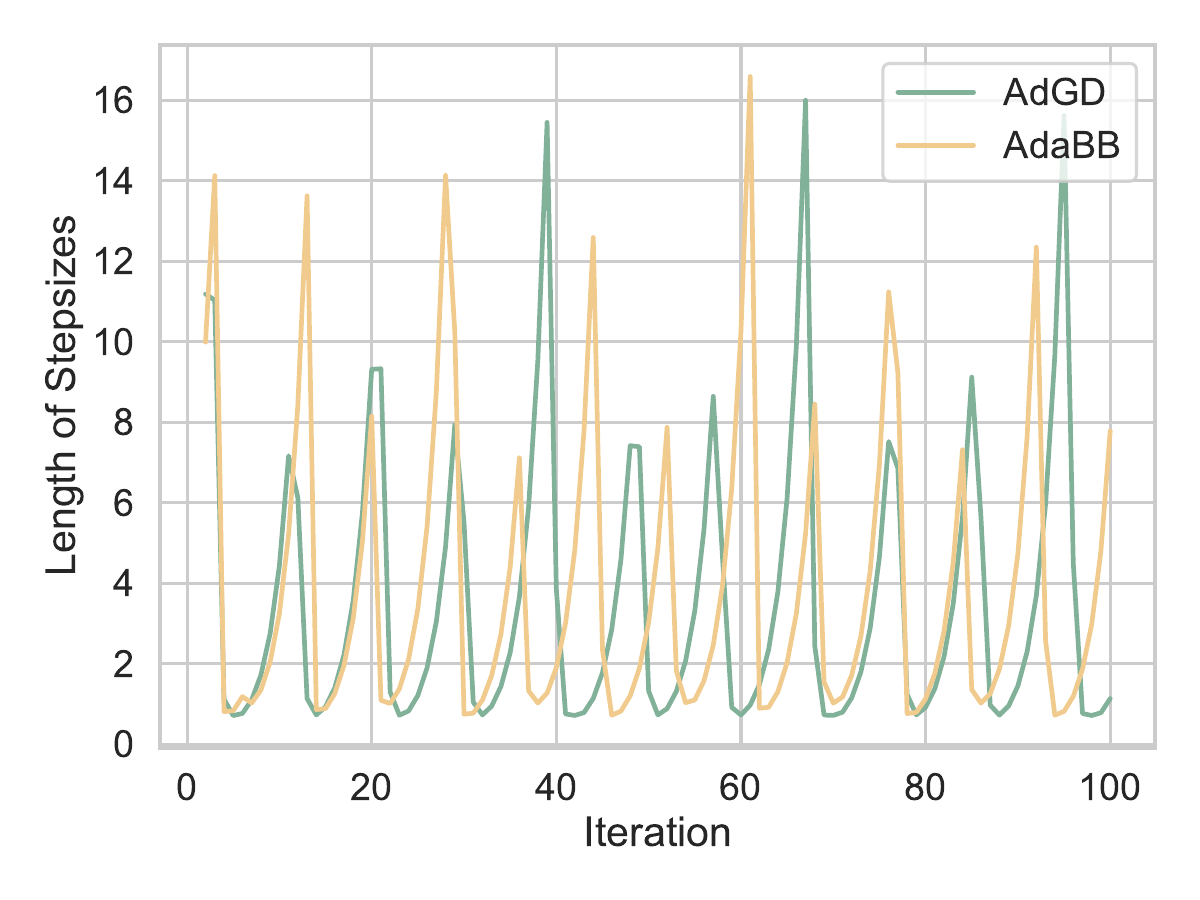}}\\
\subfloat[mushrooms, $M=10$, pattern]{
\includegraphics[scale = 0.25]{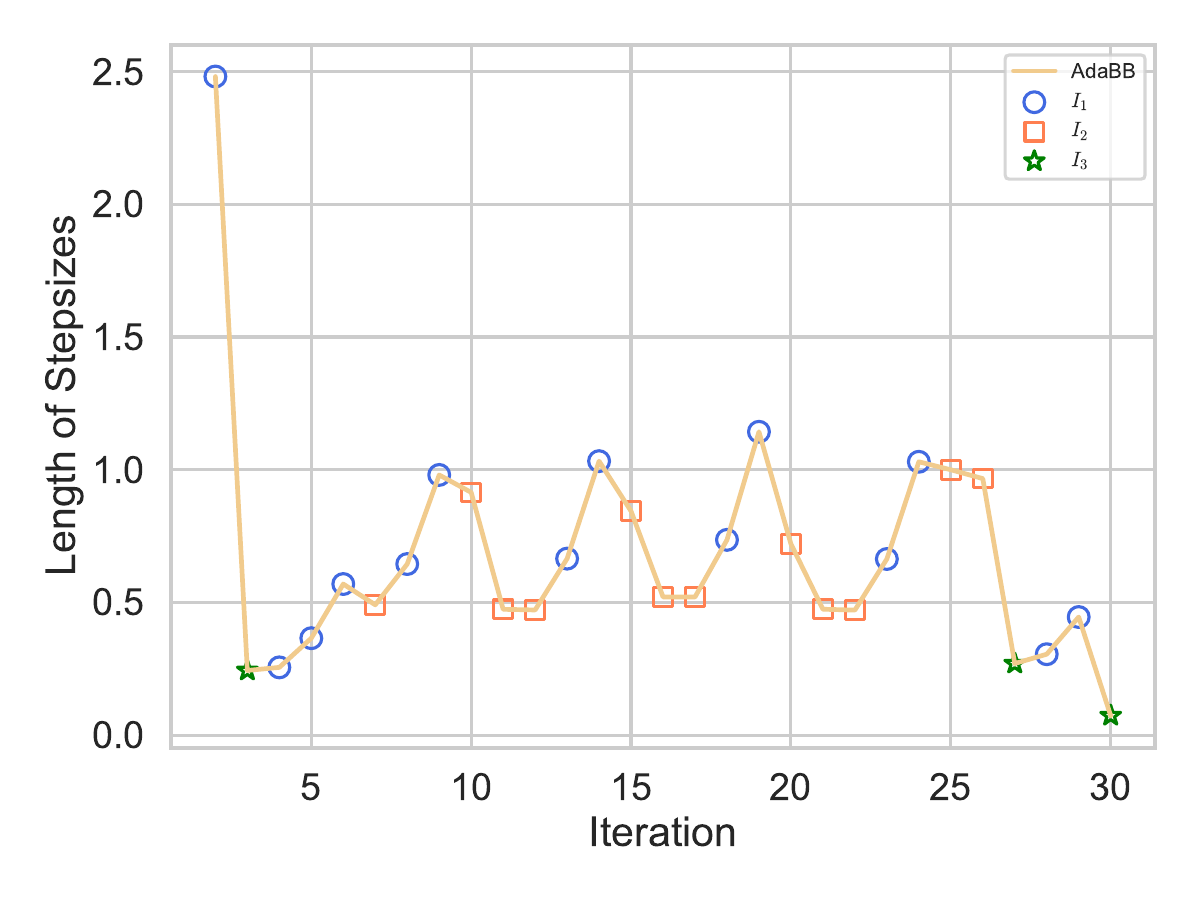}}
\subfloat[w8a, $M=10$, pattern]{
\includegraphics[scale = 0.25]{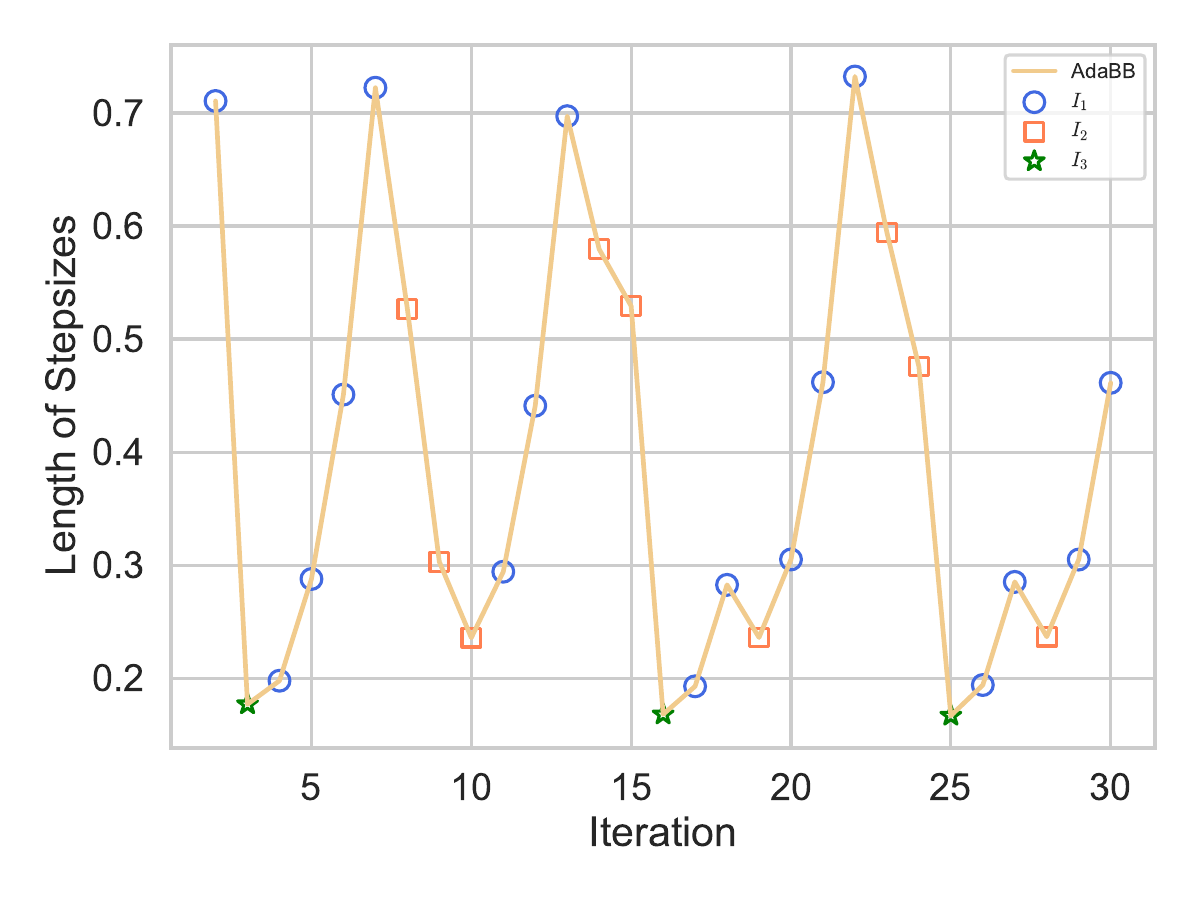}}
\subfloat[covtype, $M=10$, pattern]
{\includegraphics[scale = 0.25]{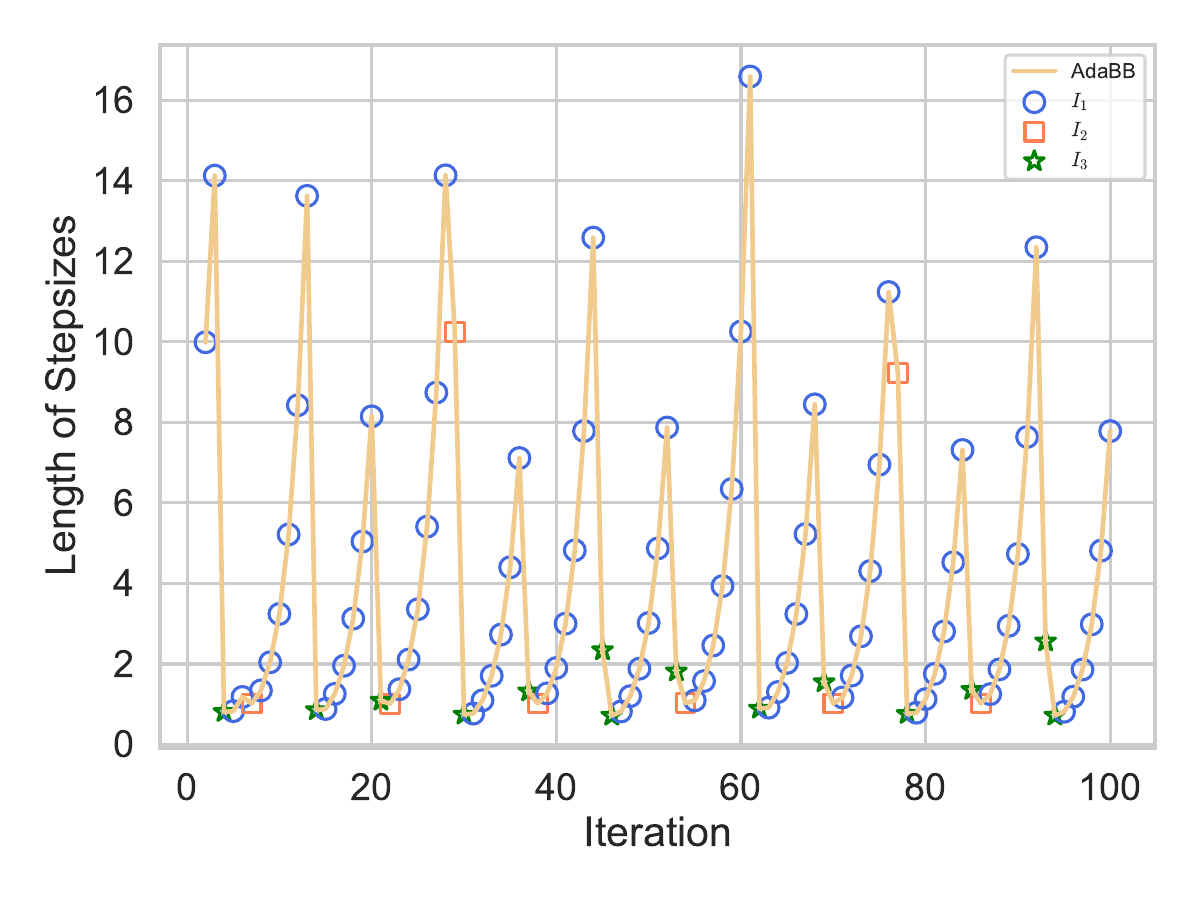}}
\caption{Stepsizes generated by AdGD and AdaBB.}
\label{fig:crst}
\end{figure}
From Figure \ref{fig:crst} we obtain similar observations as the ones in the previous subsection. 

\section{Concluding Remarks} \label{conclusions}

In this paper, we proposed an adaptive BB method for solving unconstrained smooth convex optimization. The proposed AdaBB algorithm is line-search-free and parameter-free. It essentially provides a convergent variant of the BB method for general unconstrained convex optimization. Comparing other adaptive algorithms along the same line of research, our AdaBB achieves the best lower bounds on the stepsize and the average of the stepsizes, which also affirmatively answers an open questions posed by Malitsky and Mishchenko \cite{MM23}. Our numerical results show the superior performance of two versions of AdaBB which takes the BB stepsize directly when it is not too large and not too small. This confirms the great potential of using BB stepsize in practice, under the condition that some safeguard procedure ensuring the convergence is taken, just like our AdaBB algorithm.

\printbibliography

@inproceedings{MM20,
author = {Malitsky, Yura and Mishchenko, Konstantin},
title = {Adaptive Gradient Descent without Descent},
year = {2020},
booktitle = {ICML}
}

@inproceedings{MS10,
author = {H. Brendan McMahan and
Matthew J. Streeter},
title = {Adaptive bound optimization for online convex optimization},
year = {2010},
booktitle = {COLT}
}

@inproceedings{Tan-SVRGBB,
author = {Conghui Tan and Shiqian Ma and Yu-Hong Dai and Yuqiu Qian},
title = {Barzilai-{B}orwein Step Size for Stochastic Gradient Descent},
year = {2016},
booktitle = {NeurIPS}
}

@misc{MM23,
      title={Adaptive Proximal Gradient Method for Convex Optimization}, 
      author={Yura Malitsky and Konstantin Mishchenko},
      year={2023},
      eprint={2308.02261},
      archivePrefix={arXiv},
      primaryClass={math.OC}
}

@article {DT14,
    AUTHOR = {Drori, Yoel and Teboulle, Marc},
     TITLE = {Performance of first-order methods for smooth convex
              minimization: a novel approach},
   JOURNAL = {Math. Program.},
  FJOURNAL = {Mathematical Programming},
    VOLUME = {145},
      YEAR = {2014},
    NUMBER = {1-2},
     PAGES = {451--482},
      ISSN = {0025-5610,1436-4646},
   MRCLASS = {90C60 (68Q25 90C22 90C25)},
  MRNUMBER = {3207695},
MRREVIEWER = {Klaus\ Meer},
       DOI = {10.1007/s10107-013-0653-0},
       URL = {https://doi.org/10.1007/s10107-013-0653-0},
}

@article {TV23,
    AUTHOR = {Teboulle, Marc and Vaisbourd, Yakov},
     TITLE = {An elementary approach to tight worst case complexity analysis
              of gradient based methods},
   JOURNAL = {Math. Program.},
  FJOURNAL = {Mathematical Programming},
    VOLUME = {201},
      YEAR = {2023},
    NUMBER = {1-2},
     PAGES = {63--96},
      ISSN = {0025-5610,1436-4646},
   MRCLASS = {90C25 (90C60)},
  MRNUMBER = {4620224},
       DOI = {10.1007/s10107-022-01899-0},
       URL = {https://doi.org/10.1007/s10107-022-01899-0},
}

@article {BB88,
    AUTHOR = {Barzilai, Jonathan and Borwein, Jonathan M.},
     TITLE = {Two-point step size gradient methods},
   JOURNAL = {IMA J. Numer. Anal.},
  FJOURNAL = {IMA Journal of Numerical Analysis},
    VOLUME = {8},
      YEAR = {1988},
    NUMBER = {1},
     PAGES = {141--148},
      ISSN = {0272-4979},
   MRCLASS = {65K05},
  MRNUMBER = {967848},
       DOI = {10.1093/imanum/8.1.141},
       URL = {https://doi.org/10.1093/imanum/8.1.141},
}

@article {BDH19,
    AUTHOR = {Burdakov, Oleg and Dai, Yu-hong and Huang, Na},
     TITLE = {Stabilized {B}arzilai-{B}orwein method},
   JOURNAL = {J. Comput. Math.},
  FJOURNAL = {Journal of Computational Mathematics},
    VOLUME = {37},
      YEAR = {2019},
    NUMBER = {6},
     PAGES = {916--936},
      ISSN = {0254-9409,1991-7139},
   MRCLASS = {65K05 (90C26 90C53)},
  MRNUMBER = {4038517},
MRREVIEWER = {Douglas\ S.\ Gon\c{c}alves},
       DOI = {10.4208/jcm.1911-m2019-0171},
       URL = {https://doi.org/10.4208/jcm.1911-m2019-0171},
}

@article {R97,
AUTHOR = {Raydan, Marcos},
     TITLE = {The {B}arzilai and {B}orwein gradient method for the large
              scale unconstrained minimization problem},
   JOURNAL = {SIAM J. Optim.},
  FJOURNAL = {SIAM Journal on Optimization},
    VOLUME = {7},
      YEAR = {1997},
    NUMBER = {1},
     PAGES = {26--33},
      ISSN = {1052-6234},
   MRCLASS = {90C30 @article {MR1430555,
    AUTH(65K05)}},
  MRNUMBER = {1430555},
MRREVIEWER = {Nada\ I.\ Djuranovi\'{c}-Mili\v{c}i\v{c}},
       DOI = {10.1137/S1052623494266365},
       URL = {https://doi.org/10.1137/S1052623494266365},
}

@article {GLL86,
    AUTHOR = {Grippo, L. and Lampariello, F. and Lucidi, S.},
     TITLE = {A nonmonotone line search technique for {N}ewton's method},
   JOURNAL = {SIAM J. Numer. Anal.},
  FJOURNAL = {SIAM Journal on Numerical Analysis},
    VOLUME = {23},
      YEAR = {1986},
    NUMBER = {4},
     PAGES = {707--716},
      ISSN = {0036-1429},
   MRCLASS = {90C30 (49D15 65K10)},
  MRNUMBER = {849278},
       DOI = {10.1137/0723046},
       URL = {https://doi.org/10.1137/0723046},
}

@article {DL02,
    AUTHOR = {Dai, Yu-Hong and Liao, Li-Zhi},
     TITLE = {{\bf {R}}-linear convergence of the {B}arzilai and {B}orwein
              gradient method},
   JOURNAL = {IMA J. Numer. Anal.},
  FJOURNAL = {IMA Journal of Numerical Analysis},
    VOLUME = {22},
      YEAR = {2002},
    NUMBER = {1},
     PAGES = {1--10},
      ISSN = {0272-4979,1464-3642},
   MRCLASS = {90C30 (65K05 90C06)},
  MRNUMBER = {1880051},
MRREVIEWER = {Marcos\ Raydan},
       DOI = {10.1093/imanum/22.1.1},
       URL = {https://doi.org/10.1093/imanum/22.1.1},
}

@article {R93,
    AUTHOR = {Raydan, Marcos},
     TITLE = {On the {B}arzilai and {B}orwein choice of steplength for the
              gradient method},
   JOURNAL = {IMA J. Numer. Anal.},
  FJOURNAL = {IMA Journal of Numerical Analysis},
    VOLUME = {13},
      YEAR = {1993},
    NUMBER = {3},
     PAGES = {321--326},
      ISSN = {0272-4979,1464-3642},
   MRCLASS = {90C30 (65K05)},
  MRNUMBER = {1225468},
       DOI = {10.1093/imanum/13.3.321},
       URL = {https://doi.org/10.1093/imanum/13.3.321},
}

@article {A66,
    AUTHOR = {Armijo, Larry},
     TITLE = {Minimization of functions having {L}ipschitz continuous first
              partial derivatives},
   JOURNAL = {Pacific J. Math.},
  FJOURNAL = {Pacific Journal of Mathematics},
    VOLUME = {16},
      YEAR = {1966},
     PAGES = {1--3},
      ISSN = {0030-8730,1945-5844},
   MRCLASS = {65.10 (65.30)},
  MRNUMBER = {191071},
MRREVIEWER = {M.\ Lotkin},
       URL = {http://projecteuclid.org/euclid.pjm/1102995080},
}

@article {NP06,
    AUTHOR = {Nesterov, Yurii and Polyak, B. T.},
     TITLE = {Cubic regularization of {N}ewton method and its global
              performance},
   JOURNAL = {Math. Program.},
  FJOURNAL = {Mathematical Programming. A Publication of the Mathematical
              Programming Society},
    VOLUME = {108},
      YEAR = {2006},
    NUMBER = {1},
     PAGES = {177--205},
      ISSN = {0025-5610,1436-4646},
   MRCLASS = {90C53 (90C30)},
  MRNUMBER = {2229459},
MRREVIEWER = {Michel\ H.\ Geoffroy},
       DOI = {10.1007/s10107-006-0706-8},
       URL = {https://doi.org/10.1007/s10107-006-0706-8},
}

@article {DHS11,
    AUTHOR = {Duchi, John and Hazan, Elad and Singer, Yoram},
     TITLE = {Adaptive subgradient methods for online learning and
              stochastic optimization},
   JOURNAL = {J. Mach. Learn. Res.},
  FJOURNAL = {Journal of Machine Learning Research (JMLR)},
    VOLUME = {12},
      YEAR = {2011},
     PAGES = {2121--2159},
      ISSN = {1532-4435,1533-7928},
   MRCLASS = {68T05 (62G08 90C15 90C25)},
  MRNUMBER = {2825422},
}

@misc{LTSP23,
      title={Adaptive proximal algorithms for convex optimization under local Lipschitz continuity of the gradient}, 
      author={Puya Latafat and Andreas Themelis and Lorenzo Stella and Panagiotis Patrinos},
      year={2023},
      eprint={2301.04431},
      archivePrefix={arXiv},
      primaryClass={math.OC},
}

@misc{lTP23,
      title={On the convergence of adaptive first order methods: proximal gradient and alternating minimization algorithms}, 
      author={Puya Latafat and Andreas Themelis and Panagiotis Patrinos},
      year={2023},
      eprint={2311.18431},
      archivePrefix={arXiv},
      primaryClass={math.OC}
}

@book {N87,
    AUTHOR = {Nesterov, Yurii},
     TITLE = {Introductory lectures on convex optimization},
    SERIES = {Applied Optimization},
    VOLUME = {87},
      NOTE = {A basic course},
 PUBLISHER = {Kluwer Academic Publishers, Boston, MA},
      YEAR = {2004},
     PAGES = {xviii+236},
      ISBN = {1-4020-7553-7},
   MRCLASS = {90-02 (90-01 90C25)},
  MRNUMBER = {2142598},
       DOI = {10.1007/978-1-4419-8853-9},
       URL = {https://doi.org/10.1007/978-1-4419-8853-9},
}

@misc{D18,
      title={On the Properties of Convex Functions over Open Sets}, 
      author={Yoel Drori},
      year={2018},
      eprint={1812.02419},
      archivePrefix={arXiv},
      primaryClass={math.OC}
}

@misc{GSW23,
      title={Accelerated Gradient Descent via Long Steps}, 
      author={Benjamin Grimmer and Kevin Shu and Alex L. Wang},
      year={2023},
      eprint={2309.09961},
      archivePrefix={arXiv},
      primaryClass={math.OC}
}

@misc{G23,
      title={Provably Faster Gradient Descent via Long Steps}, 
      author={Benjamin Grimmer},
      year={2023},
      eprint={2307.06324},
      archivePrefix={arXiv},
      primaryClass={math.OC}
}

@misc{LOZ23,
      title={Optimal and parameter-free gradient minimization methods for convex and nonconvex optimization}, 
      author={Guanghui Lan and Yuyuan Ouyang and Zhe Zhang},
      year={2023},
      eprint={2310.12139},
      archivePrefix={arXiv},
      primaryClass={math.OC}
}

@misc{AP23,
      title={Acceleration by Stepsize Hedging II: Silver Stepsize Schedule for Smooth Convex Optimization}, 
      author={Jason M. Altschuler and Pablo A. Parrilo},
      year={2023},
      eprint={2309.16530},
      archivePrefix={arXiv},
      primaryClass={math.OC}
}

@misc{AP232,
      title={Acceleration by Stepsize Hedging I: Multi-Step Descent and the Silver Stepsize Schedule}, 
      author={Jason M. Altschuler and Pablo A. Parrilo},
      year={2023},
      eprint={2309.07879},
      archivePrefix={arXiv},
      primaryClass={math.OC}
}

@misc{LL23,
      title={A simple uniformly optimal method without line search for convex optimization}, 
      author={Tianjiao Li and Guanghui Lan},
      year={2023},
      eprint={2310.10082},
      archivePrefix={arXiv},
      primaryClass={math.OC}
}

@article{DVR23,
  title={Branch-and-bound performance estimation programming: a unified methodology for constructing optimal optimization methods},
  author={Das Gupta, Shuvomoy and Van Parys, Bart PG and Ryu, Ernest K},
  JOURNAL = {Math. Program.},
  FJOURNAL = {Mathematical Programming},
  pages={1--73},
  year={2023},
  publisher={Springer}
}

@article{CL11,
author = {Chang, Chih-Chung and Lin, Chih-Jen},
title = {LIBSVM: A Library for Support Vector Machines},
year = {2011},
issue_date = {April 2011},
publisher = {Association for Computing Machinery},
address = {New York, NY, USA},
volume = {2},
number = {3},
issn = {2157-6904},
url = {https://doi.org/10.1145/1961189.1961199},
doi = {10.1145/1961189.1961199},
abstract = {LIBSVM is a library for Support Vector Machines (SVMs). We have been actively developing this package since the year 2000. The goal is to help users to easily apply SVM to their applications. LIBSVM has gained wide popularity in machine learning and many other areas. In this article, we present all implementation details of LIBSVM. Issues such as solving SVM optimization problems theoretical convergence multiclass classification probability estimates and parameter selection are discussed in detail.},
journal = {ACM Trans. Intell. Syst. Technol.},
articleno = {27},
pages = {27},
keywords = {Classification LIBSVM optimization regression support vector machines SVM}
}

\end{document}